\documentclass{amsart}


\usepackage{amsmath,amsthm,amsfonts,amscd,latexsym,amssymb,amsbsy,dsfont,mathrsfs}


\newtheorem{theorem}{Theorem}[section]
\newtheorem{theorem*}{Theorem}
\newtheorem{lemma}[theorem]{Lemma}
\newtheorem{corollary}[theorem]{Corollary}
\newtheorem{proposition}[theorem]{Proposition}

\theoremstyle{definition}
\newtheorem{definition}[theorem]{Definition}

\theoremstyle{remark}
\newtheorem{remark}[theorem]{Remark}

\numberwithin{equation}{section}


\newcommand{\C}{\mbb{C}}

\newcommand{\I}{\mc{I}}

\newcommand{\N}{\mbb{N}}

\newcommand{\R}{\mbb{R}}
\newcommand{\OO}{\Omega}

\newcommand{\LL}{\mc{L}}

\newcommand{\K}{\mc{K}}

\newcommand{\mbb}{\mathbb}
\newcommand{\mc}{\mathcal}
\newcommand{\mi}{\mathit}

\newcommand{\mr}{\mathrm}

\newcommand{\mscr}{\mathscr}
\newcommand{\lra}{\longrightarrow}
\newcommand{\pr}{\prime}

\newcommand{\vep}{\varepsilon}

\newcommand{\insec}{~}

\def\dd#1#2{\dfrac{\partial#1}{\partial#2}}

\newcommand\Sl{\mathcal S}		
\newcommand{\jS}{\Sl_{\bC_\jmath}}
\newcommand\PS{\mc{P}\Sl}	
\newcommand\PRS{\PS_{\bR}}	
\newcommand\Sc{\Sl_c}	
\newcommand\RS{\Sl_{\bR}}
\newcommand\ssp{\sigma_{S}}	
\newcommand\srho{\rho_{S}}	

\newcommand{\cS}{\mbb{S}}



\newcommand{\rr}{|}
\newcommand{\Ran}{\mi{Ran}}
\newcommand{\1}{\mr{I}}
\newcommand{\old}{\oldstylenums{1}}


\newcommand{\cG}{\mc{G}}

\newcommand{\CC}{\mscr{C}}

\def\sH{\mathsf{H}}

\def\bC{{\mathbb C}}           

\def\bH{{\mathbb H}}
\def\bN{{\mathbb N}}

\def\bR{{\mathbb R}}

\def\bS{{\mathbb S}}

\def\gB{{\mathfrak B}}

\def\beq{\begin{equation}}
\def\eeq{\end{equation}}

\def\b{\langle}
\def\k{\rangle}




\begin{document}

\title[Quaternionic slice functional calculus]{Continuous slice functional calculus \\ in quaternionic Hilbert spaces}


\author{Riccardo Ghiloni}
\address{Department of Mathematics, University of Trento, I--38123, Povo-Trento, Italy}
\email{ghiloni@science.unitn.it \\ moretti@science.unitn.it \\ perotti@science.unitn.it}
\thanks{Work partially supported by GNSAGA and  GNFM of INdAM}

\author{Valter Moretti}

\author{Alessandro Perotti}


\subjclass[2010]{46S10, 47A60, 47C15, 30G35, 32A30, 81R15}

\date{}


\begin{abstract}
The aim of this work is to define a continuous functional calculus in quaternionic Hilbert spaces, starting from basic issues regarding the notion of spherical spectrum of a normal operator. As properties of the spherical spectrum suggest, the class of continuous functions to consider in this setting is the one of slice quaternionic functions. Slice functions generalize the concept of slice regular function, which comprises power series with quaternionic coefficients on one side and that can be seen as an effective generalization to quaternions of holomorphic functions of one complex variable.
The notion of slice function allows to introduce suitable classes of real, complex and quaternionic $C^*$--algebras and to define, on each of these $C^*$--algebras, a functional calculus for quaternionic normal operators. In particular, we establish several versions of the spectral map theorem. Some of the results are proved also for unbounded operators. However, the mentioned continuous functional calculi are defined only for bounded normal operators. Some comments on the physical significance of our work are included.
\end{abstract}

\maketitle


\setcounter{tocdepth}{1}

\tableofcontents

\section{Introduction} 
Functional calculus in quaternionic Hilbert spaces has been focused especially by mathematical physicists, concerning the application of spectral theory to quantum theories (see e.g. \cite{Adler,Emch,FJSD,HB}). As a careful reading of these works reveals, most part of results have been achieved into a non completely rigorous fashion, leaving some open gaps and deserving further investigation. As a matter of fact, the classic approach for complex Hilbert spaces, starting from the continuous functional calculus and then reaching the measurable functional calculus, has been essentially disregarded, while attention has been devoted to the measurable functional calculus almost immediately. Furthermore, even more mathematically minded works on this topics, as \cite{visw}, do not present complete proofs of the claimed statements. Historically, an overall problem was the absence of a definite notion of spectrum of an operator on quaternionic Hilbert spaces. Such a notion has been introduced only few years ago \cite{libroverde} in the more general context of operators on quaternionic Banach modules. Therefore, on the one hand, no systematic investigation on the spectral properties of operators on quaternionic Hilbert spaces has been performed up to now. On the other hand, the many key--results available for non self--adjoint operators 
 have been obtained by means of a Dirac bra--ket like formalism, which formally, but often erroneously, reduces the argumentations to the case of a vague notion of point spectrum. Finally, another interesting issue concerns the existence of a model of quaternions in terms of anti--self adjoint and unitary operators, commuting with the self--adjoint parts of a given normal operator. Although that model is not completely understood and analyzed, it was extensively exploited in various technical constructions of physicists (see \cite{Adler} and Theorem~\ref{newtheorem} below).
  
The aim of this work is to provide a foundational investigation of continuous functional calculus in quaternionic Hilbert spaces, particularly starting from basic issues regarding the general notion of \textit{spherical spectrum} (Definition~\ref{def_spectrum}) and its general properties (Theorems~\ref{teopropspectrum}
and \ref{teospectra} and Propositions~\ref{propsigmap} and \ref{propspecTT}). The general relation between the theory in complex Hilbert space and the one in quaternionic Hilbert spaces will be examined extending some classic known results (see e.g.~\cite{Emch}) to the unbounded operator case (Proposition~\ref{propestensione}). In view of that general aim, up to Section~\ref{secspecprop}, we shall not confine ourselves to the bounded operator case, but we shall consider also unbounded operators. However, the proper continuous functional calculus will be discussed for bounded normal operators only, thus postponing the non--bounded case to a work in preparation \cite{GhMoPe2}. In particular, in the first part of the work, the general spectral properties of bounded and unbounded operators on quaternionic Hilbert spaces will be discussed from scratch.

The pivotal tool in our investigation is the notion of {\em slice function}, whose relevance clearly pops up once the notion of spherical spectrum is introduced. As we shall prove, in the continuous case, that notion allows one to introduce suitable classes of real, complex and quaternionic $C^*$--algebras and to define, on each of these $C^*$--algebras, a functional calculus for normal operators. In particular, we establish several versions of the {\em spectral map theorem}. As our results show, the interplay between continuous slice functions and the space of operators is much more complicated than in the case of continuous functional calculus on complex Hilbert spaces.
 
\subsection{Slice functions, a key result and the main theorems}
The concept of \emph{slice regularity} for functions of one quaternionic variable has been introduced by Gentili and Struppa in \cite{GeSt2006CR,GeSt2007Adv} and then extended to octonions,  Clifford algebras and in general real alternative $^*$--algebras in \cite{CoSaSt2009Israel,GeStRocky,GhPe_Trends,GhPe_AIM}.  This function theory comprises polynomials and power series in the quaternionic variable with quaternionic coefficients on one side. It can then be seen as an effective generalization to quaternions of the theory of holomorphic functions of one complex variable.

At the base of the definition of slice regularity, there is the ``slice'' character of the quaternionic algebra: every element $q\in \bH$ can be decomposed into the form $q=\alpha+\jmath\beta$, where $\alpha,\beta\in\bR$ and $\jmath$ is an imaginary unit in the two--dimensional sphere $\bS=\{q\in\bH\,|\, q^2=-1\}$. 
This decomposition is unique when assuming further that $\beta \geq 0$; otherwise, for non--real $q$, $\beta$ and $\jmath$ are determined up to a sign.

Equivalently, $\bH$ is the union of the (commutative) real subalgebras $\bC_\jmath\simeq\bC$ generated by  $\jmath\in\cS$, with the property that $\bC_\imath\cap\bC_\jmath=\bR$ when $\imath\ne\pm\jmath$, where $\bR$ denotes the real subalgebra of $\bH$ generated by 1.

The original definition \cite{GeSt2006CR,GeSt2007Adv} of slice regularity for a quaternionic function $f$, defined on an open domain $\OO$ of $\bH$, requires that, for every $\jmath \in \cS$, the restriction of $f$ to $\OO\cap\bC_\jmath$ is holomorphic with respect to the complex structure defined by left multiplication by $\jmath$. The approach taken in \cite{GhPe_Trends,GhPe_AIM} allows to embed the space of slice regular functions into a larger class, that of continuous \emph{slice functions}, which corresponds in some sense to the usual complex continuous functions on the complex plane.

The first step is to single out a peculiar class of subsets of $\bH$, those that are invariant with respect to the action of $\cS$. If $\K \subset \bC$ is non--empty and invariant under complex conjugation, one defines the \textit{circularization} $\OO_\K$ of $\K$ (in $\bH$) as: 
\[
\OO_\K =\{\alpha+\jmath\beta \in \bH \, | \,  \alpha,\beta \in \bR, \, \alpha+i\beta \in \K, \, \jmath \in \bS \},
\]
and call a subset of $\bH$ a \emph{circular set} if it is of the form $\OO_\K$ for some $\K$. The second step is the introduction of \emph{stem functions}. Let $\bH\otimes_{\bR}\bC$ be the complexified quaternionic algebra, represented as
\[
\bH\otimes_{\bR}\bC=\{x+iy \, | \, x,y\in \bH\},
\]
with complex conjugation $w=x+i y\mapsto \overline w=x-i y$. If a function $F: \K \lra \bH \otimes_\bR \bC$ satisfies the condition $F(\overline z)=\overline{F(z)}$ for every $z\in \K$, then $F$  is called a \emph{stem function} on $\K$. Any stem function $F:\K \lra \bH \otimes_\bR \bC$ induces a \emph{(left) slice function} $f=\I(F):\OO_\K\rightarrow \bH$: if $q=\alpha+\jmath\beta \in \OO_\K\cap \bC_\jmath$, with $\jmath\in\cS$, we set  
\[f(q):=F_1(\alpha+i\beta)+\jmath F_2(\alpha+i\beta)\;,\]
where $F_1, F_2$ are the two $\bH$--valued components of $F$. In this approach to the theory, a quaternionic function turns out to be slice regular if and only if it is the slice function induced by a holomorphic stem function.

The definition of a continuous slice function of a normal operator is based on a key result, which can interpreted as the operatorial counterpart of the slice character of $\bH$ and which describes rigorously the just mentioned model of quaternions for bounded normal operators exploited by physicists. Theorem \ref{teoext} in Section \ref{sec:commuting operator} can be reformulated as follows:

\medskip

\noindent \textbf{Theorem \textit{J}.} 
\textit{Let $\sH$ be a quaternionic Hilbert space and let $\gB(\sH)$ be the set of all bounded operators of $\sH$. Given any normal operator $T \in \gB(\sH)$, there exist three operators $A,B,J \in \gB(\sH)$ such that:}
\begin{itemize}
 \item[$(\mr{i})$] $T=A+JB$,
 \item[$(\mr{ii})$] \textit{$A$ is self--adjoint and $B$ is positive,}
 \item[$(\mr{iii})$] \textit{$J$ is anti self--adjoint and unitary,}
 \item[$(\mr{iv})$] \textit{$A$, $B$ and $J$ commute mutually.}
\end{itemize}
\textit{Furthermore, the following additional facts hold:}
\begin{itemize}
 \item \textit{$A$ and $B$ are uniquely determined by $T$: $A=(T+T^*)\frac{1}{2}$ and $B=|T-T^*|\frac{1}{2}$.}
 \item \textit{$J$ is uniquely determined by $T$ on $\mi{Ker}(T-T^*)^\perp$.}
\end{itemize}

\medskip

In the parallelism between this decomposition of $T$ and the slice decomposition of quaternions, real numbers in $\bH$ correspond to self--adjoint operators, non--negative real numbers to positive operators and quaternionic imaginary units to anti self--adjoint and unitary operators. This parallelism suggests a natural way to define the operator $f(T)$ for a continuous slice function $f$, at least in the case of \emph{$\bH$--instrinsic} slice functions, i.e.\ those slice functions such that $f(\bar q)=\overline{f(q)}$ for every $q \in \OO_\K$. Observe that $\bH$--intrinsic slice functions leave all slices $\bC_\jmath$ invariant. If $f$ is a polynomial slice function, induced by a stem function with polynomials components $F_1$, $F_2 \in \bR[X,Y]$, then $f$ is $\bH$--intrinsic and we define the normal operator $f(T) \in \gB(\sH)$ by setting
\[
f(T):=F_1(A,B)+JF_2(A,B),
\]
and then we extend the definition to continuous $\bH$--intrinsic slice functions by density. We obtain in this way a isometric $^*$--homomorphism of real Banach  $C^*$--algebras (\textit{Theorem~\ref{teofinale1}}).
In particular, continuous $\bH$--intrinsic slice  functions satisfy the spectral map property $f(\ssp(T)) =\ssp(f(T))$, where $\ssp(T)$ denotes the spherical spectrum of $T$.

The definition of $f(T)$ can be extended to other classes of continuous slice functions. In this transition, some of the nice properties of the map $f \mapsto f(T)$ are lost, but other interesting phenomena appear. In \textit{Theorem~\ref{teofinale2}}, the slice functions considered are those which leave only one slice $\bC_\jmath$ invariant. In this case, a $^*$--homomorphism of \emph{complex} Banach $C^*$--algebras is obtained. A suitable form of the spectral map property continues to hold. In \textit{Theorem~\ref{teofinale3}}, we consider \emph{circular} slice functions, those which satisfy the condition $f(\bar q)=f(q)$ for every $q$. Differently from the previous cases, these functions form a \emph{non--commutative} quaternionic Banach $C^*$--algebra. In this case, we still have an isometric  $^*$--homomorphism, but the spectral map property holds in a weaker form. In \textit{Proposition~\ref{teofinale4}}, we show how to extend the previous definitions of  $f(T)$ to a generic continuous slice function $f$, but in this case also the $^*$--homomorphism property is necessarily lost. Finally, in Section~\ref{sec:sliceregular}, we show that the continuous functional calculus defined above, when restricted to slice regular functions, coincides with the functional calculus developed in \cite{libroverde} as a generalization of the classical holomorphic functional calculus.


\subsection{Physical significance} \label{intro:physic} 
As remarked  by Birkhoff and von Neumann in their celebrated seminal work on Quantum Logic in 1936 \cite{BvN}, Quantum Mechanics may alternatively be formulated on a Hilbert space where the ground field of complex numbers is replaced for the division algebra of quaternions. Nowadays, the picture is more clear on the one hand  and more strict on the other hand, after 
the efforts started in 1964 by Piron \cite{Piron} and concluded in 1995 by Sol\`{e}r \cite{Soler}, and more recently reformulated by other researchers (see e.g. \cite{Aerts}). Indeed, it has been rigorously established that, assuming that the set of ``yes--no'' elementary propositions on a given quantum system are described by a lattice that is bounded, orthomodular, atomic, separable, irreducible, verifying the so--called covering property (see \cite{BC,EGL09,Moretti}) and finally, assuming that certain orthogonal systems exist therein, then the lattice is isomorphic to the lattice of orthogonal projectors  of a generalised Hilbert space over the fields $\bR$, $\bC$ or over the division algebra of quaternions $\bH$. No further possibility is allowed. Actually, the first possibility is only theoretical, since it has been proved that, dealing with concrete quantum systems, the description of the time--reversal operation introduces a complex structure in the field that makes,
 indeed, the real Hilbert space a complex Hilbert space (see \cite{Adler}). Therefore, it seems that the only two realistic possibilities allowed by Nature are complex Hilbert spaces and quaternionic ones. While the former coincides to the mainstream of the contemporary quantum physics viewpoint, the latter has been taken into consideration by several outstanding physicists and mathematical physicists, since the just mentioned paper of Birkhoff and von Neumann. Adler's book \cite{Adler} represents a quite complete treatise on that subject from the point of view of physics.

While all the fundamental results, like Gleason theorem and Wigner theorem, can be re--demonstrated in quaternionic quantum mechanics with minor changes (see \cite{vara}), its theoretic formulation 
 differs from the complex formulation in some key points related to  the  spectral theory,  the proper language of quantum mechanics. Perhaps, the most important is the following. Exactly as in the standard approach, observables are represented by
(generally unbounded) self--adjoint operators. However, within the complex Hilbert space picture, self--adjoint operators enter the theory also from another route due to the celebrated Stone theorem. Indeed, self--adjoint operators, when multiplied with $i$, become the generators of the continuous one--parameter groups of unitary operators representing continuous quantum symmetries.  More generally, in complex Hilbert spaces, in view of well--known results due to Nelson \cite{Nelson}, anti self--adjoint operators are the building blocks necessary to construct strongly continuous unitary representations of Lie groups of quantum symmetries. This way naturally leads to the quantum version of Noether 
 theorem relating conserved quantities ($i$ times the anti self--adjoint generators representing the Lie algebra of the  group) and symmetries (the one--parameter groups obtained by exponentiating the given anti self--adjoint generators) of a given quantum system. This nice interplay, in principle, should survive the passage from complex to quaternionic context. However, a difficult snag pops up immediately: the relationship between self--adjoint operators $S$ and anti self--adjoint operators $A$ is much more complicated in quaternionic Hilbert spaces than in complex Hilbert spaces. Indeed, in the quaternionic case, an identity as $A= JS$ holds, where $J$ is an operator replacing the trivial $i$ in complex Hilbert spaces. Nevertheless, $A$ does not uniquely fix the pair $J,S$ so that the interplay of dynamically conserved quantities and symmetries needs a deeper physical investigation in quaternionic quantum mechanics. This issue affects all the physical construction from scratch as it is already evident from the various mathematically inequivalent attempts to provide a physically sound definition of the momentum operator of a particle. Furthermore, one has to employ quite sophisticated mathematical tools as the quaternionic version of Mackey's imprimitivity theorem (see \cite{CT}). The operator $J$ has to satisfy several constraints, first of all, it has to commute with $S$ and it has to be anti self--adjoint. The polar decomposition theorem, re--formulated in quaternionic Hilbert spaces, provides such an operator, at least for bounded anti self--adjoint operators $A$. In some cases, it is convenient for technical reasons to look for an operator $J$ that is also unitary and that it is accompanied by two other similar operators $I$ and $K$, commuting with $S$, such that they define a representation of the imaginary quaternions in terms of operators (see Section 2.3 of \cite{Adler}). This is not assured by the polar decomposition theorem and the existence of such anti self--adjoint and unitary operators $I,J,K$ is by no means obvious. This is one of the key issues tackled in this work. Indeed, in Theorem \ref{teoext}, we prove that,  every normal operator $T$ can always be decomposed as $T=A+JB$, with $A,B$ self--adjoint and $J$ anti self--adjoint, unitary and commuting with both $A$ and $B$. Moreover, $J$ can always be written as $L_\imath$ for some (quaternionic) imaginary unit $\imath$, where $\bH \ni q \mapsto L_q \in \gB(\sH)$ is a $^*$--representation of $\bH$ in terms of bounded operators, commuting with $A$ and $B$ (see Theorem \ref{newtheorem}).

The systematic investigation of the properties of the above--mentioned operators $J$ will enable us to state and prove a quite general proposition (Proposition \ref{propestensione}  extending previous results by Emch \cite{Emch}) concerning the extension to the whole quaternionic Hilbert space and the properties of such extensions of (generally unbounded) operators initially defined on complex subspaces of the quaternionic Hilbert space induced by $J$.

As we have just recalled, when looking at this matter with a mathematically minded intention, the most problematic issue in the known literature on quaternionic quantum mechanics is the notion of spectrum of an operator. As is known, in the case of self--adjoint operators representing quantum observables, the Borel sets in the spectrum of an observable account for the outcomes of measurements of that observable. The definition of spectrum of an operator in quaternionic Hilbert spaces  turns out to be  problematic on its own right in view of the fact that quaternions are a non commutative ring and it generates troubles, already studying eigenvalues and eigenvectors: an eigenspace is not a subspace because it is not closed under multiplication with scalars. However, at first glance, all that could not appear as  serious as it is indeed, because self--adjoint operators should have real spectrum where quaternionic noncommutativity is ineffective. Nevertheless, as stressed above, self--adjoint operators are not the only class of operators relevant in quaternionic Hilbert spaces for quantum physics, since also unitary and anti self--adjoint play some important r\^ole. Therefore, a full fledged notion of spectrum for normal operators should be introduced. In spite of some different formulations of the spectral theorem for normal operators \cite{visw}, a notion of the spectrum and of a generic operator in quaternionic Hilbert spaces as well as a systematic  investigation on its properties do not exist. As a matter of fact, mathematical physicists \cite{Emch,FJSD,HB} always tried  to avoid to face this   issue confining their investigations to (bounded) self--adjoint operators and passing to other classes of operators by means of quite {\em ad hoc} arguments and very often, dealing with the help of Dirac bra--ket formalism, not completely justified in these contexts. This remark concerns physicists, \cite{Adler} in particular, who adopt  the popular and  formal bra--ket procedure, assume the naive starting point where the spectrum is made of  eigenvalues  even when that  approach is evidently untenable and should be handled with the help of some rigged quaternionic Hilbert space machinery similar to Gelfand's theory in complex Hilbert spaces. While all these approaches are physically sound and there are no doubts that the produced results are physically meaningful, an overall rigorous mathematical  formulation of the quaternionic spectral theory still does not exist. The absence of a suitable notion of spectrum has generated a lack in the natural development of the functional calculus. As a matter of fact, in  the complex Hilbert space theory,  the functional calculus theory on the spectrum starts by the definition of a continuous function of a given normal operator and then passes to define the notion of measurable function of the operator. The projector--valued measures exploited to formulate the spectral theory, the last step of the story,  are subsequently constructed  taking advantage of the measurable functional calculus. In quaternionic Hilbert space, instead, a formulation of the spectral theorem exists \cite{visw} without any systematic investigation of the continuous and measurable functional calculus, nor an explicit definition of the spectrum of an operator.

A pivotal notion in this paper is just that of spherical spectrum of an operator in a quaternionic Hilbert space. We state it (Definition \ref{def_spectrum}) by specializing (and re--elaborating  in the case of unbounded operators) the definition recently proposed by Colombo, Sabadini and Struppa in \cite{libroverde} for operators in quaternionic two--side Banach modules. We see that, in spite of the different definition (based on a second order polynomial), the properties of the spectrum of an operator (Theorem \ref{teospectra}) are natural generalizations of those in complex Hilbert spaces. In particular, the point spectrum, regardless an apparently inequivalent definition, turns out to be the set of eigenvalues exactly as in the complex Hilbert space case (Proposition \ref{propsigmap}). Moreover, a nice relationship appears (Proposition \ref{propinterssigma}) between the standard notion of spectrum in complex Hilbert spaces and the quaternionic notion of spectrum, for those operators that have been obtained as extensions of operators on complex Hilbert subspaces as pointed out above. The definition and the properties of the spectrum not only allow to construct a natural extension of the continuous functional calculus to the quaternionic Hilbert space case, but they permit us to discover an intriguing interplay of the continuous functional calculus and the theory of slice functions.


\section{Quaternionic Hilbert spaces}
In this part, we summarize some  basic notions about the algebra of quaternions, quaternionic Hilbert spaces and operators (even unbounded and defined in proper subspaces) on quaternionic Hilbert spaces. In almost all cases, the proofs of the various statements concerning quaternionic Hilbert spaces and operators thereon are very close to the analogues for the complex Hilbert space theory. Therefore, we shall omit the corresponding proofs barring some comments if necessary.


\subsection{Quaternions}

The \emph{space of quaternions} $\bH$ is the four dimensional real algebra with unity we go to describe. 

We denote by $0$ the null element of $\bH$ and by $1$ the multiplicative identity of~$\bH$. The space $\bH$ includes three so--called \emph{imaginary units}, which we indicate by $i,j,k$. By definition, they satisfy (we omit the symbol of product of the algebra):
\begin{equation} \label{prod}
ij = -ji = k, \; \; ki = -ik = j, \; \; jk = -kj =i, \; \; ii=jj=kk =-1\:.
\end{equation}
The elements $1,i,j,k$ are assumed to form a real vector basis of $\bH$, so that any element $q \in \bH$ takes 
the form: $q = a 1+ bi + cj + dk$, where $a$, $b$, $c$, and $d$ belong to $\bR$ and are uniquely determined by $q$ itself. We identify $\bR$ with the subalgebra generated by $1$. In other words, if $a \in \bR$, we write simply $a$ in place of $a1$. The product of two such elements is individuated by (\ref{prod}), assuming associativity and distributivity with respect to the real vector space sum. In this way, $\bH$ turns out to be a \emph{non--commutative associative real division algebra}.

Given $q=a+bi+cj+dk \in \bH$, we recall that:
\begin{itemize}
 \item $\overline{q}:=a-bi-cj-dk$ is the \emph{conjugate quaternion of $q$}.
 \item $|q|:=\sqrt{q\overline{q}}=\sqrt{a^2+ b^2+c^2+d^2} \in \bR$ is the \emph{norm of $q$}.
 \item $\mr{Re}(q):=\frac{1}{2}(q+\overline{q})=a \in \bR$ is the \emph{real part of $q$} and $\mr{Im}(q):=\frac{1}{2}(q-\overline{q})=bi+cj+dk$ is the \emph{imaginary part of $q$}.
\end{itemize}
The element $q \in \bH$ is said to be \emph{real} if $q=\mr{Re}(q)$. It is easy to see that $q$ is real if and only if $qp=pq$ for every $p \in \bH$ or, equivalently, if and only if $\overline{q}=q$. If $\overline{q}=-q$ or, equivalently, $q=\mr{Im}(q)$, then $q$ is said to be \emph{imaginary}. We denote by $\mr{Im}(\bH)$ the imaginary space of $\bH$ and by $\mathbb S$ the sphere of unit imaginary quaternions:
\[
\mr{Im}(\bH):=\{q \in \bH \, | \, q=\mr{Im}(q)\}=\{bi+cj+dk \in \bH \, | \, b,c,d \in \bR\}
\]
and
\[
\bS:=\{q \in \mr{Im}(\bH) \, | \, |q|=1\}=\{bi+cj+dk \in \bH \, | \, b,c,d \in \bR, b^2+c^2+d^2=1\}.
\]
As said in the introduction, the set $\cS$ is also equal to $\{q \in \bH \, | \, q^2=-1\}$.

We define $\bR^+:=\{a \in \bR \, | \, a \geq 0\}$ and we denote by $\bN$ the set of all non--negative integers.

\begin{remark} \label{remin}
$(1)$ Notice that $\overline{pq} = \overline{q} \, \overline{p}$, so the conjugation 
reverses the order of the factors.

$(2)$ $|\cdot|$ is, indeed, a norm on $\bH$, when it is viewed as the real vector space $\bR^4$. It also satisfies $|1|=1$, 
 $|\overline{q}|= |q|$ if $q\in \bH$ and
the remarkable identity:
\[
|pq| = |p||q| \quad \text{if $p,q \in \bH$.}
\]

$(3)$ For every $\jmath \in \bS$, denote by $\bC_\jmath$ the real subalgebra of $\bH$ generated by $\jmath$; that is, $\C_\jmath:=\{\alpha+\jmath\beta \in \bH \, | \, \alpha,\beta \in \bR\}$. Given any $q \in \bH$, we can write
\[
q=\alpha+\jmath\beta \quad \text{for some $\alpha,\beta \in \bR$ and $\jmath \in \bS$}.
\]
Evidently, $\alpha$ and $|\beta|$ are uniquely determined: $\alpha=\mr{Re}(q)$ and $|\beta|=|\mr{Im}(q)|$. If $q \in \bR$, then $\beta=0$ and hence $\jmath$ can be chosen arbitrarily in $\bS$. Thanks to the Independence Lemma (see \cite[\S 8.1]{ebb}), if $q \in \bH \setminus \bR$, then there are only two possibilities: $\beta=\pm|\mr{Im}(q)|$ and $\jmath=\beta^{-1}\mr{Im}(q)$. In other words, we have that $\bH=\bigcup_{\jmath \in \bS}\bC_\jmath$ and $\bC_\jmath \cap \bC_\kappa=\bR$ for every $\jmath,\kappa \in \bS$ with $\jmath \neq \pm\kappa$.

$(4)$ Two quaternions $p$ and $q$ are called \emph{conjugated} (to each other), if there is $s\in \bH \setminus\{0\}$ such that $p = sqs^{-1}$. The conjugacy class of $q$; that is, the set of all quaternions conjugated with $q$, is equal to the $2$--sphere $\bS_q:=\mr{Re}(q)+|\mr{Im}(q)|\bS$ of $\bH$. More explicitly, if $q=a+bi+cj+dk$, then we have that
\[
\cS_q=\{a+xi+yj+zk \in \bH \, | \, x,y,z \in \bR, \,  x^2+y^2+z^2=b^2+c^2+d^2\}.
\]
In particular, $q$ and $\overline{q}$ are always conjugated, because $\overline{q} \in \bS_q$.

It is worth stressing that the following assertions are equivalent:
\begin{itemize}
 \item $p$ and $q$ are conjugated,
 \item $\bS_p=\bS_q$,
 \item $\mr{Re}(p)=\mr{Re}(q)$ and $|\mr{Im}(p)|=|\mr{Im}(q)|$,
 \item $\mr{Re}(p)=\mr{Re}(q)$ and $|p|=|q|$.
\end{itemize}

$(5)$ Equipped with the metric topology induced by the norm $|\cdot|$, $\bH$ results to be complete. Indeed, as a normed real vector space, it is isomorphic to $\bR^4$ endowed with the standard Euclidean norm. In particular, given a sequence $\{q_n\}_{n \in \bN}$ in $\bH$, if the series $\sum_{n \in \bN}q_n$ converges absolutely; that is, $\sum_{n \in \bN} |q_n|<+\infty$, then $\sum_{n \in \bN}q_n$ converges to some element $q$ of $\bH$ and such a series can be arbitrarily re--ordered without affecting its sum $q$.
\end{remark}


\subsection{Quaternionic Hilbert spaces}
Let recall the definition of quaternionic Hilbert space (see e.g.~\cite{BDS},\cite{Ng}).
Let $\sH$ be a right $\bH$--module; that is, an abelian group with a right scalar multiplication
\[
\sH \times \bH \ni (u, q) \mapsto uq \in \sH
\]
satisfying the distributive properties with respect to the two notions of  sum:
\[
(u+v)q=uq+vq \quad \mbox{and} \quad v(p+q)=vp+vq
\quad \mbox{if $u,v \in \sH$ and $p,q\in \bH$,}
\]
and the associative property with respect to the quaternionic product:
\[
v(pq)=(vp)q \quad \mbox{if $v \in \sH$ and $p,q \in \bH$.}
\]

Such a right $\bH$--module $\sH$ is called \emph{quaternionic pre--Hilbert space} if there exists a Hermitean quaternionic scalar product; that is, a map $\sH \times \sH \ni (u,v) \mapsto \b u|v \k \in \bH$ satisfying the following three properties:
\begin{itemize}
 \item (Right linearity) $\b u| vp +  wq \k = \b u | v\k p + \b u | w\k q$ if $p,q \in \bH$ and $u,v,w \in \sH$.
 \item (Quaternionic Hermiticity) $\b u|v \k = \overline{\b v | u \k}$ if $u,v \in \sH$.
 \item (Positivity) If $u \in \sH$, then $\b u| u \k \in \bR^+$ and $u=0$ if $\b u | u \k=0$.
\end{itemize}

Suppose that $\sH$ is equipped with such a Hermitean quaternionic scalar product. Then we can define the \emph{quaternionic norm $\|\cdot\|:\sH \lra \bR^+$ of $\sH$} by setting
\beq \label{qnorm}
\|u\| := \sqrt{\langle u| u\rangle} \quad \text{if $u \in \sH$}. 
\eeq
The function $\|\cdot\|$ is a genuine norm over $\sH$, viewed as a real vector space, and the above defined scalar product $\b \cdot | \cdot \k$ fulfills the standard Cauchy--Schwarz inequality.

\begin{proposition} \label{propnorm}
The Hermitean quaternionic scalar product $\b \cdot | \cdot \k$ satisfies the Cauchy--Schwarz inequality:
\beq \label{CS}
|\b u|v \k |^2 \leq \b u | u \k \, \b v | v \k \quad \mbox{if }u,v \in \sH.
\eeq
Moreover, the map $\sH \ni u \mapsto \|u\|\in \bR^+$ defined in $(\ref{qnorm})$ has the following properties:
\begin{itemize}
 \item $\|uq\| = \|u\| \:|q|$ if $u \in \sH$ and $q\in \bH$.
 \item $\|u+v\| \leq \|u\| + \|v\|$ if $u,v\in \sH$.
 \item If $\|u\|=0$ for some $u \in \sH$, then $u=0$.
 \item If $u,v \in \sH$, then the following polarization identity holds:
\begin{align}
4 \langle u| v\rangle =& \|u+v\|^2-\|u-v\|^2+\left(\|ui+v\|^2-\|ui-v\|^2 \right)i+ \nonumber\\
\label{polarization}
&+\left(\|uj+v\|^2-\|uj-v\|^2\right)j+\left(\|uk+v\|^2-\|uk-v\|^2\right)k.
\end{align}
\end{itemize}
\end{proposition}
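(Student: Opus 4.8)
The plan is to first record the elementary consequences of the three defining axioms and then build up to the Cauchy--Schwarz inequality, from which the triangle inequality follows immediately, treating the polarization identity as a separate direct computation. The one preliminary fact I would extract at the outset is that the product is \emph{conjugate-linear in the left slot}: combining Quaternionic Hermiticity with Right linearity gives $\langle vp+wq\,|\,u\rangle=\overline p\,\langle v|u\rangle+\overline q\,\langle w|u\rangle$. I would also use repeatedly that $\langle u|u\rangle\in\bR^+$ is a real number, hence commutes with every quaternion.

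Two of the five assertions are then essentially free. Since $\langle u|u\rangle$ is real, $\|uq\|^2=\langle uq|uq\rangle=\overline q\,\langle u|u\rangle\,q=\langle u|u\rangle\,\overline q\,q=\langle u|u\rangle\,|q|^2$, so $\|uq\|=\|u\|\,|q|$; and $\|u\|=0$ forces $\langle u|u\rangle=0$, whence $u=0$ by Positivity. For Cauchy--Schwarz (\ref{CS}), if $v=0$ both sides vanish, so I would assume $\langle v|v\rangle>0$ and set $q:=\langle v|u\rangle\,\langle v|v\rangle^{-1}$. Expanding the nonnegative real quantity $0\le\langle u-vq\,|\,u-vq\rangle$ and using both linearities yields $\langle u|u\rangle-\langle u|v\rangle q-\overline q\,\langle v|u\rangle+\langle v|v\rangle\,|q|^2$; since $\langle u|v\rangle q+\overline q\,\langle v|u\rangle=2\,\mr{Re}\!\big(\langle u|v\rangle q\big)$ and the chosen $q$ makes $\langle u|v\rangle q=|\langle u|v\rangle|^2\langle v|v\rangle^{-1}$ real, the whole expression collapses to $\langle u|u\rangle-|\langle u|v\rangle|^2\langle v|v\rangle^{-1}\ge 0$, which is exactly (\ref{CS}). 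This is the only genuinely nontrivial step, and it is also where quaternionic noncommutativity could bite: the care needed is to keep every scalar on the correct (right) side of the vectors and to exploit that $\langle v|v\rangle$ is real, so that its inverse commutes freely.

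The triangle inequality then follows routinely, since $\|u+v\|^2=\|u\|^2+2\,\mr{Re}\langle u|v\rangle+\|v\|^2\le\|u\|^2+2|\langle u|v\rangle|+\|v\|^2\le(\|u\|+\|v\|)^2$ by (\ref{CS}).

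Finally, for the polarization identity (\ref{polarization}) I would expand each of the eight squared norms. For $a\in\{1,i,j,k\}$ one has $\langle ua|ua\rangle=\overline a\,\langle u|u\rangle\,a=\|u\|^2$ (as $|a|=1$), so $\|ua\pm v\|^2=\|u\|^2+\|v\|^2\pm\big(\overline a\,\langle u|v\rangle+\langle v|u\rangle a\big)$, and hence each difference $\|ua+v\|^2-\|ua-v\|^2$ equals $2\big(\overline a\,\langle u|v\rangle+\langle v|u\rangle a\big)$. Writing $\langle u|v\rangle=c_0+c_1i+c_2j+c_3k$ and using the multiplication table (\ref{prod}), a direct check shows that $\overline a\,\langle u|v\rangle+\langle v|u\rangle a$ equals twice the component of $\langle u|v\rangle$ along $a$ (for instance $-i\langle u|v\rangle+\overline{\langle u|v\rangle}\,i=2c_1$); thus the $1$-difference gives $4c_0$, while multiplying the $i$-, $j$-, $k$-differences by $i$, $j$, $k$ and summing reconstructs $4(c_0+c_1i+c_2j+c_3k)=4\langle u|v\rangle$. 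This last part is purely computational, the only thing to watch being the order of the quaternionic products.
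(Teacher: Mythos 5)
Your proof is correct and follows essentially the same route as the paper: the only cosmetic difference is that in Cauchy--Schwarz the paper expands $\langle up-vq\,|\,up-vq\rangle$ with $p=\langle v|v\rangle$, $q=\langle v|u\rangle$ (avoiding the inverse $\langle v|v\rangle^{-1}$, which your use of the reality of $\langle v|v\rangle$ handles equally well), and the paper leaves the polarization identity to ``direct inspection'' where you carry out the computation explicitly and correctly.
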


We explicitly present the proof of the above statement just to give the flavour of the procedure taking the non--commutativity of the scalars into account. The result is evident if $v=0$. Assume $v \neq 0$. Concerning (\ref{CS}), we start from the inequality
\[
0 \leq \langle up - vq| up - vq\rangle = \overline{p}\langle u|u\rangle p +
\overline{q}\langle v|v\rangle q- \overline{p}\langle u|v\rangle q 
- \overline{q}\langle v|u\rangle p.
\] 
Choosing $p=\b v|v \k$ and  $q= \b v|u \k$, we obtain:
\[
0 \leq \b v|v \k \left( \b u|u \k   \b v|v \k   - \b u|v \k  \b v|u \k  \right).
\]
Since $\b v|v \k >0$ and $\b u|v \k \b v|u \k  =\overline{\b v|u \k} \b v|u \k=|\b v|u \k|^2=|\b u|v \k|^2$, (\ref{CS}) follows immediately. Let us pass to the norm properties. With regards homogeneity, exploiting the fact that $\langle u| u \rangle$ is real and thus it commutes with all quaternions, we obtain: $\|uq\|^2 = \b uq | uq \k = \overline{q}\b u| u \k q = \overline{q}q \b u | u \k=|q|^2 \|u\|^2$. Triangular inequality follows from (\ref{CS}). Indeed, bearing in mind that $|\mr{Re}(q)| \leq |q|$ for every $q \in \bH$, it holds:
\begin{align*}
\|u+v\|^2 =& \b u+v| u+v \k=\|u\|^2+\|v\|^2+2 \, \mr{Re}(\b u | v \k) \leq \\
\leq & \|u\|^2 + \|v\|^2 + 2 |\langle  u|v\rangle|
\leq \\
\leq & \|u\|^2 + \|v\|^2 + 2 \| u\| \|v\|=(\|u\|+\|v\|)^2.
\end{align*}
The polarization identity can be proved by direct inspection.

From the positivity and the triangular inequality, 
it follows that:
\beq \label{dist}
d(u,v):=\|u-v\| \quad \text{if } u,v \in \sH
\eeq
defines a (translationally invariant) distance. One easily verifies that, in view of the given definitions, the operations of sum of vectors and right multiplication of vectors and quaternions are continuous with respect to the topology induced by $d$. Similarly, the scalar product is jointly continuous as a map from $\sH \times \sH$ to $\bH$.

As soon as we are equipped with a metric topology, it makes sense to state the following definition.
\begin{definition}
The quaternionic pre--Hilbert space $\sH$ is said to be a \emph{quaternionic Hilbert space} if it is complete with respect to its natural distance $d$.
\end{definition}

\textit{In what follows, given a quaternionic Hilbert space $\sH$, we will implicitly assume that $\sH \neq \{0\}$}.

A function $f:\sH \to \bH$ is \emph{right $\bH$--linear} if $f(u+v)=f(u)+f(v)$ and $f(uq)=f(u)q$ for every $u,v \in \bH$ and $q \in \bH$. Let $\sH'$ be the \emph{topological dual space of $\sH$} consisting of all continuous right $\bH$--linear functions on $\sH$. Equip $\sH'$ with its natural structure of left $\bH$--module induced by the left multiplication by $\bH$: if $f \in \sH'$ and $q \in \bH$, then $qf$ is the element of $\sH'$ sending $u \in \sH$ into $qf(u) \in \bH$.

The following result is an immediate consequence of quaternionic Hahn--Banach theorem (see \cite[\S 2.10]{BDS} and \cite[\S 4.10]{libroverde}).

\begin{lemma} \label{lem:Hahn-Banach}
Let $\sH$ be a quaternionic Hilbert space and let $u \in \sH$. If $f(u)=0$ for every $f \in \sH'$, then $u=0$. 
\end{lemma}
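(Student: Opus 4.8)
The plan is to prove the contrapositive: assuming $u \neq 0$, I will exhibit a single $f \in \sH'$ with $f(u) \neq 0$. In a Hilbert (rather than merely Banach) setting the most economical route is to bypass the full strength of Hahn--Banach and instead use the Hermitean scalar product itself to manufacture the separating functional. Concretely, I would set $f(v) := \b u | v \k$ for $v \in \sH$ and check that this $f$ already does the job.

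First I would verify that $f$ is right $\bH$-linear: additivity and the identity $f(vq)=f(v)q$ are exactly the right--linearity axiom $\b u | vp+wq \k = \b u|v\k p + \b u|w\k q$ recorded in the definition of the scalar product, so this is immediate. Next, continuity of $f$ follows from the Cauchy--Schwarz inequality $(\ref{CS})$ of Proposition~\ref{propnorm}: since $|f(v)| = |\b u|v\k| \leq \|u\|\,\|v\|$, the functional $f$ is bounded (with $\|f\| \leq \|u\|$), hence continuous, so $f \in \sH'$. Finally, evaluating at $u$ gives $f(u) = \b u|u\k = \|u\|^2$, which is strictly positive whenever $u \neq 0$ by the positivity axiom (an element of $\bR^+$ that vanishes only for $u=0$). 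This contradicts the hypothesis that $f(u)=0$ for every $f \in \sH'$, and the lemma follows.

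Alternatively, as the statement of the lemma suggests, one may invoke the quaternionic Hahn--Banach theorem directly. On the one--dimensional right submodule $u\bH$ the assignment $uq \mapsto \|u\|\,q$ is well defined (because $u \neq 0$ forces $uq=0 \Leftrightarrow q=0$, by homogeneity $\|uq\|=\|u\|\,|q|$) and is a norm--one right $\bH$-linear functional; Hahn--Banach then extends it to some $f \in \sH'$ with $f(u)=\|u\| \neq 0$. I expect no serious obstacle in either route. The only points demanding care are that one must use the \emph{right} (not left) linearity of $\b\cdot|\cdot\k$ to guarantee that the constructed functional genuinely lies in $\sH'$, and that the positivity axiom be applied in the correct direction to conclude $\|u\|^2 \neq 0$ for $u \neq 0$; both are supplied by the axioms already in place, which is precisely why the statement is immediate.
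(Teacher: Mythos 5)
Your proof is correct, and your first route is genuinely different from the paper's. The paper does not write out an argument at all: it simply declares the lemma ``an immediate consequence of the quaternionic Hahn--Banach theorem'' and cites \cite{BDS} and \cite{libroverde} --- which is exactly your second, alternative route (extending the norm--one functional $uq \mapsto \|u\|q$ from the right submodule $u\bH$). Your primary argument instead exploits the Hilbert structure directly: the functional $f(v):=\b u|v\k$ is right $\bH$--linear by the axioms of the Hermitean scalar product, bounded by the Cauchy--Schwarz inequality (\ref{CS}) already established in Proposition~\ref{propnorm}, and satisfies $f(u)=\|u\|^2\neq 0$ for $u\neq 0$. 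This is self--contained, avoids any appeal to Hahn--Banach (whose quaternionic version is itself nontrivial to set up for right $\bH$--linear functionals on right modules), and uses only material appearing before the lemma in the paper; it is essentially a one--vector instance of the quaternionic Riesz representation theorem (Theorem~\ref{quat_Riesz}), which the paper only states afterwards. What the paper's route buys is generality --- the same statement and proof work in any quaternionic Banach module, not just a Hilbert space --- but in the Hilbert setting your inner--product construction is the more economical and more elementary of the two. Both of your verifications (right linearity of $\b u|\cdot\k$, positivity giving $\|u\|^2>0$) are exactly the points that need checking, and both are handled correctly.
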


It is possible to re--cast the definition of Hilbert basis even in the quaternionic--Hilbert--space context. Given a subset $A$ of $\sH$, we define: 
\[
A^\perp:=\{v \in \sH \, | \, \b v | u \k =0 \ \forall u \in A\}.
\]
Moreover, $<A>$ denotes the right $\bH$--linear subspace of $\sH$ consisting of all finite right $\bH$--linear combinations of elements of $A$.

Let $I$ be a non--empty set and let $I \ni i \mapsto a_i \in \bR^+$ be a function on $I$. As usual, we can define $\sum_{i \in I}a_i$ as the following element of $\R^+ \cup \{+\infty\}$:
\[
\sum_{i \in I} a_i:=\sup\left\{\sum_{i \in J} a_i \, \bigg| \, J \mbox{ is a non--empty finite subset of }I\right\}.
\]
It is clear that, if $\sum_{i \in I} a_i < +\infty$, then the set of all $i \in I$ such that $a_i \neq 0$ is at most countable.

Given a quaternionic Hilbert space $\sH$ and a map $I \ni i \mapsto u_i \in \sH$, one can say that the series $\sum_{i \in I}u_i$ converges absolutely if $\sum_{i \in I}\|u_i\|<+\infty$. If this happens, then only a finite or countable number of $u_i$ is nonzero and the series $\sum_{i \in I}u_i$ converges to a unique element of $\sH$, independently from the ordering of the $u_i$'s. 

The next three results can be established following the proofs of their corresponding complex versions (see e.g.~\cite{Moretti,RudinARC}).

\begin{proposition} \label{propoHB}
Let $\sH$ be a quaternionic Hilbert space and let 
$N$ be a subset of $\sH$ such that, for $z,z' \in N$, $\b z|z' \k=0$ if $z \neq z'$ and
$\b z| z \k=1$. Then conditions $(\mr{a})$--$(\mr{e})$ listed below are pairwise equivalent.
\begin{itemize}
\item[$(\mr{a})$] For every $u,v \in \sH$, the series $\sum_{z \in N} \b u|z \k \b z| v \k$ converges absolutely and it holds:
\[
\b u|v \k=\sum_{z \in N} \b u|z \k \b z| v \k.
\]

\item[$(\mr{b})$] For every $u \in \sH$, it holds:
\[
\|u\|^2=\sum_{z \in N} |\b z| u \k|^2.
\]

\item[$(\mr{c})$] $N^\perp = \{0\}$.

\item[$(\mr{d})$] $<N>$ is dense in $\sH$.
\end{itemize}
\end{proposition}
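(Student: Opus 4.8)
The plan is to reduce all the listed conditions to a single hub, the \emph{Fourier expansion} $u=\sum_{z\in N}z\,\langle z|u\rangle$, which I will show is available for every $u\in\sH$ and which is the natural fifth condition $(\mr e)$. Everything rests on a preliminary Bessel-type estimate together with the convergence of the Fourier series; once these are in hand, each of $(\mr a)$--$(\mr d)$ turns into an elementary reformulation of the assertion ``$u$ equals its Fourier series'', and no appeal to the general orthogonal-projection theorem is needed.

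First I would fix $u\in\sH$ and, for a finite subset $J\subseteq N$, set $s_J(u):=\sum_{z\in J}z\,\langle z|u\rangle$. Expanding $\langle u-s_J(u)\,|\,u-s_J(u)\rangle\ge 0$ and using orthonormality together with the rule $\langle zp|w\rangle=\overline p\,\langle z|w\rangle$ forced by Hermiticity, a direct computation gives the finite Bessel identity
\[
\|u-s_J(u)\|^2=\|u\|^2-\sum_{z\in J}|\langle z|u\rangle|^2 .
\]
Hence $\sum_{z\in N}|\langle z|u\rangle|^2\le\|u\|^2<+\infty$, so at most countably many coefficients are nonzero and, for finite $J\subseteq J'$, one has $\|s_{J'}(u)-s_J(u)\|^2=\sum_{z\in J'\setminus J}|\langle z|u\rangle|^2$. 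The net of finite partial sums is therefore Cauchy; by completeness of $\sH$ the Fourier series converges to a vector $s(u)\in\overline{<N>}$, independently of the ordering, in accordance with the unordered-summation framework introduced earlier.

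The second preliminary is that $u-s(u)$ is orthogonal to every $z_0\in N$. Indeed, joint continuity of the scalar product lets me pass $\langle z_0|\cdot\rangle$ through the series, and orthonormality collapses the sum to the single term $z=z_0$, giving $\langle z_0|s(u)\rangle=\langle z_0|u\rangle$, whence $\langle z_0|u-s(u)\rangle=0$ and so $u-s(u)\in N^\perp$. Pythagoras then yields $\|u\|^2=\|s(u)\|^2+\|u-s(u)\|^2$. With these two facts the equivalences are immediate. Condition $(\mr b)$ says $\|u\|^2=\|s(u)\|^2$, which by the Pythagorean identity is the same as $u=s(u)$, i.e.\ $(\mr e)$. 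Condition $(\mr a)$, rewritten via continuity as $\langle u|v\rangle=\langle u|s(v)\rangle$ for all $u,v$, is equivalent to $v-s(v)=0$ for all $v$ (test with $u=v-s(v)$), again $(\mr e)$. Since $u-s(u)\in N^\perp$ always, condition $(\mr c)$ forces $u=s(u)$, while conversely $(\mr e)$ makes every vector of $N^\perp$ vanish, so $(\mr c)\Leftrightarrow(\mr e)$. Finally $(\mr e)$ gives $u=s(u)\in\overline{<N>}$ for all $u$, which is $(\mr d)$; and if $<N>$ is dense then right-linearity and continuity extend $\langle u|z\rangle=0$ $(\forall z\in N)$ to $\langle u|u\rangle=0$, so $(\mr d)\Rightarrow(\mr c)$. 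This closes the circle.

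The main obstacle lies entirely in the preliminaries, and it is a genuinely quaternionic one: because Bessel delivers only $\ell^2$-summability of the coefficients, the Fourier series is \emph{not} absolutely convergent in the sense defined earlier in the paper (the quantity $\sum_{z}\|z\langle z|u\rangle\|=\sum_{z}|\langle z|u\rangle|$ need not be finite), so that notion cannot be invoked and convergence must instead be argued through the Cauchy criterion for the net of finite partial sums. Beyond this point, the only care required is the bookkeeping of the non-commutative scalars---keeping each Fourier coefficient $\langle z|u\rangle$ on the \emph{right} of its vector and using $\langle zp|w\rangle=\overline p\,\langle z|w\rangle$ consistently---so that the finite Bessel identity and the orthogonality computations go through exactly as in the complex case.
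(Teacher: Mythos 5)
Your proof is correct and follows essentially the route the paper itself points to (the standard complex-case argument via the finite Bessel identity, convergence of the Fourier series $s(u)=\sum_{z\in N}z\langle z|u\rangle$, and the orthogonality of $u-s(u)$ to $N$), with the non-commutative bookkeeping and the correct observation that one must use the Cauchy criterion for the net of partial sums rather than absolute convergence. The only step left implicit is that the \emph{absolute} convergence asserted in $(\mr{a})$ follows from your Bessel bound together with the Cauchy--Schwarz inequality in $\ell^2$, namely $\sum_{z}|\langle u|z\rangle\langle z|v\rangle|\leq\|u\|\,\|v\|$.
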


\begin{proposition}
Every quaternionic Hilbert space $\sH$ admits a subset $N$, called \emph{Hilbert basis of $\sH$}, such that, for $z,z' \in N$, $\b z|z' \k=0$ if $z \neq z'$ and $\b z | z \k=1$, and $N$ satisfies  equivalent conditions $(\mr{a})$--$(\mr{d})$ stated in the preceding proposition. Two such sets have the same cardinality. 

Furthermore, if $N$ is a Hilbert basis of $\sH$, then every $u \in \sH$ can be uniquely decomposed as follows:
\[
u=\sum_{z\in N} z \b z| u \k,
\]
where the series $\sum_{z\in N} z \b z| u \k$ converges absolutely in $\sH$.
\end{proposition}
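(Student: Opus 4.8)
The plan is to reproduce the three classical arguments of the complex theory, watching only that scalars now multiply vectors on the right and that $\b z|z\k$ is real. For \emph{existence}, I would partially order by inclusion the family $\mathcal{O}$ of all orthonormal subsets of $\sH$ (those $A$ with $\b z|z'\k=0$ for $z\neq z'$ and $\b z|z\k=1$). The union of any chain in $\mathcal{O}$ is again orthonormal, so Zorn's Lemma yields a maximal element $N$. To check that $N$ meets condition $(\mr{c})$ of Proposition \ref{propoHB}, I would argue by contradiction: if some $v\in N^\perp$ is nonzero, then $w:=v\|v\|^{-1}$ satisfies $\b w|w\k=1$ (the real scalar $\|v\|^{-1}$ commutes with everything and is self--conjugate) and still lies in $N^\perp$, so $N\cup\{w\}$ is a strictly larger orthonormal set, contradicting maximality. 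Hence $N^\perp=\{0\}$, and Proposition \ref{propoHB} upgrades this to all of $(\mr{a})$--$(\mr{d})$.

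For the \emph{decomposition}, fix $u\in\sH$. Parseval's identity $(\mr{b})$ gives $\sum_{z\in N}|\b z|u\k|^2=\|u\|^2<+\infty$, so at most countably many Fourier coefficients $c_z:=\b z|u\k$ are nonzero; I would enumerate the corresponding vectors as $z_1,z_2,\dots$ and form the partial sums $s_n:=\sum_{k=1}^n z_k c_{z_k}$. Orthonormality makes the cross terms vanish, since $\b z_k c_{z_k}|z_l c_{z_l}\k=\overline{c_{z_k}}\b z_k|z_l\k c_{z_l}=0$ for $k\neq l$, so $\|s_n-s_m\|^2=\sum_{k=m+1}^n|c_{z_k}|^2$ for $m<n$. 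As $\sum_k|c_{z_k}|^2$ converges, $(s_n)$ is Cauchy and converges to some $s\in\sH$ by completeness; joint continuity of the scalar product then gives $\b z|s\k=\b z|u\k$ for every $z\in N$, whence $u-s\in N^\perp=\{0\}$ and $u=\sum_{z\in N}z\b z|u\k$, the same $\ell^2$ estimate showing the value to be independent of the chosen enumeration. Uniqueness is immediate: for any competing expansion $u=\sum_{z\in N}z\,d_z$, applying $\b z_0|\cdot\k$ and using continuity and orthonormality forces $d_{z_0}=\b z_0|u\k$.

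For \emph{cardinality}, let $N$ and $M$ be two Hilbert bases. If either is finite, then the (finite--dimensional, hence closed) right $\bH$--linear span of that set is dense and therefore equals $\sH$, so $\sH$ is a finitely generated free right $\bH$--module and the claim reduces to the invariance of dimension over the division algebra $\bH$. If both are infinite, then for each $z\in N$ Parseval gives $\sum_{w\in M}|\b w|z\k|^2=1$, so $M_z:=\{w\in M\mid\b w|z\k\neq0\}$ is countable; moreover every $w\in M$ belongs to some $M_z$, for otherwise $w\in N^\perp=\{0\}$, impossible since $\|w\|=1$. Hence $M=\bigcup_{z\in N}M_z$ and $|M|\leq\aleph_0\cdot|N|=|N|$; by symmetry $|N|\leq|M|$, and Cantor--Schr\"oder--Bernstein gives equality. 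The step I expect to demand the most care is precisely this infinite cardinality comparison: the counting itself is immune to noncommutativity (it only records which scalar products vanish), but it must be preceded by a separate treatment of the finite--dimensional case, and it rests entirely on Parseval's identity $(\mr{b})$, whose own proof hinges on the real--valuedness of $\b z|z\k$.
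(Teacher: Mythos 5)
Your proof is correct and follows precisely the classical complex--Hilbert--space arguments that the paper itself invokes without reproducing (it only refers the reader to the complex versions): Zorn's lemma plus maximality of an orthonormal set to get $N^\perp=\{0\}$ and hence conditions $(\mr{a})$--$(\mr{d})$, Parseval plus completeness for the expansion, and the countable--support counting argument (with the finite--dimensional case handled separately by invariance of rank over the division ring $\bH$) for the cardinality claim. The only point worth flagging is that what you actually establish for the expansion is \emph{unconditional} convergence via the $\ell^2$ tail estimate, not absolute convergence in the paper's declared sense $\sum_{z\in N}\|z\langle z|u\rangle\|=\sum_{z\in N}|\langle z|u\rangle|<+\infty$ --- and rightly so, since the latter fails already for coefficients $1/n$ in an infinite--dimensional space; your version is the correct one, and the paper's wording here is the usual slight abuse of terminology.
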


\begin{theorem}\label{teoremaperp}
If $\sH$ is a quaternionic Hilbert space and $\emptyset \neq A\subset \sH$, then it holds:
\[A^{\perp} = <A>^\perp = \overline{<A>}^\perp = \overline{<A>^\perp}\:, \quad
\overline{\langle A\rangle} = (A^\perp)^\perp \ \text{ and }\ 
A^\perp \oplus <A> = \sH\:,\]
where the bar denotes the topological closure and  the symbol $\oplus$ denotes the ortho\-gonal direct sum.
\end{theorem}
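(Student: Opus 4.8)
The plan is to follow the classical Hilbert-space argument, the only subtlety being the bookkeeping forced by the non-commutativity of $\bH$. First I would record that for any subset $A$ the set $A^\perp$ is a closed right $\bH$-submodule of $\sH$: closure under addition and right scalar multiplication is immediate from right linearity of $\langle \cdot | \cdot \rangle$, while closedness follows from the joint continuity of the scalar product noted after $(\ref{dist})$.

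Next I would establish the chain $A^\perp = <A>^\perp = \overline{<A>}^\perp = \overline{<A>^\perp}$. The inclusions $\overline{<A>}^\perp \subseteq <A>^\perp \subseteq A^\perp$ come from $A \subseteq <A> \subseteq \overline{<A>}$ by monotonicity of the $\perp$ operation. For the reverse inclusions: a vector orthogonal to $A$ is orthogonal to every finite right $\bH$-linear combination of elements of $A$ by right linearity, giving $A^\perp \subseteq <A>^\perp$; and a vector orthogonal to $<A>$ is orthogonal to its closure by continuity of $\langle v| \cdot\rangle$, giving $<A>^\perp \subseteq \overline{<A>}^\perp$. The last equality is then immediate, since every orthogonal complement is closed and hence $\overline{<A>^\perp} = <A>^\perp$.

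The crux is the orthogonal decomposition with respect to the closed subspace $M := \overline{<A>}$. The key point is that the parallelogram identity $\|x+y\|^2 + \|x-y\|^2 = 2\|x\|^2 + 2\|y\|^2$ survives verbatim, because expanding $\langle x \pm y | x \pm y\rangle$ only produces the real cross term $2\mr{Re}\langle x|y\rangle$, which cancels. Hence, given $u \in \sH$, a minimizing sequence for $\mathrm{dist}(u,M)$ is Cauchy by the parallelogram law and, by completeness of $\sH$ and closedness of $M$, converges to a point $m \in M$ realizing the distance. Writing $n := u - m$, I would show $n \in M^\perp$: for every $w \in M$ and every $q \in \bH$ the inequality $\|n - wq\|^2 \geq \|n\|^2$ expands, using right linearity and the fact that $\langle w|w\rangle$ is real, to $-2\mr{Re}(\langle n|w\rangle q) + |q|^2\|w\|^2 \geq 0$; specializing $q = t\,\overline{\langle n|w\rangle}$ and letting $t \to 0^+$ forces $\langle n|w\rangle = 0$. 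Uniqueness of the splitting $u = m + n$ follows from $M \cap M^\perp = \{0\}$, itself a consequence of the positivity axiom $\langle x|x\rangle = 0 \Rightarrow x = 0$. Combined with $M^\perp = \overline{<A>}^\perp = A^\perp$ from the previous step, this yields the orthogonal direct sum $\sH = \overline{<A>} \oplus A^\perp$. I expect this minimizing-vector step to be the main obstacle, as it is the only place where completeness and the genuine geometry of $\sH$ enter, everything else being formal.

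Finally I would deduce $(A^\perp)^\perp = \overline{<A>}$ from the decomposition. The inclusion $\overline{<A>} \subseteq (A^\perp)^\perp$ follows from $A^\perp = \overline{<A>}^\perp$ together with quaternionic Hermiticity $\langle u|v\rangle = \overline{\langle v|u\rangle}$. Conversely, for $w \in (A^\perp)^\perp$ I would split $w = m + n$ with $m \in \overline{<A>}$ and $n \in A^\perp$; since $m \in (A^\perp)^\perp$ by the inclusion just proved, $n = w - m$ lies in $A^\perp \cap (A^\perp)^\perp = \{0\}$, whence $w = m \in \overline{<A>}$.
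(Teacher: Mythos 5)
Your proposal is correct and is exactly the argument the paper has in mind: the paper gives no details for Theorem \ref{teoremaperp}, stating only that it follows the classical complex Hilbert space proof, and you carry out precisely that proof (projection onto a closed submodule via the parallelogram law and a minimizing sequence, with the quaternionic bookkeeping handled by Hermiticity and the fact that $\langle w|w\rangle$ is real). Note only that you prove the decomposition in the form $\sH = \overline{\langle A\rangle} \oplus A^\perp$, which is the correct reading of the last displayed identity.
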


Taking the previously introduced results into account, the representation Riesz' theorem extends to the quaternionic case. This result can be proved as in the complex case (see e.g.~\cite{RudinFA}).

\begin{theorem}[Quaternionic representation Riesz' theorem]\label{quat_Riesz}
If $\sH$ is a quaternionic Hilbert space, the map
\[
\sH \ni v \mapsto \langle v\, |\, \cdot\, \rangle \in \sH'
\]
is well--posed and defines a conjugate--$\bH$--linear isomorphism.
\end{theorem}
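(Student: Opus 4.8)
The plan is to establish the four assertions packaged into the statement separately: that $\Phi\colon v \mapsto \langle v\mid\cdot\rangle$ is well defined into $\sH'$, that it is conjugate-$\bH$-linear, that it is injective, and that it is surjective; the first three are routine and the last is the crux. For well-posedness, I would fix $v \in \sH$ and note that $\phi_v := \langle v\mid\cdot\rangle$ is right $\bH$-linear directly by right linearity of the scalar product in its second argument, and continuous by the Cauchy--Schwarz inequality $(\ref{CS})$, since $|\phi_v(u)| \leq \|v\|\,\|u\|$; hence $\phi_v \in \sH'$. For conjugate-$\bH$-linearity, additivity of $\Phi$ follows from Hermiticity together with additivity of the product, while for a scalar $q$ I would compute, using Hermiticity, $\langle vq\mid u\rangle = \overline{\langle u\mid vq\rangle} = \overline{\langle u\mid v\rangle q} = \overline{q}\,\overline{\langle u\mid v\rangle} = \overline{q}\,\langle v\mid u\rangle$, so that $\phi_{vq} = \overline{q}\,\phi_v$ in the left $\bH$-module $\sH'$; thus $\Phi$ intertwines right multiplication on $\sH$ with left multiplication by the conjugate on $\sH'$, which is precisely conjugate-$\bH$-linearity. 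Injectivity is immediate: if $\phi_v = 0$ then $\langle v\mid v\rangle = \phi_v(v) = 0$, so $v = 0$ by positivity.

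The heart of the argument is surjectivity, which I would prove by mimicking the classical complex Riesz representation. Given $f \in \sH'$, if $f = 0$ take $v = 0$; otherwise $M := \ker f$ is a closed right $\bH$-submodule (as $f$ is continuous and right $\bH$-linear), so Theorem~\ref{teoremaperp} supplies the orthogonal decomposition $\sH = M \oplus M^\perp$ with $M^\perp \neq \{0\}$, for otherwise $\overline{M} = \sH$ would contradict $f \neq 0$. I would then pick $z_0 \in M^\perp$ with $\|z_0\| = 1$; since $z_0 \notin M$, the scalar $c := f(z_0)$ is nonzero. For arbitrary $u \in \sH$, the vector $w := u - z_0\,(c^{-1}f(u))$ satisfies $f(w) = f(u) - c\,(c^{-1}f(u)) = 0$, hence $w \in M$ and $\langle z_0\mid w\rangle = 0$. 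Expanding this using right linearity and $\langle z_0\mid z_0\rangle = 1$ gives $\langle z_0\mid u\rangle = c^{-1}f(u)$, that is $f(u) = c\,\langle z_0\mid u\rangle = \langle z_0\,\overline{c}\mid u\rangle$. Therefore $f = \phi_v$ with $v := z_0\,\overline{f(z_0)}$, completing surjectivity and the proof.

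The main obstacle throughout the surjectivity step is bookkeeping of the side on which quaternionic scalars act, since $\bH$ is non-commutative. Two points require care: the auxiliary vector $w$ must carry $z_0$ multiplied \emph{on the right} by $c^{-1}f(u)$, so that right $\bH$-linearity of $f$ produces the cancellation $c\,(c^{-1}f(u)) = f(u)$; and the leftover factor $c = f(z_0)$ must be absorbed into the first slot of the scalar product through conjugation, using the first-slot identity $\langle z_0\,\overline{c}\mid u\rangle = c\,\langle z_0\mid u\rangle$ derived above. Once these conventions are tracked consistently, no genuinely new difficulty beyond the complex case arises, and the orthogonal splitting provided by Theorem~\ref{teoremaperp} does all the structural work.
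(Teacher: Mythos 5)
Your proof is correct and is exactly the argument the paper has in mind: the paper gives no explicit proof, stating only that the result ``can be proved as in the complex case,'' and your adaptation of the classical kernel-splitting Riesz argument via Theorem~\ref{teoremaperp}, with the quaternionic scalars carefully placed on the correct side, is precisely that adaptation. All steps check out, including the identification of conjugate-$\bH$-linearity with the intertwining $\phi_{vq}=\overline{q}\,\phi_v$ for the left $\bH$-module structure on $\sH'$.
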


\subsection{Operators} \label{subsec:operators}
 
First of all, we present the general definition of what we mean by a right $\bH$--linear operator.
 
\begin{definition}
Let $\sH$ be a quaternionic Hilbert space.  A \emph{right $\bH$--linear operator} is a map $T: D(T) \lra \sH$ such that:
\[
T(ua+vb)=(Tu)a+(Tv)b \quad \mbox{if }u,v \in D(T) \mbox{ and }a,b \in \bH,
\]
where the \emph{domain} $D(T)$ of $T$ is a (not necessarily closed) right $\bH$--linear subspace of $\sH$. We define the \emph{range $\Ran(T)$ of $T$} by setting $\Ran(T):=\{Tu \in \sH \, | \, u \in D(T)\}$.
\end{definition}
 
\emph{In what follows, by the term ``operator'', we mean a ``right $\bH$--linear operator''. Similarly, by a ``subspace'', we mean a ``right $\bH$--linear subspace''.}

Let $T: D(T) \lra \sH$ and $S: D(S) \lra \sH$ be operators. As usual, we write $T\subset S$ if $D(T) \subset D(S)$ and $S \rr_{D(T)}=T$. In this case, $S$ is said to be an extension of $T$. We define the \emph{natural domains} of the sum $T+S$ and of the composition $TS$ by setting $D(T+S):= D(T) \cap D(S)$ and $D(TS):=\{x \in D(S) \, | \, Sx \in D(T)\}$. Here we use the symbols $TS$ and $Sx$ in place of $T \circ S$ and $S(x)$, respectively.

The operator $T$ is said to be \emph{closed} if the graph $\cG(T):=D(T) \oplus \Ran(T)$ of $T$ is closed in $\sH \times \sH$, equipped with the product topology. Finally, $T$ is called \emph{closable} if it admits closed operator extensions. In this case, the \emph{closure $\overline{T}$ of $T$} is the smallest closed extension.
 
We have the following elementary, though pivotal, result that permits the introduction of  the notion of bounded operator. The proof is the same as for complex Hilbert spaces (see e.g.~\cite{Moretti,RudinARC}).
 
\begin{theorem}
If $\sH$ is a quaternionic Hilbert space, then an operator $T: D(T) \lra \sH$ is continuous if and only if is bounded; that is, there exists $K \geq 0$ such that
\[
\|Tu\| \leq K \|u\| \quad \mbox{if } u \in D(T).
\]
Furthermore, a bounded operator is closed.
\end{theorem}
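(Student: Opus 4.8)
The plan is to prove the two directions of the equivalence separately and then deduce closedness, following the classical complex-space argument while keeping track of where the non-commutativity of $\bH$ could interfere.

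First I would treat the easy implication, boundedness $\Rightarrow$ continuity. If $\|Tu\| \le K\|u\|$ for all $u \in D(T)$, then right $\bH$-linearity gives $Tu - Tv = T(u-v)$ for $u,v \in D(T)$ (take the scalars $1$ and $-1$), whence $\|Tu - Tv\| = \|T(u-v)\| \le K\|u-v\|$. Thus $T$ is Lipschitz with respect to the distance $d$ of $(\ref{dist})$, hence (uniformly) continuous.

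For continuity $\Rightarrow$ boundedness I would use continuity at the origin together with the homogeneity of the norm. Since $T0 = 0$, continuity at $0$ yields $\delta > 0$ such that $\|Tw\| \le 1$ whenever $w \in D(T)$ and $\|w\| \le \delta$. Given an arbitrary nonzero $u \in D(T)$, set $w := u(\delta\|u\|^{-1})$. Because $D(T)$ is a subspace and $\delta\|u\|^{-1} \in \bR \subset \bH$, we have $w \in D(T)$, and by the homogeneity $\|wq\| = \|w\|\,|q|$ of Proposition~\ref{propnorm} we get $\|w\| = \delta$. Right $\bH$-linearity gives $Tw = (Tu)(\delta\|u\|^{-1})$, so, using homogeneity once more, $\|Tu\|\,\delta\|u\|^{-1} = \|Tw\| \le 1$, i.e.\ $\|Tu\| \le \delta^{-1}\|u\|$. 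Hence $K := \delta^{-1}$ works (the estimate being trivial for $u = 0$).

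Finally, for the closedness of a bounded operator I would argue on the graph $\cG(T)$. Let $\{(u_n, Tu_n)\}$ be a sequence in $\cG(T)$ converging in $\sH \times \sH$ to $(u,v)$, so that $u_n \to u$ and $Tu_n \to v$. Provided $D(T)$ is closed --- in particular when $D(T) = \sH$, which is the relevant case for the everywhere-defined bounded operators used later --- the limit $u$ lies in $D(T)$; continuity then gives $Tu_n \to Tu$, and uniqueness of limits in the metric space $\sH$ forces $v = Tu$. Thus $(u,v) = (u, Tu) \in \cG(T)$, and the graph is closed. The whole argument is a transcription of the complex case, so there is no deep obstacle; the only point demanding care is to check that non-commutativity does no harm, and it does not, because the single rescaling in the second step multiplies on the \emph{right} by the \emph{real} scalar $\delta\|u\|^{-1}$, for which $\|wq\| = \|w\|\,|q|$ holds unproblematically. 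The other subtlety worth flagging is the closedness claim itself: for an operator with non-closed (e.g.\ merely dense) domain it would fail, so one genuinely uses that $D(T)$ is closed.
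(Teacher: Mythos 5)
Your proof is correct and is precisely the classical complex-Hilbert-space argument that the paper invokes without writing out (it only refers the reader to \cite{Moretti,RudinARC}); the key point you rightly isolate is that the single rescaling uses a real scalar acting on the right, so non-commutativity never enters. Your caveat about the closedness assertion is also well taken: as literally stated it requires $D(T)$ to be closed, and the paper implicitly intends operators defined on all of $\sH$, as in its definition of $\gB(\sH)$.
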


As in the complex case, if $T: D(T) \lra \sH$ is any operator, one define $\|T\|$ by setting
\beq \label{QN}
\|T\|:=\sup_{u \in D(T) \setminus \{0\}} \frac{\|Tu\|}{\|u\|}=\inf\{K \in \bR \:|\: \|Tu\| \leq K \|u\| \ \forall u \in D(T)\}.
\eeq
Denote by $\gB(\sH)$ the set of all bounded (right $\bH$--linear) operators of $\sH$: 
\[
\gB(\sH):=\{T:\sH \lra \sH \, \text{ operator} \, | \, \|T\|<+\infty\}.
\]
It is immediate to verify that, if $T$ and $S$ are operators in $\gB(\sH)$, then the same is true for $T+S$ and $TS$, and it holds:
\beq \label{eq:sum-prod}
\|T+S\| \leq \|T\|+\|S\| \quad \mbox{and} \quad \|TS\| \leq \|T\| \, \|S\|.
\eeq

The reader observes that $\gB(\sH)$ has a natural structure of real algebra, in which the sum is the usual pointwise sum, the product is the composition and the real scalar multiplication $\gB(\sH) \times \bR \ni (T,r) \mapsto Tr \in \gB(\sH)$ is defined by setting
\beq \label{eq:rT}
(Tr)(u):=T(u)r.
\eeq
In Section \ref{subsec:two-sided}, we will extend this real algebra structure to a quaternionic two--sided Banach unital $C^*$-algebra structure. The reader observes that definition (\ref{eq:rT}) can be repeated for every operator $T:D(T) \lra \sH$.

It is worth introducing here a notion, which will be useful later. As usual, we denote by $\bR[X]$ the ring of real polynomials in the indeterminate $X$. For conve\-nience, we write the polynomials in $\bR[X]$ \textit{with coefficients on the right}. Given $P(X)=\sum_{h=0}^dX^hr_h$ in $\bR[X]$, we define the operator $P(T) \in \gB(\sH)$ as follows:
\beq \label{eq:P(T)}
P(T):=\sum_{h=0}^dT^hr_h,
\eeq 
where $T^0$ is considered to be equal to the identity operator $\1:\sH \lra \sH$ of $\sH$.

The norm of $\gB(\sH)$ allows us to define a metric $D:\gB(\sH)\times\gB(\sH) \lra \R^+$ on $\gB(\sH)$ as follows:
\beq \label{distance}
D(T,T'):=\|T-T'\| \quad \text{if $T,T' \in \gB(\sH)$}.
\eeq

\begin{proposition} \label{propcompleteness}
Let $\sH$ be a quaternionic Hilbert space. Equip $\gB(\sH)$ with the metric $D$. The following assertions hold:
\begin{itemize}
\item[$(\mr{a})$] $\gB(\sH)$ is a complete metric space. In particular, it is a Baire space.
\item[$(\mr{b})$]
As maps $\gB(\sH) \times \gB(\sH) \mapsto \gB(\sH)$, the sum and the composition of opera\-tors are continuous. The same is true for the real scalar multiplication.
\item[$(\mr{c})$] The subset of $\gB(\sH)$ consisting of elements admitting two--sided inverse in $\gB(\sH)$ is open in $\gB(\sH)$. 
\item[$(\mr{d})$]  The \emph{uniform boundedness principle} holds: Given any subset $F$ of $\gB(\sH)$, if $\sup_{T \in F} |Tu|<+\infty$ for every $u \in \sH$, then $\sup_{T \in F}\|T\|<+\infty$.
\item[$(\mr{e})$] The \emph{open map theorem} holds: If $T\in \gB(\sH)$ is surjective, then $T$ is open. In particular, if $T$ is bijective, then $T^{-1} \in \gB(\sH)$.
\item[$(\mr{f})$] The \emph{closed graph theorem} holds: If $T:\sH \lra \sH$ is closed, then $T \in \gB(\sH)$.
\end{itemize}
\end{proposition}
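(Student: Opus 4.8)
The plan is to read all six assertions as quaternionic analogues of standard Banach--space facts, exploiting a single unifying observation: when $\sH$ is regarded as a real vector space equipped with the real inner product $(u,v) \mapsto \mr{Re}\langle u|v\rangle$, it becomes a real Hilbert space $\sH_{\bR}$ whose norm coincides with $\|\cdot\|$. A bounded right $\bH$--linear operator is in particular a bounded real--linear operator on $\sH_{\bR}$, and its quaternionic operator norm $(\ref{QN})$ equals its norm as a real operator, since both $\|Tu\|$ and $\|u\|$ are computed with the same real norm. Thus $\gB(\sH)$ sits inside the real Banach space of bounded real operators on $\sH_{\bR}$ as a real--linear subspace closed under composition, and every classical theorem for real Banach spaces becomes available; the only recurring point requiring care is that limits and inverses stay right $\bH$--linear, which holds because right $\bH$--linearity is preserved by pointwise limits and by inversion.

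For $(\mr{a})$ I would take a Cauchy sequence $\{T_n\}$ in $\gB(\sH)$. From $\|T_nu - T_mu\| \le \|T_n - T_m\|\,\|u\|$ each $\{T_nu\}$ is Cauchy in $\sH$, hence converges to a vector $Tu$ by completeness of $\sH$. The map $u \mapsto Tu$ inherits right $\bH$--linearity from the $T_n$ by passing to the limit, and a uniform Cauchy bound gives $\|T\| < +\infty$ together with $\|T_n - T\| \to 0$; so $T \in \gB(\sH)$ and $\gB(\sH)$ is complete, hence a Baire space. For $(\mr{b})$, the triangle inequality and the submultiplicative estimate $(\ref{eq:sum-prod})$ yield $\|(T+S)-(T'+S')\| \le \|T-T'\| + \|S-S'\|$ and $\|TS - T'S'\| \le \|T-T'\|\,\|S\| + \|T'\|\,\|S-S'\|$, from which joint continuity of sum and composition follows; continuity of real scalar multiplication is immediate from $\|Tr\| = |r|\,\|T\|$.

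For $(\mr{c})$ I would run the Neumann series argument: if $T$ has a two--sided inverse and $\|S - T\| < \|T^{-1}\|^{-1}$, then $\|\1 - T^{-1}S\| \le \|T^{-1}\|\,\|T-S\| < 1$, so the series $\sum_{n \ge 0}(\1 - T^{-1}S)^n$ converges absolutely in $\gB(\sH)$ by the completeness from $(\mr{a})$, its sum lying in $\gB(\sH)$ because each partial sum is right $\bH$--linear. This sum is a two--sided inverse of $T^{-1}S$, whence $S = T(T^{-1}S)$ is invertible, exhibiting a norm ball around $T$ inside the invertible elements. Assertions $(\mr{d})$, $(\mr{e})$ and $(\mr{f})$ are the Banach--Steinhaus theorem, the open mapping theorem and the closed graph theorem for $\sH_{\bR}$; their classical proofs, resting only on the Baire property of $\sH_{\bR}$ and on real linearity, transfer verbatim. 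In $(\mr{e})$ and $(\mr{f})$ one checks that the real--linear inverse produced by the open mapping theorem is again right $\bH$--linear, which is automatic since $T(uq) = (Tu)q$ forces $T^{-1}(vq) = (T^{-1}v)q$.

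I expect no genuine obstacle: each part is a transcription of a standard result. The two places deserving a line of verification are the identification of the quaternionic operator norm with the real operator norm, needed so that the classical theorems apply without change, and the preservation of right $\bH$--linearity under the limiting and inversion processes, where the non--commutativity of $\bH$ forbids careless manipulation but where right--linearity is exactly the algebraic structure that survives.
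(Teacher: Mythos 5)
Your proposal is correct and follows essentially the same route as the paper, which likewise disposes of $(\mr{a})$ and $(\mr{b})$ as elementary, proves $(\mr{c})$ by the Neumann series argument of Rudin's Theorem 10.12, and obtains $(\mr{d})$--$(\mr{f})$ from the Baire property exactly as in the real/complex Banach space theory. The two verification points you single out --- that the quaternionic operator norm agrees with the norm of $T$ viewed as a bounded real--linear operator, and that right $\bH$--linearity survives pointwise limits and inversion --- are precisely the observations that justify the paper's appeal to the classical references.
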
 
\begin{proof} The proofs of $(\mr{a})$, $(\mr{b})$ are elementary.  Point $(\mr{c})$ is proved as in \cite[Theorem 10.12]{RudinFA}. Points $(\mr{d})$, $(\mr{e})$ and $(\mr{f})$ follow from the Baire theorem exactly as in complex Banach space theory: Lemma 2.2.3, Theorem 2.2.4 and Corollary 2.2.5 of \cite{Analysisnow} prove $(\mr{e})$; Theorem 2.2.9 of \cite{Analysisnow} leads to $(\mr{d})$ and Theorem 2.2.7 
proves $(\mr{f})$.
\end{proof}

The notion of adjoint operator is the same as for complex Hilbert spaces.

\begin{definition}
Let $\sH$ be a quaternionic Hilbert space and let $T:D(T) \lra \sH$ be an operator with dense domain. The adjoint $T^*: D(T^*) \lra \sH$ of $T$ is the unique operator with the following properties (the definition being well posed since $D(T)$ is dense):
\[
D(T^*):= \{u \in \sH \, | \, \exists w_u \in \sH \mbox{ with } \b w_u|v \k=\b u | T v \k \ \forall v \in D(T)\}.
\]
and
\beq \label{defagg}
\b T^* u | v \k=\b u | T v \k \quad \forall v \in D(T), \forall u \in D(T^*).
\eeq
\end{definition}

It is worth noting that an immediate consequence of such a definition is that the operator $T^*$ is always closed. 
Moreover, if $T \in \gB(\sH)$, then requirement (\ref{defagg}) alone automatically determines $T^*$ as an element of $\gB(\sH)$ in view of quaternionic representation Riesz' theorem (see Theorem \ref{quat_Riesz} above).

As usual, an operator $T:D(T) \lra \sH$ with dense domain is said to be:
\begin{itemize}
 \item \emph{symmetric} if $T \subset T^*$,
 \item \emph{anti symmetric} if $T \subset -T^*$,
 \item \emph{self--adjoint} if $T=T^*$,
 \item \emph{essentially self--adjoint} if it is closable and $\overline{T}$ is self--adjoint,
 \item \emph{anti self--adjoint} if $T=-T^*$,
 \item \emph{positive}, and we write $T \geq 0$, if $\b u | Tu \k \in \R^+$ for every $u \in D(T)$,
 \item \emph{normal} if $T \in \gB(\sH)$ and $TT^*=T^*T$,
 \item \emph{unitary} if $D(T)=\sH$ and $TT^*=T^*T=\1$.
\end{itemize}

\begin{remark} \label{propun}
$(\mr{1})$ By definition, if $U$ is a unitary operator of $\sH$, then $\b Uu | Uv \k=\b u | v \k$ for every $u,v \in \sH$ and $U$ is isometric; that is, $\|Uu\|=\|u\|$ for every $u \in \sH$. In particular, $U$ belongs to $\gB(\sH)$. As for complex Hilbert spaces, an operator $U:\sH \lra \sH$ is unitary if and only if it is isometric and surjective. 

$(\mr{2})$ If $N, N' \subset \sH$ are Hilbert bases, then there exists a unitary operator $U \in \gB(\sH)$ such that $U(N)=N'$. Moreover, if $V \in \gB(\sH)$ is unitary, then $\{V z \in \sH \,| \, z \in N\}$ is a Hilbert basis of $\sH$.
\end{remark}

Now we state quaternionic versions of three well--known results for complex Hilbert spaces. The former is an immediate consequence of Theorem \ref{teoremaperp} and of identity (\ref{defagg}).

\begin{proposition}\label{lemma0}
Let $\sH$ be a quaternionic Hilbert space and let $T:D(T) \lra \sH$ be an operator with domain dense in $\sH$. We have:
\[
\mi{Ran}(T)^\perp =\mi{Ker}(T^*)
\quad \mbox{and} \quad
\mi{Ker}(T) \subset \mi{Ran}(T^*)^\perp.
\]

Furthermore, if $D(T^*)$ is dense in $\sH$ and $T$ is closed, then we can replace ``$\subset$'' with ``$=$''. In particular, this is true when $T \in \gB(\sH)$.
\end{proposition}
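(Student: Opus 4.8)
The plan is to deduce both stated relations directly from the definition of the adjoint together with the Hermiticity of the scalar product, and then, under the additional hypotheses, to promote the inclusion to an equality by a graph--duality argument.

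First I would establish $\mi{Ran}(T)^\perp = \mi{Ker}(T^*)$ by unwinding the definitions. By the definition of the orthogonal complement, $u \in \mi{Ran}(T)^\perp$ means exactly that $\langle u | Tv\rangle = 0$ for every $v \in D(T)$. Taking $w_u = 0$ in the definition of the adjoint, this is precisely the statement that $u \in D(T^*)$ with $T^* u = 0$, i.e.\ $u \in \mi{Ker}(T^*)$; conversely, if $u \in \mi{Ker}(T^*)$ then $\langle u | Tv\rangle = \langle T^* u | v\rangle = 0$ for all $v \in D(T)$ by $(\ref{defagg})$. Both inclusions being immediate, the first equality follows.

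Next, for $\mi{Ker}(T) \subset \mi{Ran}(T^*)^\perp$, I would fix $u \in \mi{Ker}(T)$ and a generic element $T^* z \in \mi{Ran}(T^*)$ with $z \in D(T^*)$. Using Hermiticity and $(\ref{defagg})$ with the admissible choice $v = u \in D(T)$,
\[
\langle u | T^* z\rangle = \overline{\langle T^* z | u\rangle} = \overline{\langle z | Tu\rangle} = 0,
\]
since $Tu = 0$. Hence $u$ is orthogonal to all of $\mi{Ran}(T^*)$, giving the claimed inclusion.

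Finally, to obtain equality when $D(T^*)$ is dense and $T$ is closed, I would apply the first equality to the (now densely defined) operator $T^*$, whose adjoint $T^{**}$ exists because $D(T^*)$ is dense; this yields $\mi{Ran}(T^*)^\perp = \mi{Ker}(T^{**})$. It then suffices to show $T^{**} = T$, and this identity is the substantive step and the main obstacle. I would prove it by a graph--flip argument: on the product quaternionic Hilbert space $\sH \oplus \sH$, with scalar product $\langle (x,y)|(x',y')\rangle = \langle x|x'\rangle + \langle y|y'\rangle$, define $V(x,y) := (-y,x)$; one checks that $V$ is right $\bH$--linear, isometric and surjective, hence unitary, and that $V^2 = -\1$. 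Directly from $(\ref{defagg})$ one verifies $\cG(T^*) = [V\,\cG(T)]^\perp$. Using that $V$ is unitary (so $V(A^\perp) = (VA)^\perp$) and that $-\cG(T) = \cG(T)$, this gives $\cG(T^{**}) = [V\,\cG(T^*)]^\perp = [(\cG(T))^\perp]^\perp$, which by Theorem $\ref{teoremaperp}$ equals $\overline{\cG(T)} = \cG(T)$, the last equality using the closedness of $T$. Thus $T^{**} = T$ and $\mi{Ran}(T^*)^\perp = \mi{Ker}(T)$. For $T \in \gB(\sH)$ the extra hypotheses hold automatically, since then $D(T^*) = \sH$ is dense and $T$ is closed. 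The delicacy throughout the graph argument lies in handling the right--$\bH$--module structure of $\sH \oplus \sH$ and in verifying that the orthogonal--complement identities of Theorem $\ref{teoremaperp}$ apply verbatim despite the non--commutativity of $\bH$.
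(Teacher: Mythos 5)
Your proof is correct and is essentially the argument the paper has in mind when it declares the proposition an immediate consequence of Theorem~\ref{teoremaperp} and identity~(\ref{defagg}): the two orthogonality relations follow by unwinding the definition of the adjoint, and the equality case reduces to $(T^*)^*=T$. Note only that your graph--flip derivation of $T^{**}=T$ reproves what the paper records separately as Theorem~\ref{teoopagg}$(\mr{a})$--$(\mr{b})$ (where the same unitary $U(x,y)=(-y,x)$ and decomposition (\ref{decgraf}) appear), so in the paper's logical order one would simply cite that result rather than rederive it.
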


\begin{theorem} \label{teoopagg}
Let $\sH$ be a quaternionic Hilbert space and let $T: D(T) \lra \sH$ be an operator with dense domain. The following facts hold.
\begin{itemize}
 \item[$(\mr{a})$] $T^*$ is closed and, if $U \in \gB(\sH \oplus \sH)$ is the unitary operator sending $(x,y)$ into $(-y,x)$, then the following orthogonal decomposition holds:
\beq \label{decgraf}
\sH \oplus \sH = \overline{\cG(T)} \oplus U(\cG(T^*)),
\eeq
where the bar denotes the closure in $\sH \oplus \sH$.
 \item[$(\mr{b})$] $T$ is closable if and only if $D(T^*)$ is dense in $\sH$ and $\overline{T}=(T^*)^*$.
 \item[$(\mr{c})$] If $T$ is closed, injective and has image dense in $\sH$, then the same is true for $T^*$ and for $T^{-1}:\mi{Ran}(T) \lra D(T)$. Moreover, under these hypotheses, it holds: $(T^*)^{-1}= (T^{-1})^*$.
 \item[$(\mr{d})$] If $T$ is closed, then $D(T^*T)$ is dense in $\sH$ and $T^*T$ is self--adjoint.
 \item[$(\mr{e})$] \emph{(Helling--Toeplitz Theorem)} If $T$ is self--adjoint or anti self--adjoint and $D(T)=\sH$, then $T\in \gB(\sH)$.
 \end{itemize}
\end{theorem}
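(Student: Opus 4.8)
The plan is to reduce all five statements to the geometry of graphs inside the product Hilbert space $\sH\oplus\sH$, exploiting the right $\bH$--linear unitary $U\in\gB(\sH\oplus\sH)$, $U(x,y)=(-y,x)$, which satisfies $U^2=-\1$ and $U^{-1}=-U$. The cornerstone is part $(\mr{a})$. First I would record the elementary fact that for a unitary $W$ and any subset $M$ one has $(WM)^\perp=W(M^\perp)$, so $U$ commutes with orthogonal complementation. Then I would compute, for $u\in D(T^*)$ and $x\in D(T)$, the inner product $\b U(u,T^*u)\,|\,(x,Tx)\k=-\b T^*u|x\k+\b u|Tx\k$, which vanishes by the defining identity $(\ref{defagg})$; the converse inclusion follows by reading the orthogonality condition $\b a|x\k+\b b|Tx\k=0$ as the statement that $b\in D(T^*)$ with $T^*b=-a$. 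This gives $U(\cG(T^*))=\cG(T)^\perp$. Since an orthogonal complement is closed and $U$ is a homeomorphism, $\cG(T^*)$ is closed, so $T^*$ is closed; the decomposition $(\ref{decgraf})$ is then exactly the orthogonal splitting $\sH\oplus\sH=(\cG(T)^\perp)^\perp\oplus\cG(T)^\perp=\overline{\cG(T)}\oplus U(\cG(T^*))$ furnished by Theorem~\ref{teoremaperp}.

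Parts $(\mr{b})$ and $(\mr{c})$ I would then obtain purely formally. Using that $U$ commutes with $\perp$ and that subspaces are stable under negation, $\overline{\cG(T)}=\cG(T)^{\perp\perp}=(U\cG(T^*))^\perp=U(\cG(T^*)^\perp)$. If $D(T^*)$ is dense, applying $(\mr{a})$ to the closed operator $T^*$ gives $U(\cG((T^*)^*))=\cG(T^*)^\perp$, whence $\cG((T^*)^*)=U^{-1}(\cG(T^*)^\perp)=-U(\cG(T^*)^\perp)=\overline{\cG(T)}$; thus $\overline{\cG(T)}$ is a graph, $T$ is closable and $\overline T=(T^*)^*$. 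Conversely, if $D(T^*)$ is not dense I would pick $0\neq w\in D(T^*)^\perp$ and check that $(0,w)\in U(\cG(T^*)^\perp)=\overline{\cG(T)}$, so $\overline{\cG(T)}$ is not a graph and $T$ is not closable. For $(\mr{c})$, Proposition~\ref{lemma0} (whose equality case applies since a closed $T$ is closable, hence $D(T^*)$ is dense by $(\mr{b})$) turns ``$T$ injective with dense range'' into ``$T^*$ injective with dense range'', so $(T^*)^{-1}$ exists; writing $V(x,y)=(y,x)$ for the flip, $\cG(T^{-1})=V\cG(T)$, and applying $(\mr{a})$ to $T^{-1}$ together with the relation $U^{-1}VU=-V$ yields $\cG((T^{-1})^*)=V\cG(T^*)=\cG((T^*)^{-1})$, that is $(T^{-1})^*=(T^*)^{-1}$.

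For $(\mr{d})$ I would run the classical von Neumann argument, now legitimate because $(\mr{a})$ gives $\sH\oplus\sH=\cG(T)\oplus U(\cG(T^*))$, the first summand being closed as $T$ is closed. Decomposing $(h,0)$ along this splitting produces unique $x\in D(T)$, $y\in D(T^*)$ with $h=x+T^*Tx=(\1+T^*T)x$, so $\1+T^*T$ is onto; the identity $\b(\1+T^*T)x|x\k=\|x\|^2+\|Tx\|^2\geq\|x\|^2$ shows it is injective with bounded everywhere--defined inverse $R$. A short direct computation shows $R$ is symmetric, hence (being bounded and everywhere defined) self--adjoint; its range $\Ran(R)=D(T^*T)$ is dense because $\Ran(R)^\perp=\mi{Ker}(R)=\{0\}$, and then $(\mr{c})$ gives $R^{-1}$ self--adjoint, so $T^*T=R^{-1}-\1$ is self--adjoint. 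Finally $(\mr{e})$ is immediate: $T^*$ is always closed by $(\mr{a})$, so a self--adjoint or anti self--adjoint $T$ with $D(T)=\sH$ equals $\pm T^*$ and is therefore a closed operator defined on all of $\sH$, hence bounded by the closed graph theorem, Proposition~\ref{propcompleteness}$(\mr{f})$.

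The only genuinely delicate point is $(\mr{d})$: one must verify that the decomposition of $(h,0)$ really lands in the stated summands and that $R$ is everywhere defined, bounded and symmetric before concluding self--adjointness. Throughout, the one place where quaternionic non--commutativity demands care is the bookkeeping of the conjugate--linear slot of $\b\cdot|\cdot\k$ in the orthogonality computations of $(\mr{a})$; once $(\mr{a})$ is secured, the remaining parts are formal consequences exactly as in the complex theory.
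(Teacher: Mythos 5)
Your proof is correct and follows exactly the route the paper intends: the authors simply cite the classical complex--Hilbert--space arguments (Pedersen, Theorems 5.1.5, 5.1.7, 5.1.9 and the closed graph theorem), and what you have written out is precisely that von Neumann graph argument, with the identity $U(\cG(T^*))=\cG(T)^\perp$ as cornerstone and the correct bookkeeping of the conjugate--linear slot of $\b\cdot|\cdot\k$. No discrepancies to report.
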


\begin{proof} The proof are the same as for complex Hilbert spaces, since they do not depend on the fact that the Hilbert space 
is defined over $\bC$ or $\bH$. Points $(\mr{a})$ and $(\mr{b})$ can be proved exactly as Theorem 5.1.5 in \cite{Analysisnow}. One can prove $(\mr{c})$ exactly as Proposition 5.1.7 in \cite{Analysisnow} and $(\mr{d})$ as point $(\mr{i})$ in Theorem 5.1.9 in \cite{Analysisnow}.
Point $(\mr{e})$ follows from $(\mr{a})$ taking into account the closed graph theorem.
 \end{proof}

\begin{remark} \label{remarkop}
Exactly as in the case of complex Hilbert space (see e.g.~\cite{Moretti,RudinFA}), one can prove that, if $S:D(S) \lra \sH$ and $T:D(T) \lra \sH$ are operators with $D(T)$ and $D(S)$ dense in $\sH$, then it hold:
\begin{itemize}
 \item[(i)]  $S^* \subset T^*$ if $T \subset S$.
 \item[(ii)] If $T \in \gB(\sH)$, then  $T^* \in \gB(\sH)$, $\|T^*\|=\|T\|$ and $\|T^*T\|=\|T\|^2$.
 \item[(iii)] $T \subset (T^*)^*$ if $D(T^*)$ is dense in $\sH$, and $T=(T^*)^*$ if $T \in \gB(\sH)$.
 \item[(iv)] $T^*+S^* \subset (T + S)^*$ if $D(T+S)$ is dense in $\sH$, and $(T+S)^*=T^*+S^*$ if $T \in \gB(\sH)$.
 \item[(v)] $S^*T^* \subset (TS)^*$ if $D(TS)$ is dense in $\sH$, and $(TS)^*=S^*T^*$ if $T \in \gB(\sH)$.
 \item[(vi)] If $T$ is self--adjoint, $S$ is symmetric and $T \subset S$, then $S=T$.
 \item[(vii)] $T$ is essentially self--adjoint if and only if $D(T^*)$ is dense in $\sH$ and $T^*$ is self--adjoint. In that case, $T^*=\overline{T}$ and $\overline{T}$ is the only self--adjoint extension of~$T$.
 \item[$(\mr{viii})$] Let $T \in \gB(\sH)$. If $T$ is bijective and $T^{-1} \in \gB(\sH)$, then  $T^*(T^{-1})^*=\1=(T^{-1})^*T^*$. In this way, $T$ is bijective and $T^{-1} \in \gB(\sH)$ if and only if $T^*$ is bijective and $(T^*)^{-1} \in \gB(\sH)$. Moreover, in that situation, it holds: $(T^*)^{-1}=(T^{-1})^*$.
 \item[$(\mr{ix})$] $(Tr)^*=T^*r$ if $r \in \bR$.
\end{itemize}
\end{remark}

The following last proposition deserves an explicit proof as it is different from that 
in complex Hilbert spaces.
 
\begin{proposition} \label{Lemmadiag}
Let $\sH$ be a quaternionic Hilbert space and let $T:D(T) \lra \sH$ be an operator with dense domain. The following assertions hold.
\begin{itemize}
 \item[$(\mr{a})$] If $T$ is closed (for example, if $T \in \gB(\sH)$) and there exists a dense subspace $L$ of $D(T)$ such that $\b u | Tu \k=0$ for every $u \in L$, then $T=0$.
 \item[$(\mr{b})$] If $\b u | Tu \k \in \R$ for every $u \in D(T)$ (for example, if $T \geq 0$), then $T$ is symmetric and hence it is self--adjoint in the case in which $D(T)=\sH$.
\end{itemize}
\end{proposition}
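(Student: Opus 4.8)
The plan is to handle both parts with a single polarization--type computation, the difference from the complex case lying entirely in the bookkeeping forced by non--commutativity. Fix $u$ and $v$ in the relevant subspace ($L$ for part~$(\mr{a})$, $D(T)$ for part~$(\mr{b})$) and, for each $\ell \in \{1,i,j,k\}$, expand
\[
\langle u+v\ell \,|\, T(u+v\ell)\rangle
= \langle u|Tu\rangle + \langle v\ell|Tv\ell\rangle + \langle u|Tv\rangle\,\ell + \overline{\ell}\,\langle v|Tu\rangle ,
\]
using right linearity in the second slot, conjugate linearity in the first (so $\langle v\ell|Tu\rangle=\overline{\ell}\,\langle v|Tu\rangle$), and $\langle v\ell|Tv\ell\rangle=\overline{\ell}\,\langle v|Tv\rangle\,\ell$. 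Writing $a:=\langle u|Tv\rangle$ and $b:=\langle v|Tu\rangle$, the diagonal terms are controlled by hypothesis and all the information sits in the cross term $a\ell+\overline{\ell}\,b$.

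For part~$(\mr{a})$, every vector $u+v\ell$ lies in $L$, so the left--hand side vanishes, as do $\langle u|Tu\rangle$ and $\langle v|Tv\rangle$; hence $a\ell+\overline{\ell}\,b=0$ for all $\ell\in\{1,i,j,k\}$. Taking $\ell=1$ gives $b=-a$, and the remaining three relations then force $a$ to anticommute with each of $i$, $j$, $k$; a direct check shows that the only such quaternion is $0$, so $a=b=0$. Thus $\langle u|Tv\rangle=0$ for all $u,v\in L$. Fixing $v\in L$ and using that $L$ is dense in $D(T)$, hence in $\sH$, joint continuity of the scalar product gives $\langle u|Tv\rangle=0$ for every $u\in\sH$; choosing $u=Tv$ yields $Tv=0$. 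Finally I invoke closedness: for $w\in D(T)$ pick $w_n\in L$ with $w_n\to w$, so that $(w_n,Tw_n)=(w_n,0)\to(w,0)$ lies in the closed graph $\cG(T)$, whence $Tw=0$ and $T=0$.

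For part~$(\mr{b})$ the same expansion gives $a\ell+\overline{\ell}\,b\in\bR$ for all $\ell$, since now each of $\langle u+v\ell|T(u+v\ell)\rangle$, $\langle u|Tu\rangle$ and $\langle v|Tv\rangle$ is real. Comparing imaginary parts for $\ell=1$ and $\ell=i$ determines all four real components of $b$ in terms of those of $a$ and yields $b=\overline{a}$; that is, $\langle v|Tu\rangle=\overline{\langle u|Tv\rangle}=\langle Tv|u\rangle$, equivalently $\langle Tu|v\rangle=\langle u|Tv\rangle$ for all $u,v\in D(T)$. This is precisely $T\subset T^*$, so $T$ is symmetric; and if $D(T)=\sH$, then $D(T^*)\supseteq D(T)=\sH$ forces $D(T^*)=\sH$ and hence $T=T^*$, i.e.\ $T$ is self--adjoint.

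The main obstacle is the final algebraic extraction. In the complex setting commutativity collapses the cross term to $(a\pm b)\ell$ and a single scalar $i$ suffices; here one instead meets $a\ell+\overline{\ell}\,b$ and must verify that the system indexed by $\ell\in\{1,i,j,k\}$ admits only the claimed solutions ($a=b=0$ in~$(\mr{a})$, $b=\overline{a}$ in~$(\mr{b})$). This short piece of quaternionic linear algebra --- resting on the fact that no nonzero quaternion anticommutes with all of $i,j,k$ --- is exactly what makes the proposition genuinely different from its complex analogue.
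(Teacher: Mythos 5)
Your argument is correct and follows essentially the same route as the paper: polarize via $\langle u+vq\,|\,T(u+vq)\rangle$, isolate the cross term, and use a short piece of quaternionic algebra (the paper writes $a=a_0+\jmath a_1$ and tests against $q=1$ and $q=\jmath$, whereas you test against $\ell\in\{1,i,j,k\}$ and note that no nonzero quaternion anticommutes with $i,j,k$ --- the same computation in slightly different clothing). The closing steps (density plus $u=Tv$, then closedness of the graph for $(\mr{a})$; $T\subset T^*$ with $D(T)=\sH$ for $(\mr{b})$) also coincide with the paper's.
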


\begin{proof}
$(\mr{a})$ Since $D(T)$ is dense in $\sH$, $L$ is dense in $\sH$ as well. Fix $u,v \in L$. For every $q \in \bH$, $u+vq$ belongs to $L$ and hence $0=\b u + vq|T(u + vq)\k=\b u | Tv \k q +\bar q \b v | Tu \k$. In particular, taking $q=1$, we get: $\b u | Tv \k + \b v | Tu \k=0$, from which we deduce that
\[
\b u | Tv \k q -\bar q \b u | Tv \k=0 \quad \mbox{for every }q \in \bH.
\]
Defining $a:=\b u | Tv \k=a_0+\jmath a_1$ for some $a_0,a_1 \in \bR$ and $\jmath \in \bS$, we get: $0=a\jmath + \jmath a=(a_0+\jmath a_1)\jmath+\jmath (a_0+\jmath a_1)=-2a_1+2\jmath a_0$. Therefore we have that $a_0=a_1=0$ and hence $\b u | Tv \k=0$.
Choosing a sequence $L \supset \{u_n\}_{n \in \bN} \to Tv$, we infer that $\|Tv\|^2=0$ (and hence $Tv=0$) for every $v \in L$. Finally, if $x \in D(T)$ and $L \supset \{x_n\}_{n \in \bN} \to x$, then we have that $Tx_n=0$ for every $n \in \bN$, so $\{(x_n, Tx_n)\}_{n \in \bN} \to (x,0)$. Since $T$ is closed, it follows that $Tx=0$.

$(\mr{b})$ Let us follow the strategy used in the proof of point $(\mr{a})$. First, observe that $\b x | Tx \k=\overline{\b x | Tx \k}=\b Tx | x \k$ for every $x \in D(T)$. Fix $u,v \in D(T)$. For every $q \in \bH$, we have:
\begin{align*}
0=& \b u+vq | T(u+vq) \k-\b T(u+vq) | u+vq \k= \\
=& (\b u | Tv \k-\b Tu | v \k)q+\bar q(\b v | Tu \k-\b Tv | u \k).
\end{align*}
Define $b:=\b u | Tv \k-\b Tu | v \k$. Choosing $q=1$, we obtain that $bq-\bar q b=0$ for every $q \in \bH$. Proceeding as above, we infer that $b=0$ or, equivalently, that $\b u | Tv \k=\b Tu | v \k$ for every $u,v \in D(T)$. This proves the symmetry of $T$.
\end{proof}

\subsection{Square root and polar decomposition of operators}

The theorem of existence of the square root of positive bounded operators works exactly as 
in the case of complex Hilbert spaces. The reason is that the proof exploits the convergence of sequences of real polynomials of operators in the strong operator topology (cf.\ \cite{Emch} and \cite[Propositions~3.2.11 and 3.2.12, Theorem~3.2.17]{Analysisnow}).

\begin{theorem} \label{radT}
Let $\sH$ be a quaternionic Hilbert space and let $T \in \gB(\sH)$. If $T \geq 0$, then there exists a unique operator in $\gB(\sH)$, indicated by $\sqrt{T}$, such that $\sqrt{T} \geq 0$ and $\sqrt{T}\sqrt{T}=T$. Furthermore, it turns out that $\sqrt{T}$ commutes with every operator which commutes with $T$.
\end{theorem}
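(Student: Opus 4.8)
The plan is to mimic the classical complex-Hilbert-space construction of the square root via a uniformly convergent sequence of real polynomials in $T$, exploiting the fact that $\gB(\sH)$ is a real Banach algebra (Proposition \ref{propcompleteness}) in which positivity is preserved under strong limits. First I would reduce to the case $0 \le T \le \1$ by rescaling: if $T \ne 0$ put $T_0 := T/\|T\|$, so that $0 \le T_0 \le \1$, and recover $\sqrt{T} = \|T\|^{1/2}\sqrt{T_0}$ using the real scalar multiplication \eqref{eq:rT}; the case $T=0$ is trivial. For $0 \le T_0 \le \1$, I would set $R := \1 - T_0$ with $0 \le R \le \1$ and define the sequence $\{S_n\}_{n \in \bN}$ in $\gB(\sH)$ by the recursion $S_0 := 0$ and $S_{n+1} := S_n + \frac{1}{2}(R - S_n^{\,2})$, exactly as one does over $\bC$. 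Each $S_n$ is then a real polynomial in $R$ (hence in $T_0$) with nonnegative coefficients; in particular every $S_n$ is self-adjoint and commutes with any operator commuting with $T$.

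The key estimate is the scalar inequality $0 \le S_n(t) \le S_{n+1}(t) \le 1$ for all $t \in [0,1]$, proved by induction on the real polynomials $S_n(t)$; this shows $\{S_n(t)\}$ increases to $1 - \sqrt{1-t}$ pointwise on $[0,1]$ and, by Dini's theorem, uniformly. The transition from scalars to operators is the heart of the argument: since each $S_n$ is a polynomial with real coefficients evaluated at the self-adjoint operator $R$ with $0 \le R \le \1$, I would argue that the operator norm $\|S_{n+p} - S_n\|$ is controlled by the supremum over $[0,1]$ of the corresponding polynomial difference $S_{n+p}(t) - S_n(t)$. This uses that for a self-adjoint operator $R$ with spectrum (in the scalar sense of $\b u | p(R) u \k$) inside $[0,1]$, a real polynomial $p$ with $p \ge 0$ on $[0,1]$ gives a positive operator $p(R) \ge 0$; this positivity-of-polynomials fact, together with Proposition \ref{Lemmadiag}$(\mr{b})$, is what makes the uniform Cauchy property on $[0,1]$ transfer to a norm-Cauchy property in $\gB(\sH)$. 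Completeness of $\gB(\sH)$ then yields a limit $S := \lim_n S_n \in \gB(\sH)$, and I set $\sqrt{T_0} := \1 - S$.

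With $S$ in hand, passing to the limit in the recursion gives $S = S + \frac{1}{2}(R - S^2)$, hence $S^2 = R$, i.e. $(\1 - S)^2 = \1 - R = T_0$, so $\sqrt{T_0}\,\sqrt{T_0} = T_0$. Positivity $\sqrt{T_0} \ge 0$ follows because each $\1 - S_n$ is a polynomial in $T_0$ positive on $[0,1]$ hence a positive operator, and the norm limit of positive operators is positive (the condition $\b u | (\1-S_n) u \k \in \bR^+$ passes to the limit by continuity of the scalar product). The commutation property is immediate: each $S_n$ is a polynomial in $T$, so any $C \in \gB(\sH)$ commuting with $T$ commutes with every $S_n$ and hence, by continuity of composition (Proposition \ref{propcompleteness}$(\mr{b})$), with $S$ and with $\sqrt{T}$. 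For uniqueness, I would let $Q \ge 0$ satisfy $Q^2 = T$; then $Q$ commutes with $T = Q^2$, hence with $\sqrt{T}$ by the commutation property just proved, so $\sqrt{T}$ and $Q$ are two commuting positive operators with the same square, and the standard argument $(\sqrt{T}-Q)\sqrt{T}(\sqrt{T}-Q) + (\sqrt{T}-Q)Q(\sqrt{T}-Q) = 0$ with both summands positive forces $(\sqrt{T}-Q)^3 = 0$ and then, via self-adjointness, $\sqrt{T} = Q$.

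The main obstacle I anticipate is the genuinely quaternionic step of transferring scalar polynomial inequalities on $[0,1]$ to operator inequalities and, in particular, to the operator-norm estimate that drives convergence. Over $\bC$ one routinely invokes the continuous functional calculus or the spectral mapping theorem to say $\|p(R)\| = \sup_{t \in \sigma(R)} |p(t)|$, but at this point in the paper no such machinery is available (indeed, building it is the paper's goal). I would therefore need to establish the required bound by hand: the positivity statement \emph{``$p \ge 0$ on $[0,1]$ and $0 \le R \le \1$ imply $p(R) \ge 0$''} must be derived directly, e.g. by factoring a nonnegative real polynomial on $[0,1]$ into pieces of the form $t$, $1-t$, and $(t-c)^2$, and checking that each such factor yields a positive operator and that products and sums of commuting positive operators built from $R$ remain positive. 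Making this factorization argument rigorous over $\bH$, using only Proposition \ref{Lemmadiag} and the real-algebra structure of $\gB(\sH)$, is the delicate part; everything else parallels the complex case verbatim, which is precisely why the paper can cite \cite{Analysisnow}.
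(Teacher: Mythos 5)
Your proposal is correct, and it is essentially the classical Riesz--Nagy iteration that the paper itself invokes: the paper gives no written proof of Theorem \ref{radT}, only the remark that the argument ``exploits the convergence of sequences of real polynomials of operators'' and a citation to \cite{Emch} and \cite{Analysisnow}. The one place where you diverge from the cited route is the mode of convergence, and it is exactly the place you flag as delicate. You aim for \emph{norm} convergence of $\{S_n\}$ via Dini's theorem, which forces you to prove the lemma ``$p\ge 0$ on $[0,1]$ and $0\le R\le \1$ imply $p(R)\ge 0$'' without any functional calculus; this is doable (write $p=\sigma_1^2+t\sigma_2^2+(1-t)\sigma_3^2+t(1-t)\sigma_4^2$ and check each term is of the form $q(R)Xq(R)$ with $X\in\{\1,R,\1-R,R(\1-R)\}$, the positivity of $R(\1-R)$ following from the Cauchy--Schwarz inequality for the positive form $(u,v)\mapsto\b u|Rv\k$), but it is extra work. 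The route the paper points at uses the \emph{strong} operator topology instead and sidesteps that lemma entirely: one checks by induction that $S_{n+1}-S_n$ is a polynomial in $R$ with nonnegative coefficients and no constant term, so its positivity needs only $\b u|R^k u\k=\b R^ju|R^ju\k\ge 0$ or $\b R^ju|R(R^ju)\k\ge 0$; then the bounded increasing sequence $\{S_n\}$ of commuting self--adjoint operators converges strongly (via $\|(S_m-S_n)u\|^4\le\b u|(S_m-S_n)u\k\,\|S_m-S_n\|^3\|u\|^2$), and the limit $S$ is self--adjoint by Proposition \ref{Lemmadiag}$(\mr{b})$ and satisfies $S^2=R$ because multiplication is strongly continuous on bounded sets. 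Everything else in your write--up --- the rescaling, the commutation property, and the uniqueness argument via $D(\sqrt{T}+Q)D=0$, positivity of both summands, $D^4=0$ and the $C^*$--identity --- matches the standard proof and transfers to the quaternionic setting exactly as you describe.
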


Even for quaternionic Hilbert spaces, one has the polar decomposition theorem. Before presenting our quaternionic version of such a theorem, we need some preparations.

\begin{proposition} \label{lemma3}
Let $\sH$ be a quaternionic Hilbert space and let $T \in \gB(\sH)$ be a normal operator. Then we have that $\|Tu\|=\|T^*u\|$ for every $u \in \sH$ and it holds:
\[
\mi{Ker}(T)=\mi{Ker}(T^*)
\quad \mbox{and} \quad
\overline{\mi{Ran}(T)}=\overline{\mi{Ran}(T^*)}.
\]
In particular, $\sH$ orthogonally decomposes as:
\beq \label{decH}
\sH=\mi{Ker}(T) \oplus \overline{\mi{Ran}(T)}.
\eeq
\end{proposition}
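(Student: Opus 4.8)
The plan is to prove the three assertions in sequence, each being a formal consequence of the preceding one together with the orthogonality machinery already developed.

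First I would establish the norm identity $\|Tu\|=\|T^*u\|$ by a direct computation in the scalar product. Writing $\|Tu\|^2=\langle Tu\,|\,Tu\rangle$ and reading the adjoint relation in the form $\langle Tu\,|\,v\rangle=\langle u\,|\,T^*v\rangle$ (legitimate since $(T^*)^*=T$ for bounded operators, Remark~\ref{remarkop}(iii)), one gets $\|Tu\|^2=\langle u\,|\,T^*Tu\rangle$. Symmetrically, applying the defining adjoint relation $\langle T^*u\,|\,v\rangle=\langle u\,|\,Tv\rangle$ with $v=T^*u$ gives $\|T^*u\|^2=\langle u\,|\,TT^*u\rangle$. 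Normality $TT^*=T^*T$ then forces the two right--hand sides to coincide, hence $\|Tu\|=\|T^*u\|$ for every $u\in\sH$. The kernel equality is then immediate: $u\in\mi{Ker}(T)$ iff $\|Tu\|=0$ iff $\|T^*u\|=0$ iff $u\in\mi{Ker}(T^*)$, so $\mi{Ker}(T)=\mi{Ker}(T^*)$.

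For the closures of the ranges I would invoke Proposition~\ref{lemma0}, which for bounded operators yields $\mi{Ker}(T^*)=\mi{Ran}(T)^\perp$ and, applied to $T^*$, $\mi{Ker}(T)=\mi{Ran}(T^*)^\perp$. Taking orthogonal complements and using $(A^\perp)^\perp=\overline{\langle A\rangle}$ from Theorem~\ref{teoremaperp} (with $A$ already a subspace, so $\langle A\rangle=A$), I obtain $\overline{\mi{Ran}(T)}=\mi{Ker}(T^*)^\perp$ and $\overline{\mi{Ran}(T^*)}=\mi{Ker}(T)^\perp$. Combining these with the kernel equality just proved gives $\overline{\mi{Ran}(T)}=\overline{\mi{Ran}(T^*)}$. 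Finally, since $\mi{Ker}(T)$ is a closed subspace (as $T$ is continuous), the orthogonal direct sum decomposition of Theorem~\ref{teoremaperp} applied to $A=\mi{Ker}(T)$ gives $\sH=\mi{Ker}(T)\oplus\mi{Ker}(T)^\perp$; and $\mi{Ker}(T)^\perp=\mi{Ker}(T^*)^\perp=(\mi{Ran}(T)^\perp)^\perp=\overline{\mi{Ran}(T)}$, which is exactly the claimed decomposition~(\ref{decH}).

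As for the main obstacle, there is none of real substance: the statement is a routine transcription of the classical normal--operator argument, resting entirely on the adjoint relations and on Proposition~\ref{lemma0} and Theorem~\ref{teoremaperp}. The single point deserving attention is that all scalar--product manipulations respect the right--linearity and Hermiticity conventions of the quaternionic inner product; but since the decisive norm identity only requires reading the adjoint relation in both directions—justified by $(T^*)^*=T$ for bounded $T$—no scalars are ever transported across a product, and the non--commutativity of $\bH$ never actually intervenes.
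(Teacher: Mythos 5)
Your proof is correct and follows essentially the same route as the paper: the norm identity $\|Tu\|^2=\langle u|T^*Tu\rangle=\langle u|TT^*u\rangle=\|T^*u\|^2$ via normality, the kernel equality as an immediate consequence, and then Proposition~\ref{lemma0} together with the orthogonal-complement machinery of Theorem~\ref{teoremaperp} to identify the range closures and obtain the decomposition. The only difference is that you spell out the final steps in more detail than the paper does; there is nothing to correct.
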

\begin{proof} Given any $u \in \sH$, we have:
\[
\|Tu\|^2=\b Tu | Tu \k=\b u | T^*Tu \k=\b u | TT^*u \k=\b T^*u | T^*u \k=\|T^*u\|^2.
\]
This implies immediately that $\mi{Ker}(T)=\mi{Ker}(T^*)$. By combining the latter equality with Proposition \ref{lemma0}, we obtain immediately the equality $\overline{\mi{Ran}(T)}=\overline{\mi{Ran}(T^*)}$ and the orthogonal decomposition (\ref{decH}). 
\end{proof}

Let $T \in \gB(\sH)$. The reader observes that $T^*T \geq 0$. Indeed, it holds:
\[
\b u | T^*Tu \k=\b Tu | Tu \k=\|Tu\|^2 \geq 0
\quad \mbox{if }u \in \sH.
\]
As usual, we define $|T| \in \gB(\sH)$ by setting
\[
|T|:= \sqrt{T^*T}.
\]
By point $(\mr{b})$ of Proposition \ref{Lemmadiag}, the operator $|T|$ is self--adjoint. Moreover, it has the same kernel of $T$:
\beq \label{norm}
\||T|(u)\|^2=\b |T|(u) | |T|(u) \k=\b u | |T|^2(u) \k=\b u | T^*Tu \k=\|Tu\|^2
\eeq
for every $u \in \sH$. In this way, if $T$ is normal, one also has that
\beq \label{treeq}
\mi{Ker}(T)=\mi{Ker}(T^*)=\mi{Ker}(|T|)
\quad \mbox{and} \quad
\overline{\mi{Ran}(T)}=\overline{\mi{Ran}(T^*)}= \overline{\mi{Ran}(|T|)}.
\eeq

We are in position to state and prove our quaternionic version of the polar decomposition theorem. To the best of our knowledge, such a theorem has been mentioned and used several times in the mathematical physics literature, without an explicit proof.

\begin{theorem}\label{polar-polar} 
Let $\sH$ be a quaternionic Hilbert space and let $T \in \gB(\sH)$ be an operator. Then there exist, and are unique, two operators $W$ and $P$ in $\gB(\sH)$ such that: 
\begin{itemize}
 \item[$(\mr{i})$] $T=WP$,
 \item[$(\mr{ii})$] $P \geq 0$, 
 \item[$(\mr{iii})$] $\mi{Ker}(P) \subset \mi{Ker}(W)$,
 \item[$(\mr{iv})$] $W$ is isometric on $\mi{Ker}(P)^\perp$; that is, $\|Wu\|=\|u\|$ for every $u \in \mi{Ker}(P)^\perp$.
\end{itemize}
Furthermore, $W$ and $P$ have the following additional properties:
\begin{itemize}
 \item[$(\mr{a})$]
 $P=|T|$.
 \item[$(b)$] If $T$ is normal, then $W$ defines a unitary operator in $\gB(\overline{\mi{Ran}(T)})$.
 \item[$(\mr{c})$] If $T$ is normal, then $W$ commutes with $|T|$ and with all the operators in $\gB(\sH)$ commuting with both $T$
and $T^*$.
 \item[$(\mr{d})$] $W$ is self--adjoint if $T$ is.
 \item[$(\mr{e})$] $W$ is anti self--adjoint if $T$ is.
\end{itemize}
\end{theorem}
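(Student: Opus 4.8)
The plan is to reproduce the classical construction of the polar decomposition, relying on the fact that all the ingredients it needs are already available over $\bH$: existence and uniqueness of the positive square root (Theorem~\ref{radT}), the norm identity $\||T|u\|=\|Tu\|$ from (\ref{norm}), the kernel/range identities for normal operators (Proposition~\ref{lemma3} and (\ref{treeq})), and the polarization identity (\ref{polarization}). First I would set $P:=|T|=\sqrt{T^*T}$, so that $P\geq 0$, giving (ii) and (a) at once. Since (\ref{norm}) yields $\mi{Ker}(P)=\mi{Ker}(T)$, I would define $W$ on $\mi{Ran}(P)$ by $W(Pu):=Tu$. This is well posed: if $Pu_1=Pu_2$ then $u_1-u_2\in\mi{Ker}(P)=\mi{Ker}(T)$, whence $Tu_1=Tu_2$. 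The same identity (\ref{norm}) gives $\|W(Pu)\|=\|Pu\|$, so $W$ is right $\bH$--linear and isometric on $\mi{Ran}(P)$, and thus extends by uniform continuity to an isometry on $\overline{\mi{Ran}(P)}=\mi{Ker}(P)^\perp$, using the orthogonal decomposition $\sH=\mi{Ker}(P)\oplus\overline{\mi{Ran}(P)}$ valid for the self--adjoint $P$ (Proposition~\ref{lemma3}). Setting $W:=0$ on $\mi{Ker}(P)$ produces $W\in\gB(\sH)$ with $\|W\|\le 1$ satisfying (iii), (iv), and $WP=T$, i.e.\ (i).

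For uniqueness, suppose $T=W'P'$ satisfies (i)--(iv). From (iv) and the polarization identity (\ref{polarization}) one gets $\b W'x|W'y\k=\b x|y\k$ for $x,y\in\mi{Ker}(P')^\perp$; since $\mi{Ran}(P')\subseteq\overline{\mi{Ran}(P')}=\mi{Ker}(P')^\perp$, this forces $T^*T=P'W'^*W'P'=P'^2$. Hence $P'$ is a positive square root of $T^*T$, so $P'=|T|=P$ by the uniqueness in Theorem~\ref{radT}. Then $W'(Pu)=Tu=W(Pu)$ on $\mi{Ran}(P)$, and continuity together with $W=W'=0$ on $\mi{Ker}(P)$ gives $W=W'$.

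It remains to establish the normal-case properties. Assuming $T$ normal, (\ref{treeq}) gives $\mi{Ker}(P)^\perp=\overline{\mi{Ran}(T)}$; as $W$ is isometric there with $W(\mi{Ran}(P))=\mi{Ran}(T)$ dense in $\overline{\mi{Ran}(T)}$ and $W=0$ elsewhere, $W$ restricts to an isometry of $\overline{\mi{Ran}(T)}$ whose closed range contains the dense set $\mi{Ran}(T)$, hence a surjection onto $\overline{\mi{Ran}(T)}$; this proves (b). For (c), if $S\in\gB(\sH)$ commutes with $T$ and $T^*$, then $S$ commutes with $T^*T$ and so with $P=\sqrt{T^*T}$ by Theorem~\ref{radT}; the computation $SW(Pu)=STu=TSu=W(PSu)=WS(Pu)$ shows $SW=WS$ on $\mi{Ran}(P)$, hence on $\mi{Ker}(P)^\perp$ by continuity, while on $\mi{Ker}(P)$ both sides vanish because $S$ preserves $\mi{Ker}(P)$. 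Taking $S=|T|$, which commutes with $T$ and $T^*$ by normality, yields that $W$ commutes with $|T|$.

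For (d) and (e), I would adjoin $T=WP$ to get $T^*=PW^*$; if $T=\pm T^*$, then combining this with $WP=PW$ from (c) gives $P(W\mp W^*)=0$, so $\mi{Ran}(W\mp W^*)\subseteq\mi{Ker}(P)$. On the other hand, in the normal case both $W$ and $W^*$ map $\mi{Ker}(P)^\perp$ into itself (by (b), and since $\mi{Ker}(W^*)=\mi{Ran}(W)^\perp=\mi{Ker}(P)$) and annihilate $\mi{Ker}(P)$, so $W\mp W^*$ preserves $\mi{Ker}(P)^\perp$; having range inside $\mi{Ker}(P)$ it must vanish on $\mi{Ker}(P)^\perp$, and it already vanishes on $\mi{Ker}(P)$, whence $W=\pm W^*$. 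The genuinely quaternionic points, and thus where I expect the only real care to be needed, are the well-definedness of $W$ under non-commutative scalars and the passage from norm-preservation to inner-product preservation via (\ref{polarization}); both are handled cleanly by the cited results, so the main body of the argument is the bookkeeping of the partial-isometry structure on $\mi{Ker}(P)^\perp$ and $\mi{Ker}(P)$.
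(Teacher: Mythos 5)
Your proof is correct and follows essentially the same route as the paper: $P:=|T|$, $W$ defined by $W(Pu):=Tu$ and extended by continuity, uniqueness via polarization plus the uniqueness of the positive square root, and the same kernel/range bookkeeping for (b)--(e). The only (harmless) deviation is in (c), where you prove the general commutation statement first and then specialize to $S=|T|$, whereas the paper establishes $W|T|=|T|W$ first by a direct computation with $W|T|^2W^*$; both arguments are valid.
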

\begin{proof}
Firstly, we show that, if there exists a decomposition $(\mr{i})$ of $T$ with properties $(\mr{ii})$, $(\mr{iii})$ and $(\mr{iv})$, then it is unique. Thanks to $(\mr{ii})$ and Proposition \ref{Lemmadiag},$(\mr{b})$, $P$ is self--adjoint and hence $\mi{Ker}(P)^\perp=\overline{\mi{Ran}(P)}$ and $T^*T=PW^*WP$. By using $(\mr{iv})$ and the polarization identity (see (\ref{polarization})), we know that $\b WPu | WPv \k=\b Pu | Pv \k$ or, equivalently, $0=\b u | (PW^*WP-P^2)v \k=\b u | (T^*T-P^2)v \k$ for every $u,v \in \sH$. This is equivalent to say that $T^*T=P^2$. Theorem \ref{radT} ensures that $P$ coincides with $|T|$ and hence $(\mr{a})$ holds. In particular, $P$ is unique. Let us show the uniqueness of~$W$. Since $\sH=\mi{Ker}(P) \oplus \mi{Ker}(P)^\perp=\mi{Ker}(P) \oplus \overline{\mi{Ran}(P)}$ and
$W$ vanishes on $\mi{Ker}(P)$, to determine it, it is enough fixing it on $\mi{Ker}(P)^\perp = \overline{Ran(P)}$. Actually, this is done  by the requirement $T= WP$ itself, that fixes $W$ on $Ran(P)$ and thus on the whole $\overline{Ran(P)}$, because $W$ is continuous.

Let us prove the existence of a polar decomposition. Let $P:=|T|$. Define $W$ as follows. If $v \in \mi{Ker}(P)$, then $Wv:=0$. If $v \in \mi{Ran}(P)$, then $Wv:=Tu$, where $u$ is any element of $\sH$ such that $v=Pu$. Since $\mi{Ker}(P)=\mi{Ker}(T)$ (see (\ref{norm})), the latter definition is well posed. Using (\ref{norm}) again, we infer that $W$ is isometric on $\mi{Ran}(P)$: $\|Wv\|=\|Tu\|=\||T|u\|=\|v\|$ for every $v \in \mi{Ran}(P)$. By continuity, $W$ can be uniquely defined on the whole $\overline{\mi{Ran}(P)}=\mi{Ker}(P)^\perp$. Evidently, $W$ and $P$ satisfy properties $(\mr{i})$--$(\mr{iv})$.

In the remainder of the proof, we suppose that $T$ is normal.

Let us prove $(\mr{b})$. The normality of $T$ ensures that $\overline{\mi{Ran}(T)}=\overline{\mi{Ran}(P)}$ (see (\ref{treeq})). Since $T=WP$, it is evident that $W(\mi{Ran}(P))=\mi{Ran}(T)$. By continuity, we have that $W(\overline{\mi{Ran}(T)})=W(\overline{\mi{Ran}(P)}) \subset \overline{\mi{Ran}(T)}$. In this way, we have that $\mi{Ran}(T) \subset W(\overline{\mi{Ran}(T)}) \subset \overline{\mi{Ran}(T)}$. On the other hand, by $(\mr{iv})$, $W$ is isometric on $\mi{Ker}(P)^\perp=\overline{\mi{Ran}(T)}$ and hence $W(\overline{\mi{Ran}(T)})$ is closed in $\sH$. It follows that $W(\overline{\mi{Ran}(T)})=\overline{\mi{Ran}(T)}$. The restriction of $W$ from $\overline{\mi{Ran}(T)}$ into itself turns out to be a surjective isometric operator or, equivalently, a unitary operator. 

By $(\mr{iv})$ and $(\mr{a})$, we have that $W^*W=\1$ on $\mi{Ker}(|T|)^\perp=\overline{\mi{Ran}(|T|)}$. 
In this way, using the equalities $T=W|T|$ and 
$T^*T=TT^*$, we infer that $|T|^2=W|T|^2W^*$. Applying $W^*$ on the left, it arises: $W^*|T|^2=|T|^2W^*$ and, taking the adjoint,
$|T|^2W=W|T|^2$. Since $|T|=\sqrt{|T|^2}$ commutes with all of the operators commuting with $|T|^2$, we get that $(|T|W-W|T|)|T|=0$ and hence $|T|W=W|T|$ on $\overline{\mi{Ran}(|T|)}$.    
Since $W$ vanishes on $\mi{Ker}(|T|)$, it follows that $|T|W=W|T|$ on the whole $\sH$.

Suppose now that $A \in \gB(\sH)$ commute with $T$ and $T^*$. Since $A$ commutes with $T$, it follows that $A$ preserves the decomposition $\sH=\mi{Ker}(T) \oplus \overline{\mi{Ran}(T)}$. As we know, the operator $|T|$ commutes with all operators commuting with $T$ and $T^*$. In particular, it commutes with $A$. In this way, the equality $AW|T|=W|T|A$ (recall that $T=W|T|$) is equivalent to the following one: $(AW-WA)|T|=0$. It follows that $A$ commutes with $W$ on $\overline{\mi{Ran}(T)}$. Since $W$ vanishes on $\mi{Ker}(T)$, $A$ trivially commutes with $W$ on $\mi{Ker}(T)$. This proves that $A$ commutes with $W$ and hence~$(\mr{c})$.

Let us prove $(\mr{d})$. Since $|T|$ is self--adjoint and commutes with $W$, $|T|$ commutes also with $W^*$. In this way, assuming $T$ self--adjoint, we have that $0=T-T^*=(W-W^*)|T|$ and hence $W=W^*$ on $\mi{Ker}(T)^\perp=\overline{\mi{Ran}(T)}$. Since $W$ preserves the decomposition $\sH = Ker(T) \oplus \overline{Ran(T)}$ and vanishes on $ Ker(T)$, one easily gets that $W^*$ vanishes on $Ker(T)$ as well. We infer that $W^*=W$, as desired.

It remains to show $(\mr{d})$. Exactly as we have done to prove $(\mr{c})$, if $T$ is anti self--adjoint, one proves that $0=T+T^*=(W+W^*)|T|$. Thus $W=-W^*$ on 
$\mi{Ker}(T)^\perp$. Since $W=0=-W^*$ on $\mi{Ker}(T)$, we have that $W^*=-W$.
\end{proof}

\begin{remark} \label{remarkbastardo} 
What is important to stress here is that, for a normal operator $T$, the three operators appearing in the decomposition $T= W|T|$ separately respect decomposition (\ref{decH}). In other words, $T$, $W$ and $P$ separately map $\mi{Ker}(T)$ and $\overline{\mi{Ran}(T)}$ into themselves. This circumstance will turn out useful in several proofs along all this work.
\end{remark}

\begin{remark} \label{rem}
The reader observes that, it being $\sH=\mi{Ker}(P) \oplus \mi{Ker}(P)^\perp$, preceding condition $(\mr{iii})$ can be replaced by the following one: $(\mr{iii}^{\pr})$ $\mi{Ker}(P)=\mi{Ker}(W)$. 
\end{remark}


\section{Left multiplications, imaginary units and complex subspaces}
In this part, we introduce a notion that is proper of quaternionic modules, since it arises from the non--commutativity of the algebra of quaternions. It is the notion of left scalar multiplication, which makes both $\sH$ and $\gB(\sH)$  quaternionic two--side Banach modules. As a byproduct, the notion of (operatorial) imaginary unit is also presented. This notion plays a crucial r\^ole in spectral theory over quaternionic Hilbert spaces, giving rise to a nice relationship with the theory in complex Hilbert spaces. This fact will be evident shortly, especially in view of Proposition \ref{propestensione}.

\subsection{Left scalar multiplications} \label{subsec:leftprod}
Let us show that it is possible to equip  $\sH$ with a left multiplication with quaternions. It will be a highly non--canonical operation relying upon a choice of a preferred Hilbert basis. So, pick out a Hilbert basis $N$ of $\sH$ and define the \emph{left scalar multiplication of $\sH$ induced by $N$} as the map $\bH \times \sH \ni (q,u) \mapsto qu \in \sH$ given by 
\beq \label{leftp}
q u:=\sum_{z \in N} z q \b z|u \k \quad \text{if $u \in \sH$ and $q \in \bH$.}
\eeq

\begin{proposition}\label{propprod} 
The left product defined in (\ref{leftp}) satisfies the following properties.
\begin{itemize}
\item[$(\mr{a})$] $q(u+v) = qu + qv$ and $q(up)= (qu)p$ for every $u \in \sH$ and $q,p \in \bH$.

\item[$(\mr{b})$] $\|q u\|=|q|\: \|u\|$ for every $u \in \sH$ and $q \in \bH$.

\item[$(\mr{c})$] $q(q'u) = (qq')u$ for every $u \in \sH$ and $q,q' \in \bH$.
 
\item[ $(\mr{d})$] $\langle \overline{q}u| v\rangle = \langle u| qv\rangle$ for every $u,v \in \sH$ and $q\in \bH$.

\item[$(\mr{e})$] $r u = ur$ for every $u \in \sH$ and $r \in \bR$.

\item[$(\mr{f})$] $qz=zq$ for every $z\in N$ and $q\in\bH$.

\end{itemize}

Consequently, for every $q \in \bH$, the map $L_q:\sH \lra \sH$, sending $u$ into $qu$, is an element of $\gB(\sH)$. Moreover, the  map $\LL_N:\bH \lra \gB(\sH)$, defined by setting
\[
\LL_N(q):=L_q,
\]
is a norm--preserving real algebra homomorphism, with the additional properties:
\beq \label{ur}
L_ru=ur \quad \text{if $r \in \bR$ and $u \in \sH$}
\eeq
and
\beq \label{lq}
(L_q)^*=L_{\overline{q}} \quad \mbox{if $q \in \bH$}.
\eeq

Finally, if $N'$ is another Hilbert basis of $\sH$ and $U \in \gB(\sH)$ is any unitary operator such that $U(N)=N'$, then it holds: 
\begin{itemize}
 \item[$\mr(f)$] $\LL_{N'}(q)=U\LL_N(q)U^{-1}$ for every $q \in \bH$.
 \item[$(\mr{g})$]
$\LL_N=\LL_{N'}$ if and only if $\b z | z' \k \in \bR$ for every $z \in N$ and $z' \in N'$.
\end{itemize}
\end{proposition}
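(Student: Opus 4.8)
The plan is to verify properties $(\mr{a})$ through $(\mr{e})$ directly from the defining formula (\ref{leftp}), using the fact that $N$ is a Hilbert basis, and then deduce the claimed structure of $\LL_N$ together with the final two comparison properties $(\mr{f})$ and $(\mr{g})$. First I would establish property $(\mr{f})$ in the list (that $qz=zq$ for $z\in N$) as a computational warm-up: applying (\ref{leftp}) with $u=z$ and using orthonormality $\b z'|z\k=\de_{z'z}$ collapses the sum to the single term $zq$. Property $(\mr{a})$ is immediate from the right-linearity of the scalar product in the second slot and the associativity axioms of the module; property $(\mr{e})$ follows because a real $r$ commutes with every quaternion, so $ru=\sum_z z r\b z|u\k=\sum_z z\b z|u\k r=ur$, invoking the decomposition $u=\sum_z z\b z|u\k$ from the Hilbert basis proposition.

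The analytic heart is property $(\mr{b})$, the isometry $\|qu\|=|q|\,\|u\|$. Here I would use the Parseval-type identity $(\mr{b})$ of Proposition \ref{propoHB}, namely $\|w\|^2=\sum_{z\in N}|\b z|w\k|^2$. Computing $\b z|qu\k=q\b z|u\k$ by the same collapse as above, I would get
\[
\|qu\|^2=\sum_{z\in N}|\b z|qu\k|^2=\sum_{z\in N}|q\b z|u\k|^2=|q|^2\sum_{z\in N}|\b z|u\k|^2=|q|^2\|u\|^2,
\]
where the multiplicativity $|pq|=|p||q|$ of the quaternionic norm (Remark \ref{remin}(2)) is used in the third equality. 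This simultaneously shows $L_q\in\gB(\sH)$ with $\|L_q\|=|q|$, giving the norm-preserving claim. Property $(\mr{c})$, $q(q'u)=(qq')u$, then follows either by another direct substitution into (\ref{leftp}) or, more cleanly, by checking the two sides have equal components $\b z|\cdot\k$ for all $z\in N$ and appealing to Proposition \ref{propoHB}. For $(\mr{d})$, I would compute $\b\bar q u|v\k$ and $\b u|qv\k$ by expanding both $u$ and $v$ in the basis and using $\b zp|z'p'\k=\bar p\,\b z|z'\k p'=\bar p\,\de_{zz'}\,p'$, so both sides reduce to the same double sum; combined with Hermiticity this yields (\ref{lq}), $(L_q)^*=L_{\bar q}$. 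That $\LL_N$ is a real algebra homomorphism is the conjunction of additivity in $q$ (clear from (\ref{leftp})), the composition law $(\mr{c})$, and $\R$-linearity from $(\mr{e})$.

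The two comparison statements are where the role of the basis choice surfaces. For $(\mr{f})$, writing $U(N)=N'$ I would compute $\LL_{N'}(q)u$ by expanding in $N'=\{Uz\}_{z\in N}$ and using that $U$ unitary gives $\b Uz|u\k=\b z|U^{-1}u\k$, so $\LL_{N'}(q)u=\sum_{z}Uz\,q\b z|U^{-1}u\k=U\big(\sum_z zq\b z|U^{-1}u\k\big)=U\LL_N(q)U^{-1}u$, with $U$ pulled through the absolutely convergent series by continuity and right-linearity. The main obstacle, and the most delicate step, is $(\mr{g})$: I would fix $q$ and compare $\LL_N(q)z'=qz'$ computed via (\ref{leftp}) against $\LL_{N'}(q)z'=z'q$ (using property $(\mr{f})$ applied in the basis $N'$). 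Equality $\LL_N=\LL_{N'}$ forces $qz'=z'q$ for all $z'\in N'$; expanding $qz'=\sum_{z\in N}zq\b z|z'\k$ and matching components, the commutation $qz'=z'q$ for all imaginary $q$ is equivalent to every coefficient $\b z|z'\k$ being central in $\bH$, i.e.\ real. The care needed here is that one must test against enough $q$ (it suffices to use the imaginary units $i,j,k$, since a quaternion commuting with all of them is real) and to argue the converse direction, namely that reality of all $\b z|z'\k$ makes the two defining sums agree term by term; I would handle the converse by substituting $u=z'\in N'$, reducing to $\b z|z'\k\in\bR\Rightarrow q\b z|z'\k=\b z|z'\k q$, and then extending to general $u$ by linearity and density via Proposition \ref{propoHB}.
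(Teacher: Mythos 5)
Your proposal is correct and follows essentially the same route as the paper: the paper dispatches $(\mr{a})$--$(\mr{f})$ and the homomorphism claims as immediate consequences of Proposition \ref{propoHB} (which is exactly what your component-wise computations carry out), and its detailed argument for $(\mr{g})$ is the mirror image of yours, expanding each $z\in N$ in the basis $N'$ and reducing $\LL_N=\LL_{N'}$ to the condition that each coefficient $\b z'|z\k$ commute with every quaternion, hence be real. Your extra care about testing against $i,j,k$ and extending the converse by continuity and density is sound and consistent with the paper's (implicit) use of the same facts.
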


\begin{proof}
All statements straightforwardly follows from Proposition \ref{propoHB}. However, the proof of (g) deserves some comments. Bearing in mind that $z=\sum_{z' \in N'} z'\b z' | z \k$, the equality $\LL_N=\LL_{N'}$ holds if and only if
\[
\sum_{z' \in N'} z' \b z' | z \k q=zq=\LL_N(q)(z)=\LL_{N'}(q)(z)= \sum_{z'\in N'} z'q\b z' | z \k
\]
for every $z \in N$ and $q \in \bH$. This is equivalent to say that, fixed any $z \in N$ and $z' \in N'$, $\b z'|z \k q = q \b z'|z \k$ for every $q \in \bH$, which in turn is equivalent to require that $\b z'|z \k \in \bR$.
\end{proof}

\begin{remark} \label{rem:ur=ru}
$(1)$ All possible different notions of left scalar multiplication have to coincide when multiplying with real quaternions in view of point $(\mr{e})$ of Proposition \ref{leftp}, because $ur$ does not depend on the choice of $N$.

$(2)$ Thanks to Proposition \ref{leftp}, given a left scalar multiplication $\bH \times \sH \ni (q,u) \mapsto L_q(u)=qu \in \sH$, we know that the map $\bH \ni q \mapsto L_q \in \gB(\sH)$ is a norm--preserving real algebra homomorphism satisfying (\ref{ur}) and (\ref{lq}). Conversely, it can be shown that every norm--preserving real algebra homomorphism from $\bH$ to $\gB(\sH)$ satisfying (\ref{ur}) and (\ref{lq}) is a left scalar multiplication induced by some Hilbert basis of $\sH$ (see \cite{GhMoPe2} for the proof).
\end{remark}

\textit{In what follows, by the symbol $\bH \ni q \mapsto L_q$, we mean the left scalar multiplication $L_qu:=\sum_{z \in N} z q \b z|u \k$ of $\sH$ induced by some fixed Hilbert basis $N$ of $\sH$ itself}.


\subsection{Quaternionic two-sided Banach $\boldsymbol{C^*}$-algebra structure on $\boldsymbol{\gB(\sH)}$} \label{subsec:two-sided}

In order to avoid any possible misunderstanding, we specify the meaning we give to the notion of quaternionic two--sided Banach $C^*$--algebra with unity. Such a notion coincides with the one used in \cite{libroverde} (see also \cite{BDS}).

We call \textit{quaternionic two--sided vector space} a non--empty subset $V$ equipped with a \textit{sum} $V \times V \ni (u,v) \mapsto u+v \in V$, with a \textit{left scalar multiplication} $\bH \times V \ni (q,u) \mapsto qu \in V$ and with a \textit{right scalar multiplication} $V \times \bH \ni (u,q) \mapsto uq$ such that $V$ is an abelian group with respect to the sum and it holds:
\begin{itemize}
 \item[$(v1)$] $q(u+v)=qu+qv \;$ and $\; (u+v)q=uq+vq$,
 \item[$(v2)$] $(q+p)u=qu+pu \;$ and $\; u(q+p)=uq+up$,
 \item[$(v3)$] $q(pu)=(qp)u \;$ and $\; (up)q=u(pq)$,
 \item[$(v4)$] $(qu)p=q(up)$,
 \item[$(v5)$] $u=1u=u1$,
 \item[$(v6)$] $ru=ur$
\end{itemize}
for every $u,v \in V$, $q,p \in \bH$ and $r \in \bR$. Such a quaternionic two--sided vector space $V$ is said to be a \textit{quaternionic two--sided algebra} if, in addition, it is equipped with a \textit{product} $V \times V \ni (u,v) \mapsto uv \in V$ such that
\begin{itemize}
 \item[$(a1)$] $u(vw)=(uv)w$,
 \item[$(a2)$] $u(v+w)=uv+uw \;$ and $\; (u+v)w=uw+vw$,
 \item[$(a3)$] $q(uv)=(qu)v \;$ and $\; (uv)q=u(vq)$
\end{itemize}
for every $u,v,w \in V$ and $q \in \bH$. Moreover, we say that $V$ is \textit{with unity}, or \textit{unital}, if there exists an element $\old$ of $V$, the \textit{unity} of $V$, such that
\begin{itemize}
 \item[$(a4)$] $u=\old u=u\old$
\end{itemize}
for every $u \in V$. Suppose that $V$ is a quaternionic two--sided algebra with unity. If $uv=vu$ for every $u,v \in V$, then $V$ is called \textit{commutative}. A map $^*:V \lra V$ is called \textit{$^*$--involution} if it holds:
\begin{itemize}
 \item[$(*1)$] $(u^*)^*=u$,
 \item[$(*2)$] $(u+v)^*=u^*+v^*$, $\; (qu)^*=u^*\overline{q} \;$ and $\; (uq)^*=\overline{q}u^*$,
 \item[$(*3)$] $(uv)^*=v^*u^*$
\end{itemize}
for every $u,v \in V$ and $q \in \bH$. Equipping $V$ with a $^*$--involution, we obtain a \textit{quaternionic two--sided $^*$--algebra with unity}. It is easy to verify that $\old^*=\old$ and, if $u$ is invertible in $V$, then $(u^{-1})^*=(u^*)^{-1}$. Finally, the quaternionic two--sided $^*$--algebra $V$ with unity is called a \textit{quaternionic two--sided Banach $C^*$--algebra with unity} if, in addition, $V$ is equipped with a function $\|\cdot\|:V \lra \bR^+$ such that
\begin{itemize}
 \item[$(n1)$] $\|u\|=0$ if and only if $u=0$,
 \item[$(n2)$] $\|u+v\| \leq \|u\|+\|v\|$,
 \item[$(n3)$] $\|qu\|=|q| \, \|u\|=\|uq\|$,
 \item[$(n4)$] $\|uv\| \leq \|u\| \, \|v\|$,
 \item[$(n5)$] $\|\old\|=1$,
 \item[$(n*)$] $\|u^*u\|=\|u\|^2$
\end{itemize}
for every $u,v \in V$ and $q \in \bH$, and the metric space obtained equipping $V$ with the distance $V \times V \ni (u,v) \mapsto \|u-v\| \in \bR^+$ is complete. We say that $\|\cdot\|$ is a \textit{quaternionic Banach $C^*$--norm} on $V$.

Suppose that $V$ is a quaternionic two--sided Banach $C^*$--algebras with unity $\old$. As usual, a subset $V'$ of $V$ is a \textit{quaternionic two--sided Banach unital $C^*$--subalgebra} of $V$ if $\old \in V'$ and the restrictions to $V'$ of the operations of $V$ define on $V'$ a structure of quaternionic two--sided Banach $C^*$--algebras with unity $\old$. 

A map $\phi:V \lra W$ between quaternionic two--sided Banach $C^*$--algebras with unity is called \textit{$^*$--homomorphism} if it holds:
\begin{itemize}
 \item[$(h1)$] $\phi(u+v)=\phi(u)+\psi(v), \;$ $\phi(qu)=q\phi(u) \;$ and $\; \phi(uq)=\phi(u)q$,
 \item[$(h2)$] $\phi(u^*)=\phi(u)^*$,
 \item[$(h3)$] $\phi(\old)=\old$
\end{itemize}
for every $u,v \in V$ and $q \in \bH$. 

\begin{remark} \label{rem:complex-C^*}
Considering $\bR$ (resp.\ $\bC_\jmath$ for some fixed $\jmath \in \bS$) instead of $\bH$ as set of scalars, one defines a real (resp.\ $\bC_\jmath$) two--sided Banach unital $C^*$--algebra. Thanks to $(v6)$, the notion of real two--sided Banach unital $C^*$--algebra is equivalent to the usual one of real Banach unital $C^*$--algebra. Similarly, complex Banach unital $C^*$--algebras correspond to two--sided $\bC_\jmath$--Banach unital $C^*$--algebras $V$ having the following property in place of $(v6)$:
\begin{itemize}
 \item[$(v6')$] $cu=uc$ for every $u \in V$ and $c \in \bC_\jmath$.
\end{itemize}

The notion of real Banach unital $C^*$--subalgebra of a real Banach unital $C^*$--algebra can be defined in the usual way. Since a quaternionic two--sided Banach unital $C^*$--algebra $V$ is also a real (two--sided) Banach unital $C^*$--algebra, we can speak about real Banach unital $C^*$--subalgebras of $V$. Evidently, if $(v6')$ holds, then one can speak about $\bC_\jmath$--Banach unital $C^*$--subalgebras of $V$ as well.
\end{remark}

Consider a quaternionic Hilbert space $\sH$ and fix a left scalar multiplication $\bH \ni q \mapsto L_q$ of $\sH$. Given a (right $\bH$--linear) operator $T:D(T) \lra \sH$ and $q \in \bH$, we define the map $qT:D(T) \lra \sH$ by setting
\beq \label{eq:qT}
(qT)u:=q(Tu).
\eeq
It is immediate to verify that $qT$ is again a (right $\bH$--linear) operator. Simi\-larly, if $L_q(D(T)) \subset D(T)$, one can define the (right $\bH$--linear) operator $Tq:D(T) \lra \sH$ as follows:
\beq \label{eq:Tq}
(Tq)(u):=T(qu).
\eeq
Observe that, if $q \in \bR$, then the inclusion $L_q(D(T)) \subset D(T)$ is always verified.

The given notions of left and right scalar multiplications $qT$ and $Tq$ between quaternions and operators are nothing but usual compositions $L_qT$ and $TL_q$, respectively. In general, $qT$ is different from $Tq$. However, if $r \in \bR$, the operators $rT$ and $Tr$ defined here are equal and coincide with the operator $Tr$ defined in (\ref{eq:rT}).

By a direct inspection, one can easily prove that, if $D(T)$ is dense in $\sH$, then $(qT)^*=T^*\overline{q}$ and $(Tq)^*=\overline{q} \, T^*$ if $L_q(D(T))\subset D(T)$.

Suppose now that $T \in \gB(\sH)$. By Proposition \ref{propprod}$(\mr{b})$, it follows immediately that
\beq \label{|qT|}
\|qT\|=|q|\, \|T\|.
\eeq
Moreover, it holds also:
\beq \label{|Tq|}
\|Tq\|=|q| \, \|T\|.
\eeq
Indeed, if $q=0$, (\ref{|Tq|}) is evident. If $q \neq 0$, then we have:
\[
\|Tq\|=\sup_{u\neq 0} \frac{\|T(qu)\|}{\|u\|}=  \sup_{v\neq 0} \frac{\|T(v)\|}{\|q^{-1}v\|} = 
 \frac{1}{|q^{-1}|}\sup_{v\neq 0} \frac{\|T(v)\|}{\|v\|} = |q| \|T\|.
\]
Equalities (\ref{|qT|}) and (\ref{|Tq|}) imply that, if $T \in \gB(\sH)$, then $qT$ and $Tq$ belong to $\gB(\sH)$.

It is now easy to establish the next result.

\begin{theorem} \label{thm:two-sided}
Let $\sH$ be a quaternionic Hilbert space equipped with a left scalar multiplication. Then the set $\gB(\sH)$, equipped with the pointwise sum, with the left and right scalar multiplications defined in (\ref{eq:qT}) and (\ref{eq:Tq}), with the composition as product, with the adjunction $T \mapsto T^*$ as $^*$--involution and with the norm defined in (\ref{QN}), is a quaternionic two--sided Banach $C^*$--algebra with unity $\1$. 
\end{theorem}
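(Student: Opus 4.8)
The plan is to verify, one family at a time, the defining axioms $(v1)$--$(v6)$, $(a1)$--$(a4)$, $(*1)$--$(*3)$ and $(n1)$--$(n*)$ together with completeness, reducing each to a result already established. Since the left and right scalar multiplications are by definition the compositions $qT=L_qT$ and $Tq=TL_q$ (see (\ref{eq:qT}) and (\ref{eq:Tq})), the entire two-sided vector space structure will follow from the fact, recorded in Proposition \ref{propprod}, that $\LL_N\colon\bH\to\gB(\sH)$ is a unital, norm-preserving real algebra homomorphism. Concretely, $(v1)$ and $(v2)$ are the additivity of $L_q$ and of $\LL_N$; $(v3)$ is the multiplicativity $L_qL_p=L_{qp}$ combined with associativity of composition; $(v4)$ is just associativity $(L_qT)L_p=L_q(TL_p)$; $(v5)$ uses $L_1=\1$; and $(v6)$ is exactly the identity $L_ru=ur$ from (\ref{ur}), which forces $rT=Tr$ for real $r$.

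The algebra axioms are equally immediate: $(a1)$ and $(a2)$ are associativity and distributivity of operator composition, while $(a3)$ follows by writing $q(TS)=L_q(TS)=(L_qT)S=(qT)S$ and symmetrically on the right. The unit axiom $(a4)$ holds because $\1$ is the identity for composition, so I take $\old=\1$. For the $^*$-involution I would invoke Remark \ref{remarkop}: involutivity $(*1)$ and additivity in $(*2)$ are items (iii) and (iv), while $(*3)$ is item (v). The only place where the scalars interact non-trivially with the adjoint is the remaining part of $(*2)$, namely $(qT)^*=T^*\overline q$ and $(Tq)^*=\overline q\,T^*$; these were already observed in the discussion preceding the theorem and rest on the identity $(L_q)^*=L_{\overline q}$ from (\ref{lq}).

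Finally, for the norm I would collect $(n1)$, $(n2)$, $(n4)$ from (\ref{eq:sum-prod}) and the definition (\ref{QN}); $(n5)$ is the trivial $\|\1\|=1$; the bilateral homogeneity $(n3)$ is precisely (\ref{|qT|}) and (\ref{|Tq|}); and the $C^*$-identity $(n*)$, $\|T^*T\|=\|T\|^2$, is Remark \ref{remarkop}(ii). Completeness of the metric space attached to the norm $\|\cdot\|$ is Proposition \ref{propcompleteness}(a).

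Since every ingredient is already in place, the ``difficulty'' here is organizational rather than mathematical: the statement is an assembly of previously proved facts. If one wishes to single out the steps carrying genuine content, they are the two axioms mixing scalars with the involution, $(qT)^*=T^*\overline q$ and $(Tq)^*=\overline q\,T^*$, and the $C^*$-identity $(n*)$. I would therefore present the argument as a guided checklist, spelling out only those verifications explicitly and citing the earlier results for the routine remainder.
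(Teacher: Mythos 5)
Your proposal is correct and matches the paper's intent exactly: the paper gives no explicit proof, introducing the theorem with ``It is now easy to establish the next result'' after having set up precisely the ingredients you cite (Proposition \ref{propprod}, the identities $(qT)^*=T^*\overline q$ and $(Tq)^*=\overline q\,T^*$, the norm equalities (\ref{|qT|}) and (\ref{|Tq|}), Remark \ref{remarkop} and Proposition \ref{propcompleteness}). Your checklist, including the identification of the only non-routine verifications, is the intended argument.
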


\begin{remark} \label{r0}
Independently from the choice of a left scalar multiplication of $\sH$, $\gB(\sH)$ is always a \emph{real Banach $C^*$--algebra with unity $\1$}. It suffices to consider the right scalar multiplication (\ref{eq:rT}), the adjun\-ction $T \mapsto T^*$ as $^*$--involution and the norm defined in (\ref{QN}).
\end{remark}


\subsection{Imaginary units and complex subspaces}
Consider a quaternionic Hilbert space $\sH$ equipped with a left scalar multiplication $\bH \ni q \mapsto L_q$. For short, we write $L_qu=qu$. For every imaginary unit $\imath \in \bS$, the operator $J:= L_\imath$ is anti self--adjoint and unitary; that is, it holds:
\[
J^*=-J \quad \text{and} \quad J^*J=\1.
\]
The proof straightforwardly arises from Proposition~\ref{propprod}. We intend to establish the converse statement: if an operator $J \in \gB(\sH)$ is anti--self adjoint and unitary, then $J=L'_\imath$ for some left scalar multiplication $\bH \ni q \mapsto L'_q$ of $\sH$.

To prove the statement, we need a preliminary definition known from the litera\-ture \cite{Emch}, which will turn out to be useful several times in this work later.

\begin{definition}\label{defHJ}
Let $J \in \gB(\sH)$ be an anti self--adjoint and unitary operator and let $\imath\in \bS$. Recall that $\bC_\imath$ denotes the real subalgebra of $\bH$ generated by $\imath$; that is, $\C_\imath:=\{\alpha+\imath\beta \in \bH \, | \, \alpha,\beta \in \bR\}$. Define the \emph{complex subspaces $\sH^{J\imath}_+$ and $\sH^{J\imath}_-$ of $\sH$ associated with $J$ and $\imath$} by setting
\[
\sH^{J\imath}_\pm:=\{u \in \sH \, | \, Ju=\pm u \imath\}.
\]
\end{definition}

\begin{remark} \label{rC}
$\sH^{J\imath}_\pm$ are closed subsets of $\sH$, because $u \mapsto Ju$ and $u \mapsto \pm u\imath$ are continuous. However, they are not (right $\bH$--linear) subspaces of $\sH$. The names of $\sH^{J\imath}_+$ and $\sH^{J\imath}_-$ are justified by point $(\mr{e})$ of Proposition \ref{propJ} below.
\end{remark}

The aim of this section is to prove the following result.

\begin{proposition}\label{propJ} 
Let $\sH$ be a quaternionic Hilbert space, let $J \in \gB(\sH)$ be an anti self--adjoint and unitary operator, and let $\imath \in \bS$. Then the following facts hold.
\begin{itemize}
 \item[ $(\mr{a})$] There exists a left scalar multiplication $\bH \ni q \mapsto L_q$ of $\sH$ such that $J = L_\imath$.
 \item[$(\mr{b})$] If $\bH \ni q \mapsto L_q$ and $\bH \ni q \mapsto L'_q$ are  left scalar multiplications of $\sH$ such that $L_\imath=L'_\imath$, then $L_{q}=L'_{q}$ for every $q \in \bC_\imath$.
 \item[$(\mr{c})$] Let $\bH \ni q \mapsto L_q$ and $\bH \ni q \mapsto L'_q$ be left scalar  multiplications of $\sH$ induced by Hilbert bases $N$ and $N'$, respectively. Then $L_\imath=L'_\imath$ if and only if $\b z|z' \k \in \bC_\imath $ for every $z \in N$ and $z' \in N'$.
 \item[$(\mr{d})$] $\sH^{J\imath}_+ \neq \{0\}$ and $\sH^{J\imath}_- \neq \{0\}$.
 \item[$(\mr{e})$] Identify $\bC_\imath$ with $\bC$ in the natural way. Then $\sH^{J\imath}_+$ is a complex Hilbert space with respect to the structure induced by $\sH$: its sum is the sum of $\sH$, its complex scalar multiplication is the  right scalar multiplication of $\sH$ restricted to $\bC_\imath$ and its $\bC_{\imath}$--valued Hermitean scalar product coincides with the restriction of the one of~$\sH$. For short, we say that $\sH^{J\imath}_+$ is a \emph{$\bC_\imath$--Hilbert space}. An analogous statement holds for $\sH^{J\imath}_-$.

\item[$(\mr{f})$] If $N$ is a Hilbert basis of the $\bC_\imath$--Hilbert space $\sH^{J\imath}_+$, then $N$ is also a Hilbert basis of $\sH$ and it holds:
\[
J=\sum_{z\in N} z \imath \b z \, | \, \cdot \, \k.
\]
An analogous statement holds for $\sH^{J\imath}_-$.
\end{itemize}
\end{proposition}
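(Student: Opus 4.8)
The plan is to handle the six claims in the order (e), then the decomposition of $\sH$ that simultaneously yields (d), then (f), from which (a) is immediate, and finally the purely algebraic (b) and (c). The reason is that everything rests on understanding $\sH^{J\imath}_{\pm}$ as complex subspaces and on splitting $\sH$ along them using $J$.

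First I would prove (e). Fix $u,v \in \sH^{J\imath}_+$, so $Ju=u\imath$ and $Jv=v\imath$. The one computation that does the work is
\[
-\imath\langle u|v\rangle = \langle Ju|v\rangle = \langle u|J^*v\rangle = -\langle u|Jv\rangle = -\langle u|v\rangle\imath,
\]
using $\langle up|v\rangle=\overline{p}\langle u|v\rangle$, $J=(J^*)^*$ and $J^*=-J$. Hence $\imath\langle u|v\rangle=\langle u|v\rangle\imath$, i.e.\ $\langle u|v\rangle\in\bC_\imath$. Since $\imath$ commutes with every element of $\bC_\imath$, the set $\sH^{J\imath}_+$ is closed under sum and under right multiplication by $\bC_\imath$, so it is a right $\bC_\imath$-module; being closed in $\sH$ (Remark~\ref{rC}) it is complete, hence a $\bC_\imath$-Hilbert space, the scalar product being the restriction of that of $\sH$. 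The same computation with $Ju=-u\imath$ treats $\sH^{J\imath}_-$.

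Next I would set up the splitting. Writing $R$ for the real-linear map $u\mapsto u\imath$, one has $R^2=-\1$ and $J^2=-\1$ (from $J^*J=\1$, $J^*=-J$), and $J,R$ commute by right $\bH$-linearity of $J$; thus $K:=JR$ satisfies $K^2=\1$. Since $Ku=(Ju)\imath$, one checks $\sH^{J\imath}_+=\{Ku=-u\}$ and $\sH^{J\imath}_-=\{Ku=u\}$, so the real-linear involution $K$ gives the (real) direct sum $\sH=\sH^{J\imath}_+\oplus\sH^{J\imath}_-$. For (d), $K$ can be neither $\1$ nor $-\1$: either would force $J=\pm R$, impossible because $J$ is right $\bH$-linear while $R$ is not (as $q\imath\neq\imath q$ for $q\notin\bC_\imath$); hence both eigenspaces are non-trivial. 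Fixing once and for all $\jmath\in\bS$ with $\jmath\imath=-\imath\jmath$, right multiplication by $\jmath$ sends $\sH^{J\imath}_+$ bijectively onto $\sH^{J\imath}_-$, so $\sH^{J\imath}_-=\sH^{J\imath}_+\jmath$.

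The heart of the matter is (f). Let $N$ be a Hilbert basis of the $\bC_\imath$-Hilbert space $\sH^{J\imath}_+$; by (e) it is orthonormal in $\sH$. For density, write each $q\in\bH$ as $q=a+b\jmath$ with $a,b\in\bC_\imath$; then $zq=za+(zb)\jmath\in\sH^{J\imath}_+\oplus\sH^{J\imath}_+\jmath$, and letting the finite $\bC_\imath$-combinations of $N$ run through a dense subset of $\sH^{J\imath}_+$ shows that the right $\bH$-span of $N$ is dense in $\sH^{J\imath}_++\sH^{J\imath}_-=\sH$. By Proposition~\ref{propoHB}, $N$ is a Hilbert basis of $\sH$. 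The series formula then follows: for any $u$, the expansion $u=\sum_{z\in N}z\langle z|u\rangle$, boundedness and right $\bH$-linearity of $J$, and $Jz=z\imath$ give $Ju=\sum_{z\in N}z\imath\langle z|u\rangle$. Claim (a) is immediate, since the left scalar multiplication induced by this $N$, namely $L_qu=\sum_{z\in N}zq\langle z|u\rangle$, satisfies $L_\imath=J$. Finally, for (b) the fact that $q\mapsto L_q$ is a real algebra homomorphism (Proposition~\ref{propprod}) gives $L_{\alpha+\imath\beta}=\alpha\1+\beta L_\imath$, which depends on the multiplication only through $L_\imath$; and for (c) I would copy the argument of Proposition~\ref{propprod}(g): $L_\imath=L'_\imath$ iff they agree on $N$, which after expanding $z=\sum_{z'\in N'}z'\langle z'|z\rangle$ becomes $\langle z'|z\rangle\imath=\imath\langle z'|z\rangle$, i.e.\ $\langle z'|z\rangle\in\bC_\imath$. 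The main obstacle is exactly (f): upgrading a $\bC_\imath$-basis of the eigenspace to a full $\bH$-basis of $\sH$, where the choice of an anticommuting unit $\jmath$ and the identification $\sH^{J\imath}_-=\sH^{J\imath}_+\jmath$ are indispensable; the remaining steps are bookkeeping built on this decomposition.
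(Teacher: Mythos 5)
Your proof is correct and follows essentially the same route as the paper: establish that the scalar product is $\bC_\imath$--valued on $\sH^{J\imath}_\pm$, split $\sH=\sH^{J\imath}_+\oplus\sH^{J\imath}_-$ via $x_\pm=\frac{1}{2}(x\mp Jx\imath)$ (your involution $K=JR$ is exactly this projection pair), use an anticommuting $\jmath$ to identify $\sH^{J\imath}_-$ with $\sH^{J\imath}_+\jmath$, and deduce $(\mr{f})$, $(\mr{a})$, $(\mr{b})$, $(\mr{c})$ as you do. The only cosmetic difference is $(\mr{d})$, where the paper exhibits an explicit nonzero element $u-Ju\imath$ (replacing $u$ by $u\jmath$ if necessary) while you argue abstractly that $K\neq\pm\1$ because $J=\pm R$ would contradict right $\bH$--linearity; both are valid.
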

 
In order to prove the proposition, we need two lemmata. The first deals with points $(\mr{d})$ and $(\mr{e})$.

\begin{lemma} \label{lemmalemme}
Let $\sH$ be a quaternionic Hilbert space, let $J \in \gB(\sH)$ be an anti self--adjoint and unitary operator and let $\imath \in \bS$. Then $\sH^{J\imath}_+ \neq \{0\}$, the restriction of the Hermitean scalar product $\b \cdot \, | \, \cdot \k$ to $\sH^{J\imath}_+$ is $\bC_\imath$--valued and $\sH^{J\imath}_+$ turns out to be a $\bC_\imath$--Hilbert space with respect to the structure induced by $\sH$.

An analogous statement holds for $\sH^{J\imath}_-$.
\end{lemma}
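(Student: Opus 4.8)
The plan is to prove the three assertions for $\sH^{J\imath}_+$ in order and then deduce the $\sH^{J\imath}_-$ statement by changing the imaginary unit. Throughout I would use that $J^*=-J$ together with $J^*J=\1$ forces $J^2=-\1$, that $J$ is right $\bH$--linear so $J(up)=(Ju)p$, and that $\imath^2=-1$, $\overline{\imath}=-\imath$. For non--triviality I would exhibit elements explicitly: for any $v\in\sH$ put $u:=v-(Jv)\imath$, so that
\[
Ju=Jv-(J^2v)\imath=Jv+v\imath \quad\text{and}\quad u\imath=v\imath-(Jv)\imath^2=v\imath+Jv,
\]
whence $Ju=u\imath$ and $u\in\sH^{J\imath}_+$. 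To get $u\neq0$ for some $v$, note that $u=0$ for all $v$ would mean $Jv=-v\imath$ identically; picking $v\neq0$ and $p\in\bH$ with $p\imath\neq\imath p$ (possible since $\bH$ is non--commutative), right $\bH$--linearity of $J$ yields $-v(p\imath)=J(vp)=(Jv)p=-v(\imath p)$, so $v(p\imath-\imath p)=0$, contradicting $\|v(p\imath-\imath p)\|=\|v\|\,|p\imath-\imath p|>0$ (Proposition~\ref{propnorm}). Hence $\sH^{J\imath}_+\neq\{0\}$.

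For the $\bC_\imath$--valuedness I would, for $u,v\in\sH^{J\imath}_+$, chain the adjoint identity~(\ref{defagg}) with the conjugate--linearity of $\b\cdot|\cdot\k$ in its first argument:
\[
-\imath\,\b u|v\k=\overline{\imath}\,\b u|v\k=\b u\imath|v\k=\b Ju|v\k=\b u|J^*v\k=-\b u|Jv\k=-\b u|v\imath\k=-\b u|v\k\,\imath .
\]
This gives $\imath\,\b u|v\k=\b u|v\k\,\imath$; since a quaternion commutes with $\imath$ exactly when its imaginary part is a real multiple of $\imath$, i.e.\ when it lies in $\bC_\imath$, we conclude $\b u|v\k\in\bC_\imath$.

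For the Hilbert structure, $\sH^{J\imath}_+$ is clearly additively closed, and stable under right multiplication by $c\in\bC_\imath$ because $J(uc)=(Ju)c=(u\imath)c=u(\imath c)=u(c\imath)=(uc)\imath$ using $\imath c=c\imath$; so it is a $\bC_\imath$--subspace. Identifying $\bC_\imath$ with $\bC$, the restricted product is $\bC_\imath$--valued by the previous step and inherits right $\bC_\imath$--linearity, Hermiticity and positivity from $\sH$, hence is a genuine complex Hermitean product with norm the restriction of $\|\cdot\|$; being closed in $\sH$ (Remark~\ref{rC}), $\sH^{J\imath}_+$ is complete, so it is a $\bC_\imath$--Hilbert space. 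The assertion for $\sH^{J\imath}_-$ follows at once from $\sH^{J\imath}_-=\sH^{J(-\imath)}_+$, since $-\imath\in\bS$ and $\bC_{-\imath}=\bC_\imath$.

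The one genuinely delicate step is the non--triviality $\sH^{J\imath}_+\neq\{0\}$: it rests on the fact that right multiplication by $\imath$ fails to be right $\bH$--linear although $J$ is, which is exactly where the non--commutativity of $\bH$ enters. The $\bC_\imath$--valuedness reduces to the one--line bracket computation above, and the remaining Hilbert--space checks are routine.
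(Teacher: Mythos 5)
Your proof is correct and follows essentially the same route as the paper: the same projection $v\mapsto v-(Jv)\imath$ to produce elements of $\sH^{J\imath}_+$, the same use of the non--commutativity of $\bH$ (via an element anticommuting with $\imath$) to rule out triviality, and the same commutation argument showing $\b u|v\k$ commutes with $\imath$ and hence lies in $\bC_\imath$. The only cosmetic differences are that you phrase non--triviality as a contradiction over all $v$ rather than adjusting a single witness by a $\jmath$ with $\imath\jmath=-\jmath\imath$, and you reduce the $\sH^{J\imath}_-$ case to $\sH^{J(-\imath)}_+$ instead of repeating the argument; both variants are fine.
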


\begin{proof}
We prove the thesis for $\sH_+^{J\imath}$, the other case being essentially identical. Firstly, we show that there exists $u \neq 0$ such that $u-Ju\imath \neq 0$. Choose a non--null element $u$ of $\sH$. If $u-Ju\imath \neq 0$, we are done. Suppose $u=Ju\imath$. Let $\jmath \in \bS$ such that $\imath\jmath=-\jmath\imath$. We have that $u\jmath=Ju\imath\jmath=-Ju\jmath\imath$. Since $u \neq 0$, we infer that $u\jmath \neq Ju\jmath\imath$. Replacing $u$ with $u\jmath$ if necessary, we may suppose that $u-Ju\imath \neq 0$, as desired. Then $J(u-Ju\imath)=Ju+u\imath=(u-Ju\imath)\imath$; that is, $Ju'=u'\imath$ for $u':=u-Ju\imath \neq 0$. Therefore 
$\sH_+^{J\imath} \neq \{0\}$. Next we notice that, if $v,w \in \sH_+^{J\imath}$ and $r,r' \in \bR$, one has: 
\begin{align*}
\b v|w \k (r+r'\imath)&=\b v|w (r+r'\imath)\k=\b v | wr+wr'\imath\k=\\
&=\b v | w \k r+\b v | Jw \k r' = 
r\langle v | w \rangle  +  r'\langle -Jv | w \rangle \\
&=
r\langle v | w \rangle -  r'\langle v\imath | w \rangle 
=\langle v (r-r'\imath)| w \rangle
= (r+r'\imath) \langle v | w \rangle\:.
\end{align*}
Therefore, $\langle u|v\rangle$ commutes with every element of $\bC_\imath$. This implies that $\langle u|v\rangle\in\bC_\imath$ and that $\langle\cdot | \cdot \rangle$ reduces to a standard Hermitean scalar product on $\sH_+^{J\imath}$, viewed as a complex pre--Hilbert space. Finally, $\sH_+^{J\imath}$ is complete, since $\sH$ is such and $\sH_+^{J\imath}$ is closed.
\end{proof}

\begin{lemma} \label{lemmasomma}
As a $\bC_\imath$--Hilbert space, $\sH$ admits the following direct sum decomposition:
\[
\sH=\sH_+^{J\imath} \oplus \sH_-^{J\imath}.
\]
Moreover, if $N$ is a Hilbert basis of the $\bC_\imath$--Hilbert space $\sH_+^{J\imath}$, then it holds:
\begin{itemize}
\item[$(\mr{a})$] If $\jmath \in \mathbb S$ is an imaginary unit with
$\imath \jmath =- \jmath \imath$ (i.e.\ orthogonal to $\imath$), $N\jmath := \{z\jmath\:|\: z\in N\}$ is a Hilbert basis of the $\bC_\imath$--Hilbert space $\sH_-^{J\imath}$.
\item[$(\mr{b})$] $N$ is a Hilbert basis of $\sH$.
\end{itemize}
\end{lemma}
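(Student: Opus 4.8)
The whole argument rests on the single identity $J^2=-\1$, which follows from $J^*=-J$ and $J^*J=\1$, together with the defining equations $Ju=\pm u\imath$ for $\sH^{J\imath}_\pm$. First I would exhibit the decomposition at the vector--space level by writing down explicit projections: for $u\in\sH$ put $u_\pm:=\frac12(u\mp Ju\imath)$. Using $J^2=-\1$ and $\imath^2=-1$ one checks directly that $Ju_+=u_+\imath$ and $Ju_-=-u_-\imath$, so $u_\pm\in\sH^{J\imath}_\pm$, while $u=u_++u_-$ by construction. If $u\in\sH^{J\imath}_+\cap\sH^{J\imath}_-$, then $u\imath=Ju=-u\imath$, whence $u\imath=0$ and $u=0$ since right multiplication by $\imath$ is isometric; thus the sum is direct and the components $u_\pm$ are unique.

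To upgrade this to an orthogonal direct sum of $\bC_\imath$--Hilbert spaces I must check that mixed products have vanishing $\bC_\imath$--part (on each summand the product is already $\bC_\imath$--valued by Lemma~\ref{lemmalemme}). For $v\in\sH^{J\imath}_+$ and $w\in\sH^{J\imath}_-$, unitarity of $J$ followed by the defining relations and $\overline\imath=-\imath$ gives
\[
\b v|w\k=\b Jv|Jw\k=\b v\imath|-w\imath\k=-\overline\imath\,\b v|w\k\,\imath=\imath\,\b v|w\k\,\imath .
\]
Since the $\bC_\imath$--component of any $q\in\bH$ equals $\frac12(q-\imath q\imath)$, the displayed identity says precisely that $\b v|w\k$ has zero $\bC_\imath$--component, i.e.\ $\b v|w\k\in\bC_\imath\jmath$; this is exactly the orthogonality needed for $\sH=\sH^{J\imath}_+\oplus\sH^{J\imath}_-$.

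For part $(\mr{a})$, fix a Hilbert basis $N$ of $\sH^{J\imath}_+$ and $\jmath\in\bS$ with $\imath\jmath=-\jmath\imath$. The eigenvalue computation $J(z\jmath)=z\imath\jmath=-z\jmath\imath=-(z\jmath)\imath$ shows $N\jmath\subset\sH^{J\imath}_-$, and $\b z\jmath|z'\jmath\k=-\jmath\b z|z'\k\jmath=\delta_{zz'}$ (using that $\b z|z'\k$ is real) gives orthonormality. For completeness I would verify condition $(\mr{c})$ of Proposition~\ref{propoHB} inside $\sH^{J\imath}_-$: if $w\in\sH^{J\imath}_-$ satisfies $\b z\jmath|w\k=0$ for all $z\in N$, then the relation $\b z\jmath|w\k=-\jmath\b z|w\k$ forces $\b z|w\k=0$ for every $z$; the vector $v:=w\jmath$ lies in $\sH^{J\imath}_+$ by the same eigenvalue computation and satisfies $\b z|v\k=\b z|w\k\jmath=0$ for all $z$, so $v=0$ and hence $w=0$. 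Thus $N\jmath$ is a Hilbert basis of the $\bC_\imath$--Hilbert space $\sH^{J\imath}_-$.

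Finally, part $(\mr{b})$ is where the real content lies, since it asserts that a basis of the single ``slice'' $\sH^{J\imath}_+$ is already a full \emph{quaternionic} Hilbert basis of $\sH$. The set $N$ is orthonormal in $\sH$ because its products are real. Combining the orthogonal decomposition with $(\mr{a})$, the union $N\cup N\jmath$ is a Hilbert basis of $\sH$ viewed as a $\bC_\imath$--Hilbert space, so its right $\bC_\imath$--span is dense in $\sH$. The key observation is that this $\bC_\imath$--span sits inside the right $\bH$--span $\langle N\rangle$: each $z\jmath$ is $z$ times the scalar $\jmath\in\bH$, and $\bH=\bC_\imath\oplus\bC_\imath\jmath$. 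Hence $\langle N\rangle$ is dense, and condition $(\mr{d})$ of Proposition~\ref{propoHB} makes $N$ a Hilbert basis of the quaternionic Hilbert space $\sH$. The one persistent hazard throughout is the non--commutative bookkeeping — the conjugate--linearity of $u\mapsto u\jmath$ and the rule $\jmath c=\overline c\,\jmath$ for $c\in\bC_\imath$ — which is the likeliest source of sign slips, so I would keep those identities in view at each step.
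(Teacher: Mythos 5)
Your proof is correct and follows essentially the same route as the paper: the same projections $u_\pm=\frac12(u\mp Ju\imath)$, the same conjugate--linear map $y\mapsto y\jmath$ onto $\sH_-^{J\imath}$, and the same appeal to Proposition~\ref{propoHB} for part $(\mr{b})$. The only substantive difference is that you explicitly verify the orthogonality of $\sH_+^{J\imath}$ and $\sH_-^{J\imath}$ (vanishing of the $\bC_\imath$--component of $\b v|w\k$), a point the paper leaves implicit; that check is correct and a worthwhile addition.
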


\begin{proof}
The first statement holds because $\sH_+^{J\imath}\cap \sH_-^{J\imath}= \{0\}$ and, defining $x_\pm := \frac{1}{2}(x \mp Jx\imath)$ for $x \in \sH$, one has $x= x_++x_-$ and $x_\pm \in \sH_\pm^{J\imath}$ as it follows by a direct inspection.

Let us prove $(\mr{a})$. As it is easy to verify, the map $\sH_+^{J\imath} \ni y \mapsto y\jmath \in \sH_-^{J\imath}$ is a well--defined isometric $\bC_\imath$--anti-linear isomorphism and hence the set $N\jmath$ is orthonormal, because $N$ is such. Choose $u \in \sH^{J\imath}_-$. Since $u \jmath \in \sH^{J\imath}_+$, we have that $u\jmath=\sum_{z \in N} z \b z|u \jmath \k$. From this equality, it follows that
\[
u=\sum_{z \in N} z \b z | u \k=\sum_{z \in N} z\jmath \b z\jmath| u \k.
\]
This proves that $N\jmath$ is a Hilbert basis of $\sH_-^{J\imath}$.

It remains to show $(\mr{b})$. By point $(\mr{a})$, one has:
\[x = \sum_{z\in N} z\langle z| x_+ \rangle +  \sum_{z\in N} z\jmath \langle z\jmath| x_- \rangle  = \sum_{z\in N} z \langle z| x_+ +x_- \rangle\:.\]
So $<N>$ is dense in $\sH$ and $\langle z|z'\rangle = \delta_{zz'}$ for every $z,z' \in N$. Thus $N$ is a Hilbert basis of $\sH$. 
\end{proof}

\begin{proof}[{Proof of Proposition \ref{propJ}}]
Points $(\mr{d})$ and $(\mr{e})$ were proved in Lemma \ref{lemmalemme}.

Let us prove $(\mr{f})$. Let $N$ be a Hilbert basis of $H_+^{J\imath}$. By the definition of $H_+^{J\imath}$, it holds that $Jz=z\imath$ for each $z \in N$. Thanks to point $(\mr{b})$ of Lemma \ref{lemmasomma}, $N$ is also a Hilbert basis of $\sH$. In this way, for every $u \in \sH$, it holds:
\[
Ju=J\sum_{z \in N} z \b z|u \k=\sum_{z \in N} Jz \b z|u \k
=\sum_{z \in N} z\imath \b z|u \k.
\]
Point $(\mr{a})$ follows immediately. In fact, if $\bH \ni q \mapsto L_q$ is the left scalar multiplication induced by $N$, then $J=L_\imath$.

Let now $\bH \ni q \mapsto L_q$ and $\bH \ni q \mapsto L'_q$ be scalar left multiplications of $\sH$ such that $L_\imath=L'_\imath$. Since $L_1=\1=L'_1$, for each $\alpha,\beta \in \bR$, it follows:
\[
L_{\alpha+\beta\imath}=L_1\alpha+L_\imath \beta=L'_1\alpha+L'_\imath \beta=L'_{\alpha +\beta\imath}.
\]
This shows $(\mr{b})$. Point $(\mr{c})$ can be proved similarly to $(\mr{g})$ in Proposition \ref{propprod}.
\end{proof}


\subsection{Extension of $\boldsymbol{J}$ to a full left scalar multiplication of $\boldsymbol{\sH}$}

Finally, we present a technical, but quite useful, proposition we shall employ later several times.
This proposition generalizes, and formulates into a modern language, some results presented in Section 3 of \cite{Emch}.

\begin{proposition}\label{propestensione}
For every $\bC_\imath$--linear operator $T: D(T) \lra \sH^{J\imath}_+$ with $D(T) \subset \sH^{J\imath}_+$, there exists a unique right $\bH$--linear operator $\tilde{T}:D(\tilde T) \lra \sH$ of $T$ with $D(\tilde{T}) \subset \sH$ such that $J(D(\tilde T)) \subset D(\tilde T)$, $D(\tilde{T}) \cap \sH^{J\imath}_+ =D(T)$ and $\tilde T(u)=T(u)$ for every $u \in D(T)$. The following additional facts hold.
\begin{itemize}
 \item[$(\mr{a})$] If $T\in \gB(\sH^{J\imath}_+)$, then $\tilde{T} \in \gB(\sH)$ and $\|\tilde T\|=\|T\|$. 
 \item[$(\mr{b})$]  $J\tilde{T}=\tilde{T}J$.
 \item[$(\mr{c})$] Let $V:D(V) \lra \sH$ be a right $\bH$--linear operator with $D(V) \subset \sH$. Then $V$ is equal to $\tilde U$ for some $\bC_\imath$--linear operator $U : D(V)\cap \sH^{J\imath}_+ \lra \sH^{J\imath}_+$ if and only if $J(D(V)) \subset D(V)$ and $JV=VJ$.
 \item[$(\mr{d})$] If $D(T)$ is dense in $\sH^{J\imath}_+$, then $D(\tilde T)$ is dense in $\sH$ and $(\tilde T)^*=\widetilde{T^*}$.
\end{itemize}

Furthermore, given a $\bC_\imath$--linear operator $S:D(S) \lra \sH^{J\imath}_+$ with $D(S) \subset \sH^{J\imath}_+$, we have:
\begin{itemize}
 \item[$(\mr{e})$] $\widetilde{ST}= \tilde{S}\tilde{T}$. 
 \item[$(\mr{f})$] If $S$ is a right $($resp. left$)$ inverse of $T$, then $\tilde S$ is a right $($resp. left$)$ inverse of $\tilde T$. 
\end{itemize}

An analogous statement holds for $\sH^{J\imath}_-$.
\end{proposition}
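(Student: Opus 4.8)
The plan is to build $\tilde T$ explicitly from the direct sum decomposition $\sH=\sH^{J\imath}_+\oplus\sH^{J\imath}_-$ of Lemma~\ref{lemmasomma}. Fix an imaginary unit $\jmath\in\bS$ with $\imath\jmath=-\jmath\imath$. Since $\sH^{J\imath}_+\ni y\mapsto y\jmath\in\sH^{J\imath}_-$ is a bijection, every $u\in\sH$ is uniquely $u=a+b\jmath$ with $a,b\in\sH^{J\imath}_+$. Accordingly I would set $D(\tilde T):=D(T)\oplus D(T)\jmath=\{a+b\jmath\mid a,b\in D(T)\}$ and define
\[
\tilde T(a+b\jmath):=T(a)+T(b)\jmath .
\]
The three requested properties are then read off directly: $J$-invariance of $D(\tilde T)$ follows from $J(b\jmath)=(Jb)\jmath=(b\imath)\jmath$ together with the $\bC_\imath$-stability of $D(T)$; the identity $D(\tilde T)\cap\sH^{J\imath}_+=D(T)$ holds because the sum is direct; and $\tilde T|_{D(T)}=T$ is the case $b=0$.

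The key algebraic verification is that $\tilde T$ is right $\bH$-linear. Writing $q\in\bH$ as $q=q_1+q_2\jmath$ with $q_1,q_2\in\bC_\imath$ and using $\jmath w=\overline w\,\jmath$ for $w\in\bC_\imath$, one computes
\[
(a+b\jmath)q=(aq_1-b\overline{q_2})+(aq_2+b\overline{q_1})\jmath ,
\]
whose two components again lie in $D(T)$; applying $\tilde T$ and invoking the $\bC_\imath$-linearity of $T$ gives exactly $(\tilde T(a+b\jmath))q$, so $\tilde T(uq)=(\tilde Tu)q$. For uniqueness I would show that any right $\bH$-linear extension with the stated properties must have domain $D(T)\oplus D(T)\jmath$: if $u$ is in its domain then $u_\pm=\tfrac12(u\mp Ju\imath)$ lie there by $J$-invariance, whence $u_+\in\sH^{J\imath}_+\cap D(\tilde T)=D(T)$ and $b:=-u_-\jmath\in D(T)$, so $u=u_++b\jmath$; right $\bH$-linearity then pins down the values.

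For the additional items most of the work is formal. Property~$(\mr{b})$ is the same computation as the $J$-invariance check, and $(\mr{e})$, $(\mr{f})$ follow by comparing natural domains once one notes $\widetilde{\1}=\1$. For $(\mr{a})$ I would take a Hilbert basis $N$ of $\sH^{J\imath}_+$, which by Lemma~\ref{lemmasomma}$(\mr{b})$ is also a Hilbert basis of $\sH$; since $\langle z\mid a+b\jmath\rangle=\langle z\mid a\rangle+\langle z\mid b\rangle\jmath$ with the summands lying in the orthogonal real subspaces $\bC_\imath$ and $\bC_\imath\jmath$ of $\bH$, Parseval gives $\|a+b\jmath\|^2=\|a\|^2+\|b\|^2$, and the identical formula applied to $\tilde T(a+b\jmath)=T(a)+T(b)\jmath$ yields $\|\tilde T\|\le\|T\|$, hence equality. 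For $(\mr{c})$, given $V$ with $J(D(V))\subset D(V)$ and $JV=VJ$, I would set $U:=V|_{D(V)\cap\sH^{J\imath}_+}$; the commutation gives $J(Vu)=(Vu)\imath$ for $u\in\sH^{J\imath}_+$, so $U$ maps into $\sH^{J\imath}_+$ and is $\bC_\imath$-linear, and the domain description above identifies $V$ with $\tilde U$ by uniqueness.

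The main obstacle is the adjoint identity $(\tilde T)^*=\widetilde{T^*}$ in $(\mr{d})$. Density of $D(\tilde T)$ is immediate from density of $D(T)$ and the isometry $y\mapsto y\jmath$. The delicate point is equality of domains. Expanding the $\bH$-scalar product of $a+b\jmath$ and $c+d\jmath$ (all four vectors in $\sH^{J\imath}_+$) into its $\bC_\imath$- and $\bC_\imath\jmath$-parts, using $\langle b\jmath\mid c\rangle=-\overline{\langle b\mid c\rangle}\,\jmath$ and $\langle b\jmath\mid d\jmath\rangle=\overline{\langle b\mid d\rangle}$, shows that the $\bC_\imath$-adjoint relation for $T$ upgrades to $\langle\widetilde{T^*}u\mid v\rangle=\langle u\mid\tilde Tv\rangle$, so $\widetilde{T^*}\subset(\tilde T)^*$. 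For the reverse inclusion I would take $w\in D((\tilde T)^*)$, write $w=w_++e\jmath$ and $(\tilde T)^*w=w^*_++f\jmath$, test the defining identity against $v=c\in D(T)$, and separate it into components: the $\bC_\imath$-part gives $\langle w^*_+\mid c\rangle=\langle w_+\mid Tc\rangle$ and the $\bC_\imath\jmath$-part gives $\langle f\mid c\rangle=\langle e\mid Tc\rangle$ for all $c\in D(T)$, which say precisely that $w_+,e\in D(T^*)$ with $T^*w_+=w^*_+$ and $T^*e=f$. Hence $w\in D(T^*)\oplus D(T^*)\jmath=D(\widetilde{T^*})$, and $(\tilde T)^*=\widetilde{T^*}$ follows. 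The statement for $\sH^{J\imath}_-$ is obtained by symmetry, interchanging the roles of the two subspaces.
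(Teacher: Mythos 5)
Your construction is the one the paper uses: the same decomposition $\sH=\sH^{J\imath}_+\oplus\sH^{J\imath}_-$, the same domain $D(\tilde T)=D(T)\oplus D(T)\jmath$, the same formula $\tilde T(a+b\jmath)=T(a)+T(b)\jmath$, and the same uniqueness argument via $u_\pm=\tfrac12(u\mp Ju\imath)$; the verifications of right $\bH$--linearity, (b), (c), (e) and (f) also match. You diverge only in two verifications, both correctly: in (a) you get $\|a+b\jmath\|^2=\|a\|^2+\|b\|^2$ by Parseval with respect to a Hilbert basis of $\sH^{J\imath}_+$, where the paper checks directly that $\langle a|b\jmath\rangle+\langle b\jmath|a\rangle=0$; and in (d) you prove $(\tilde T)^*=\widetilde{T^*}$ by splitting the adjoint identity into its $\bC_\imath$-- and $\bC_\imath\jmath$--components, whereas the paper first obtains $J\tilde T^*=\tilde T^*J$ by taking adjoints in (b) and then applies (c) --- your version is more computational but entirely self--contained.
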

\begin{proof}
Choose $\jmath \in \bS$ in such a way that $\{1,\imath,\jmath,\imath\jmath\}$ is a basis of $\bH$ and $\imath\jmath=-\jmath\imath$.

We begin showing that, if $\tilde{T}:D(\tilde{T}) \lra \sH$ is a right $\bH$--linear extension of $T$ with $J(D(\tilde{T})) \subset D(\tilde{T})$ and $D(\tilde{T}) \cap \sH^{J\imath}_+=D(T)$, then $\tilde{T}$ is uniquely determined by~$T$.

Denote by $\Phi:\sH \lra \sH$ the $\bC_\imath$--anti--linear isomorphism of $\sH$ sending $x$ in $x\jmath$. It is immediate to verify that $\Phi(\sH^{J\imath}_\pm)=\sH^{J\imath}_\mp$. Since $D(\tilde{T})$ is a right $\bH$--linear subspace of $\sH$, we have that $\Phi(D(\tilde{T}))=D(\tilde{T})$. Applying $\Phi$ to both members of the equality $D(\tilde{T}) \cap \sH^{J\imath}_+=D(T)$, we obtain $D(\tilde{T}) \cap \sH^{J\imath}_-=\Phi(D(T))$. Let $x \in D(\tilde{T})$. Define $a:=\frac{1}{2}(x-Jx\imath)$ and $b:=\frac{1}{2}(x+Jx\imath)$. It is easy to see that $a \in \sH^{J\imath}_+$, $b \in \sH^{J\imath}_-$ and $x=a+b$. Since $J(D(\tilde{T})) \subset D(\tilde{T})$, we infer that $a \in D(\tilde{T}) \cap \sH^{J\imath}_+=D(T)$ and $b \in D(\tilde{T}) \cap \sH^{J\imath}_-=\Phi(D(\tilde{T}))$. This proves that, as a $\bC_\imath$--linear subspace of $\sH$, $D(\tilde{T})$ coincides with $D(T) \oplus \Phi(D(T))$. Bearing in mind that $b\jmath=\Phi(b) \in \sH^{J\imath}_+$, we have that $\tilde{T}(b)=-\tilde{T}(b\jmath\jmath)=-T(b\jmath)\jmath$ and hence $\tilde T(x)=T(a)-T(b\jmath)\jmath$. It follows that $\tilde{T}$ is uniquely determined by $T$, as desired.

Concerning the existence of the extension of $T$ with the required properties, we are now forced to define $D(\tilde{T}):=D(T) \oplus \Phi(D(T))$ and $\tilde{T}:D(\tilde{T}) \lra \sH$ by setting $\tilde{T}(x):=T(a)-T(b\jmath)\jmath$, where $x \in D(\tilde{T})$, $a \in D(T)$, $b \in \Phi(D(T))$ and $x=a+b$. 

We must verify that $J(D(\tilde{T})) \subset D(\tilde{T})$, $D(\tilde{T})$ is a right $\bH$--linear subspace of $\sH$ and $\tilde{T}$ is right $\bH$--linear. Let $x$, $a$ and $b$ be as above. Observe that $J(D(\tilde{T})) \subset D(\tilde{T})$. Indeed, $Jx=Ja+Jb=a\imath-b\imath$ belongs to $D(\tilde{T})$, because $D(T)$ and $\Phi(D(T))$ are $\bC_\imath$--linear subspaces of $\sH$. Let $q \in \bH$. There exist, and are unique, $z_1,z_2 \in \bC_\imath$ such that $q=z_1+z_2\jmath$. It holds:
\[
xq=(a+b)(z_1+z_2\jmath)=(az_1+bz_2\jmath)+(bz_1+az_2\jmath).
\]
Since $z_1\jmath=\jmath \overline{z}_1$ and $z_2\jmath=\jmath \overline{z}_2$, we get that $az_1+bz_2\jmath \in D(T)$ and $bz_1+az_2\jmath \in \Phi(D(T))$ and then $D(\tilde{T})$ is a right $\bH$--linear subspace of $\sH$. Moreover, we have:
\begin{align*}
\tilde{T}(xq) & =T(az_1+bz_2\jmath)-T((bz_1+az_2\jmath)\jmath)\jmath=\\
& =T(a)z_1+T(bz_2\jmath)-T(bz_1\jmath-az_2)\jmath=\\
& =T(a)z_1+T(bz_2\jmath)-T(bz_1\jmath)\jmath+T(a)z_2\jmath.
\end{align*}

On the other hand,  we obtain the right $\bH$--linearity of $\tilde{T}$ as follows:
\begin{align*}
\tilde{T}(x)q & =(T(a)-T(b\jmath)\jmath)(z_1+z_2\jmath)=\\
& =T(a)z_1-T(b\jmath)\jmath z_1+T(a)z_2\jmath-T(b\jmath)\jmath z_2\jmath=\\
& =T(a)z_1-T(b\jmath)\overline{z}_1\jmath+T(a)z_2\jmath-T(b\jmath)\overline{z}_2\jmath\jmath=\\
& =T(a)z_1-T(b\jmath\overline{z}_1)\jmath+T(a)z_2\jmath+T(b\jmath\overline{z}_2)=\\
& =T(a)z_1-T(bz_1\jmath)\jmath+T(a)z_2\jmath+T(bz_2\jmath)=\tilde{T}(xq).
\end{align*}

Let us prove the remaining points.

$(\mr{a})$ Suppose that $T \in \gB(\sH^{J\imath}_+)$. Then $D(\tilde{T})=\sH^{J\imath}_+ \oplus \Phi(\sH^{J\imath}_+)=\sH^{J\imath}_+ \oplus \sH^{J\imath}_-=\sH$. Let $x$, $a$ and $b$ be as above and let $c \in \sH^{J\imath}_+$ such that $b=c\jmath$. Since $\jmath z=\overline{z}\jmath$ for every $z \in \bC_\imath$, we obtain:
\begin{align*}
\b a|b \k+\b b|a \k & =\b a|c\jmath \k+\b c\jmath|a \k=\b a|c \k \jmath-\jmath \b c|a \k=\\
&=\b a|c \k \jmath-\overline{\b c|a \k}\jmath=\b a|c \k \jmath-\b a|c \k \jmath=0
\end{align*}
and hence $\|x\|^2=\|a+b\|^2=\|a\|^2+\|b\|^2$. By the same argument, we have that $\|\tilde{T}(x)\|^2=\|T(a)\|^2+\|T(b\jmath)\jmath\|^2$. In particular, it holds:
\[
\|\tilde{T}(x)\|^2 \leq \|T\|^2(\|a\|^2+\|b\|^2)=\|T\|^2\|x\|^2.
\]
This shows that $\|\tilde{T}\| \leq \|T\|$. On the other hand, we know that $\sH^{J\imath}_+ \neq \{0\}$ and hence $\tilde{T}(y)=T(y)$ for some $y \in \sH^{J\imath}_+ \setminus \{0\}$. It follows that $\|\tilde{T}\|=\|T\|$. 

$(\mr{b})$ Let $x=a+b \in \sH$, where $a \in \sH^{J\imath}_+$ and $b \in \sH^{J\imath}_-$. Observe that $Jx=a\imath-b\imath \in D(\tilde{T})=D(T) \oplus \Phi(D(\tilde{T}))$ if and only if $a\imath \in D(T)$ and $b\imath \in \Phi(D(T))$; that is, $x \in D(\tilde{T})$. These considerations imply that $J(D(\tilde{T}))=D(\tilde{T})$. It follows that $D(J\tilde{T})=D(\tilde{T})=D(\tilde{T}J)$. Suppose now $x=a+b \in D(\tilde{T})$. It holds:
\[
\tilde{T}Jx=\tilde{T}(a\imath-b\imath)=T(a\imath)+T(b\imath\jmath)\jmath=T(a)\imath-T(b\jmath)\imath\jmath
\]
and
\[
J\tilde{T}x=J(T(a)-T(b\jmath)\jmath)=T(a)\imath+T(b\jmath)\jmath\imath=T(a)\imath-T(b\jmath)\imath\jmath=\tilde{T}Jx.
\]

$(\mr{c})$ If $V=\tilde{U}$ for some $\bC_\imath$--linear operator $U : D(V)\cap \sH^{J\imath}_+ \lra \sH^{J\imath}_+$, then we just know that $J(D(V)) \subset D(V)$ and $JV=VJ$. Let us prove the converse implication. Given $x \in D(V) \cap \sH^{J\imath}_+$, it holds: $J(Vx)=V(Jx)=V(x\imath)=(Vx)\imath$. In other words, we have that $V(D(V) \cap \sH^{J\imath}_+) \subset \sH^{J\imath}_+$. Define the $\bC_\imath$--linear operator $U:D(V) \cap \sH^{J\imath}_+ \lra \sH^{J\imath}_+$ by setting $Uu:=Vu$. Since $J(D(V)) \subset D(V)$, $D(U)=D(V) \cap \sH^{J\imath}_+$ and $V$ is a right $\bH$--linear extension of $U$, by the uniqueness of such an extension, we infer that $V=\tilde{U}$.

$(\mr{d})$ Since $\sH=\sH^{J\imath}_+ \oplus \sH^{J\imath}_-$ and $D(\tilde{T})=D(T) \oplus \Phi(D(T))$, if $D(T)$ is dense in $\sH^{J\imath}_+$, then $D(\tilde{T})$ is automatically dense in $\sH$. Taking the adjoint in both members of the equality $\tilde{T}J=J\tilde{T}$, using $J^*=-J$ and bearing in mind point $(\mr{v})$ of Remark \ref{remarkop}, we infer that $J\tilde{T}^*=\tilde{T}^*J$, which includes $J(D(\tilde{T}^*)) \subset D(\tilde{T}^*)$. Here $\tilde{T}^*$ denotes $(\tilde{T})^*$. Thanks to $(\mr{c})$, there exists $U:D(\tilde{T}^*) \cap \sH^{J\imath}_+ \lra \sH^{J\imath}_+$ such that $\tilde{T}^*=\tilde{U}$. It remains to prove that $U=T^*$ or, equivalently, that $\tilde{T}^*x=T^*x$ for every $x \in D(\tilde{T}^*) \cap \sH^{J\imath}_+$. This is quite easy to see. Indeed, if $x \in D(\tilde{T}^*) \cap \sH^{J\imath}_+$, then $\tilde{T}^*x \in \sH^{J\imath}_+$, because $\tilde{T}^*$ commutes with $J$. Moreover, by definition of adjoint, it holds: $\b \tilde{T}^*x|v \k=\b x|\tilde{T}v \k$ for every $v \in D(\tilde{T})$. In particular, it follows that $\b \tilde{T}^*x|v \k=\b x|Tv \k$ for every $v \in D(T)$, which is equivalent to say that $\tilde{T}^*x=T^*x$, as desired. 

$(\mr{e})$ Let $x \in D(\widetilde{ST})$. As we have just proved, $D(\widetilde{ST})=D(ST) \oplus \Phi(D(ST))$ so there exist, and are unique, $a \in D(ST)$ and $b \in \Phi(D(ST))$ such that $x=a+b$. It follows that $\{T(a),T(b\jmath)\} \subset D(S)$ and hence $\tilde{T}(x)=T(a)-T(b\jmath)\jmath \in D(S) \oplus \Phi(D(S))=D(\tilde{S})$. In other words, $x$ belongs to $D(\widetilde{ST})$. This implies that $D(\widetilde{ST}) \subset D(\tilde{S}\tilde{T})$. Proceeding similarly, one can prove the converse inclusion. The equality $D(\widetilde{ST})=D(\tilde{S}\tilde{T})$, just proved, ensures that $J(D(\tilde{S}\tilde{T}))=J(D(\widetilde{ST})) \subset D(\widetilde{ST})=D(\tilde{S}\tilde{T})$ and $D(\tilde{S}\tilde{T}) \cap \sH^{J\imath}_+=D(\widetilde{ST}) \cap \sH^{J\imath}_+=D(ST)$. In this way,  being $\tilde{S}\tilde{T}u=STu$ for every $u \in D(ST)$, thanks to the uniqueness of the extension, we infer that $\tilde{S}\tilde{T}=\widetilde{ST}$.

$(\mr{f})$ This point follows immediately from $(\mr{e})$.
\end{proof}


\section{Resolvent and spectrum}
It is not so obvious how to extend the definitions of spectrum and resolvent in quaternionic Hilbert spaces. Let us focus on the simpler situation, where we are looking for eigenvalues of a bounded right $\bH$--linear operator $T$ on a quaternionic Hilbert space $\sH$. Without fixing any left scalar multiplication of $\sH$, the equation determining the eigenvalues reads as follows:
\[
Tu=uq.
\] 
Here a drawback arises: if $q \in \bH \setminus \bR$ is fixed, the map $u \mapsto uq$ is not right $\bH$--linear, differently from $u \mapsto Tu$. Consequently, the eigenspace of $q$ cannot be a right $\bH$--linear subspace. Indeed, if $\lambda \neq 0$, $u\lambda$ is an eigenvector of $\lambda^{-1} q \lambda$ instead of $q$ itself. As a tentative way out, one could decide to deal with quaternionic Hilbert spaces equipped with a left scalar multiplication interpreting the eigenvalues equation in terms of such a left scalar multiplication:
\[
Tu=qu.
\]
Now both sides are right $\bH$--linear. However, this approach is not suitable for physical applications \cite{Adler}, where self--adjoint operators should have real spectrum. As an elementary example, consider the finite dimensional quaternionic Hilbert space constructed over $\sH=\bH \oplus \bH$ equipped with the standard scalar product:
\[
\b (r,s)|(u,v) \k:=\overline{r}u+\overline{s}v \quad \text{if $u,v,r,s \in \bH$.}
\]
The right $\bH$--linear operator represented by the matrix
\[
T=
\begin{bmatrix}
0 & i \\
-i & 0
\end{bmatrix},
\]
where the multiplication is that on the left, is self--adjoint. (See \cite{Loring} and references therin for  several results in the  finite-dimensional case).
However, for every quaternion of the form $q=a+cj+dk$ with $a,c,d \in \bR$ and $a^2+c^2+d^2=1$, there is an element $w_q \in \sH\setminus\{0\}$ with $Tw_q=qw_q$. Although $T=T^*$, it admits non--real eigenvalues. Consequently, in the rest of the paper, we come back to the former approach keeping the constraint, found above, concerning the fact that each eigenvalue $q$ brings a whole conjugation class of the quaternions, the \emph{eigensphere}: \[\cS_q=\{\lambda^{-1} q \lambda \in \bH \,|\, \lambda \in \bH \setminus \{0\}\}.\]
As a matter of fact, we adopt the viewpoint introduced in \cite{libroverde}, that is invariant under conjugation in the sense just pointed out.


\subsection{Spherical resolvent and spectrum and their elementary properties}
We are in a position to present  one of the most important notion treated in this work, that of spectrum of a quaternionic operator. 
First of all, given a (right $\bH$--linear) operator $T:D(T) \lra \sH$ and $q \in \bH$, we define the associated operator $\Delta_q(T):D(T^2) \lra \sH$ by setting:
\[
\Delta_q(T):=T^2-T(q+\overline{q})+\1 |q|^2.
\]
The fundamental suggestion by \cite{libroverde} is to systematically replace $T-\1 q$ for $\Delta_q(T)$ in the definition of resolvent set, resolvent operator and spectrum. The following definitions are a re-adaptation of the corresponding ones stated in \cite{libroverde}. A more accurate comparison  appears immediately after the definition.

\begin{definition} \label{def_spectrum}
Let $\sH$ be a quaternionic Hilbert space and let $T:D(T) \lra  \sH$ be an operator. The \emph{spherical resolvent set of $T$} is the set $\srho(T) \subset \bH$ of the quaternions $q$ such that the three following conditions hold true:
\begin{itemize}
 \item[$(\mr{a})$] $\mi{Ker}(\Delta_q(T))= \{0\}$.  
 \item[$(\mr{b})$] $\mi{Ran}(\Delta_q(T))$ is dense in $\sH$. 
 \item[$(\mr{c})$] $\Delta_q(T)^{-1}:\mi{Ran}(\Delta_q(T)) \lra D(T^2)$ is bounded.
\end{itemize}

The \emph{spherical spectrum $\ssp(T)$ of $T$} is defined by setting $\ssp(T):=\bH \setminus \rho_S(T)$. It decomposes into three disjoint subsets as follows:
\begin{itemize}
 \item[$(\mr{i})$] the \emph{spherical point spectrum of $T$}:
  \[
  \sigma_{\mi{pS}}(T):=\{q \in \bH \,|\, \mi{Ker}(\Delta_q(T)) \neq \{0\} \};
  \]
 \item[$(\mr{ii})$] the \emph{spherical residual spectrum of $T$}:
  \[
  \sigma_{\mi{rS}}(T):=\left\{q \in \bH \left| \, \mi{Ker}(\Delta_q(T))=\{0\}, \, \overline{\mi{Ran}(\Delta_q(T))} \neq \sH \right.\right\};
 \] 
\item[(iii)] the \emph{spherical continuous spectrum of $T$}:
 \[
 \sigma_{\mi{cS}}(T):=\left\{q \in \bH \left| \, \mi{Ker}(\Delta_q(T))= \{0\},\, \overline{\mi{Ran}(\Delta_q(T))}=\sH, \, \Delta_q(T)^{-1} \not \in \gB(\sH) \right.\right\}.
 \]
\end{itemize}

The \emph{spherical spectral radius of $T$} is defined as the following element $r_S(T)$ of $\bR^+ \cup \{+\infty\}$:
\[
r_S(T):=\sup\big\{|q| \in \bR^+ \big| \, q \in  \ssp(T)\big\}.
\]

If $Tu=uq$ for some $q \in \bH$ and $u \in \sH \setminus \{0\}$, then $u$ is called \emph{eigenvector of $T$ with eigenvalue $q$}. 
\end{definition}

\begin{remark} \label{spectrum}
$(1)$ In \cite{libroverde}, a different, but equivalent, definition of spherical resolvent set $\rho_S(T)$ is adopted for $T \in \gB(\sH)$. Therein no decomposition onto the various parts of the spectrum has been introduced.
In \cite{libroverde}, the following definition (Def. 4.8.1) is stated (straightforwardly adapting it to the quaternionic Hilbert space case, since \cite{libroverde} studies the case of a quaternionic two--sided Banach modules):
\beq
\rho_S(T)=\{q \in \bH \,|\, \Delta_q(T):\sH \lra \sH \mbox{ is bijective and }\Delta_q(T)^{-1} \in \gB(\sH)\}. \label{defrislibroverde}
\eeq
Actually, that definition is completely equivalent to our Definition \ref{def_spectrum} so that 
(\ref{defrislibroverde}) is an identity assuming our definition of $\rho_S(T)$.
Indeed, if $\Delta_q(T)$ is bijective and its inverse belongs to $\gB(\sH)$, then conditions $(\mr{a})$, $(\mr{b})$ and $(\mr{c})$ are evidently verified. Conversely, suppose that conditions $(\mr{a})$, $(\mr{b})$ and $(\mr{c})$ hold true. Firstly, observe that $(\mr{c})$ implies that $\mi{Ran}(\Delta_q(T))$ is closed in $\sH$. Let $\mi{Ran}(\Delta_q(T)) \supset \{y_n\}_{n \in \bN} \to y \in \sH$ and, for each $n \in \bN$, let $x_n:=\Delta_q(T)^{-1}y_n \in \sH=D(\Delta_q(T))$. The sequence $\{x_n\}_{n \in \bN}$ of $\sH$ is a Cauchy sequence. Indeed, it holds:
\[
\|x_n-x_m\| \leq \|\Delta_q(T)^{-1}\| \, \|y_n-y_m\|.
\]
It follows that $\{x_n\}_{n \in \bN} \to x$ for some $x \in \sH$. Since $\Delta_q(T)$ is continuous, we infer that $\Delta_q(T)x = y$ and hence $\mi{Ran}(\Delta_q(T))$ is closed in $\sH$. Thanks to $(\mr{a})$ and $(\mr{b})$, $\Delta_q(T)$ turns out to be bijective with $\Delta_q(T)^{-1} \in \gB(\sH)$.

In the case of an unbounded operator defined on a subspace of $\sH$, we stick to Definition \ref{def_spectrum} since, on the one hand, the domains and boundedness of operators are irrelevant in that definition. On the other hand, our definition gives rise to statements that are 
straightforwardly extensions of the corresponding propositions in the  theory in complex Hilbert spaces (see, in particular, (b) and (c) in Theorem \ref{teospectra} below). This is true in spite of the fact that definitions are substantially different, as they rely upon the second order polynomial
$\Delta_q(T)$, rather than the first--order polynomial $T-\1 q$.
 
It is worth noticing that, referring to unbounded operators, \cite{libroverde} proposes a formally different definition (see Definition 4.15.2 in \cite{libroverde}), whose equivalence with ours, through a straightforward  re--adaptation to the case of a quaternionic Hilbert space, would deserve investigation. It will be done elsewhere.

$(2)$ For every $q \in \bH$, define $E_q(T):=\{u \in \sH \,|\, Tu=uq\}$. It is worth stressing that $E_q(T)$ is a right $\bH$--linear subspace of $\sH$ if and only if either $q \in \bR$ or $q \not\in \bR$ and $E_q(T)=\{0\}$. Indeed, if $q \in \bH \setminus \bR$, $u \in E_q(T) \setminus \{0\}$ and $p$ is a quaternion with $qp \neq pq$, then $up \not\in E_q(T)$. 

$(3)$ If $p$ and $q$ are conjugated quaternions, then  $\Delta_p(T)=\Delta_q(T)$. Indeed, $p+\overline{p}=q+\overline{q}$ and $|p|=|q|$ (see point $(4)$ of Remark \ref{remin}).  We infer that $\srho(T)$ and $\ssp(T)$ are circular. The same is true for $\sigma_{\mi{pS}}$, $\sigma_{\mi{rS}}$ and $\sigma_{\mi{cS}}$.

Recall that a subset $A$ of $\bH$ is called \emph{circular} if it is equal to $\OO_\K$ for some subset $\K$ of $\bC$. This is equivalent to say that $A$ satisfies one of the following two equivalent conditions:
\begin{itemize}
 \item If $\alpha+\imath\beta \in A$ for some $\alpha,\beta \in \bR$ and $\imath \in \bS$, then $\alpha+\jmath\beta \in A$ for every $\jmath \in \bS$.
 \item If $q \in A$ and $p \in \bH$ is conjugated to $q$, then $p \in A$. Equivalently, if $q \in A$, then $\bS_q \subset A$.
\end{itemize}

$(4)$ The spherical spectrum, or at least its intersection with a complex plane $\bC_\jmath$, has already appeared in the literature, in the most general setting of real $^*$--algebras. In \cite{Kulkarni}, it is referred to as Kaplansky's definition \cite{Kaplansky} of the spectrum.
\end{remark}

More generally, the spherical resolvent and the spherical spectrum can be defined for bounded right $\bH$--linear operators on quaternionic two--sided Banach modules in a form similar to that introduced above (see \cite{libroverde}). Several properties of resolvents and of spectra of bounded opertors on complex Banach or Hilbert  spaces remain valid in that general context. Nevertheless, their proofs are by no means trivial (see \cite{libroverde} again). Here we recall some of these properties in the quaternionic Hilbert setting.

\begin{theorem} \label{teopropspectrum}
Let $\sH$ be a quaternionic Hilbert space and let $T \in \gB(\sH)$ be an operator. The following assertions hold.
\begin{itemize}
 \item[$(\mr{a})$] Let $q \in \bH$ with $|q|>\|T\|$ and, for each $n \in \bN$, let $a_n$ be the real number defined by setting $a_n:=|q|^{-2n-2}\sum_{h=0}^nq^h\overline{q}^{n-h}$. Then the series $\sum_{n \in \bN}T^na_n$ converges absolutely in $\gB(\sH)$ (with respect to the operator norm $\| \cdot \|$) to $\Delta_q(T)^{-1}$. In particular, it holds:
\[
r_S(T) \leq \|T\|.
\]
 \item[$(\mr{b})$] $\ssp(T)$ is a non--empty compact subset of $\bH$.
 \item[$(\mr{c})$] Let $P \in \bR[X]$ and let $P(T)$ be the corresponding operator in $\gB(\sH)$ defined in (\ref{eq:P(T)}). Then, if $T$ is self--adjoint, the following \emph{spectral map property} holds:
\[
\ssp(P(T))=P(\ssp(T)).
\]
 \item[$(\mr{d})$] For every $n \in \bN$, we have that $\ssp(T^n) =(\ssp(T))^n:=\{q^n \in \bH \,|\, q \in \ssp(T)\}$.
 \item[$(\mr{e})$] \emph{Gelfand's spectral radius formula} holds:
\beq \label{GF}
r_S(T)=\lim_{n \to +\infty} \|T^n\|^{1/n}.
\eeq
In particular, if $T$ is normal, then:
\beq \label{raggiospett} 
r_S(T)=\|T\|.
\eeq
\end{itemize}
\end{theorem}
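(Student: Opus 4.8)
The plan is to prove (a) by a direct Neumann-type series and then to deduce (b)--(e) either from elementary factorisations of $\Delta_q(T)=T^2-(q+\overline{q})T+\1|q|^2$ or from the classical complex theory, exploiting that $\Delta_q(T)$ depends on $q$ only through the two \emph{real} numbers $q+\overline{q}$ and $|q|^2$. For (a) I would first note that each $a_n$ is real: writing $q=\alpha+\imath\beta$ with $\imath\in\bS$, the elements $q,\overline{q}$ lie in the commutative field $\bC_\imath$ and $\sum_{h=0}^{n}q^h\overline{q}^{\,n-h}$ equals $(q^{n+1}-\overline{q}^{\,n+1})/(q-\overline{q})\in\bR$ (and $(n+1)q^n$ when $q\in\bR$). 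Thus $T^na_n$ is an honest real multiple of $T^n$, independent of any left multiplication, and from $|a_n|\le(n+1)|q|^{-n-2}$ one gets $\|T^na_n\|\le\tfrac{n+1}{|q|^2}(\|T\|/|q|)^n$, so the series converges absolutely in $\gB(\sH)$ as soon as $|q|>\|T\|$. Denoting its sum by $S$, I would check $\Delta_q(T)S=S\Delta_q(T)=\1$ by multiplying termwise (licit by absolute convergence) and collecting powers of $T$: the coefficient of $T^k$ reduces to $|q|^2a_k-(q+\overline{q})a_{k-1}+a_{k-2}=\delta_{k,0}$, the coefficient identity behind $\sum_k a_kx^k=(x^2-(q+\overline{q})x+|q|^2)^{-1}$. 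Hence $q\in\srho(T)$ whenever $|q|>\|T\|$, i.e.\ $r_S(T)\le\|T\|$.

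The key device for (b)--(e) is a reduction to complex scalars. Fix $\imath\in\bS$ and view $\sH$ as a complex Banach space $\sH_\imath$ over $\bC_\imath$ through the right multiplication $u\mapsto uc$ ($c\in\bC_\imath$), with unchanged norm; since $T$ is right $\bH$-linear it is a bounded $\bC_\imath$-linear operator of $\sH_\imath$ of equal norm. For $q=\alpha+\imath\beta$ and $c:=\alpha+\imath\beta$ one has in $\gB_{\bC_\imath}(\sH_\imath)$ the commuting factorisation $\Delta_q(T)=(T-c\1)(T-\overline{c}\,\1)$; moreover the set-inverse of a bijective right $\bH$-linear map is again right $\bH$-linear, so invertibility of $\Delta_q(T)$ in $\gB(\sH)$ coincides with invertibility in $\gB_{\bC_\imath}(\sH_\imath)$. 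Consequently $q\in\srho(T)$ iff $c,\overline{c}$ avoid the complex spectrum $\sigma_{\bC_\imath}(T)$ of $T$ on $\sH_\imath$, whence $\ssp(T)\cap\bC_\imath=\sigma_{\bC_\imath}(T)\cup\overline{\sigma_{\bC_\imath}(T)}$ and, comparing moduli, $r_S(T)=r_{\bC_\imath}(T)$. This already yields (b): $\ssp(T)$ is bounded by (a) and closed because $q\mapsto\Delta_q(T)$ is continuous into $\gB(\sH)$ and, by the Remark after Definition~\ref{def_spectrum}, $\srho(T)$ is the preimage of the open set of invertible elements (Proposition~\ref{propcompleteness}$(\mr{c})$); and it is non--empty because $\sigma_{\bC_\imath}(T)\neq\emptyset$ for the bounded operator $T$ on $\sH_\imath\neq\{0\}$.

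For the two algebraic identities I would argue by factorisation rather than by the reduction. For (c), self--adjointness of $T$ gives $\Delta_q(T)=(T-\alpha\1)^2+\beta^2\1\ge\beta^2\1$ when $\beta\neq0$, so $\Delta_q(T)$ is self--adjoint and bounded below, hence bijective; therefore $\ssp(T)\subset\bR$. Writing $P(X)-x_0\in\bR[X]$ (with $x_0\in\bR$, legitimate since $P(T)$ is self--adjoint, so $\ssp(P(T))\subset\bR$) as a real product of factors $X-\lambda_i$ and $X^2-(\mu_j+\overline{\mu_j})X+|\mu_j|^2$ with $\mu_j\notin\bR$, the corresponding commuting operators are $T-\lambda_i\1$ and $\Delta_{\mu_j}(T)$; the latter are always invertible because $\mu_j\notin\bR\supseteq\ssp(T)$, so (using $\Delta_{x_0}(P(T))=(P(T)-x_0\1)^2$) the point $x_0$ lies in $\ssp(P(T))$ iff some $\lambda_i\in\ssp(T)$, i.e.\ iff $x_0\in P(\ssp(T))$. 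The same mechanism gives (d): choosing the slice $\bC_\imath$ of a representative of $p$ and factoring $x^{2n}-(p+\overline{p})x^n+|p|^2=\prod_{k=0}^{n-1}\bigl(x^2-(p_k+\overline{p_k})x+|p_k|^2\bigr)$ through the $n$-th roots $p_k\in\bC_\imath$ of $p$, one obtains $\Delta_p(T^n)=\prod_k\Delta_{p_k}(T)$ with commuting factors, so $p\in\ssp(T^n)$ iff some $p_k\in\ssp(T)$; circularity of $\ssp(T^n)$ and of $(\ssp(T))^n$ then upgrades this to $\ssp(T^n)=(\ssp(T))^n$.

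Finally, for (e) the reduction closes everything: $r_S(T)=r_{\bC_\imath}(T)=\lim_n\|T^n\|^{1/n}$ by the classical Gelfand formula on $\sH_\imath$ (the norm of $T^n$ being the same in both structures), while the elementary inequality $r_S(T)^n=r_S(T^n)\le\|T^n\|$ coming from (d) and (a) recovers the $\le$ half directly. When $T$ is normal, $T^2$ is again normal and the $C^*$--identity gives $\|T^2\|^2=\|T^{*2}T^2\|=\|(T^*T)^2\|=\|T^*T\|^2=\|T\|^4$, hence $\|T^{2^k}\|=\|T\|^{2^k}$ and so $r_S(T)=\lim_n\|T^n\|^{1/n}=\|T\|$. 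I expect the main obstacle to be exactly the two non--algebraic facts, namely non--emptiness in (b) and the inequality $\limsup_n\|T^n\|^{1/n}\le r_S(T)$ hidden in (e); the decisive idea in my plan is to obtain both of them from ordinary complex spectral theory through the complex Banach space $\sH_\imath$, the algebraic parts (c) and (d) being then purely formal.
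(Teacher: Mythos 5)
Your proposal is correct, and for the two genuinely analytic facts --- non--emptiness of $\ssp(T)$ in $(\mr{b})$ and the inequality $\limsup_n\|T^n\|^{1/n}\le r_S(T)$ hidden in $(\mr{e})$ --- it takes a different route from the paper. The paper proves both by hand inside the quaternionic setting: it introduces $\psi(z):=\Delta_z(T)^{-1}(T-\1\overline z)$ on $\bC\cap\srho(T)$, shows via functionals $F\in\gB(\sH)'$ that $F\circ\psi=\ell+km$ with $\ell,m$ holomorphic, and then runs Liouville's theorem and a Cauchy/Laurent expansion to get non--emptiness and the Gelfand formula (point $(\mr{d})$ it simply cites from \cite{libroverde}). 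You instead restrict scalars: $\sH$ with its right multiplication by $\bC_\imath$ is a complex Banach space, $T$ is a bounded $\bC_\imath$--linear operator on it of the same norm, and in the complex Banach algebra of $\bC_\imath$--linear operators one has the commuting factorisation $\Delta_q(T)=(T-c\1)(T-\overline c\,\1)$ with $c\1:u\mapsto uc$; since the set--inverse of a bijective right $\bH$--linear map is right $\bH$--linear, invertibility transfers between the two algebras, giving $\ssp(T)\cap\bC_\imath=\sigma_{\bC_\imath}(T)\cup\overline{\sigma_{\bC_\imath}(T)}$ and hence (using circularity of $\ssp(T)$) $r_S(T)=r_{\bC_\imath}(T)$. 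Both non--emptiness and the spectral radius formula then come for free from the classical complex theory; this is essentially an operator--algebra version of what the paper only obtains later, in Proposition~\ref{propinterssigma}, for \emph{normal} $T$ via the complex subspaces $\sH_\pm^{J\imath}$ --- your observation is that for spectral--radius purposes one needs neither normality nor the operator $J$, only the right $\bC_\imath$--module structure. The remaining points match the paper closely: $(\mr{a})$ is the same Neumann--type series with the same recursion $a_{n-2}-(q+\overline q)a_{n-1}+|q|^2a_n=0$; $(\mr{c})$ uses the same real factorisation of $P(X)-x_0$ into linear and $\Delta$--type quadratic factors (you fold the paper's two separate inclusions into one ``iff'' via commuting factors, which is fine); your $(\mr{d})$, via $\Delta_p(T^n)=\prod_k\Delta_{p_k}(T)$ over the $n$--th roots $p_k\in\bC_\imath$ of $p$ plus circularity, is a complete proof where the paper gives only a reference; and the normal case of $(\mr{e})$ is the identical $C^*$--identity computation $\|T^{2^k}\|=\|T\|^{2^k}$. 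The only nits are cosmetic: in $(\mr{c})$ and $(\mr{d})$ the degenerate cases ($P$ constant, $p=0$, $n=0$) should be dispatched in a line, and in $(\mr{c})$ the step from $\Delta_q(T)\ge\beta^2\1$ to bijectivity should explicitly invoke Cauchy--Schwarz for the lower bound $\|\Delta_q(T)u\|\ge\beta^2\|u\|$ and $\mi{Ran}(S)^\perp=\mi{Ker}(S^*)$ for density of the range, as the paper does in Theorem~\ref{teospectra}$(\mr{b})$.
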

\begin{proof}
This result is a consequence of Theorems 4.7.4, 4.7.5, 4.8.11, 4.12.5, 4.12.6 and 4.13.2 of \cite{libroverde}. Actually, these theorems of \cite{libroverde} deals with the more general case of quaternionic two--sided Banach modules. In this way, in order to be able to apply it in our context, it suffices to fix a left scalar multiplication $\bH \ni q \mapsto L_q$ of $\sH$ and to equip $\gB(\sH)$ with its natural quaternionic two--sided Banach module structure described in Section \ref{subsec:leftprod}. For the convenience of the reader, we give a proof of the theorem. The following proof of point $(\mr{e})$ is a detailed version of the corresponding one of \cite{libroverde}. With regards to $(\mr{d})$, we simply give the exact reference in \cite{libroverde}. The proofs of the other points are different, and more direct.

$(\mr{a})$ Let $q \in \bH$ with $|q|>\|T\|$. By direct computations, it is immediate to verify that $a_0=|q|^{-2}$, $-(q+\overline{q})a_0+|q|^2a_1=0$ and $a_{n-2}-(q+\overline{q})a_{n-1}+|q|^2a_n=0$ for every $n \geq 2$. Moreover, we have that $|a_n| \leq (n+1)|q|^{-n-2}$ for every $n \in \bN$ and hence $\limsup_{n \to +\infty}\|T^na_n\|^{1/n} \leq \|T\| \, |q|^{-1}<1$. It follows that the series $S:=\sum_{n \in \bN}T^na_n$ converges absolutely in $\gB(\sH)$. Furthermore, it holds:
\begin{align*}
\Delta_q(T)S =& S \Delta_q(T)=\1|q^2|a_0+T(-(q+\overline{q})a_0+|q|^2a_1)+\\
&+\sum_{n \geq 2}T^n(a_{n-2}-(q+\overline{q})a_{n-1}+|q|^2a_n)=\1.
\end{align*}
This proves that $S$ is the inverse of $\Delta_q(T)$ in $\gB(\sH)$ and hence $r_S(T) \leq \|T\|$.

$(\mr{b})$ Firstly, we show that $\ssp(T)$ is compact. Bearing in mind the  just proved inequality $r_S(T) \leq \|T\|$, it suffices to see that $\ssp(T)$ is closed in $\bH$ or, equiva\-lently, that $\srho(T)$ is open in $\bH$. By point $(\mr{c})$ of Proposition \ref{propcompleteness}, the subset $\mscr{I}$ of $\gB(\sH)$ consisting of all isomorphisms is open. Since the map $\Theta:\bH \lra \gB(\sH)$, sending $q$ into $\Delta_q(T)$, is continuous, it follows that $\srho(T)=\Theta^{-1}(\mscr{I})$ is open in $\bH$.

Now fix a left scalar multiplication of~$\sH$. Let $\{1,i,j,k\}$ be the standard orthonormal basis of $\sH$. Identify $\bC$ with $\bC_i$ in the natural way. Define the map $\psi:\bC \cap \srho(T) \lra \gB(\sH)$ by setting, for  $z=\alpha+i\beta$ with $\alpha,\beta \in \bR$,
\[
\psi(z):=\Delta_z(T)^{-1}(T-\1\overline{z})=\Delta_{\alpha+i\beta}(T)^{-1}(T-\1(\alpha-i\beta)).
\] 

Choose $z \in \bC \cap \srho(T)$ with $|z|>\|T\|$ and define the sequence $\{a_n\}_{n \in \bN} \subset \bR$ as in the statement of point $(\mr{b})$ with $q=z$. It is easy to verify that $a_n\overline{z}-a_{n-1}=z^{-n-1}$ if $n \geq 1$. In this way, we infer that
\begin{align} \label{eq:psi}
\psi(z)&=\textstyle \left(\sum_{n \in \bN}T^na_n\right)(T-\1\overline{z})= \nonumber \\
&\textstyle =-\1 z^{-1}-\sum_{n \geq 1}T^n(a_n\overline{z}-a_{n-1})=-\sum_{n \in \bN}T^nz^{-n-1},
\end{align}
where the series $\sum_{n \in \bN}T^nz^{-n-1}$ converges absolutely in $\gB(\sH)$. Let $r\ge r_S(T)$  and let $B_r$ be the open ball of $\bC$ centered at $0$ with radius $r$. Since the map $\psi$ is real analytic with respect to $\alpha$ and $\beta$, and holomorphic for $|z|>\|T\|$, it is holomorphic on an open neighborhood $U$ of $\bC \setminus B_r$. This means that $\partial \psi/\partial \alpha$ and $\partial \psi/\partial \beta$ exist, are continuous and satisfy:
\beq \label{eq:partial}
\dd{\psi}{\alpha}+\dd{\psi}{\beta} \, i=0 \quad \mbox{in }\gB(\sH) \mbox{ if }z \in U.
\eeq

Let $F:\gB(\sH) \lra \bH$ be a continuous right $\bH$--linear functional on $\gB(\sH)$; that is, an element of $\gB(\sH)'$, and let $f_0,f_1,f_2,f_3:\bC \cap \srho(T) \lra \bR$ and $\ell,m:\bC \cap \srho(T) \lra \bC$ be the functions of class $\mscr{C}^1$ such that
\[
F \circ \psi=f_0+if_1+jf_2+kf_3=\ell+km
\]
and hence $\ell=f_0+if_1$ and $m=f_3+if_2$. Thanks to (\ref{eq:partial}), we infer that
\begin{align*}
0 &=F\left(\dd{\psi}{\alpha}+\dd{\psi}{\beta} \, i\right)=\dd{(F \circ \psi)}{\alpha}+\dd{(F \circ \psi)}{\beta} \, i=\\
&=\left(\dd{f_0}{\alpha}-\dd{f_1}{\beta}\right)+i\left(\dd{f_1}{\alpha}+\dd{f_0}{\beta}\right)+j\left(\dd{f_2}{\alpha}+\dd{f_3}{\beta}\right)+k\left(\dd{f_3}{\alpha}-\dd{f_2}{\beta}\right),
\end{align*}
which is equivalent to say that $\ell$ and $m$ are holomorphic.

Let us complete the proof of $(\mr{b})$ by showing that $\ssp(T) \neq \emptyset$. By (\ref{eq:psi}), we have that
\beq \label{eq:|psi|}
(|\ell(z)|^2+|m(z)|^2)^{1/2}=|(F \circ \psi)(z)| \leq K\|F\| \, |z|^{-1} \quad \mbox{if }|z| \geq 1+\|T\|,
\eeq
where $\|F\|$ is the operator norm of $F$ and $K:=\sum_{n \in \bN}\|T\|^n(1+\|T\|)^{-n} \in \bR^+$.

 Suppose that $\ssp(T)=\emptyset$. Thanks to (\ref{eq:|psi|}), $\ell$ and $m$ turn out to be bounded entire holomorphic functions with $\lim_{|z| \to +\infty}|\ell(z)|=0=\lim_{|z| \to +\infty}|m(z)|$. Liouville's theorem ensures that $\ell$ and $m$ are null, and hence $F \circ \psi$ is null for every $F \in \gB(H)'$. Lemma  \ref{lem:Hahn-Banach} implies that $\psi(z)=0$ for every $z \in \bC$. It follows that $T=\1 \overline{z}$ for every $z \in \bC$, which is impossible.  

$(\mr{c})$ If $P$ is constant, then $(\mr{c})$ is trivial. Suppose that $P$ has positive degree.

Let us prove that $P(\ssp(T)) \subset \ssp(P(T))$. Let $q \in \ssp(T)$. Since $T$ is assumed to be self--adjoint, as we have just said, Theorem~\ref{teospectra} below, which is completely independent from the proposition we are proving, implies that $q \in \R$. It follows that $P(q) \in \R$ and $\Delta_{P(q)}(P(T))=(P(T)-\1 P(q))^2$. The polynomial $P(X)-P(q)$ in $\R[X]$ vanishes at $X=q$ and hence there exists $\theta \in \R[X]$ such that $P(X)-P(q)=(X-q)\theta(X)$. In particular, we infer that
\begin{equation} \label{eq:sub}
\Delta_{P(q)}(P(T))=(T-\1 q)^2(\theta(T))^2=\Delta_q(T)(\theta(T))^2=(\theta(T))^2\Delta_q(T).
\end{equation}
It follows that $P(q) \in \ssp(P(T))$; that is, $\Delta_{P(q)}(P(T))$ is not inverti\-ble in $\gB(\sH)$. Otherwise, thanks to (\ref{eq:sub}), $\Delta_q(T)$ would be invertible in $\gB(\sH)$ as well, contradicting the fact that $q \in \ssp(T)$.

It remains to show that $\ssp(P(T)) \subset P(\ssp(T))$. Let $q \in \ssp(P(T))$. Since $P$ has real coefficients, the operator $P(T)$ is self--adjoint. Therefore, $q \in \R$ and $\Delta_q(P(T))=(P(T)-\1 q)^2$. Fix $\imath \in \mathbb{S}$. By the Fundamental Theorem of Algebra, there exist $\alpha_0 \in \R \setminus \{0\}$, $\alpha_1,\ldots,\alpha_h \in \R$ and $\alpha_{h+1},\ldots,\alpha_k \in \C_\imath \setminus \R$ for some $h,k \in \N$ with $h \leq k$ such that
\[
P(X)-q=\alpha_0\prod_{\ell=1}^h(X-\alpha_{\ell})\prod_{\ell=h+1}^k\Delta_{\alpha_{\ell}}(X).
\]
In particular, it holds:
\[
\Delta_q(P(T))=(P(T)-\1 q)^2=\alpha_0^2\prod_{\ell=1}^h\Delta_{\alpha_{\ell}}(T)\prod_{\ell=h+1}^k(\Delta_{\alpha_{\ell}}(T))^2.
\]
If $\Delta_{\alpha_{\ell}}(T)$ were invertible in $\gB(\sH)$ for every $\ell \in \{1,\ldots,k\}$, then $\Delta_q(P(T))$ would be invertible in $\gB(\sH)$ as well, contradicting the fact that $q \in \ssp(P(T))$. It follows that there exists $\ell \in \{1,\ldots,k\}$ (or better $\ell \in \{1,\ldots,h\}$) such that $\alpha_{\ell} \in \ssp(T)$. Since $P(\alpha_{\ell})-q=0$, $q=P(\alpha_{\ell})$ belongs to $P(\sigma_S(T))$, as desired.

$(\mr{d})$ This point is proved in Theorem 4.12.5 of \cite{libroverde}.

$(\mr{e})$ Let us follows the proof of Theorem 4.12.6 of \cite{libroverde}. By combining $(\mr{d})$ with the last part of $(\mr{a})$, we infer that $r_S(T^n)=r_S(T)^n \leq \|T^n\|$ for every $n \in \bN$ and hence $r_S(T) \leq \liminf_{n \to +\infty}\|T^n\|^{1/n}$. In order to prove (\ref{GF}), it suffices to show that
\beq \label{eq:limsup}
\limsup_{n \to +\infty}\|T^n\|^{1/n} \leq r_S(T).
\eeq
Let us prove this inequality. Consider again the maps $\psi:\bC \cap \srho(T) \lra \gB(\sH)$, $F \in \gB(\sH)'$ and $\ell,m:\bC \cap \srho(T) \lra \bC$ introduced in the proof of point $(\mr{b})$. Let $r>r_S(T)$, let $B_r$ be the open ball of $\bC$ centered at $0$ with radius $r$ and let $\bar{B}_r$ be its closure in $\bC$. Since $\ell$ and $m$ are holomorphic on an open neighborhood of $\bC \setminus B_r$, it holds:
\begin{align*}
(F \circ \psi)(z)&=\ell(z)+km(z)=-\frac{1}{2\pi i}\int_{\partial B_r}\frac{\ell(\xi)}{\xi-z} \, d\xi-k\frac{1}{2\pi i}\int_{\partial B_r}\frac{m(\xi)}{\xi-z} \, d\xi=\\
&=-\frac{1}{2\pi}\int_{\partial B_r}(\ell(\xi)+km(\xi))(\xi-z)^{-1}i^{-1} \, d\xi=\\
&=-\frac{1}{2\pi}\int_{\partial B_r}(F \circ \psi)(\xi)(\xi-z)^{-1}i^{-1} \, d\xi=\\
&=F \left(-\frac{1}{2\pi}\int_{\partial B_r}\psi(\xi)(\xi-z)^{-1}i^{-1} \, d\xi\right) \quad \mbox{if }z \in \bC \setminus \bar{B}_r,
\end{align*}
where the integral $\int_{\partial B_r}\psi(\xi)(\xi-z)^{-1}i^{-1} \, d\xi$ can be defined as an operator norm limit of Riemann sums. Lemma \ref{lem:Hahn-Banach} implies the following Cauchy formula for $\psi$:
\[
\psi(z)=-\frac{1}{2\pi}\int_{\partial B_r}\psi(\xi)(\xi-z)^{-1}i^{-1} \, d\xi \quad \mbox{if }z \in \bC \setminus \bar{B}_r.
\]
Fix $z \in \bC \setminus \bar{B}_r$. Since $(\xi-z)^{-1}=z^{-1}(1-\frac{\xi}{z})^{-1}=-\sum_{n \in \bN}\xi^nz^{-n-1}$ if $\xi \in \partial B_r$ and the series $-\sum_{n \in \bN}\xi^nz^{-n-1}$ converges absolutely for $\xi \in \partial B_r$, $\psi$ can be expanded into Laurent series on $\bC \setminus \bar{B}_r$ as follows:
\begin{align*}
\psi(z) &=\sum_{n \in \bN}\frac{1}{2\pi}\int_{\partial B_r}\psi(\xi)\xi^nz^{-n-1}i^{-1} \, d\xi=\sum_{n \in \bN}\left(\frac{1}{2\pi}\int_{\partial B_r}\psi(\xi)\xi^ni^{-1} \, d\xi\right)z^{-n-1}
\end{align*}
Comparing the latter equality with (\ref{eq:psi}) and bearing in mind the uniqueness of the Laurent series expansion, we infer at once that $T^n=\frac{1}{2\pi}\int_{\partial B_r}\psi(\xi)\xi^ni^{-1} \, d\xi$ for every $n \in \bN$ and the series $\sum_{n \in \bN}T^nz^{-n-1}$ converges absolutely for every $z \in \bC$ with $|z|>r_S(T)$. In particular, $\lim_{n \to +\infty}\|T^n\| \, |z|^{-n-1}=0$ and hence the sequence $\{\|T^n\| \, |z|^{-n}\}_{n \in \bN} \subset \bR$ is bounded. It follows that $\limsup_{n \to +\infty}\|T^n\|^{1/n} \leq |z|$ for every $z \in \bC$ with $|z|>r_S(T)$. This proves (\ref{eq:limsup}) and hence (\ref{GF}).

If $T$ is normal, then it holds:
\[
\|T^2\|^2 = \|(T^2)^*  T^2\| = \|(T^*)^2  T^2\| 
= \|(T^*T)^*  (T^*T)\| 
= \|T^*T\|^2 = (\|T\|^2)^2
\]
and hence $\|T^2\|=\|T\|^2$. Proceeding by induction, one easily shows that $\|T^{2^k}\|^{1/2^k}=\|T\|$ for every $k \in \bN$. Now (\ref{raggiospett}) follows from (\ref{GF}).
\end{proof}

\begin{corollary} \label{cor:finite}
Let $\sH$ be a quaternionic Hilbert space, let $T \in \gB(\sH)$ be a self--adjoint operator and let $P \in \bR[X]$ such that $\ssp(T)$ is finite and $P$ vanishes on $\ssp(T)$. Then $P(T)$ is the null operator in $\gB(\sH)$.
\end{corollary}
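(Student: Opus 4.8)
The plan is to deduce the vanishing of $P(T)$ from the fact that, for a normal operator, the norm coincides with the spherical spectral radius (equation (\ref{raggiospett}) in Theorem~\ref{teopropspectrum}$(\mr{e})$). Thus the whole argument reduces to computing $\ssp(P(T))$ and observing that it collapses to $\{0\}$. If $P$ is the zero polynomial there is nothing to prove, so I would assume $P \neq 0$; note that this forces $\ssp(T)$ to be finite anyway, since a nonzero real polynomial has only finitely many roots, so the finiteness hypothesis is exactly what makes the statement non-vacuous.

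First I would check that $P(T)$ is self--adjoint, hence normal. Writing $P(X)=\sum_{h=0}^d X^h r_h$ with $r_h \in \bR$, we have $P(T)=\sum_{h=0}^d T^h r_h$, and using $(T^h)^*=(T^*)^h$, the relation $(Sr)^*=S^*r$ for $r \in \bR$ from Remark~\ref{remarkop}$(\mr{ix})$, and the self--adjointness $T=T^*$, we obtain $P(T)^*=\sum_{h=0}^d (T^*)^h r_h = \sum_{h=0}^d T^h r_h = P(T)$. In particular $P(T)$ is normal, so Theorem~\ref{teopropspectrum}$(\mr{e})$ applies and gives $r_S(P(T))=\|P(T)\|$.

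Next I would invoke the spectral map property. Since $T$ is self--adjoint, Theorem~\ref{teopropspectrum}$(\mr{c})$ yields
\[
\ssp(P(T)) = P(\ssp(T)) = \{P(q) \mid q \in \ssp(T)\}.
\]
By hypothesis $P$ vanishes at every point of $\ssp(T)$, so $P(\ssp(T)) \subseteq \{0\}$; and because $\ssp(P(T))$ is non--empty by Theorem~\ref{teopropspectrum}$(\mr{b})$, in fact $\ssp(P(T)) = \{0\}$. Consequently
\[
r_S(P(T)) = \sup\{|q| \mid q \in \ssp(P(T))\} = 0.
\]
Combining this with $r_S(P(T))=\|P(T)\|$ from the previous step gives $\|P(T)\|=0$, whence $P(T)=0$ in $\gB(\sH)$.

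There is no genuine obstacle here, as the corollary is a direct consequence of two substantial results already established: the spectral map property for self--adjoint operators (Theorem~\ref{teopropspectrum}$(\mr{c})$) and Gelfand's formula specialized to normal operators (Theorem~\ref{teopropspectrum}$(\mr{e})$). The only point demanding a little care is confirming that $P(T)$ is normal so that the identity $r_S=\|\cdot\|$ is legitimately available; everything else is a short chain of equalities.
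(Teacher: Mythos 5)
Your proof is correct and follows essentially the same route as the paper's: verify that $P(T)$ is self--adjoint, apply the spectral map property of Theorem~\ref{teopropspectrum}$(\mr{c})$ to get $\ssp(P(T))=\{0\}$, and conclude via the identity $r_S(P(T))=\|P(T)\|$ from (\ref{raggiospett}). The additional remarks (normality of $P(T)$, non--emptiness of the spectrum, redundancy of the finiteness hypothesis when $P\neq 0$) are accurate but not needed beyond what the paper already records.
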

\begin{proof}
Since $T$ is self--adjoint, it is immediate to verify that $P(T)$ is self--adjoint as well. By point $(\mr{c})$ of Theorem \ref{teopropspectrum}, we infer that $\ssp(P(T))=\{0\}$ and hence (\ref{raggiospett}) implies that $P(T)=0$, as desired.
\end{proof}


\subsection{Some spectral properties of operators on quaternionic Hilbert spaces}\label{secspecprop}
 Regardless different definitions with respect to the complex Hilbert space case, the notions of spherical spectrum and resolvent set enjoy some properties which are quite similar to those for complex Hilbert spaces. We go to illustrate them together with some other features that, conversely, are proper to the quaternionic Hilbert space case.

First of all, quite remarkably,  it turns out that $\sigma_{\mi{pS}}(T)$ coincides with the set of eigenvalues of $T$.

\begin{proposition}\label{propsigmap}
Let $\sH$ be a quaternionic Hilbert space and let $T:D(T) \lra \sH$ be an operator. Then $\sigma_{pS}(T)$ coincides with the set of all eigenvalues of $T$.
\end{proposition}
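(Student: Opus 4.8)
The plan is to prove the two inclusions separately: that every eigenvalue lies in $\sigma_{\mi{pS}}(T)$ is a one-line computation, while the reverse containment is the substantive part and requires manufacturing an honest eigenvector out of an arbitrary element of $\mi{Ker}(\Delta_q(T))$.

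First I would dispatch the easy inclusion. Suppose $q$ is an eigenvalue, so $Tu=uq$ for some $u\in\sH\setminus\{0\}$. Since $D(T)$ is a right $\bH$--submodule, $uq=Tu\in D(T)$, hence $u\in D(T^2)$ and $\Delta_q(T)u$ is defined. Using right $\bH$--linearity together with the fact that $q+\overline q$ and $|q|^2$ are real, one finds $T^2u=(Tu)q=uq^2$ and $(T(q+\overline q))u=(Tu)(q+\overline q)=u(q^2+|q|^2)$, so that
\[
\Delta_q(T)u=uq^2-u(q^2+|q|^2)+u|q|^2=0 .
\]
Thus $u\in\mi{Ker}(\Delta_q(T))\setminus\{0\}$ and $q\in\sigma_{\mi{pS}}(T)$.

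For the reverse inclusion, which is the crux, let $q\in\sigma_{\mi{pS}}(T)$ and pick $v\in\mi{Ker}(\Delta_q(T))\setminus\{0\}$; note $v\in D(T^2)$, so $Tv\in D(T)$. The idea is to test the explicit candidate $u:=Tv-v\overline q$. The defining relation $T^2v=(Tv)(q+\overline q)-v|q|^2$, after cancelling $(Tv)\overline q$, yields
\[
Tu=T^2v-(Tv)\overline q=(Tv)q-v|q|^2=uq ,
\]
so $u$ \emph{always} solves the eigenvalue equation for $q$. If $u\neq 0$, then $q$ is an eigenvalue and we are done.

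The only obstacle is the degenerate case $u=0$, i.e.\ $Tv=v\overline q$ with $v\neq 0$, in which $v$ is an eigenvector for $\overline q$ rather than for $q$. I would resolve this by conjugacy: by point $(4)$ of Remark \ref{remin}, $q$ and $\overline q$ are conjugated, so there exists $s\in\bH\setminus\{0\}$ with $\overline q\,s=s\,q$; then $vs\neq 0$ and, by right $\bH$--linearity, $T(vs)=(Tv)s=v\overline q\,s=vsq=(vs)q$, exhibiting $vs$ as an eigenvector of $T$ with eigenvalue exactly $q$. Equivalently, one may note that the set of eigenvalues is itself circular, since $T(u\lambda)=(u\lambda)(\lambda^{-1}q\lambda)$, matching the circularity of $\sigma_{\mi{pS}}(T)$ recorded in Remark \ref{spectrum}$(3)$, which makes the two cases collapse. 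In either event $q$ is an eigenvalue, which completes the proof.
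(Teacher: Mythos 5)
Your proof is correct and follows essentially the same route as the paper: both directions rest on factoring $\Delta_q(T)$ through the first--order expression $Tv-v\lambda$ and, in the degenerate case, converting an eigenvector for $\overline q$ into one for $q$ via the conjugacy $\overline q=sqs^{-1}$. The only (immaterial) difference is that you test $u:=Tv-v\overline q$, which lands on the eigenvalue $q$ directly, whereas the paper tests $v':=Tv-vq$ and applies the conjugation in the opposite case.
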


\begin{proof} Let $u$ be an eigenvector of $T$ with eigenvalue $q \in \bH$. By the definition of $\Delta_q(T)$, we have that
\[
\Delta_q(T)u=T(Tu-uq)-(Tu-uq)\overline{q}=0
\]
and hence $q \in \sigma_{pS}(T)$, because $u \neq 0$. Conversely, if $q \in \sigma_{\mi{pS}}(T)$, then there is $v \in D(T^2)$ such that
\[
0=\Delta_q(T)v=T(Tv-vq)-(Tv-vq)\overline{q}.
\]
If $v':=Tv-vq=0$, then $q$ is an eigenvalue of $T$. Otherwise $v'$ is an eigenvector of $T$ with eigenvalue $\overline{q}$. Since $\overline{q}=sqs^{-1}$ for some $s \neq 0$, $T(v's)=(v's)q$ and $v's \neq 0$, we infer that $q$ is an eigenvalue of $T$ as well.
\end{proof}

\begin{remark}
In our context, the subspace $\mi{Ker}(\Delta_q(T))$ has the role of an ``eigenspace''. In particular, it holds: $\mi{Ker}(\Delta_q(T)) \neq \{0\}$ if and only if $\cS_q$ is an eigensphere of $T$.
\end{remark}

There is an important difference from the standard complex Hilbert space: if $T \in \gB(\sH)$, then $T$ and $T^*$ have always the same spherical spectrum and, as we shall prove later, $T$ and $T^*$ are even unitarily equivalent, whenever $T$ is normal.

\begin{proposition}\label{propspecTT}
Let $\sH$ be a quaternionic Hilbert space and let $T \in \gB(\sH)$ be an operator. Then $\srho(T)= \srho(T^*)$ and $\ssp(T)=\ssp(T^*)$.
\end{proposition}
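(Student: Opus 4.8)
The plan is to reduce everything to a single adjunction identity for the second-order operator $\Delta_q$. First I would observe that, since $T \in \gB(\sH)$, all of the operators $\Delta_q(T)$ and $\Delta_q(T^*)$ belong to $\gB(\sH)$, so the bounded reformulation of the resolvent set from Remark~\ref{spectrum}$(1)$ applies: $q \in \srho(T)$ if and only if $\Delta_q(T):\sH \lra \sH$ is bijective with $\Delta_q(T)^{-1} \in \gB(\sH)$, and analogously for $T^*$. Thus it suffices to compare the invertibility of $\Delta_q(T)$ and $\Delta_q(T^*)$ in $\gB(\sH)$.

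The crucial computation is then $\Delta_q(T)^* = \Delta_q(T^*)$, and this works precisely because the coefficients $q + \overline{q} = 2\re(q)$ and $|q|^2$ of the defining polynomial are real. Using $(T^2)^* = (T^*)^2$ (point $(\mr{v})$ of Remark~\ref{remarkop}), the identity $(Tr)^* = T^*r$ for $r \in \bR$ (point $(\mr{ix})$), and the self-adjointness of $\1$, I would compute
\[
\Delta_q(T)^* = (T^*)^2 - T^*(q+\overline{q}) + \1|q|^2 = \Delta_q(T^*).
\]

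With this identity in hand, the conclusion is immediate from point $(\mr{viii})$ of Remark~\ref{remarkop}: a bounded operator is bijective with bounded inverse if and only if its adjoint is. Applying this to $\Delta_q(T)$ and recalling $\Delta_q(T)^* = \Delta_q(T^*)$, we get that $\Delta_q(T)$ is invertible in $\gB(\sH)$ exactly when $\Delta_q(T^*)$ is. Hence $q \in \srho(T) \iff q \in \srho(T^*)$, which gives $\srho(T) = \srho(T^*)$ and, by taking complements in $\bH$, $\ssp(T) = \ssp(T^*)$.

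There is essentially no serious obstacle here: the whole point is that the spherical spectrum is built from the real-coefficient polynomial $\Delta_q$, and this is exactly what makes the adjoint behave well — in contrast with the first-order object $T - \1 q$, whose adjoint $T^* - \1\overline{q}$ would shift $q$ to $\overline{q}$. The only care needed is in the bookkeeping of the adjunction rules, ensuring that the real coefficients pass through the involution unchanged.
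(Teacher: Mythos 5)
Your proof is correct and follows essentially the same route as the paper: both rest on the identity $\Delta_q(T)^*=\Delta_q(T^*)$ (valid because the coefficients $q+\overline{q}$ and $|q|^2$ are real), combined with Remark~\ref{remarkop}$(\mr{viii})$ and the bounded reformulation of the resolvent set from Remark~\ref{spectrum}$(1)$. The paper merely states this more tersely; your write-up supplies the same ingredients with the bookkeeping made explicit.
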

\begin{proof} Since $\Delta_q(T)^*=\Delta_q(T^*)$ for every $q \in \bH$, point $(\mr{viii})$ of Remark \ref{remarkop} and point $(1)$ of Remark \ref{spectrum} immediately imply that $\srho(T)= \srho(T^*)$ and hence $\ssp(T)=\ssp(T^*)$.
 \end{proof}

We are now in a position to establish an important result concerning the spectrum of normal, self--adjoint, anti self--adjoint and unitary operators. That statement, at first glance, sounds quite weird, since it declares that the point spectrum of an anti self--adjoint and unitary operator $T$ on $\sH$ completely fills a sphere. In this way, $\sigma_{pS}(T)$ has to be uncountable in all cases,
even if the quaternionic Hilbert space $\sH$ is separable. However, differently from the complex Hilbert space case,  there is no contradiction now. Indeed, in quaternionic Hilbert spaces, eigenvectors of two different eigenvalues are not mutually orthogonal in general, unless both the eigenvalues are real.
 
\begin{theorem}\label{teospectra} 
Let $\sH$ be a quaternionic Hilbert space and let 
$T: D(T) \lra \sH$ be an operator with dense domain. The following assertions hold.
\begin{itemize}
\item[$(\mr{a})$] If $T\in \gB(\sH)$ is normal, then we have that
\begin{itemize}
\item[(i)] $\sigma_{\mi{pS}}(T)=\sigma_{\mi{pS}}(T^*)$,
\item[(ii)] $\sigma_{\mi{rS}}(T)=\sigma_{\mi{rS}}(T^*)=\emptyset$,
\item[(iii)] $\sigma_{\mi{cS}}(T)=\sigma_{\mi{cS}}(T^*)$.
\end{itemize}
\item[$(\mr{b})$] If  $T$ is self--adjoint (not necessarily in $\gB(\sH)$), then $\ssp(T) \subset \bR$ and $\sigma_{\mi{rS}}(T)$ is empty. 
\item[$(\mr{c})$] If $T$ is anti self--adjoint (not necessarily in $\gB(\sH)$), then $\ssp(T) \subset \mr{Im}(\bH)$ and $\sigma_{\mi{rS}}(T)$ is empty. 
\item[$(\mr{d})$] If $T \in \gB(\sH)$ is unitary, then $\ssp(T) \subset \{q \in \bH \,|\, |q|=1\}$.
 \item[$(\mr{e})$] If $T\in \gB(\sH)$ is anti self--adjoint and unitary, then $\ssp(T)=\sigma_{pS}(T) = \bS$.
\end{itemize}
\end{theorem}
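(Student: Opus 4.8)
The plan is to treat all five assertions by exploiting that $\Delta_q(T)=T^2-T(q+\overline q)+\1|q|^2$ is a \emph{real} polynomial in $T$ (since $q+\overline q=2\mr{Re}(q)$ and $|q|^2$ are real), so it inherits (anti)normality from $T$ and depends on $q$ only through $\mr{Re}(q)$ and $|q|$. For $(\mr{a})$, normality of $T$ gives $\Delta_q(T)^*=\Delta_q(T^*)$ commuting with $\Delta_q(T)$, so $\Delta_q(T)$ is normal; Proposition \ref{lemma3} then yields $\mi{Ker}(\Delta_q(T))=\mi{Ker}(\Delta_q(T)^*)=\mi{Ker}(\Delta_q(T^*))$, which is $(\mr{i})$. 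By Proposition \ref{lemma0}, $\overline{\mi{Ran}(\Delta_q(T))}=\mi{Ker}(\Delta_q(T)^*)^\perp=\mi{Ker}(\Delta_q(T))^\perp$, so a trivial kernel forces dense range and $\sigma_{rS}(T)=\emptyset$; the same for $T^*$ gives $(\mr{ii})$. Then $(\mr{iii})$ follows by subtraction, using $\ssp(T)=\ssp(T^*)$ from Proposition \ref{propspecTT}.

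For $(\mr{b})$, write $q=\alpha+\imath\beta$ with $\alpha=\mr{Re}(q)$, $\beta=|\mr{Im}(q)|\ge 0$, and set $S:=T-\1\alpha$, self--adjoint. Then $\Delta_q(T)=S^2+\1\beta^2$; since $S^2=S^*S$ is self--adjoint by Theorem \ref{teoopagg}$(\mr d)$ and $\1\beta^2$ is bounded self--adjoint, $\Delta_q(T)$ is self--adjoint on $D(T^2)$. For $u\in D(T^2)$ one has $\langle u\,|\,\Delta_q(T)u\rangle=\|Su\|^2+\beta^2\|u\|^2\ge\beta^2\|u\|^2$, so $\|\Delta_q(T)u\|\ge\beta^2\|u\|$. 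Self--adjointness gives $\overline{\mi{Ran}(\Delta_q(T))}=\mi{Ker}(\Delta_q(T))^\perp$, hence $\mi{Ker}(\Delta_q(T))=\{0\}$ already forces dense range; thus $\sigma_{rS}(T)=\emptyset$ for every $q$. If $q\notin\bR$ then $\beta>0$, and the lower bound gives injectivity and a bounded inverse on the range, placing $q\in\srho(T)$; so $\ssp(T)\subset\bR$.

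For $(\mr{c})$, now $T^*=-T$, so $T^2=-T^*T$ is self--adjoint, and the relations $\langle u|T^2u\rangle=-\|Tu\|^2$, $\mr{Re}\langle u|Tu\rangle=0$, $\mr{Re}\langle T^2u|Tu\rangle=0$ give, for $u\in D(T^2)$,
\[
\|\Delta_q(T)u\|^2=\|T^2u\|^2+2(\alpha^2-\beta^2)\|Tu\|^2+(\alpha^2+\beta^2)^2\|u\|^2 .
\]
Eliminating $\|T^2u\|^2$ via $\|Tu\|^2=|\langle u|T^2u\rangle|\le\|u\|\,\|T^2u\|$ and minimising in one variable shows the right--hand side is $\ge\gamma^2\|u\|^2$ with $\gamma>0$ whenever $\alpha=\mr{Re}(q)\neq0$ (the bound is $(\alpha^2+\beta^2)^2$ if $\alpha^2\ge\beta^2$ and $4\alpha^2\beta^2$ otherwise). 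The same formula gives $\|\Delta_{-\overline q}(T)u\|=\|\Delta_q(T)u\|$, so $\mi{Ker}(\Delta_q(T))=\mi{Ker}(\Delta_{-\overline q}(T))=\mi{Ker}(\Delta_q(T)^*)$, forcing dense range when the kernel is trivial; hence $\sigma_{rS}(T)=\emptyset$, and $\mr{Re}(q)\neq0\Rightarrow q\in\srho(T)$, i.e. $\ssp(T)\subset\mr{Im}(\bH)$. \textbf{The main obstacle is exactly here}: in the unbounded case one must rigorously identify $\Delta_q(T)^*=\Delta_{-\overline q}(T)$ (equivalently, show $\mi{Ran}(\Delta_q(T))$ is closed with trivial orthogonal complement). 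I would establish that $T$ is $T^2$--bounded with relative bound $0$ (from $\|Tu\|^2=\langle u|T^*Tu\rangle$), deduce that $\Delta_q(T)$ is closed on $D(T^2)$, and argue that $D(T^2)$ is a core yielding $\Delta_q(T)^*=\Delta_{-\overline q}(T)$; in the bounded case this is immediate from Remark \ref{remarkop}.

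For $(\mr{d})$, a unitary $T$ is normal with $\|T\|=1$, so $r_S(T)=1$ by Theorem \ref{teopropspectrum}$(\mr e)$ and $\ssp(T)\subset\{|q|\le1\}$; moreover $0\notin\ssp(T)$ as $\Delta_0(T)=T^2$ is invertible. For $q\neq0$ the identity
\[
\Delta_q(T)=|q|^2\,T^2\,\Delta_{q^{-1}}(T^{-1})
\]
(checked using $q^{-1}=\overline q/|q|^2$ and $TT^{-1}=\1$) gives $q\in\srho(T)\Leftrightarrow q^{-1}\in\srho(T^{-1})$; since $T^{-1}=T^*$ forces $\ssp(T^{-1})=\ssp(T)$ by Proposition \ref{propspecTT}, $\ssp(T)$ is stable under $q\mapsto q^{-1}$, and $|q|\le1$ throughout then yields $|q|=1$. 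For $(\mr{e})$, combining $(\mr c)$ and $(\mr d)$ gives $\ssp(T)\subset\mr{Im}(\bH)\cap\{|q|=1\}=\bS$; conversely, for each $\imath\in\bS$ Proposition \ref{propJ}$(\mr d)$ applied to $J:=T$ produces a nonzero $u\in\sH^{T\imath}_+$ with $Tu=u\imath$, so $\imath\in\sigma_{pS}(T)$ by Proposition \ref{propsigmap}. Hence $\bS\subseteq\sigma_{pS}(T)\subseteq\ssp(T)\subseteq\bS$, and all three coincide with $\bS$.
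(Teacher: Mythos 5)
Your argument tracks the paper's own proof almost line for line: part $(\mr{a})$ via normality of $\Delta_q(T)$, Proposition \ref{lemma3} and Proposition \ref{lemma0}; part $(\mr{b})$ via the lower bound $\|\Delta_q(T)u\|\geq|\mr{Im}(q)|^2\|u\|$ together with self--adjointness of $\Delta_q(T)$; part $(\mr{d})$ via the factorization $\Delta_q(T)=\Delta_{q^{-1}}(T^*)T^2|q|^2$; and part $(\mr{e})$ via Propositions \ref{propJ}$(\mr{d})$ and \ref{propsigmap}. Your norm identity in $(\mr{c})$ and the resulting lower bound ($4\alpha^2\beta^2$ or $(\alpha^2+\beta^2)^2$, versus the paper's $r^4$) are both correct and serve the same purpose. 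All of this is sound.

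The one genuine gap is exactly the step you flagged in $(\mr{c})$: the identity $\Delta_q(T)^*=\Delta_{-\overline q}(T)$ for \emph{unbounded} anti self--adjoint $T$. Your sketch does not close it. Relative boundedness of $T$ with respect to $T^2$ (with bound $0$) gives closedness of $\Delta_q(T)$ on $D(T^2)$ and, via Remark \ref{remarkop}$(\mr{iv})$--$(\mr{v})$, the easy inclusion $\Delta_{-\overline q}(T)\subset\Delta_q(T)^*$; but the substantive content is the reverse inclusion $D(\Delta_q(T)^*)\subset D(T^2)$, and saying ``$D(T^2)$ is a core'' does not produce it -- a core argument controls closures, not adjoints, and the perturbation $-2\alpha T$ of the self--adjoint $T^2+\1|q|^2$ is anti--symmetric, so Kato--Rellich does not apply directly (one would need a Hess--Kato type theorem on adjoints of relatively bounded perturbations, which is not available in the paper and would itself require a quaternionic proof). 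The paper closes this gap with a short double--inclusion computation that you could adopt verbatim: since $T^2+\1 r^2$ is self--adjoint by Theorem \ref{teoopagg}$(\mr{d})$, writing it as $(T-\1 r)^2+T2r$ and taking adjoints yields $(T+\1 r)^2=(T-\1 r)^2+T4r\supset((T-\1 r)^2)^*$, while Remark \ref{remarkop}$(\mr{v})$ gives $((T-\1 r)^2)^*\supset(T^*-\1 r)^2=(T+\1 r)^2$; hence $((T-\1 r)^2)^*=(T+\1 r)^2$ and $\Delta_\lambda(T)^*=\Delta_{-r+\nu}(T)$. With that substitution your proof is complete.
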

\begin{proof} $(\mr{a})$ Since $\Delta_q(T)^*=\Delta_q(T^*)$ and $T$ is assumed to be normal, it follows immediately that $\Delta_q(T)$ is normal as well. Therefore, thanks to Proposition \ref{lemma3}, we know that $\mi{Ker}(\Delta_q(T))= \mi{Ker}(\Delta_q(T^*))$. This equality implies at once that 
$\sigma_{\mi{pS}}(T)=\sigma_{\mi{pS}}(T^*)$. Let us prove that $\sigma_{\mi{rS}}(T)=\emptyset$. Suppose on the contrary that $\sigma_{\mi{rS}}(T)$ contains a quaternion $q$. It would follows that
\[
\{0\}=\mi{Ker}(\Delta_q(T))=\mi{Ker}(\Delta_q(T^*))=\overline{\mi{Ran}(\Delta_q(T))}^\perp \neq \{0\},
\]
which is a contradiction. Similarly, one can prove that $\sigma_{\mi{rS}}(T^*)=\emptyset$. Since $\ssp(T)=\ssp(T^*)$ by Proposition \ref{propspecTT}, $\sigma_{\mi{pS}}(T)=\sigma_{\mi{pS}}(T^*)$, $\sigma_{\mi{rS}}(T)=\sigma_{\mi{rS}}(T^*)$ and the three components of the spherical spectrum are pairwise disjoint, we infer that $\sigma_{\mi{cS}}(T)=\sigma_{\mi{cS}}(T^*)$ as well.

$(\mr{b})$ Consider $q=r+\nu \in \bH$ with $r \in \bR$ and $\nu \in \mr{Im}(\bH) \setminus \{0\}$. We intend to show that $q \in \srho(T)$, which is equivalent to $\ssp(T) \subset \bR$. One has:
\[
\Delta_q(T)=T^2-T2r+\1 r^2+\1 |\nu|^2= 
(T-\1 r)^2+\1 |\nu|^2.
\]
Since $T$ is self--adjoint, $T-\1 r$ is self--adjoint as well. In particular, $T-\1 r$ is closed with dense domain. Define the operator $S_r:D(T^2) \lra \sH$ by setting $S_r:=(T-\1 r)^2$. Applying point $(\mr{d})$ of Theorem \ref{teoopagg}, we infer that $D(S_r)=D(T^2)=D(\Delta_q(T))$ is dense in $\sH$ and $S_r$ is self--adjoint. Consequently, $\Delta_q(T)=(T-\1 r)^2+\1 |\nu|^2$ is self--adjoint as well. Since $T-\1 r$ and $S_r$ are self--adjoint, if $u \in D(T^2)$, then we have that 
\[
\b u| S_r u \k=\b (T-\1 r)u | (T-\1 r)u \k  \geq 0,
\]
\[
\|\Delta_q(T)u\|^2=\b S_ru+u|\nu|^2 \, | \,  S_ru+u|\nu|^2\k=\|S_ru\|^2+2|\nu|^2 \b u \, | \, S_r u \k+|\nu|^4 \|u\|^2
\]
and hence we can write that
\beq \label{diseq}
\|\Delta_q(T)u\| \geq  |\nu|^2 \|u\|.
\eeq
The latter inequality implies at once that $\mi{Ker}(\Delta_q(T))=\{0\}$ and 
$\Delta_q(T)^{-1}:\mi{Ran}(\Delta_q(T)) \lra D(T^2)$ is bounded. Bearing in mind Proposition \ref{lemma0} and the fact that $\Delta_q(T)$ is self--adjoint, we observe:
\begin{align*}
\overline{\mi{Ran}(\Delta_q(T))}&=(\mi{Ran}(\Delta_q(T))^\perp)^\perp=
\mi{Ker}(\Delta_q(T)^*)^\perp=\\
&=\mi{Ker}(\Delta_q(T))^\perp = \{0\}^\perp= \sH.
\end{align*}
This proves that $q \in \srho(T)$ and hence that $\ssp(T) \subset \bR$, as desired. 

The proof of the fact that $\sigma_{\mi{rS}}(T)=\emptyset$ is similar to the one given above. If $p \in \sigma_{\mi{rS}}(T)$, then we obtain the following contradiction:
\[
\{0\}=\mi{Ker}(\Delta_p(T))=\mi{Ker}(\Delta_p(T)^*)=\overline{\mi{Ran}(\Delta_p(T))}^\perp \neq \{0\}.
\]

$(\mr{c})$ Let $\lambda=r+\nu \in \bH$ with $r \in \bR \setminus \{0\}$ and $\nu \in \mr{Im}(\bH)$. We must prove that $\lambda \in \srho(T)$. First, we show that
\beq \label{stimaantiself}
\|\Delta_{\lambda}(T) u\| \geq r^2 \|u\| \quad \text{if }u \in D(T^2),
\eeq
which implies that $\mi{Ker}(\Delta_\lambda(T))= \{0\}$ and $\Delta_\lambda(T)^{-1}:\mi{Ran}(\Delta_\lambda(T)) \lra D(\Delta_\lambda(T))$ is bounded. Let $u \in D(T^2)$. Since $T$ is anti self--adjoint, we have that $\b T^2u|Tu\k+\b Tu|T^2u\k=0=\b Tu|u\k+\b u|Tu\k$ and $\b T^2u|u\k+\b u|T^2u\k=-2\|Tu\|^2$. In particular, it holds:
\[
\| \Delta_{\lambda}(T) u\|^2 =\|T^2u\|^2+(r^2 +|\nu|^2)^2\|u\|^2+2(r^2-|\nu|^2)\|Tu\|^2.
\]
If $r^2-|\nu|^2 \geq 0$, then (\ref{stimaantiself}) is evident. Suppose that $r^2-|\nu|^2<0$. Since $\|Tu\|^2=-\b u|T^2 u\k \leq \|u\|\, \|T^2u\|$, we have that $2(r^2-|\nu|^2)\|Tu\|^2 \geq 2(r^2-|\nu|^2)\|u\|\,\|T^2u\|$. It follows that
\begin{align*}
\|\Delta_{\lambda}(T) u\|^2 &\geq \|T^2u\|^2 +(r^2 +|\nu|^2)^2\|u\|^2+2(r^2-|\nu|^2)\|u\|\,\|T^2u\|=\\
&=(\|T^2u\|-|\nu|^2\|u\|)^2+2r^2\|u\|\,\|T^2u\|+ (r^4+2r^2|\nu|^2)\|u\|^2 \geq \\
&\geq r^4\|u\|^2.
\end{align*}
Inequality (\ref{stimaantiself}) is proved. Now we show that $D(T^2)$ is dense in $\sH$ and $\Delta_\lambda(T)^*=\Delta_{-r+\nu}(T)$. Applying point $(\mr{d})$ of Theorem \ref{teoopagg} to $T$, we obtain that $D(T^2)$ is dense in $\sH$ and $T^2$ is self--adjoint. Evidently, the operator $T^2+\1 r^2:D(T^2) \lra \sH$ is self--adjoint as well. In this way, bearing in mind point $(\mr{iv})$ of Remark \ref{remarkop}, we infer that
\begin{align*}
T^2+\1 r^2&=(T-\1 r)^2+T2r=((T-\1 r)^2+T2r)^* \supset((T-\1 r)^2)^*+T^*2r=\\
&=((T-\1 r)^2)^*-T2r
\end{align*}
and hence $(T+\1 r)^2=(T-\1 r)^2+T4r \supset ((T-\1 r)^2)^*$. By point $(\mr{v})$ of Remark \ref{remarkop}, we have also that
$((T-\1 r)^2)^* \supset (T^*-\1 r)^2=(T+\1 r)^2$. It follows that $((T-\1 r)^2)^*=(T+\1 r)^2$ and hence
\[
\Delta_\lambda(T)^*=((T-\1 r)^2+\1 |\nu|^2)^*=(T+\1 r)^2+\1 |\nu|^2=\Delta_{-r+\nu}(T),
\]
as desired. Thanks to (\ref{stimaantiself}), we know that $\|\Delta_{-r+\nu}(T)u\| \geq r^2\|u\|$ for every $u \in \sH$. In particular, we infer that $\mi{Ker}(\Delta_{-r+\nu}(T))=\{0\}$. In this way, Proposition \ref{lemma0} implies that 
\begin{align*}
\overline{\mi{Ran}(\Delta_\lambda(T))} &= (\mi{Ran}(\Delta_\lambda(T))^\perp)^\perp=
\mi{Ker}(\Delta_\lambda(T)^*)^\perp=\\
&=\mi{Ker}(\Delta_{-r+\nu}(T))^\perp=\{0\}^\perp= \sH.
\end{align*}
We have just established that $\lambda \in \srho(T)$ and hence that $\ssp(T) \subset \mr{Im}(\bH)$. The equality $\sigma_{\mi{rS}}(T)=\emptyset$ can be proved as in the proof of $(\mr{a})$.

$(\mr{d})$ Since $\Delta_0(T)=T^2$, it is obvious that $\Delta_0(T)$ is bijective and its inverse coincides with $(T^*)^2 \in \gB(\sH)$. It follows that $0 \in \srho(T)$. If $|q|>1=\|T\|$, then point $(\mr{a})$ of Theorem~\ref{teopropspectrum} ensures that $q \in \srho(T)$. Let $0<|q|<1$. We have:
\begin{align*}
\Delta_q(T)&=T^2-T(q+\overline{q})+\1 |q|^2=\\ &=((T^*)^2-T^*(\overline{q}^{-1}+ q^{-1})+\1 |q^{-1}|^{2})T^2|q|^2=\\
&=\Delta_{q^{-1}}(T^*)T^2|q|^2.
\end{align*}
The operator $T^*$ is unitary and $|q^{-1}|= |q|^{-1}>1$. In this situation, we know that $\Delta_{q^{-1}}(T^*)$ is bijective and has bounded inverse. It follows that $\Delta_q(T)$ is bijective and has bounded inverse as well. This proves that $q \in \srho(T)$. We have just established that, if $q \in \srho(T)$, then $|q| \neq 1$, completing the proof of $(\mr{d})$.

$(\mr{e})$ The fact that $\sigma_S(J)\subset \bS$ is an  immediate consequence of $(\mr{c})$, $(\mr{d})$. Moreover, due to (d) in Proposition \ref{propJ} and
Proposition \ref{propsigmap}, every $\imath \in \bS$ belongs to $\sigma_{pS}(T)$. Since $\sigma_{pS}(T) \subset \sigma_S(T)$ the thesis holds.
\end{proof}


\section{Real measurable functional calculus for self--adjoint operators and slice nature of normal operators} \label{sec:J}

Let $\sH$ be a quaternionic Hilbert space and let $T \in \gB(\sH)$ be a normal operator. This part focuses on the problem concerning the existence of self--adjoint operators $A,B \in \gB(\sH)$ and of an anti self--adjoint and unitary operator $J \in \gB(\sH)$ such that $T$ decomposes as
\[
T = A+JB
\]
and $J$ commutes with $T$ and $T^*$. We shall prove not only that operators $A$, $B$ and $J$ exist (Theorem \ref{teoext}), but even that there exists a left scalar multiplication $\bH \ni q \mapsto L_q$ of $\sH$ such that $L_\imath = J$ for some $\imath \in \bS$ and $L_q$ commutes with $A$ and $B$ for every $q \in \bH$ (Theorem \ref{newtheorem}). As a by--product, we will give a proof of the known fact \cite{visw} that normal operators are unitarily similar to their adjoint operators. Furthermore, in Proposition \ref{propinterssigma} and Corollary \ref{corollaruCplus}, we will also establish the relation between the spherical spectrum of $T$ and the standard spectrum of the restriction of $T$ to the complex Hilbert subspaces $\sH_\pm^{J\imath}$ of $\sH$ defined in Definition \ref{defHJ}. The way we shall follow to prove the mentioned results relies upon some tools of measurable functional calculus for self--adjoint operators 
on quaternionic Hilbert spaces, which are interesting on their own right.

\subsection{The operator $\boldsymbol{J_0}$} \label{subsec:J_0}

It happens that, for a fixed normal operator $T \in \gB(\sH)$, there is an anti self--adjoint operator $J_0$, isometric on $\overline{\mi{Ran}(T-T^*)}=\mi{Ker}(T-T^*)^\perp$ and vanishing on $\mi{Ker}(T-T^*)$, uniquely determined by $T$, that commutes with $T$ and $T^*$, and induces an apparently familiar decomposition of $T$ into a complex combination of self--adjoint operators:
\beq \label{decA0}
T=(T+T^*)\frac{1}{2}+J_0 |T-T^*|\frac{1}{2}.
\eeq
In the special case in which $\mi{Ker}(T-T^*)=\{0\}$, if we fix an imaginary unit $\imath$ of $\bH$ and a left scalar multiplication $\bH \ni q \mapsto L_q$ of $\sH$ with $L_\imath=J_0$ (see Proposition~\ref{propJ}), then we can also write:
\beq \label{decA}
T=\frac{1}{2}(T+T^*) + J_0 \frac{1}{2\imath} (T-T^*).
\eeq
Notice that, if $J_0$ did not commute with $T-T^*$, then the operator $\frac{1}{2\imath}(T-T^*)$ could not be self--adjoint. So commutativity plays a crucial r\^ole here.

\begin{theorem} \label{teobastardo2} 
Let $\sH$ be a quaternionic Hilbert space and let $T \in \gB(\sH)$ be an operator. Then there exists, and is unique, an operator $J_0 \in \gB(\sH)$ such that:
\begin{itemize}
 \item[$(\mr{i})$] $J_0$ is anti self--adjoint,
 \item[$(\mr{ii})$] decomposition (\ref{decA0}) holds true,
 \item[$(\mr{iii})$] $\mi{Ker}(T-T^*)=\mi{Ker}(J_0)$,
 \item[$(\mr{iv})$] $J_0J_0^*=\1$ on $\mi{Ker}(T-T^*)^\perp$,
 \item[$(\mr{v})$] $J_0$ commutes with $|T-T^*|$.
\end{itemize}

Moreover, $J_0(\mi{Ker}(T-T^*)^\perp) \subset \mi{Ker}(T-T^*)^\perp$ and, if $T$ is normal, then $J_0$ commutes also with all the operators in $\gB(\sH)$ commuting with both $T$ and $T^*$.
\end{theorem}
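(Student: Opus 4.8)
The plan is to realize $J_0$ as the isometric factor in the polar decomposition of the anti self--adjoint part of $T$. Put $C:=T-T^*$. Then $C^*=-C$, so $C$ is anti self--adjoint, and $C^*C=-C^2=CC^*$ shows that $C$ is normal. Applying the polar decomposition theorem (Theorem~\ref{polar-polar}) to $C$, I write $C=W|C|$ and set $J_0:=W$. With this choice decomposition~(\ref{decA0}) is immediate: real scalars being central, $J_0|T-T^*|\frac{1}{2}=W|C|\frac{1}{2}=\frac{1}{2}C=\frac{1}{2}(T-T^*)$, whence $T=\frac{1}{2}(T+T^*)+J_0|T-T^*|\frac{1}{2}$, which is $(\mr{ii})$.

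Next I would read off $(\mr{i})$, $(\mr{iii})$--$(\mr{v})$ and the two ``moreover'' assertions from the properties of $W$, all of which are available because $C$ is normal. Property $(\mr{i})$ is part $(\mr{e})$ of Theorem~\ref{polar-polar} applied to the anti self--adjoint $C$. Property $(\mr{v})$ and the final commutation statement are part $(\mr{c})$: $W$ commutes with $|C|=|T-T^*|$ and with every $A\in\gB(\sH)$ commuting with both $C$ and $C^*$; since any $A$ commuting with $T$ and $T^*$ commutes with $C=T-T^*$ and with $C^*=-C$, it commutes with $J_0=W$. For $(\mr{iii})$, Remark~\ref{rem} gives $\mi{Ker}(J_0)=\mi{Ker}(W)=\mi{Ker}(|C|)$, and identity~(\ref{norm}) applied to $C$ gives $\mi{Ker}(|C|)=\mi{Ker}(C)=\mi{Ker}(T-T^*)$. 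For $(\mr{iv})$, the proof of Theorem~\ref{polar-polar} establishes $W^*W=\1$ on $\mi{Ker}(|C|)^\perp=\mi{Ker}(T-T^*)^\perp$; combining this with $W^*=-W$ yields $J_0J_0^*=WW^*=-W^2=W^*W=\1$ there. The invariance $J_0(\mi{Ker}(T-T^*)^\perp)\subset\mi{Ker}(T-T^*)^\perp$ is exactly the content of Remark~\ref{remarkbastardo} for the normal operator $C$. (Since $C$ is normal regardless of $T$, the commutation claim in fact requires no hypothesis on $T$, but I shall state it as in the theorem.)

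For uniqueness, only $(\mr{ii})$ and $(\mr{iii})$ are needed. If $J_0$ and $J_0'$ both satisfy the list, subtracting the two instances of~(\ref{decA0}) gives $(J_0-J_0')|T-T^*|=0$, so $J_0$ and $J_0'$ agree on $\mi{Ran}(|T-T^*|)$ and, by continuity, on its closure. Since $|T-T^*|$ is self--adjoint, Proposition~\ref{lemma0} gives $\overline{\mi{Ran}(|T-T^*|)}=\mi{Ker}(|T-T^*|)^\perp=\mi{Ker}(T-T^*)^\perp$, hence $J_0=J_0'$ on $\mi{Ker}(T-T^*)^\perp$; on the orthogonal complement $\mi{Ker}(T-T^*)$ both operators vanish by $(\mr{iii})$. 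As $\sH=\mi{Ker}(T-T^*)\oplus\mi{Ker}(T-T^*)^\perp$ by Theorem~\ref{teoremaperp}, it follows that $J_0=J_0'$. I do not anticipate a serious obstacle: the work is essentially bookkeeping against Theorem~\ref{polar-polar}, the one point demanding care being the passage in $(\mr{iv})$ from the partial--isometry relation $W^*W=\1$ to $J_0J_0^*=\1$, which hinges on the anti self--adjointness $W^*=-W$ secured in $(\mr{i})$.
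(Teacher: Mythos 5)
Your proof is correct, and the existence half is exactly the paper's argument: both apply Theorem~\ref{polar-polar} (together with Remark~\ref{rem} and identity~(\ref{norm})) to the anti self--adjoint, hence automatically normal, operator $T-T^*$ and read off $(\mr{i})$--$(\mr{v})$, the invariance of $\mi{Ker}(T-T^*)^\perp$, and the commutation property from the conclusions of that theorem; your parenthetical remark that the final commutation statement needs no normality hypothesis on $T$ is also implicit in the paper's own proof. The only genuine divergence is the uniqueness step. The paper first uses $(\mr{i})$, $(\mr{ii})$ and $(\mr{v})$ to recover $T-T^*=J_1|T-T^*|$ and then appeals to $(\mr{iii})$, $(\mr{iv})$ and the uniqueness clause of the polar decomposition, whereas you subtract the two instances of~(\ref{decA0}) to get agreement on $\overline{\mi{Ran}(|T-T^*|)}=\mi{Ker}(|T-T^*|)^\perp=\mi{Ker}(T-T^*)^\perp$ and then invoke $(\mr{iii})$ on the kernel. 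Your route is slightly leaner --- it shows that $(\mr{ii})$ and $(\mr{iii})$ alone already determine $J_0$ --- and it is equally rigorous, since $|T-T^*|$ is self--adjoint and Proposition~\ref{lemma0} identifies the closure of its range with the orthogonal complement of its kernel.
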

\begin{proof}
By applying Theorem~\ref{polar-polar} (see also Remark \ref{rem}) to the anti self--adjoint operator $T-T^*$, we obtain an anti self--adjoint operator $W \in \gB(\sH)$ such that $T-T^*=W|T-T^*|$, $\mi{Ker}(T-T^*)=\mi{Ker}(|T-T^*|) \subset \mi{Ker}(W)$ 
 and $WW^*=\1$ on $\mi{Ker}(T-T^*)^\perp$. Moreover, $W$ commutes with $|T-T^*|$ and with all the operators in $\gB(\sH)$ commuting with $T-T^*$ (and with $(T-T^*)^*=-(T-T^*)$). In particular, if $T$ is normal, then $W$ commutes also with $T$ and $T^*$. Evidently, $J_0:=W$ has the desired properties.

Let us show that such an operator $J_0$ is unique. Let $J_1 \in \gB(\sH)$ be an operator satisfying conditions $(\mr{i})$--$(\mr{v})$ (with $J_0$ replaced by $J_1$). By combining $(\mr{i})$, $(\mr{ii})$ and $(\mr{v})$ with the fact that $|T-T^*|$ is self--adjoint, we infer that
\[
T^*=(T+T^*)\frac{1}{2}-J_1|T-T^*|\frac{1}{2}.
\]
It follows that $T-T^*=J_1|T-T^*|$. Define $P:=|T-T^*|$. Bearing in mind $(\mr{iii})$, $(\mr{iv})$, the positivity of $P$ and the uniqueness of the quaternionic polar decomposition of $T-T^*$ (see Theorem~\ref{polar-polar}), we infer at once that $J_0=J_1$.
\end{proof}

As an immediate consequence, we obtain: 

\begin{corollary}
Let $\sH$ be a quaternionic Hilbert space, let $T \in \gB(\sH)$ be a normal operator, let $J_0 \in \gB(\sH)$ be the operator with the properties 
listed in the statement of Theorem \ref{teobastardo2} and let $\imath \in \bS$. Suppose that $\mi{Ker}(T-T^*)=\{0\}$. Then $J_0$ is unitary. Moreover, if $\bH \ni q \mapsto L_q$ is a left scalar multiplication of $\sH$ such that $L_\imath=J_0$, then both operators $\frac{1}{2}(T+T^*)$ and $\frac{1}{2\imath}(T-T^*)$ are self--adjoint and decomposition (\ref{decA}) holds.
\end{corollary}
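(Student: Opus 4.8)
The plan is to read off both conclusions directly from the properties of $J_0$ recorded in Theorem~\ref{teobastardo2}, from the polar decomposition used to construct it, and from a careful treatment of the left scalar multiplication. First I would prove that $J_0$ is unitary. The hypothesis $\mi{Ker}(T-T^*)=\{0\}$ feeds into properties $(\mr{iii})$ and $(\mr{iv})$ of Theorem~\ref{teobastardo2}: since $\mi{Ker}(J_0)=\mi{Ker}(T-T^*)=\{0\}$ and $\mi{Ker}(T-T^*)^\perp=\{0\}^\perp=\sH$, the identity $J_0J_0^*=\1$ holds on all of $\sH$. Combining this with the anti self--adjointness $J_0^*=-J_0$ from $(\mr{i})$ gives $J_0J_0^*=J_0^*J_0=-J_0^2=\1$, so $J_0$ is unitary (and in particular $J_0^2=-\1$), as $J_0\in\gB(\sH)$ already has full domain.

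Next I would treat the two self--adjoint operators. That $\frac{1}{2}(T+T^*)$ is self--adjoint is immediate, since $(T+T^*)^*=T+T^*$ and the scalar $\frac{1}{2}$ is real. For $\frac{1}{2\imath}(T-T^*)$ the point is to interpret the left coefficient correctly: as a quaternion $\frac{1}{2\imath}=(2\imath)^{-1}=-\frac{1}{2}\imath$, so by (\ref{eq:qT}) the operator $\frac{1}{2\imath}(T-T^*)$ equals $-\frac{1}{2}L_\imath(T-T^*)=-\frac{1}{2}J_0(T-T^*)$, using $L_\imath=J_0$. Recalling from the construction of $J_0$ in the proof of Theorem~\ref{teobastardo2} that $T-T^*=J_0|T-T^*|$, and using $J_0^2=-\1$, I obtain
\[
\tfrac{1}{2\imath}(T-T^*)=-\tfrac{1}{2}J_0^2|T-T^*|=\tfrac{1}{2}|T-T^*|,
\]
which is self--adjoint (indeed positive) by Theorem~\ref{radT}.

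Finally, substituting this identity into the right--hand side of (\ref{decA}) gives $J_0\frac{1}{2\imath}(T-T^*)=\frac{1}{2}J_0|T-T^*|=\frac{1}{2}(T-T^*)$, whence $\frac{1}{2}(T+T^*)+J_0\frac{1}{2\imath}(T-T^*)=\frac{1}{2}(T+T^*)+\frac{1}{2}(T-T^*)=T$, so decomposition (\ref{decA}) holds; in effect (\ref{decA}) is merely (\ref{decA0}) rewritten under the substitution $\frac{1}{2}|T-T^*|=\frac{1}{2\imath}(T-T^*)$. The only genuinely delicate point is the noncommutative bookkeeping with the left scalar multiplication---specifically that $\frac{1}{2\imath}$ enters as the left multiplier $-\frac{1}{2}J_0$ rather than acting on the right---and everything else reduces to the results already established.
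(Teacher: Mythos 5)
Your proof is correct and follows the same route the paper intends: the paper states this corollary without proof ("As an immediate consequence"), and your argument simply fleshes out the immediate deductions from Theorem~\ref{teobastardo2} — unitarity from $(\mr{iii})$, $(\mr{iv})$ and anti self--adjointness, and the identity $\frac{1}{2\imath}(T-T^*)=\frac{1}{2}|T-T^*|$ via $T-T^*=J_0|T-T^*|$ and $J_0^2=-\1$, from which self--adjointness and (\ref{decA}) follow. The only cosmetic remark is that self--adjointness of $|T-T^*|$ is more directly Proposition~\ref{Lemmadiag}$(\mr{b})$ (positivity implies symmetry) than Theorem~\ref{radT}, but this changes nothing.
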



\subsection{Continuous and measurable real functions of a self-adjoint operator}

\textit{Throughout this section, given a quaternionic Hilbert space $\sH$, we consider $\gB(\sH)$ as a real Banach algebra with unity $\1$}; that is, here $\gB(\sH)$ denotes the set of (bounded right $\bH$--linear) operators on $\sH$, equipped with the pointwise sum, with the real scalar multiplication defined in $(\ref{eq:rT})$, with the composition as product and with the norm defined in $(\ref{QN})$.

Let $\K$ be a non--empty subset of $\bR$. We denote by $\CC(\K,\bR)$ the commutative real Banach unital algebra of continuous real--valued functions defined on~$\K$. As usual, the algebra operations are the natural ones defined pointwisely, the unity $1_\K$ is the function constantly equal to $1$ and the norm is the supremum one~$\|\cdot\|_\infty$.

It is worth recalling the notion of real polynomial function.

\begin{definition} \label{def:polynomial-functions}
A function $p:\K \lra \bR$ is said to be a \emph{real polynomial function} if there exists a polynomial $P \in \bR[X]$ such that $p(t)=P(t)$ for each $t \in \K$.
\end{definition}

\begin{remark} \label{rem:poly-funct}
Since the zero set of a non--null real polynomial is finite and every finite subset is the zero set of a non--null polynomial, we infer that a real polynomial function on $\K$ is induced by a unique polynomial in $\bR[X]$ if and only if $\K$ is infinite. In other words, the homomorphism from $\bR[X]$ to $\CC(\K,\bR)$, sending $P$ into the real polynomial function induced by $P$ itself, is injective if and only if $\K$ is infinite.
\end{remark}

Before stating the next result, we remind the reader that the spherical spectrum of a self--adjoint bounded operator is a non--empty compact subset of $\bR$ (see Theorems \ref{teopropspectrum}$(\mr{b})$ and \ref{teospectra}$(\mr{b})$). 

\begin{theorem}\label{teoA1}
Let $\sH$ be a quaternionic Hilbert space and let $T \in \gB(\sH)$ be a self--adjoint operator. Then there exists, and is unique, a continuous homomorphism 
\[
\Phi_T:\CC(\ssp(T),\bR) \ni f \mapsto f(T) \in  \gB(\sH)
\]
of real Banach unital algebras such that:
\begin{itemize}
 \item[$(\mr{i})$] $\Phi_T$ is unity--preserving; that is, $\Phi_T(1_{\ssp(T)})=\1$.
 \item[$(\mr{ii})$] $\Phi_T(\mi{id})=T$, where $\mi{id}:\ssp(T) \hookrightarrow \bR$ denotes the inclusion map.
\end{itemize}
The following further facts hold true.
\begin{itemize}
\item[$(\mr{a})$] The operator $f(T)$ is self--adjoint for every $f \in \CC(\ssp(T),\bR)$.
\item[$(\mr{b})$] $\Phi_T$ is isometric; that is, $\|f(T)\|=\|f\|_\infty$ for every $f \in \CC(\ssp(T),\bR)$.
\item[$(\mr{c})$] $\Phi_T$ is positive;
that is, $f(T) \geq 0$ if $f \in \CC(\ssp(T),\bR)$ and $f(t) \geq 0$ for every $t \in \ssp(T)$.
\item[$(\mr{d})$] For every $f \in \CC(\ssp(T),\bR)$, $f(T)$ commutes with every element of $\gB(\sH)$ that commutes with $T$.
\end{itemize}
\end{theorem}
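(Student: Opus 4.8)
The plan is to build $\Phi_T$ first on polynomial functions, where it is forced by conditions $(\mr{i})$ and $(\mr{ii})$ together with the homomorphism property, and then to extend it by continuity using the Stone--Weierstrass theorem. For $P \in \bR[X]$ the operator $P(T)$ is already defined by $(\ref{eq:P(T)})$, and since $T$ is self--adjoint and $P$ has real coefficients, $P(T)$ is self--adjoint, hence normal.

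The crux is the norm identity
\[
\|P(T)\| = \sup_{t \in \ssp(T)} |P(t)| = \big\|P|_{\ssp(T)}\big\|_\infty \qquad (P \in \bR[X]).
\]
To obtain it I would combine three facts established earlier: because $P(T)$ is normal, Gelfand's formula $(\ref{raggiospett})$ in Theorem \ref{teopropspectrum}$(\mr{e})$ gives $\|P(T)\| = r_S(P(T))$; the spectral map property of Theorem \ref{teopropspectrum}$(\mr{c})$ gives $\ssp(P(T)) = P(\ssp(T))$; and Theorem \ref{teospectra}$(\mr{b})$ gives $\ssp(T) \subset \bR$, so that $P(\ssp(T)) \subset \bR$ and the spherical spectral radius $r_S(P(T)) = \sup\{|q| : q \in P(\ssp(T))\}$ is exactly the supremum of $|P|$ over $\ssp(T)$. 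This single identity does double duty: it shows that $P(T)$ depends only on the restriction $P|_{\ssp(T)}$ (if that restriction vanishes then $\|P(T)\| = 0$, so $P(T)=0$), hence that $p = P|_{\ssp(T)} \mapsto P(T)$ is a well--defined real algebra homomorphism on the subalgebra $\mc{P}(\ssp(T),\bR)$ of polynomial functions, and it shows that this homomorphism is isometric.

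Since $\ssp(T)$ is a non--empty compact subset of $\bR$, the Stone--Weierstrass theorem makes $\mc{P}(\ssp(T),\bR)$ dense in $\CC(\ssp(T),\bR)$. An isometric homomorphism from a dense subalgebra into the complete space $\gB(\sH)$ (completeness by Proposition \ref{propcompleteness}$(\mr{a})$) extends uniquely to an isometric homomorphism $\Phi_T$ on all of $\CC(\ssp(T),\bR)$, and the algebra and unit properties pass to the limit by the joint continuity of sum and composition (Proposition \ref{propcompleteness}$(\mr{b})$). Uniqueness is then immediate: any continuous unital homomorphism $\Psi$ with $\Psi(\mi{id}) = T$ must satisfy $\Psi(p) = P(T)$ for every polynomial function $p = P|_{\ssp(T)}$, and so agrees with $\Phi_T$ on the dense set $\mc{P}(\ssp(T),\bR)$, hence everywhere. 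Property $(\mr{b})$ is the isometry just proved. For $(\mr{a})$, each $P(T)$ is self--adjoint and the adjunction is an isometric involution (Remark \ref{remarkop}$(\mr{ii})$), so $f(T)^* = \lim_n P_n(T)^* = \lim_n P_n(T) = f(T)$. For $(\mr{d})$, any $A$ commuting with $T$ commutes with every $P(T)$, and commutation is preserved under the operator--norm limit.

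For positivity $(\mr{c})$, given $f \geq 0$ on $\ssp(T)$ I would write $f = g^2$ with $g := \sqrt{f} \in \CC(\ssp(T),\bR)$; then $g(T)$ is self--adjoint and $f(T) = g(T)^2 = g(T)^* g(T)$, whence $\b u | f(T) u \k = \|g(T)u\|^2 \geq 0$ for every $u \in \sH$. I expect the only genuinely delicate point to be the norm identity, and specifically the verification that the quaternionic spectral radius of the \emph{real} operator $P(T)$ collapses to an ordinary real supremum; this is exactly what the reality of $\ssp(T)$ for self--adjoint $T$ guarantees, so that, modulo the earlier spectral theory, the argument runs in close parallel to the complex case despite $\gB(\sH)$ being only a real Banach algebra.
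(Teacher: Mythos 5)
Your proposal is correct and follows essentially the same route as the paper: the norm identity $\|P(T)\|=\sup_{t\in\ssp(T)}|P(t)|$ via the spectral map property, the reality of $\ssp(T)$ and the spectral radius formula for normal operators, then extension by density and the square--root trick for positivity. The only difference is organizational: the paper splits into the cases $\ssp(T)$ infinite/finite (invoking Corollary \ref{cor:finite} for well--posedness in the finite case), whereas you let the norm identity handle well--definedness uniformly, which amounts to the same argument.
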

\begin{proof}
The present proof is organized into two steps. 

\textit{Step I}. Suppose that $\ssp(T)$ is infinite. Let us prove the uniqueness of $\Phi_T$. First, observe that, thanks to $(\mr{i})$ and $(\mr{ii})$, if $p:\ssp(T) \lra \bR$ is a real polynomial function induced by the (unique) polynomial $P \in \bR[X]$, then $\Phi_T(p)=P(T)$, where $P(T)$ is the operator in $\gB(\sH)$ defined in (\ref{eq:P(T)}). Let $\Psi: \CC(\ssp(T),\bR) \lra \gB(\sH)$ be another unity--preserving continuous homomorphism with $\Psi(\mi{id})=T$. It follows that the map $\Psi-\Phi_T$ is continuous and vanishes on every real polynomial function defined on $\ssp(T)$. By the Weierstrass approximation theorem, the set of all real polynomial functions on $\ssp(T)$ is dense in $\CC(\ssp(T),\bR)$ and hence $\Psi=\Phi_T$.

Let us construct $\Phi_T$. For every real polynomial function $p:\ssp(T) \lra \bR$, define $\Phi_T(p):=P(T)$, where $P$ is the unique polynomial in $\bR[X]$ inducing $p$. Since $T$ is self--adjoint, the operator $P(T)$ is also self--adjoint and hence points $(\mr{c})$ and $(\mr{e})$ of Theorem \ref{teopropspectrum} imply that
\[
\|\Phi_T(p)\|=\sup\{|P(r)| \in \bR^+ \,|\, r \in \ssp(T)\} =\|p\|_\infty.
\]
By continuity, $\Phi_T$ extends uniquely to an isometric homomorphism on the whole $\mscr{C}(\ssp(T),\bR)$, which satisfies $(\mr{a})$ and $(\mr{b})$. Evidently, by construction, $\Phi_T$ satisfies also $(\mr{i})$ and $(\mr{ii})$. Let us prove $(\mr{c})$. Given a function $f \in \mscr{C}(\ssp(T),\bR)$ with $f(t) \geq 0$ for every $t \in \ssp(T)$, we can apply $\Phi_T$ to $\sqrt{f} \in \mscr{C}(\ssp(T),\bR)$, obtaining the operator $\sqrt{f}(T) \in \gB(\sH)$. Since $\sqrt{f}(T)\sqrt{f}(T)=f(T)$ and $\sqrt{f}(T)$ is self--adjoint, it holds:
\[
\b u|f(T)u \k=\b u|\sqrt{f}(T)\sqrt{f}(T)u \k  
=\b \sqrt{f}(T) u|\sqrt{f}(T)u \k \in \bR^+ \quad \mbox{if }u \in \sH,
\]
as desired. Point $(\mr{d})$ is evident if $f$ is a real polynomial function. By continuity and by the Stone--Weierstrass approximation theorem, $(\mr{d})$ turns out to be true for every $f \in \mscr{C}(\ssp(T),\bR)$.

\textit{Step II.} Suppose now that $\ssp(T)$ is finite. The uniqueness of $\Phi_T$ is easy to see. Indeed, if $f:\ssp(T) \lra \bR$ is a function (which is always continuous in this case), then there exists $P \in \bR[X]$ such that $f(t)=P(t)$ for each $t \in \ssp(T)$. In this way, bearing in mind $(\mr{i})$ and $(\mr{ii})$, we have that $\Phi_T(f)$ must be equal to $P(T)$. Now, Corollary \ref{cor:finite} ensures that such a definition of $\Phi_T$ is consistent, because it does not depend on the choice of $P$, but only on $f$.

Points $(\mr{a})$ and $(\mr{d})$ are evident. Points $(\mr{b})$ and $(\mr{c})$ can be proved as in Step I.
\end{proof}

Next step consists in extending the notion of function $f(T)$ of a self--adjoint operator $T$ to the case in which $f:\ssp(T) \lra \bR$ is Borel--measurable and bounded.

In the following, we consider the non--empty subset $\K$ of $\bR$ to be equipped with the relative euclidean topology and we denote by $\mscr{B}(\K)$ the Borel $\sigma$--algebra of $\K$. Moreover, we indicate by $M(\K,\bR)$ the commutative real Banach unital algebra of bounded Borel--measurable functions from $\K$ to $\bR$. As before, the algebra operations are the natural ones defined pointwisely, the unity is the function $1_\K$ and the norm is the supremum one $\|\cdot\|_\infty$, and \textit{not} the essential--supremum one, since no measure has been adopted on $\K$.

\begin{theorem}\label{teoA2}
Let $\sH$ be a quaternionic Hilbert space and let $T \in \gB(\sH)$ be a self--adjoint operator. Then there exists, and is unique, a continuous homomorphism 
\[
\widehat{\Phi}_T:M(\ssp(T),\bR) \ni f \mapsto f(T) \in \gB(\sH)
\]
of commutative real Banach unital algebras such that:
\begin{itemize}
 \item[$(\mr{i})$] $\widehat{\Phi}_T$ is unity--preserving and $\widehat{\Phi}_T(\mi{id})=T$.
 \item[$(\mr{ii})$] $f(T)$ is self--adjoint for every $f \in M(\ssp(T),\bR)$.
 \item[$(\mr{iii})$] If a sequence $\{f_n\}_{n \in \bN}$ in $M(\ssp(T),\bR)$ is bounded and converges pointwisely to some $f \in  M(\ssp(T),\bR)$, then
$\{\widehat{\Phi}_T(f_n)\}_{n \in \bN} \to \widehat{\Phi}_T(f)$ in the weak opera\-tor topology; that is, $\{F(\widehat{\Phi}_T(f_n)(u))\}_{n \in \bN} \to F(\widehat{\Phi}_T(f)(u))$ in $\bH$ for every $u \in \sH$ and $F \in \sH'$.
\end{itemize}
The following further facts hold.
\begin{itemize}
 \item[$(\mr{a})$] $\widehat{\Phi}_T$ extends $\Phi_T$; that is, $\widehat{\Phi}_T=\Phi_T$ on $\mscr{C}(\ssp(T),\bR)$.
 \item[$(\mr{b})$] $\widehat{\Phi}_T$ is norm--decreasing; that is,
$\|f(T)\| \leq \|f\|_\infty$ for every $f \in M(\ssp(T),\bR)$.
 \item[$(\mr{c})$] $\widehat{\Phi}_T$ is positive;
that is, $f(T) \geq 0$ if $f \in M(\ssp(T),\bR)$ and $f(t) \geq 0$ for every $t \in \ssp(T)$.
 \item[$(\mr{d})$] For every $u \in \sH$, define the function $\mu_u:\mscr{B}(\ssp(T)) \lra \R^+$ by setting
\[
\mu_u(E):=\b u|\widehat{\Phi}_T(\chi_E)u\k,
\]
where $\chi_E$ is the characteristic function of the Borel subset $E$ of $\ssp(T)$. Then each $\mu_u$ is a finite positive $\sigma$--additive regular Borel measure on $\ssp(T)$ and, for every $f \in M(\ssp(T),\bR)$, it holds:
\beq \label{eq:f^2}
\|f(T)u\|^2=\int_{\ssp(T)}f^2 \, d\mu_u.
\eeq
 \item[$(\mr{e})$] For every $f \in M(\ssp(T),\bR)$, $f(T)$ commutes with every element of $\gB(\sH)$ that commutes with $T$.
 \item[$(\mr{f})$] If a sequence $\{f_n\}_{n \in \bN}$ in $M(\ssp(T),\bR)$ is bounded and converges pointwisely to some $f \in M(\ssp(T),\bR)$, then
$\{\widehat{\Phi}_T(f_n)\}_{n \in \bN} \to \widehat{\Phi}_T(f)$ in the strong opera\-tor topology; that is, $\{\widehat{\Phi}_T(f_n)(u)\}_{n \in \bN} \to \widehat{\Phi}_T(f)(u)$ in $\sH$ for every $u \in \sH$.
\end{itemize}
\end{theorem}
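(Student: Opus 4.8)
The plan is to bootstrap the measurable calculus from the continuous calculus $\Phi_T$ of Theorem~\ref{teoA1} by means of a family of scalar spectral measures. First I would fix $u \in \sH$ and consider the map $\CC(\ssp(T),\bR) \ni f \mapsto \langle u | \Phi_T(f) u\rangle$. Since $\Phi_T(f)$ is self--adjoint this quantity is real, it is $\bR$--linear in $f$, and by positivity (Theorem~\ref{teoA1}$(\mr{c})$) it is a positive functional; the Riesz--Markov representation theorem then yields a unique finite positive regular Borel measure $\mu_u$ on $\ssp(T)$ with $\langle u | \Phi_T(f) u\rangle = \int_{\ssp(T)} f\, d\mu_u$ for all continuous $f$, and $\mu_u(\ssp(T)) = \|u\|^2$. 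Next, the polarization identity (\ref{polarization}), applied to the self--adjoint operator $\Phi_T(f)$ in place of the identity, lets me define a finite $\bH$--valued Borel measure $\mu_{u,v}$ as the corresponding combination of the $\mu_{u'}$'s, so that $\langle u | \Phi_T(f) v\rangle = \int_{\ssp(T)} f\, d\mu_{u,v}$ for every continuous $f$. Using the uniqueness part of Riesz--Markov, the sesquilinear behaviour of $(u,v)\mapsto \langle u|\Phi_T(f)v\rangle$ transfers to the measures: $\mu_{u,v}$ is additive and right $\bH$--homogeneous in $v$, conjugate--left--homogeneous in $u$, Hermitian in the sense $\mu_{v,u} = \overline{\mu_{u,v}}$, and $\mu_{u,u}=\mu_u$.

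With these measures in hand, for $f \in M(\ssp(T),\bR)$ I would set $\beta_f(u,v) := \int_{\ssp(T)} f\, d\mu_{u,v}$ and check that $\beta_f$ is a bounded quaternionic form, right $\bH$--linear in $v$ and conjugate--left--linear in $u$ (both inherited from the corresponding properties of $\mu_{u,v}$, recalling that $f$ is real--valued). Boundedness $|\beta_f(u,v)| \le C\,\|f\|_\infty\,\|u\|\,\|v\|$ comes from the isometry $\|\Phi_T(g)\| = \|g\|_\infty$ of Theorem~\ref{teoA1}$(\mr{b})$, which bounds $|\int g\, d\mu_{u,v}| \le \|g\|_\infty \|u\|\|v\|$ for continuous $g$ and hence controls the total variation of $\mu_{u,v}$. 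A quaternionic Riesz representation for bounded sesquilinear forms, which follows from Theorem~\ref{quat_Riesz} by freezing one variable and representing the resulting continuous right $\bH$--linear functional, then produces a unique operator $f(T) =: \widehat{\Phi}_T(f) \in \gB(\sH)$ with $\langle u | f(T) v\rangle = \beta_f(u,v)$; property $(\mr{a})$ holds by construction, and Hermiticity $\mu_{v,u}=\overline{\mu_{u,v}}$ together with the reality of $f$ yields $f(T)=f(T)^*$, which is $(\mr{ii})$. The sharp estimate $(\mr{b})$ is postponed and recovered a posteriori from (\ref{eq:f^2}).

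The homomorphism property is the heart of the matter and I would prove it by a monotone--class argument, using the standard fact that $M(\ssp(T),\bR)$ is the smallest family of functions containing $\CC(\ssp(T),\bR)$ and stable under bounded pointwise limits of sequences. Additivity and real--homogeneity of $\widehat{\Phi}_T$ are immediate from linearity of the integral. For multiplicativity I would first establish $(\mr{iii})$: for a bounded sequence $f_n \to f$ pointwise, dominated convergence gives $\int f_n\, d\mu_{u,v} \to \int f\, d\mu_{u,v}$, i.e. $\langle u | f_n(T) v\rangle \to \langle u | f(T) v\rangle$, and since by Theorem~\ref{quat_Riesz} every $F \in \sH'$ has the form $\langle w|\cdot\rangle$, this is exactly weak operator convergence. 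Then, fixing a continuous $g$, the set $\{f \in M : \widehat{\Phi}_T(fg) = f(T)\,g(T)\}$ contains the continuous functions (Theorem~\ref{teoA1}) and is stable under bounded pointwise limits (by $(\mr{iii})$ and separate continuity of composition), hence is all of $M$; fixing then an arbitrary $f \in M$ and repeating in the second variable gives full multiplicativity. Properties $(\mr{c})$ and $(\mr{e})$ propagate from the continuous case the same way. For $(\mr{d})$ one checks $\langle u|\widehat{\Phi}_T(\chi_E)u\rangle = \mu_{u,u}(E) = \mu_u(E)$, so the measure of the statement coincides with $\mu_u$, and then $\|f(T)u\|^2 = \langle u | f(T)^{*}f(T)u\rangle = \langle u|(f^{2})(T)u\rangle = \int f^{2}\, d\mu_u$ by self--adjointness and multiplicativity, proving (\ref{eq:f^2}); this yields the sharp bound $(\mr{b})$, and applied to $f_n-f$ it upgrades weak to strong convergence, which is $(\mr{f})$. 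Uniqueness follows because any two maps satisfying $(\mr{i})$--$(\mr{iii})$ agree on $\CC(\ssp(T),\bR)$ by Theorem~\ref{teoA1}, and their agreement set is stable under bounded pointwise limits by $(\mr{iii})$ (weak limits being unique via Lemma~\ref{lem:Hahn-Banach}), hence is all of $M$.

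The main obstacle I anticipate is keeping the non--commutativity under control throughout: the spectral ``measures'' $\mu_{u,v}$ are genuinely $\bH$--valued, so I must verify that the polarization--defined object is right $\bH$--homogeneous in $v$ rather than left, that integrating a real function against it commutes correctly with the quaternionic scalar, and that the Riesz step yields a right $\bH$--linear, not merely additive, operator. The second delicate point is the multiplicativity bootstrap: the convergence available is in the weak and strong operator topologies, where composition is continuous only when one factor is held fixed, so the monotone--class argument must be run entirely through those topologies, and one must ensure that bounded pointwise limits are preserved under pre-- and post--composition with a fixed bounded operator. This is exactly where $(\mr{iii})$ and $(\mr{f})$, rather than norm estimates, do the real work.
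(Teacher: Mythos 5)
Your proposal is correct, and the core construction is the one the paper uses: scalar spectral measures $\mu_u$ from the Riesz--Markov theorem applied to the positive functional $f \mapsto \b u|\Phi_T(f)u\k$, a polarization identity (valid because $\Phi_T(f)$ is self--adjoint) producing the quaternion--valued measures $\mu_{u,v}$, and the quaternionic Riesz theorem turning the bounded Hermitian form $\beta_f$ into the operator $f(T)$. Where you genuinely diverge is in establishing multiplicativity, property $(\mr{e})$ and uniqueness: the paper proves these by direct measure identities -- it shows $g\,d\nu^{(a)}_{u,v}=d\nu^{(a)}_{u,g(T)v}$ for continuous $g$ and $f\,d\nu^{(a)}_{u,v}=d\nu^{(a)}_{f(T)u,v}$ for measurable $f$ via the uniqueness clause of the Riesz representation, and for uniqueness it rebuilds a regular measure $\nu^{(\Psi)}_u$ from any competing homomorphism $\Psi$ and compares it with $\mu_u$. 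You instead first prove $(\mr{iii})$ by dominated convergence and then run a functional monotone class argument through the weak operator topology, using that on the compact metric space $\ssp(T)$ the bounded Borel functions form the smallest vector space containing $\CC(\ssp(T),\bR)$ and stable under bounded pointwise sequential limits. This buys you a cleaner uniqueness proof (no need to re--derive $\sigma$--additivity and regularity of $\nu^{(\Psi)}_u$) at the cost of invoking the functional monotone class theorem, and you correctly identify the only delicate point there, namely that composition is only separately weakly continuous, so one factor must be frozen in each pass. Two small remarks: your deferral of $(\mr{b})$ is legitimate but you still need the total--variation bound on $\mu_{u,v}$ (which you supply via duality with continuous functions) to invoke the Riesz step at all; and for $(\mr{c})$ the monotone class machinery is not needed -- once $\mu_{u,u}=\mu_u$ is known, $\b u|f(T)u\k=\int f\,d\mu_u\ge 0$ is immediate for $f\ge 0$, which is even more direct than the paper's $\sqrt{f}$ argument.
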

\begin{proof}
We begin with the \textit{existence issue} proving {\em en passant} the validity of points $(\mr{a})$--$(\mr{f})$.
We subdivide this part of the proof into three steps.

\textit{Step I.}
Fix $u \in \sH$. Bearing in mind point $(\mr{c})$ of Theorem \ref{teoA1}, we can define the map
\[
\mscr{C}(\ssp(T),\bC) \ni f \mapsto \b u|\mr{Re}(f)(T) u \k+i \b u|\mr{Im}(f)(T)u \k \in \bC,
\]
where $\mscr{C}(\ssp(T),\bC)$ denotes the usual complex Banach space of $\bC$--valued continuous functions on $\ssp(T)$. That map defines a complex linear functional on $\mscr{C}(\ssp(T),\bC)$, which is positive by $(\mr{c})$ of Theorem \ref{teoA1}. Riesz' theorem for Borel measures on Hausdorff locally compact spaces (see Theo\-rem~2.14 of \cite{RudinARC}) implies that there is a unique positive $\sigma$--additive regular Borel measure $\mu_u:\ssp(T) \lra \bR^+$ such that
\[
\b u|\mr{Re}(f)(T) u \k+i \b u|\mr{Im}(f)(T)u \k=\int_{\ssp(T)} f \, d\mu_u \quad \text{if $f \in \mscr{C}(\ssp(T),\bC)$}
\]
and hence
\beq \label{defM0}
\b u|f(T) u \k=\int_{\ssp(T)} f \, d\mu_u \quad \text{if $f \in \mscr{C}(\ssp(T),\bR)$.}
\eeq
As a consequence, we obtain:
\beq \label{M0}
\mu_u(\ssp(T))=\int_{\ssp(T)} 1_{\ssp(T)} \, d\mu_u= \b u| \1 u\k=\|u\|^2.
\eeq

\textit{Step II.} Fix $u,v \in \sH$. Let us prove the following polarization--type identity:
\begin{align} \label{eq:polar}
4 \langle u| f(T)v \rangle=&\langle u+v | f(T) (u+v)\rangle -\langle u-v | f(T) (u-v)\rangle+ \nonumber\\
&+ \left( \langle ui+v | f(T) (ui+v)\rangle -\langle ui-v | f(T) (ui-v)\rangle \right)i+\nonumber\\
&+ \left( \langle uj+v |f(T) (uj+v)\rangle -\langle uj-v | f(T) (uj-v)\rangle \right)j+\\
&+ \left( \langle uk+v |f(T) (uk+v)\rangle  -\langle uk-v | f(T) (uk-v)\rangle \right)k. \nonumber
\end{align}
Let $\alpha,\beta,\gamma,\delta \in \bR$ such that $\langle u| f(T) v \rangle=\alpha+\beta i+\gamma j+\delta k$. By point $(\mr{a})$ of Theorem \ref{teoA1}, we know that $f(T)$ is self--adjoint and hence it holds:
\begin{align*}
\b u+v | f(T) (u+v)\k-\b u-v | f(T) (u-v)\k &=2\b u|f(T)v \k+2\b v|f(T)u \k=\\
&=2\b u|f(T)v \k+2\b f(T)v|u \k=\\
&=2\b u|f(T)v \k+2\overline{\b u|f(T)v \k}=\\
&=4\mr{Re}(\b u|f(T)v \k)=4\alpha.
\end{align*}
In particular, we have:
\begin{align*}
&\b ui+v | f(T)(ui+v)\k-\b ui-v | f(T)(ui-v)\k =4\mr{Re}(-i\b u|f(T)v \k)=4\beta,\\
&\b uj+v | f(T)(uj+v)\k-\b uj-v | f(T)(uj-v)\k =4\mr{Re}(-j\b u|f(T)v \k)=4\gamma,\\
&\b uk+v | f(T)(uk+k)\k-\b uk-v | f(T) (uk-v)\k =4\mr{Re}(-k\b u|f(T)v \k)=4\delta.
\end{align*}
Equality (\ref{eq:polar}) follows immediately.
For every $a \in \{0,1,2,3\}$, define the signed measure $\nu^{(a)}_{u,v}:\mscr{B}(\ssp(T)) \lra \bR$ by setting
\[
\nu^{(a)}_{u,v}(E):=\frac{1}{4}\mu_{ue_a+v}(E)-   \frac{1}{4}\mu_{ue_a-v}(E)\quad \text{if $E \in \mscr{B}(\ssp(T))$.}
\]
For convenience, define $e_0:=1$, $e_1:=i$, $e_2:=j$ and $e_3:=k$. Thanks to (\ref{defM0}) and (\ref{eq:polar}), we infer at once that
\beq \label{defM}
\langle u | f(T) v \rangle = \sum_{a=0}^3 \int_{\ssp(T)} f \, d\nu^{(a)}_{u,v} \, e_a \quad \text{if $f \in \CC(\ssp(T),\bR)$.}
\eeq
If we think of (finite) signed measures as sub--cases of complex measures, then we can exploit the known result (valid in general for complex regular Borel measures in view of the complex measure Riesz' representation theorem \cite[Theorem 6.19]{RudinARC}) that two such measures coincide when the corresponding integrals of real continuous compactly supported functions produce the same results. In that way, by (\ref{defM}), one easily obtain that
\begin{align}
&\nu^{(a)}_{u,v+v'}=\nu^{(a)}_{u,v}+\nu^{(a)}_{u,v'} \quad \text{if $v' \in \sH$ and $a \in \{0,1,2,3\}$},\label{M2} \\
&\sum_{a=0}^3 \nu^{(a)}_{u,vq}e_a=\sum_{a=0}^3 \nu^{(a)}_{u,v}e_aq \quad \text{if $q \in \bH$}.\label{M3}
\end{align}
In addition, since $f(T)$ is self--adjoint, we have that
\begin{align*}
\sum_{a=0}^3\int_{\ssp(T)}f \, d\mu^{(a)}_{v,u} \, e_a
&=\b v | f(T)u\k=\overline{\b f(T)u | v \k}=\overline{\b u | f(T)v \k}= \\
&=\int_{\ssp(T)} f \, d\mu^{(0)}_{u,v}-\sum_{a=0}^3\int_{\ssp(T)}f \, d\mu^{(a)}_{v,u} \, e_a
\end{align*}
and hence
\begin{align} \label{M1}
& \mu^{(0)}_{u,v}=\mu^{(0)}_{v,u} \; \text{ and } \; \mu^{(a)}_{u,v}=-\mu^{(a)}_{v,u} \; \text{ if }a \in \{1,2,3\}.
\end{align}
Moreover, equality (\ref{M0}) implies that
\beq \label{M4}
|\nu^{(a)}_{u,v}(\ssp(T))| \leq \|u\| \, \|v\| \quad \text{if $a \in \{0,1,2,3\}$}.
\eeq
Indeed, it holds:
\begin{align*}
4|\nu^{(a)}_{u,v}(\ssp(T))| & =\|ue_a+v\|^2-\|ue_a-v\|^2=
4\mr{Re}(\b ue_a|v \k)= \\
&=4\mr{Re}(-e_a \b u|v \k) \leq 4|\b u|v \k| \leq 4\|u\| \, \|v\|.
\end{align*}

Viewing $\mu_u$ as a signed measure, one can compare (\ref{defM0}) with (\ref{defM}) using the same uniqueness property established in Riesz' theorem for complex Borel measures \cite[Theorem~6.19]{RudinARC}, obtaining: 
\beq \label{M5}
\nu^{(0)}_{u,u}=\mu_u \quad \text{and} \quad \nu^{(a)}_{u,u}=0 \; \text{ if $a \in \{1,2,3\}$}.
\eeq 

\textit{Step III.} Let $f \in M(\ssp(T),\bR)$. Define the quadratic form
\[
\sH \times \sH \ni (u,v) \mapsto Q_f(u,v):= \sum_{a=0}^3\int_{\ssp(T)} f \, d\nu^{(a)}_{u,v} \, e_a \in \bH.
\]
In view of (\ref{M2})--(\ref{M4}), that quadratic form is right $\bH$--linear in the second component and quaternionic hermitian; that is, $Q_f(u,vq+v'q')=Q_f(u,v)q+Q_f(u,v')q'$ and $\overline{Q_f(u,v)}=Q_f(v,u)$ if $u,v,v' \in \sH$ and $q,q' \in \bH$. Moreover, $Q_f$ is bounded:
\[
|Q_f(u,v)| \leq 4\|f\|_\infty \|u\|\,\|v\|.
\]
Quaternionic Riesz' theorem (see Theorem \ref{quat_Riesz})
 immediately implies that there exists a unique operator in $\gB(\sH)$, we shall indicate by $f(T)$ again, such that
\beq \label{eq:Q_f}
\b u|f(T) v \k=Q_f(u,v) \quad \text{if $u,v \in \sH$.}
\eeq
That operator is self--adjoint; that is, $(\mr{ii})$ is verified. Indeed, it holds:
\[
\b u|f(T)v \k=Q_f(u,v)=\overline{Q_f(v,u)}=\overline{\b v|f(T)u\k}=\b f(T)u|v \k \quad \text{if }u,v \in \sH.
\]
The map $\widehat{\Phi}_T:M(\ssp(T),\bR) \lra \gB(\sH)$, sending $f$ into $f(T)$, satisfies $(\mr{a})$; that is, it is an extension of $\Phi_T$. This is immediate consequence of (\ref{defM}) and of the definition of $f(T)$. Point $(\mr{a})$ and points $(\mr{i})$ and $(\mr{ii})$ of Theorem \ref{teoA1} implies at once $(\mr{i})$.

Let us prove that $\widehat{\Phi}_T$ is a homomorphism of real algebras. By construction, it is evident that it is $\bR$--linear. Let $x,y \in \sH$. Given $f,g \in M(\ssp(T),\bR)$, we must show that $\widehat{\Phi}_T(fg)=\widehat{\Phi}_T(f)\widehat{\Phi}_T(g)$. Firstly, suppose that $f,g \in \CC(\ssp(T),\bR)$. We have:
\begin{align*}
\sum_{a=0}^3 \int_{\ssp(T)} f \, d\nu^{(a)}_{u,g(T)v} \, e_a & =\b u |\Phi_T(f)\Phi_T(g) v \k=\b u|\Phi_T(fg) v\k=\\
&=\sum_{a=0}^3 \int_{\ssp(T)} f g \, d\nu^{(a)}_{u,v} \, e_a \quad \text{if }u,v \in \sH.
\end{align*}
Once again exploiting the uniqueness of the signed measures, we conclude that $d\nu^{(a)}_{u,g(T)v}=g\, d\nu^{(a)}_{u,v}$ for every $a \in \{0,1,2,3\}$. In this way, if $f \in M(\ssp(T),\bR)$ and $g \in \mscr{C}(\ssp(T),\bR)$, it holds:
\begin{align*}
\sum_{a=0}^3 \int_{\ssp(T)} \, f g \, d\nu^{(a)}_{u,v} \, e_a &=\sum_{a=0}^3 \int_{\ssp(T)} f \, d\nu^{(a)}_{u,g(T)v} \, e_a=\b u|f(T)g(T)v \k=\\
&=\b f(T)u|g(T)v \k=\sum_{a=0}^3 \int_{\ssp(T)} g \, d\nu^{(a)}_{f(T)u,v} \, e_a.
\end{align*}
It follows that $f \, d\nu^{(a)}_{u,v}=d\nu^{(a)}_{f(T)u,v}$ for every $a \in \{0,1,2,3\}$. Finally, if $f,g \in M(\ssp(T),\bR)$, then we have that
\begin{align*}
\b u|\widehat{\Phi}_T(fg)v \k &=\sum_{a=0}^3 \int_{\ssp(T)} \, f g \, d\nu^{(a)}_{u,v} \, e_a=\sum_{a=0}^3 \int_{\ssp(T)} g \, d\nu^{(a)}_{f(T)u,v} \, e_a=\\
&=\b \widehat{\Phi}_T(f)u|\widehat{\Phi}_T(g)v \k=\b u|\widehat{\Phi}_T(f)\widehat{\Phi}_T(g)v \k.
\end{align*}
Since this is true for every $u,v \in \sH$, we infer that $\widehat{\Phi}_T(fg)=\widehat{\Phi}_T(f)\widehat{\Phi}_T(g)$, as desired.

Point $(\mr{c})$ is quite evident. Indeed, if $f \in M(\ssp(T),\bR)$ and $f \geq 0$, then $\sqrt{f} \in M(\ssp(T),\bR)$ and $f=\sqrt{f}\sqrt{f}$. Therefore, it holds:
\[
\b u| f(T) u\k=\b u| \sqrt{f}(T) \sqrt{f}(T) u\k=\b \sqrt{f}(T) u|\sqrt f(T) u\k \geq 0 \quad \text{if }u \in \sH.
\]

Let us prove $(\mr{d})$. First, observe that, if $u \in \sH$ and $E \in \mscr{B}(\ssp(T))$, then (\ref{M5}) and (\ref{eq:Q_f}) imply that
\[
\b u|\widehat{\Phi}_T(\chi_E)u\k=\sum_{a=0}^3\int_{\ssp(T)} \chi_E \, d\nu^{(a)}_{u,u} \, e_a=\int_{\ssp(T)}\chi_E \, d\mu_u=\mu_u(E).
\]
Moreover, bearing in mind (\ref{M0}), we have:
\begin{align*}
\|f(T)u\|^2 &= \b \widehat{\Phi}_T(f)u|\widehat{\Phi}_T(f)u\k=\b u|\widehat{\Phi}_T(f)^2 u\k=\b u|\widehat{\Phi}_T(f^2)u\k=\\
&=\int_{\ssp(T)} f^2 \, d\mu_u
 \leq \|f\|_{\infty}^2 \, \mu_u(\ssp(T))=\|f\|^2_{\infty} \|u\|^2\:,
\end{align*}
so that $\|f(T)\| \leq \|f\|_\infty$. This completes the proof of $(\mr{d})$ and proves $(\mr{b})$.

We pass to prove $(\mr{e})$. Let $f \in M(\ssp(T),\bR)$ and let $S \in \gB(\sH)$ be an operator which commutes with $T$. We must show that $S \, \widehat{\Phi}_T(f)=\widehat{\Phi}_T(f) \, S$. Thanks to point $(\mr{d})$ of Theorem \ref{teoA1}, the latter equality holds if $f \in \CC(\ssp(T),\bR)$. In this way, repeating the argument employed to prove that $\widehat{\Phi}_T$ is a homomorphism of real algebras, one obtains that $\nu_{S^*u,v}^{(a)}=\nu_{u,Sv}^{(a)}$ if $u,v \in \sH$ and $a \in \{0,1,2,3\}$. It follows that
\begin{align*}
\b u|S \, \widehat{\Phi}_T(f) v \k & =\b S^*u| \widehat{\Phi}_T(f) v \rangle=\sum_{a=0}^3 \int_{\ssp(T)} f \, d\nu^{(a)}_{S^*u,v} \, e_a=\\
&=\sum_{a=0}^3\int_{\ssp(T)} f \, d\nu^{(a)}_{u,Sv} \, e_a=\langle u| \widehat{\Phi}_T(f) \, Sv \rangle,
\end{align*}
if $u,v \in \sH$ and hence $S \, \widehat{\Phi}_T(f)=\widehat{\Phi}_T(f) \, S$.

It remains to show $(\mr{f})$ (which implies $(\mr{iii})$). If $\{f_n\}_{n \in \bN}$ is a bounded sequence in $M(\ssp(T),\bR)$, which converges pointwisely to some $f \in  M(\ssp(T),\bR)$, then 
we can apply Lebesgue's dominated convergence theorem to $\{f_n-f\}_{n \in \bN}$. In this way, thanks to (\ref{eq:f^2}), we obtain:
\[
\lim_{n \to +\infty}\| (f_n(T) - f(T)) u\|^2 =
\lim_{n \to +\infty}\int_{\ssp(T)}  (f_n-f)^2  d\mu_u =0,
\]

Now we consider the \textit{uniqueness issue}. Assume that there is another continuous unity--preserving homomorphism $\Psi: M(\ssp(T),\bR) \lra \gB(\sH)$ satisfying $(\mr{i})$--$(\mr{iii})$. By Theorem~\ref{teoA1} and point $(\mr{i})$, we have that
$\Psi(f)=\Phi_T(f)=\widehat{\Phi}_T(f)$ if $f \in \CC(\ssp(T),\bR)$. Let $u \in \sH$. Define the map $\nu^{(\Psi)}_u:\mscr{B}(\ssp(T)) \lra \R^+$ by setting
\[
\nu^{(\Psi)}_u(E):=\b u|\Psi(\chi_E)u\k \quad \text{if }E \in \mscr{B}(\ssp(T)).
\]
Thanks to the fact that $\Psi$ is a homomorphism satisfying $(\mr{ii})$, we have that such a map $\nu^{(\Psi)}_u$ is well--defined. Indeed, if $E \in \mscr{B}(\ssp(T))$, it holds:
\[
\b u|\Psi(\chi_E)u\k=\b u|\Psi(\chi_E\chi_E)u\k=\b u|\Psi(\chi_E)\Psi(\chi_E)u\k=\b \Psi(\chi_E)u|\Psi(\chi_E)u\k \in \bR^+.
\] 
Observe that $\nu^{(\Psi)}_u(\emptyset)=\b u|\Psi(0)u\k=0$. Moreover, if $\{E_n\}_{n \in \bN}$ is a sequence of pairwise disjoint sets in $\mscr{B}(\ssp(T))$, then point $(\mr{iii})$ implies that
\begin{align*}
\textstyle \nu^{(\Psi)}_u(\bigcup_{n \in \bN}E_n) &\textstyle =\left\b u \left| \, \lim_{k \to +\infty}\Psi\left(\sum_{n=0}^k\chi_{E_n}\right)u\right.\right\k=\\
&\textstyle =\lim_{k \to +\infty}\sum_{n=0}^k\left\b u \left| \, \Psi\left(\chi_{E_n}\right)u\right.\right\k=\\
&\textstyle =\sum_{n \in \bN}\left\b u \left| \, \Psi\left(\chi_{E_n}\right)u\right.\right\k=\sum_{n \in \bN}\nu^{(\Psi)}_u(E_n).
\end{align*}
This proves that $\nu^{(\Psi)}_u$ is a positive $\sigma$--additive Borel measure on $\ssp(T)$. Notice that that measure is finite as $ \nu^{(\Psi)}_u(\ssp(T))=\langle u|\Psi(1_{\ssp(T)}) u \rangle=\|u\|^2$. In view of Theorem~2.18 in \cite{RudinARC}, such a measure is regular. By standard approximation of measurable functions with simple functions and by point $(\mr{iii})$, we infer that 
\[
\langle u |\Psi(f) u \rangle = \int_{\ssp(T)} f \, d\nu_u^{(\Psi)} \quad \text{if $f \in M(\ssp(T),\bR)$}.
\]
On the other hand, bearing in mind (\ref{defM0}), if $f \in \CC(\ssp(T),\bR)$, one has
\[
\int_{\ssp(T)} f \, d\mu_u=\langle u |\widehat{\Phi}_T(f) u \rangle=\langle u |\Psi(f) u \rangle=\int_{\ssp(T)} f \, d\nu_u^{(\Psi)}.
\]
 The already exploited uniqueness property implies that $\mu_u=\nu^{(\Psi)}_u$. In this way, for every $f \in M(\ssp(T),\bR)$, it holds:
\[
\langle u | (\Psi(f)  - \widehat{\Phi}_T(f)) u \rangle=\int_{\ssp(T)} f \, d\nu^{(\Psi)}_u- \int_{\ssp(T)} f \, d\mu_u=0.
\]
Applying Proposition~\ref{Lemmadiag}, we have that $\Psi  = \widehat{\Phi}_A$.
\end{proof}


\subsection{Quaternionic $\boldsymbol{L^2}$-representation of measurable real functions of a self-adjoint operator}
In this section, exploiting the functional calculus previously constructed, we establish the key tool in proving the existence of an anti self--adjoint and unitary operator $J$ commuting with a given normal operator $T$ and with its adjoint $T^*$. Indeed, here we prove the existence of such $J$ in the special situation in which $T$ is self--adjoint. The existence of $J$ will be established by constructing a suitable $L^2$--realisation of the quaternionic Hilbert space $\sH$, where $T$ and every measurable real function $f(T)$ act as multiplicative operators.

Let $X$ be a set and let $\mu$ be a positive $\sigma$--additive measure on $X$. Denote by 
 $L^2(X,\bH;\mu)$ the quaternionic Hilbert space of the squared integrable functions $f: X \lra \bH$ with respect to $\mu$. Here $L^2(X,\bH;\mu)$ is considered as a quaternionic right module in which the sum of functions is the standard pointwise sum and the right multiplication by quaternions is defined pointwisely. The scalar product is defined as follows: if $f,g \in L^2(X,\bH;\mu)$ and $f_0,f_1,f_2,f_3,g_0,g_1,g_2,g_3$ are the real--valued functions on $X$ such that $f=\sum_{a=0}^3 f_a e_a$ and $g=\sum_{a=0}^3 g_a e_a$, then we set
\[
(f|g):=\int_X\overline{f}g \, d\mu=\sum_{a,b=0}^3 \int_X f_a g_b \, d\mu \, \overline{e}_ae_b.
\]
Here $\{e_0,e_1,e_2,e_3\}$ is a fixed orthonormal basis of $\bH$. In particular, the norm $\|f\|_{L^2}$ of $f$ in $L^2(X,\bH;\mu)$ is given by $\|f\|_{L^2}=\left(\int_X|f|^2 \, d\mu\right)^{1/2}$ (see e.g.~\cite{BDS}).

\begin{theorem}\label{teoA3}
Let $\sH$ be a quaternionic Hilbert space and let $T \in \gB(\sH)$ be a self--adjoint operator. Then the following assertions hold.
\begin{itemize}
\item[$(\mr{a})$] There exists an orthogonal decomposition of $\sH$ into closed subspaces $\sH =\bigoplus_{\ell \in \Lambda} \sH_{\ell}$ with the following two properties:
\begin{itemize}
\item[$(\mr{i})$] $f(T)(\sH_\ell) \subset \sH_\ell$ for every $f \in M(\ssp(T),\bR)$ and $\ell \in \Lambda$.

\item[(ii)] For each $\ell \in \Lambda$, there is a finite positive regular Borel measure $\mu_\ell$ on $\ssp(T)$ and an isometric isomorphism $U_\ell:L^2(\ssp(T),\bH;\mu_\ell) \lra \sH_\ell$ such that
\[
\left(U^{-1}_\ell f(T)\rr_{\sH_\ell} U_\ell\right)(g)=fg
\]
for every $f \in M(\ssp(T),\bR)$ and $g \in L^2(\ssp(T),\bH;\mu_\ell)$, where $f(T)\rr_{\sH_\ell}$ denotes the restriction of $f(T)$ from $\sH_\ell$ to $\sH_\ell$.
\end{itemize}
\item[$(\mr{b})$] There exists $J \in \gB(\sH)$ such that:
\[
\text{$J^*=-J$, $J^*J=\1$ and $Jf(T)=f(T)J$ for every $f \in M(\ssp(T),\bR)$.}
\]
\end{itemize}
\end{theorem}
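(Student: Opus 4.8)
The plan is to realise $\sH$ as an orthogonal sum of \emph{cyclic} subspaces on each of which every operator $f(T)$ acts as a multiplication operator, and then to build $J$ by transporting the pointwise left multiplication by an imaginary unit through these identifications. The engine is the real measurable functional calculus $\widehat{\Phi}_T$ of Theorem \ref{teoA2} together with the spectral measures $\mu_u$ and formula $(\ref{eq:f^2})$.

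First I would fix $u \in \sH$ and, for a bounded Borel function $g=\sum_{a=0}^3 g_ae_a$ with real components $g_a \in M(\ssp(T),\bR)$ and $e_0=1,e_1=i,e_2=j,e_3=k$, set $U_u(g):=\sum_{a=0}^3 (g_a(T)u)e_a$. Using that each $f(T)$ is right $\bH$--linear and that $\widehat{\Phi}_T$ is a real algebra homomorphism, one checks that $U_u$ is right $\bH$--linear and that $f(T)U_u(g)=U_u(fg)$ for every $f \in M(\ssp(T),\bR)$. The crucial point is isometry: since $\b g_a(T)u\,|\,g_b(T)u\k=\b u\,|\,(g_ag_b)(T)u\k=\int_{\ssp(T)}g_ag_b\,d\mu_u \in \bR$ by Theorem \ref{teoA2} and $(\ref{eq:f^2})$, expanding $\|U_u(g)\|^2$ produces the terms $\big(\int g_ag_b\,d\mu_u\big)\,\overline{e}_ae_b$; the diagonal terms sum to $\int|g|^2\,d\mu_u$, while for $a\neq b$ the symmetric pair $\overline{e}_ae_b+\overline{e}_be_a=2\,\mr{Re}(\overline{e}_ae_b)$ vanishes because $\overline{e}_ae_b$ is a pure imaginary quaternion. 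Hence $\|U_u(g)\|_{\sH}=\|g\|_{L^2}$. As $\mu_u$ is finite, bounded functions are dense in $L^2(\ssp(T),\bH;\mu_u)$, so $U_u$ extends to an isometric right $\bH$--linear map whose image $\sH_u$ is a closed subspace, and $U_u:L^2(\ssp(T),\bH;\mu_u)\lra \sH_u$ is an isometric isomorphism intertwining $f(T)|_{\sH_u}$ with multiplication by $f$. This gives $(\mr{i})$ and $(\mr{ii})$ for a single block.

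Next I would assemble the global decomposition by Zorn's lemma. Consider the families of mutually orthogonal nonzero cyclic subspaces ordered by inclusion and choose a maximal one $\{\sH_\ell\}_{\ell\in\Lambda}$; let $\sH_0$ be the closure of its sum. If $\sH_0\neq\sH$, pick $0\neq w\in\sH_0^\perp$. Each $f(T)$ is self--adjoint (Theorem \ref{teoA2}$(\mr{ii})$) and preserves $\sH_0$, so $\sH_0^\perp$ is $f(T)$--invariant; being an orthogonal complement, $\sH_0^\perp$ is also a right $\bH$--submodule. Therefore the cyclic subspace $\sH_w$ is contained in $\sH_0^\perp$, hence orthogonal to every $\sH_\ell$, while $w=U_w(1_{\ssp(T)})\in\sH_w$ shows $\sH_w\neq\{0\}$, contradicting maximality. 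Thus $\sH=\bigoplus_{\ell}\sH_\ell$, which proves $(\mr{a})$ with $\mu_\ell$ the spectral measure of the generator of $\sH_\ell$.

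For $(\mr{b})$ I would define $J$ block--wise by declaring that, under $U_\ell$, it corresponds to the pointwise left multiplication $g\mapsto ig$ by the fixed imaginary unit $i$ on $L^2(\ssp(T),\bH;\mu_\ell)$, i.e.\ $J(U_\ell g):=U_\ell(ig)$. This multiplication is right $\bH$--linear by associativity in $\bH$, is isometric since $|ig|=|g|$, squares to $-\1$, and satisfies $(ig\,|\,h)=-(g\,|\,ih)$ because $\overline{ig}=\overline{g}(-i)$, so it is anti self--adjoint and unitary; it commutes with multiplication by any real $f$ since $if=fi$ pointwise. As $U_\ell$ is an isometric isomorphism, the corresponding block of $J$ is anti self--adjoint, unitary, and commutes with $f(T)|_{\sH_\ell}$; assembling these blocks along $\sH=\bigoplus_\ell\sH_\ell$ yields $J\in\gB(\sH)$ of norm $1$ with $J^*=-J$, $J^*J=\1$ and, since every $f(T)$ respects the decomposition, $Jf(T)=f(T)J$ for all $f\in M(\ssp(T),\bR)$. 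The main obstacle I anticipate is precisely the isometry of $U_u$: unlike the complex case the four real components of a quaternion--valued function interact through the products $\overline{e}_ae_b$, and the proof hinges on the cancellation of the off--diagonal contributions, which rests on $\overline{e}_ae_b$ being imaginary for $a\neq b$ together with the reality of the scalar products $\b g_a(T)u\,|\,g_b(T)u\k$ furnished by the \emph{real} functional calculus; the remaining work (density extension, invariance of $\sH_0^\perp$, block assembly) is routine but must be kept compatible with the right $\bH$--module structure throughout.
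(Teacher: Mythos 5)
Your proposal is correct and follows essentially the same route as the paper: realise $\sH$ as an orthogonal sum of cyclic subspaces via Zorn's lemma, identify each with $L^2(\ssp(T),\bH;\mu_\ell)$ through an isometric intertwiner built from the measurable functional calculus, and obtain $J$ by transporting pointwise left multiplication by a fixed imaginary unit. The only (immaterial) difference is that you build $U_u$ from bounded Borel functions split into four real components and cancel the off--diagonal terms $\overline{e}_ae_b$, whereas the paper uses quaternionic simple functions $\sum_j\chi_{E_j}q_j$ and the orthogonality of characteristic functions of disjoint sets; both verifications of isometry are sound.
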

\begin{proof}
Let $u \in \sH$. Denote by $H_u$ the vector subspace of $\sH$ consisting of all vectors $x$ having the following property: there exists a non--empty finite fami\-ly $\{E_1,\ldots,E_N\}$ of pairwise disjoint Borel subsets of $\ssp(T)$ and $q_1,\ldots,q_N \in \bH$ such that $x=\sum_{j=1}^N{\chi}_{E_j}(T)uq_j$. Indicate by $\sH_u$ the closure of $H_u$ in $\sH$. Let $\mu_u:\mscr{B}(\ssp(T)) \lra \bR^+$ be the finite positive regular Borel measure of $\ssp(T)$ sending $E$ into $\b u|\chi_E(T) u \k=\|\chi_E(T)u\|^2$ (see point $(\mr{d})$ of Theorem \ref{teoA2}). Define the vector subspace $L^2_u$ of $L^2(\ssp(T),\bH;\mu_u)$ analogous to $H_u$: a function $f:\ssp(T) \lra \bH$ belongs to $L^2_u$ if it can be represented as a finite sum $\sum_{j=1}^M\chi_{F_j}p_j$, where $F_1,\ldots,F_M$ are pairwise disjoint Borel subsets of $\ssp(T)$ and $p_1,\ldots,p_M$ are quaternions.

We subdivide the remainder of this proof into two steps.

\textit{Step I.} First, suppose that $\sH_u=\sH$ for some fixed $u \in \sH$. Let $V:L^2_u \lra \sH$ be the map defined as follows:
\beq \label{tentative}
V(f):=\sum_{j=1}^N\chi_{E_j}(T)uq_j \quad \text{if }f=\sum_{j=1}^N\chi_{E_j}q_j.
\eeq
It is quite easy to verify that $V$ is well--defined; that is, $V(f)$ depends only on $f$, and not on its representation $\sum_{j=1}^N\chi_{E_j}q_j$. Moreover, it is evident that $V$ is right $\bH$--linear. Let us prove that $V$ is isometric. Bearing in mind that $\chi_{E_j}\chi_{E_k}=\chi_{E_j}$ if $j=k$ and $\chi_{E_j}\chi_{E_k}=0$ for $j \neq k$, we infer that
\begin{align*}
\left\|\sum_{j=1}^N\chi_{E_j}q_j \right\|_{L^2}^2 &= \int_{\ssp(T)} \sum_{j,k=1}^N\chi_{E_j}\chi_{E_k} \, d\mu_u \,  \overline{q_j} q_k=\\
&=\int_{\ssp(T)} \sum_{j=1}^N\chi_{E_j} |q_j|^2 \, d\mu_u= \sum_{j=1}^N |q_j|^2  \mu(E_j).  
\end{align*}
Since $\chi_{E_j}(T)\chi_{E_k}(T)=  (\chi_{E_j}\chi_{E_k})(T)$, we have that $\chi_{E_j}(T)\chi_{E_k}(T)=\chi_{E_j}(T)$ if $j=k$ and $\chi_{E_j}(T)\chi_{E_k}(T)=0$ for $j \neq k$. It follows that
\begin{align*}
\left\|\sum_{j=1}^N\chi_{E_j}(T)uq_j \right\|_{\sH}^2 &=
\sum_{j,k=1}^N\left\langle \chi_{E_j}(T)uq_j \,\big|\, \chi_{E_k}(T)uq_k \right\rangle=\\
&=\sum_{j,k=1}^N \overline{q_j}\left\langle u \,\big|\, \chi_{E_j}(T) \chi_{E_k}(T)u \right\rangle q_k=\\
&=\sum_{j=1}^N \overline{q_j}\left\langle u \,\big|\, \chi_{E_j}(T) u \right\rangle q_j
=\sum_{j=1}^N |q_j|^2\left\langle u \,\big|\, \chi_{E_j}(T) u \right\rangle=\\
&=\sum_{j=1}^N |q_j|^2 \mu_u(E_j)=\left\|\sum_{j=1}^N\chi_{E_j}q_j \right\|_{L^2}^2.
\end{align*}
Observe that $L^2_u$ is dense in $L^2(\ssp(T),\bH;\mu_u)$. Indeed, if $f=\sum_{a=0}^3f_ae_a$ is a function in $L^2(\ssp(T),\bH;\mu_u)$, then we can apply Theorem 1.17 of \cite{RudinARC} to each component $f_a$ of $f$, obtaining a sequence of $L^2_u$ which converges to $f$ in $L^2(\ssp(T),\bH;\mu_u)$. Bearing in mind that $V$ is isometric and its range is dense in $\sH$ by hypothesis, we have that $V$ uniquely extends to an isometric isomorphism
$U:L^2(\ssp(T),\bH;\mu_u) \lra \sH$. The construction also implies that
\beq 
\left(U^{-1} \chi_E(T) U\right)(\chi_F)=\chi_E \chi_F \quad \text{if }E,F \in \mscr{B}(\ssp(T)).
\eeq
Indeed, we have:
\begin{align*}
\chi_E(T) (U(\chi_F))=\chi_E(T)\chi_F(T)u=(\chi_E\chi_F)(T)u=U(\chi_E\chi_F).
\end{align*}
By combining Theorem~1.17 in \cite{RudinARC} 
and point $(\mr{f})$ of Theorem~\ref{teoA2}, we obtain at once that
\[
(U^{-1} f(T) U)(\chi_F)= f \, \chi_F \quad \text{if }f \in M(\ssp(T),\bR).
\]
Finally, making use again of Theorem~1.17 of \cite{RudinARC}, we obtain that
\[
(U^{-1} f(T) U)g=fg \; \text{ $\mu_u$--a.e. in $\ssp(T)$}
\]
for every $f \in M(\ssp(T),\bR)$ and for every $g \in L^2(\ssp(T),\bH;\mu_u)$.

Fix $\jmath \in \bS$ and define the operator $Y \in \gB(L^2(\ssp(T),\bH;\mu_u))$ by setting $Yg:=\jmath \, g$. Evidently, $-YY$ is the identity of $L^2(\ssp(T),\bH;\mu_u)$. Furthermore, we have that $Y^*=-Y$. Indeed, it holds:$(f|Yg)=\int_{\ssp(T)}\overline{f}\jmath g \, \mu_u=-\int_{\ssp(T)}\overline{\jmath f} g \, \mu_u=(-Yf|g)$ for every $f,g \in L^2(\ssp(T),\bH;\mu_u)$. Define $J \in \gB(\sH)$ as follows: $J:=UYU^{-1}$. It follows that $JJ=-\1$ and, it being $U$ isometric, $J^*=-J$. Let now $f \in M(\ssp(T),\bR)$. Since $f$ is real--valued, we have that $Yf=\jmath f=f\jmath$ and hence
\begin{align*}
(U^{-1}Jf(T)U)(g) &=(YU^{-1}f(T)U)(g)=\jmath fg=f\jmath g=\\
&=(U^{-1}f(T)UY)(g)=(U^{-1}f(T)JU)(g)
\end{align*}
for every $g \in L^2(\ssp(T),\bH;\mu_u)$. It follows that $J$ commutes with $f(T)$ for every $f \in M(\ssp(T),\bR)$. This proves the theorem if $\sH_u=\sH$ for some $u \in \sH$.

\textit{Step II.} Let us consider the case in which $\sH_u \neq \sH$ for every $u \in \sH$. Since $\sH_0=\{0\}$, we have that $\sH \neq \{0\}$. Choose $u \in \sH \setminus \{0\}$. Observe that $u=\1 u=1_{\ssp(T)}(T)u=\chi_{\ssp(T)}(T)u \in H_u$. It follows that $H_u \neq \{0\}$ and hence $\sH_u \neq \{0\}$.

Fix $f \in M(\ssp(T),\bR)$. Let us show that
\beq \label{eq:H_u}
f(T)(\sH_u) \subset \sH_u.
\eeq
Let $x=\sum_{j=1}^N\chi_{E_j}(T)uq_j$ be an element of $H_u$ and let $\{s_n=\sum_{k=1}^{O_n}\chi_{F_{nk}}r_{nk}\}_{n \in \bN}$ be a bounded sequence of real simple functions on $\ssp(T)$ converging pointwisely to $f$. By point $(\mr{f})$ of Theorem~\ref{teoA2}, it follows that
\begin{align*}
f(T)(x) &=\sum_{j=1}^Nf(T)\chi_{E_j}(T)uq_j=\sum_{j=1}^N(f\chi_{E_j})(T)uq_j=\sum_{j=1}^N(\chi_{E_j} f)(T)uq_j=\\
&=\sum_{j=1}^N\chi_{E_j}(T)f(T)uq_j=\lim_{n \to +\infty} \sum_{j=1}^N\sum_{k=0}^{O_n}\chi_{E_j}(T) \chi_{F_{nk}}(T) uq_jr_{nk}=\\
&=\lim_{n \to +\infty} \sum_{j=1}^N\sum_{k=0}^{O_n}\chi_{E_j \cap F_{nk}}(T)uq_jr_{nk}
\end{align*}
Since each $\sum_{j=1}^N\sum_{k=0}^{O_n}\chi_{E_j \cap F_{nk}}(T)uq_jr_{nk}$ belongs to $H_u$, $f(T)(x)$ belongs to $\sH_u$. Now inclusion (\ref{eq:H_u}) follows by the continuity of $f(T)$. Combining (\ref{eq:H_u}) with the fact  that $f(T)$ is self--adjoint, we infer that $f(T)(\sH_u^\perp) \subset \sH_u^\perp$. In this way, applying Step I to the $\sH_u$'s, we see at once that, in order to complete the proof, it suffices to show the existence of a subset $\{u_\ell\}_{\ell \in \Lambda}$ of $\sH \setminus \{0\}$ such that $H=\bigoplus_{\ell \in \Lambda}\sH_{u_\ell}$ and $\sH_{u_\ell} \perp \sH_{u_{\ell'}}$ for every $\ell,\ell' \in \Lambda$ with $\ell \neq \ell'$. However, this can be done by a standard argument involving Zorn's lemma.
\end{proof}

\begin{remark}\label{remark-many-J}
Let us focus on the proof of $(\mr{b})$. Consider a function $h \in M(\ssp(T),\bR)$ with 
$h(t)^2=1$ for every $t \in \ssp(T)$ and re--define $(Yg)(x):=\jmath h(x) g(x)$ for every $g\in L^2(\sigma_S(T),\bH; \mu_\nu)$. It easily arises that $YY=-\1$ and $Y^*=-Y$. Passing to $\sH_u$, and finally to the whole $\sH$ via Zorn's lemma, one obtains other definitions of $J$ verifying all requirements in $(\mr{b})$. Therefore, $J$ turns out to be highly undetermined by $T$.
\end{remark}

\subsection{The commuting operator $\boldsymbol{J}$}\label{sec:commuting operator}
We are eventually in a position to prove the exi\-stence of an anti self--adjoint and unitary operator $J$, commuting with any given normal operator $T$ and with its adjoint $T^*$.

\begin{theorem}\label{teoext}
Let $\sH$ be a quaternionic Hilbert space and let $T \in \gB(\sH)$ be a normal operator. Then there exists an anti self--adjoint and unitary operator $J \in \gB(\sH)$ such that 
\beq \label{OMEGA}
TJ=JT, \quad T^*J=JT^*
\eeq

and
\beq \label{decA0-bis}
T=(T+T^*)\frac{1}{2}+J|T-T^*|\frac{1}{2}.
\eeq
In particular, $J$ is uniquely determined by $T$ on $\mi{Ker}(T-T^*)^\perp$ and the operators $(T+T^*)\frac{1}{2}$, $|T-T^*|\frac{1}{2}$ and $J$ commute mutually.
\end{theorem}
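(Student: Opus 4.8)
The plan is to construct $J$ by gluing two pieces together over the orthogonal splitting of $\sH$ determined by $\mi{Ker}(T-T^*)$, thereby repairing the sole defect of the operator $J_0$ supplied by Theorem~\ref{teobastardo2}: namely that $J_0$ vanishes on $\mi{Ker}(T-T^*)$ and hence fails to be unitary there. First I would note that, $T$ being normal, $T$ commutes with $T-T^*$, so both $\mi{Ker}(T-T^*)$ and $\mi{Ker}(T-T^*)^\perp$ are invariant under $T$ and $T^*$ and the decomposition
\[
\sH=\mi{Ker}(T-T^*)\oplus\mi{Ker}(T-T^*)^\perp
\]
reduces every operator in play. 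On $\mi{Ker}(T-T^*)^\perp$ the operator $J_0$ is already anti self--adjoint and, combining $(\mr{iv})$ of Theorem~\ref{teobastardo2} with $J_0^*=-J_0$ (so $J_0^2=-\1$ and $J_0^*J_0=\1$ there), unitary; by the last clause of Theorem~\ref{teobastardo2} it commutes with $T$ and $T^*$. If $\mi{Ker}(T-T^*)=\{0\}$ there is nothing to add and one takes $J:=J_0$.

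The genuinely new input concerns the subspace $\mi{Ker}(T-T^*)$, which is a quaternionic Hilbert space in its own right. On it $Tu=T^*u$, so the restriction $T_0:=T\rr_{\mi{Ker}(T-T^*)}$ coincides with $T^*\rr_{\mi{Ker}(T-T^*)}$ and is self--adjoint. I would then apply Theorem~\ref{teoA3}$(\mr{b})$ to $T_0$ to obtain an anti self--adjoint and unitary $J_1\in\gB(\mi{Ker}(T-T^*))$ commuting with $f(T_0)$ for every $f\in M(\ssp(T_0),\bR)$; taking $f=\mi{id}$ yields $J_1T_0=T_0J_1$, so $J_1$ commutes with both $T$ and $T^*$ on that block. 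Define $J$ to act as $J_1$ on $\mi{Ker}(T-T^*)$ and as $J_0$ on $\mi{Ker}(T-T^*)^\perp$. As a block--diagonal sum of two anti self--adjoint unitaries over an orthogonal decomposition, $J$ is anti self--adjoint and unitary on all of $\sH$; and since each block is invariant under $T,T^*$ and each summand commutes with $T,T^*$ there, so does $J$, giving (\ref{OMEGA}).

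It remains to verify the decomposition, the uniqueness clause and the mutual commutativity. The key observation is that
\[
\mi{Ran}(|T-T^*|)\subset\overline{\mi{Ran}(|T-T^*|)}=\mi{Ker}(|T-T^*|)^\perp=\mi{Ker}(T-T^*)^\perp,
\]
on which $J$ coincides with $J_0$; hence $J|T-T^*|=J_0|T-T^*|$ as operators on $\sH$, and (\ref{decA0-bis}) collapses to (\ref{decA0}), already established in Theorem~\ref{teobastardo2}. For mutual commutativity, $J$ commutes with $T$ and $T^*$, hence with $T+T^*$ and with $(T-T^*)^2$, and therefore, by Theorem~\ref{radT}, with $|T-T^*|=\sqrt{-(T-T^*)^2}$; the same theorem gives that $T+T^*$ commutes with $|T-T^*|$, since it commutes with $(T-T^*)^2$. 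Finally, for uniqueness on $\mi{Ker}(T-T^*)^\perp$: any admissible $J$ satisfies $T-T^*=J|T-T^*|$ by (\ref{decA0-bis}), so $J$ is forced on $\mi{Ran}(|T-T^*|)$ by $J(|T-T^*|u)=(T-T^*)u$ and, being continuous, is determined on $\overline{\mi{Ran}(|T-T^*|)}=\mi{Ker}(T-T^*)^\perp$, where it must therefore agree with $J_0$.

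The analytic heart of the argument, and the step I expect to be the main obstacle, is not this assembly but the production of the commuting anti self--adjoint unitary $J_1$ on $\mi{Ker}(T-T^*)$, which is precisely the content of the $L^2$--realisation Theorem~\ref{teoA3}$(\mr{b})$; once that tool is in hand, the splitting argument is essentially bookkeeping, the only care required being to track on which block each operator genuinely acts.
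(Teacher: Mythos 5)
Your proposal is correct and follows essentially the same route as the paper: split $\sH=\mi{Ker}(T-T^*)\oplus\mi{Ker}(T-T^*)^\perp$, take $J_0$ from Theorem~\ref{teobastardo2} on the second summand, and invoke the $L^2$--realisation Theorem~\ref{teoA3}$(\mr{b})$ to supply the missing anti self--adjoint unitary on $\mi{Ker}(T-T^*)$, where $T$ restricts to the self--adjoint operator $(T+T^*)\frac{1}{2}$. Your explicit verifications of (\ref{decA0-bis}), of the mutual commutativity via Theorem~\ref{radT}, and of the uniqueness clause on $\overline{\mi{Ran}(|T-T^*|)}$ are all sound and in fact spell out details the paper leaves implicit.
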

\begin{proof}
Since $T$ is normal, it is easy to verify that both the operators $T$ and $T^*$ preserve the decomposition $\sH=\mi{Ker}(T-T^*) \oplus \mi{Ker}(T-T^*)^\perp$. Furthermore, $T$ and $T^*$ coincide with the self--adjoint operator $(T+T^*)\frac{1}{2}$ on $\mi{Ker}(T-T^*)$. Denote by $T' \in \gB(\mi{Ker}(T-T^*))$ the self--adjoint operator of $\mi{Ker}(T-T^*)$ obtained restricting $(T+T^*)\frac{1}{2}$. Let $J_0$ be the operator obtained applying Theorem~\ref{teobastardo2} to $T$. Since $J_0$ preserves the above--mentioned decomposition of $\sH$, it is anti self--adjoint,  unitary on $\mi{Ker}(T-T^*)^\perp$ and satisfies (\ref{decA0}), in order to complete the proof, it suffices to find an anti self--adjoint unitary operator $J_0'$ in $\gB(\mi{Ker}(T-T^*))$ commuting with $T'$. Indeed, if one has such an operator $J_0'$, then $J:=J_0' \oplus J_0$ has the desired properties. On the other hand, $J_0'$ is an operator given by point $(\mr{b})$ of Theorem~\ref{teoA3}, applied to $T'$ with $f$ equal to the inclusion map $\ssp(T') \hookrightarrow \bR$.
\end{proof}

\begin{remark}\label{remarkJunded}
If $Ker(T-T^*)$ is not trivial, differently from $J_0$, the operator $J$ cannot commute with every operator commuting with $T-T^*$. Otherwise, $J$ would preserve $\mi{Ker}(T-T^*)$ and the restriction $J'$ of $J$ from $\mi{Ker}(T-T^*)$ to itself would be an anti self--adjoint and unitary operator in $\gB(\mi{Ker}(T-T^*))$, commuting with every element of $\gB(Ker(T-T^*))$. As one can easily prove, there are no operators with these properties in $\gB(Ker(T-T^*))$. Finally, we stress that, as a consequence of Remark \ref{remark-many-J}, $J_0'$ is highly undetermined from $T$ and that indeterminacy affect the definition of $J$ itself. 
\end{remark}

To go on, we need a relevant notion we shall extensively use in the remainder of the paper. Recall that, given a subset $\K$ of $\bC$, we define the \emph{circularization $\OO_\K$ of $\K$ $($in $\bH \, )$} by setting
\beq \label{circularization}
\OO_\K:=\{\alpha+\jmath\beta \in \bH \,|\, \alpha,\beta \in \bR, \alpha+i\beta \in \K, \jmath \in \bS\}.
\eeq

In the following, we denote by $\sigma(B)$ and $\rho(B)$ the standard spectrum and resolvent set
of a bounded operator $B$ of a complex Hilbert space, respectively.

\begin{proposition} \label{propinterssigma} 
Let $\sH$ be a quaternionic Hilbert space, let $T \in \gB(\sH)$ be a normal operator, let $J \in \gB(\sH)$ be an anti self--adjoint and unitary operator satisfying (\ref{OMEGA}), let $\imath \in \bS$ and let $\sH_\pm^{J\imath}$ be the complex subspaces of $\sH$ associated with $J$ and $\imath$ (see Definition~\ref{defHJ}). Then we have that
\begin{itemize}
 \item[$(\mr{a})$] $T(\sH_+^{J\imath}) \subset \sH_+^{J\imath}$ and $T^*(\sH_+^{J\imath}) \subset \sH_+^{J\imath}$.
\end{itemize}
Moreover, if $T\rr_{\sH_+^{J\imath}}$ and $T^*\rr_{\sH_+^{J\imath}}$ denote the $\bC_{\imath}$--complex operators in $\gB(\sH_+^{J\imath})$ obtained restricting respectively $T$ and $T^*$ to $\sH_+^{J\imath}$, then it holds:
\begin{itemize}
 \item[$(\mr{b})$] $(T\rr_{\sH_+^{J\imath}})^*= T^*\rr_{\sH_+^{J\imath}}$.
 \item[$(\mr{c})$] $\sigma(T\rr_{\sH_+^{J\imath}}) \cup \overline{\sigma(T\rr_{\sH_+^{J\imath}})}= \ssp(T)\cap \bC_i$. Here $\sigma(T\rr_{\sH_+^{J\imath}})$ is considered as a subset of $\bC_\imath$ via the natural identification of $\bC$ with $\bC_\imath$ induced by the real vector isomorphism $\bC \ni \alpha+i\beta \mapsto \alpha+\imath\beta \in \bC_\imath$.
 \item[$(\mr{d})$]
$\ssp(T)=\OO_\K$, where $\K:=\sigma(T\rr_{\sH_+^{J\imath}})$.
\end{itemize}

An analogous statement holds for $\sH^{J\imath}_-$.
\end{proposition}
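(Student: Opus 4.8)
The plan is to establish the four assertions in order, with $(\mr{a})$ and $(\mr{b})$ being short direct computations and $(\mr{c})$--$(\mr{d})$ carrying the real content. For $(\mr{a})$ I would argue straight from (\ref{OMEGA}): if $u \in \sH_+^{J\imath}$, so that $Ju = u\imath$, then right $\bH$--linearity of $T$ together with $JT = TJ$ gives $J(Tu) = T(Ju) = T(u\imath) = (Tu)\imath$, whence $Tu \in \sH_+^{J\imath}$; the same computation with $T^*J = JT^*$ yields $T^*(\sH_+^{J\imath}) \subset \sH_+^{J\imath}$, and the case of $\sH_-^{J\imath}$ is identical. For $(\mr{b})$, recall from Proposition \ref{propJ}$(\mr{e})$ that $\sH_+^{J\imath}$ is a $\bC_\imath$--Hilbert space whose scalar product is the restriction of $\b\cdot|\cdot\k$. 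By $(\mr{a})$ both $T_+ := T\rr_{\sH_+^{J\imath}}$ and $T^*\rr_{\sH_+^{J\imath}}$ map $\sH_+^{J\imath}$ into itself, and for $u,v \in \sH_+^{J\imath}$ one has $\b T_+ u | v\k = \b Tu|v\k = \b u | T^*v\k$ with $T^*v \in \sH_+^{J\imath}$; uniqueness of the adjoint on the complex Hilbert space $\sH_+^{J\imath}$ then forces $(T_+)^* = T^*\rr_{\sH_+^{J\imath}}$.

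The core is $(\mr{c})$. For $q = \alpha + \imath\beta \in \bC_\imath$ I set $z := \alpha + i\beta \in \bC$; since $q + \overline q = 2\alpha$ and $|q|^2 = \alpha^2+\beta^2$ are real, the operator $\Delta_q(T) = T^2 - T2\alpha + \1(\alpha^2+\beta^2)$ commutes with $J$ (as $T$ does). Hence, by Proposition \ref{propestensione}$(\mr{c})$, $\Delta_q(T)$ is the extension $\widetilde{U}$ of its $\bC_\imath$--linear restriction $U := \Delta_q(T)\rr_{\sH_+^{J\imath}}$, and by parts $(\mr{a})$, $(\mr{e})$, $(\mr{f})$ of that proposition $\Delta_q(T)$ is invertible in $\gB(\sH)$ with bounded inverse if and only if $U$ is invertible in $\gB(\sH_+^{J\imath})$. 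Using $(\mr{b})$ and the identification $\bC_\imath \cong \bC$, I would compute $U = (T_+ - z\1)(T_+ - \overline z\1)$, a product of two \emph{commuting} operators; a commuting product whose value is invertible forces each factor to have a two--sided inverse, hence both factors are invertible. Therefore $q \in \srho(T)$ iff $z \notin \sigma(T_+)$ and $\overline z \notin \sigma(T_+)$, equivalently $q \in \ssp(T)$ iff $z \in \sigma(T_+) \cup \overline{\sigma(T_+)}$. Since $\ssp(T)$ is circular (Remark \ref{spectrum}$(3)$), its trace on a slice is independent of the chosen slice under the standard identifications, so this reads $\ssp(T) \cap \bC_i = \sigma(T_+) \cup \overline{\sigma(T_+)}$, which is $(\mr{c})$.

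Finally $(\mr{d})$ is a formal consequence of $(\mr{c})$. Circularity of $\ssp(T)$ gives $\ssp(T) = \OO_{\ssp(T) \cap \bC_i}$, and since the circularization (\ref{circularization}) satisfies $\OO_{\overline{\K}} = \OO_{\K}$ (the set $\bS$ is invariant under $\jmath \mapsto -\jmath$), one has $\OO_{\K} = \OO_{\K \cup \overline{\K}}$; combining, $\ssp(T) = \OO_{\sigma(T_+) \cup \overline{\sigma(T_+)}} = \OO_{\sigma(T_+)} = \OO_\K$ with $\K = \sigma(T_+)$. The analogous statement for $\sH_-^{J\imath}$ follows by the same steps once one notes, via the $\bC_\imath$--antilinear isometry $\sH_+^{J\imath} \to \sH_-^{J\imath}$, $y \mapsto y\jmath$, of Lemma \ref{lemmasomma}$(\mr{a})$ (which intertwines $T_+$ with $T\rr_{\sH_-^{J\imath}}$ up to complex conjugation), that $\sigma(T\rr_{\sH_-^{J\imath}}) = \overline{\sigma(T_+)}$, so that $\OO_{\sigma(T\rr_{\sH_-^{J\imath}})} = \OO_{\sigma(T_+)} = \ssp(T)$. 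I expect the main obstacle to be $(\mr{c})$: invoking Proposition \ref{propestensione} carefully enough to transfer invertibility \emph{and} boundedness of the inverse between the quaternionic operator $\Delta_q(T)$ and the $\bC_\imath$--linear operator $U$, and keeping the $\bC_i$ versus $\bC_\imath$ bookkeeping honest by systematic use of circularity.
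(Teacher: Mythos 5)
Your proposal is correct and follows essentially the same route as the paper: $(\mr{a})$ and $(\mr{b})$ by direct computation from the commutation of $J$ with $T$ and $T^*$, and $(\mr{c})$ via the factorization $\Delta_q(T)\rr_{\sH_+^{J\imath}}=(T_+-\1_+q)(T_+-\1_+\overline q)$ into commuting factors together with the extension/restriction machinery of Proposition~\ref{propestensione}, then $(\mr{d})$ by circularity. The only cosmetic difference is that for the inclusion $\srho(T)\cap\bC_\imath\subset\rho(T_+)\cap\overline{\rho(T_+)}$ the paper restricts $\Delta_q(T)^{-1}$ by hand (noting it commutes with $J$) rather than quoting Proposition~\ref{propestensione} for both directions, which is exactly the observation your appeal to part $(\mr{c})$ of that proposition supplies.
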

\begin{proof} Recall that $\sH_+^{J\imath}=\{u \in \sH \, | \, Ju=u\imath\}$. Let $u \in \sH_+^{J\imath}$. Since $J$ commute  with both $T$ and $T^*$, we have that $J(Tu)=TJu=(Tu)\imath$ and $J(T^*u)=T^*Ju=(T^*u)\imath$; that is, $Tu,T^*u \in \sH_+^{J\imath}$. Bearing in mind Lemma \ref{lemmalemme}, it follows immediately that $T_+:=T\rr_{\sH_+^{J\imath}}$ and $T^*\rr_{\sH_+^{J\imath}}$ belongs to $\gB(\sH_+^{J\imath})$, and $(\mr{b})$ holds true.  

Let us pass to prove the following equality
\[
\srho(T) \cap \bC_\imath= \rho(T\rr_{\sH_+^{J\imath}}) \cap \overline{\rho(T\rr_{\sH_+^{J\imath}})},
\]
which is equivalent to $(\mr{c})$. Let $q \in \srho(T) \cap \bC_\imath$. Since $J$ commutes with $\Delta_q(T)$ and $\Delta_q(T)$ admits inverse in $\gB(\sH)$, it follows that $J$ commutes also with $\Delta_q(T)^{-1}$ and hence the restriction $\Delta_q(T)_+$ of $\Delta_q(T)$ from $\sH_+^{J\imath}$ into itself is a well--defined invertible operator in $\gB(\sH_+^{J\imath})$. If $\1_+$ denotes the identity operator on $\sH_+^{J\imath}$, then $\Delta_q(T)_+=(T_+-\1_+q)(T_+-\1_+\overline{q})$ in $\gB(\sH_+^{J\imath})$. Since $\Delta_q(T)_+$ is invertible and the operators $\Delta_q(T)_+$, $T_+-\1_+q$ and $T_+-\1_+\overline{q}$ commute with each other, we infer that both $T_+-\1_+q$ and $T_+-\1_+\overline{q}$ are invertible in $\gB(\sH_+^{J\imath})$. In other words, $q$ belongs to $\rho(T\rr_{\sH_+^{J\imath}}) \cap \overline{\rho(T\rr_{\sH_+^{J\imath}})}$. This proves the inclusion $\srho(T) \cap \bC_\imath \subset \rho(T\rr_{\sH_+^{J\imath}}) \cap \overline{\rho(T\rr_{\sH_+^{J\imath}})}$.

Let us prove the converse inclusion. Fix $q \in \rho(T\rr_{\sH_+^{J\imath}}) \cap \overline{\rho(T\rr_{\sH_+^{J\imath}})}$. The operators  $(T\rr_{\sH_+^{J\imath}}-\1_+\overline{q})^{-1}$ and 
$(T\rr_{\sH_+^{J\imath}}-\1_+q)^{-1}$ exist in $\gB(\sH_+^{J\imath})$. Their product is the inverse of 
$\Delta_q(T)_+$. Points $(\mr{a})$ and $(\mr{f})$ of Proposition~\ref{propestensione} immediately imply that $\Delta_q(T)$ is invertible in $\gB(\sH)$, so that $q \in \srho(T)$. Point $(\mr{c})$ is proved.

Finally, $(\mr{d})$ is a straightforward consequence of $(\mr{c})$ and of definition (\ref{circularization}).
\end{proof}

Denote by $\bR[X,Y]$ the ring of real polynomials in the indeterminates $X$ and~$Y$. As in the case of the ring $\bR[X]$ considered in Section \ref{subsec:operators}, we write the polynomials in $\bR[X,Y]$ \textit{with coefficients on the right}. In this way, given $Q \in \bR[X,Y]$, we have that $Q(X,Y)=\sum_{(h,k) \in \mscr{Q}}X^hY^kr_{hk}$ for some non--empty subset $\mscr{Q}$ of $\bN \times \bN$, where the $r_{hk}$'s are real numbers. If $A,B \in \gB(\sH)$, then we define the operator $Q(A,B) \in \gB(\sH)$ by setting
\beq \label{eq:Q(A,B)}
Q(A,B)=\sum_{(h,k) \in \mscr{Q}}A^hB^kr_{hk}.
\eeq

\begin{corollary} \label{cor:well-def-pol}
Let $\sH$ be a quaternionic Hilbert space, let $T \in \gB(\sH)$ be a normal operator, let $J \in \gB(\sH)$ be an anti self--adjoint and unitary operator satisfying (\ref{OMEGA}) and (\ref{decA0-bis}), let $\imath \in \bS$ and let $Q \in \bR[X,Y]$ be a polynomial such that $Q(\alpha,\beta)=0$ if $\alpha+\imath\beta \in \ssp(T)$. Define $A:=(T+T^*)\frac{1}{2}$ and $B:=|T-T^*|\frac{1}{2}$. Then $Q(A,B)$ is the null operator in $\gB(\sH)$.
\end{corollary}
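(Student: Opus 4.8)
The plan is to reduce the quaternionic identity $Q(A,B)=0$ to the classical continuous functional calculus on the complex Hilbert space $\sH_+^{J\imath}$, and then to lift the resulting vanishing back to $\sH$ by means of the extension machinery of Proposition~\ref{propestensione}. First I would record the algebraic relations forced by the hypotheses. Since $A=\frac{1}{2}(T+T^*)$ and $B=\frac{1}{2}|T-T^*|$, Theorem~\ref{teoext} tells us that $A$, $B$ and $J$ commute mutually, $A$ is self--adjoint, $B\geq 0$, and $T=A+JB$; taking adjoints and using that $B$ and $J$ commute gives $T^*=A-JB$. Because $J$ commutes with $A$ and $B$, it commutes with every monomial $A^hB^k$ and hence with the operator $Q(A,B)\in\gB(\sH)$ defined in (\ref{eq:Q(A,B)}).

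Next I would restrict to the complex subspace $\sH_+^{J\imath}$. By Proposition~\ref{propinterssigma}$(\mr{a})$,$(\mr{b})$ the operator $T_+:=T\rr_{\sH_+^{J\imath}}$ is a bounded $\bC_\imath$--linear normal operator on the complex Hilbert space $\sH_+^{J\imath}$ with $(T_+)^*=T^*\rr_{\sH_+^{J\imath}}$. On $\sH_+^{J\imath}$ the operator $J$ acts as right multiplication by $\imath$, which is exactly the complex scalar multiplication there (Proposition~\ref{propJ}$(\mr{e})$); hence, writing $A_+:=A\rr_{\sH_+^{J\imath}}$ and $B_+:=B\rr_{\sH_+^{J\imath}}$, one has $T_+=A_++\imath B_+$ and $(T_+)^*=A_+-\imath B_+$, so that $A_+=\frac{1}{2}(T_++T_+^*)$ and $B_+=\frac{1}{2\imath}(T_+-T_+^*)$. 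Since $A$ and $B$ preserve $\sH_+^{J\imath}$, the restriction of $Q(A,B)$ to $\sH_+^{J\imath}$ is $Q(A_+,B_+)=g(T_+,T_+^*)$, where $g(z,\overline z):=Q\bigl(\frac{z+\overline z}{2},\frac{z-\overline z}{2\imath}\bigr)=Q(\re z,\im z)$ is a polynomial in $z,\overline z$ that is continuous on $\sigma(T_+)$.

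The core of the argument is then the classical fact that, for a normal operator on a complex Hilbert space, $g(T_+,T_+^*)$ vanishes if and only if $g$ vanishes on $\sigma(T_+)$ (continuous functional calculus, i.e. Gelfand--Naimark for the commutative $C^*$--algebra generated by $T_+$). By Proposition~\ref{propinterssigma}$(\mr{c})$ we have $\sigma(T_+)\subset\ssp(T)\cap\bC_\imath$; writing an arbitrary point of $\sigma(T_+)$ as $\alpha+\imath\beta$ with $\alpha,\beta\in\bR$, the point $\alpha+\imath\beta$ lies in $\ssp(T)$, so the hypothesis yields $g(\alpha+\imath\beta,\overline{\alpha+\imath\beta})=Q(\alpha,\beta)=0$. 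Thus $g\equiv 0$ on $\sigma(T_+)$, and therefore $Q(A_+,B_+)=g(T_+,T_+^*)=0$. Finally I would lift this back to $\sH$: since $Q(A,B)$ commutes with $J$ and $J(\sH)=\sH$, Proposition~\ref{propestensione}$(\mr{c})$ identifies $Q(A,B)$ with the unique right $\bH$--linear extension of its restriction $Q(A_+,B_+)$ to $\sH_+^{J\imath}$. As $Q(A_+,B_+)=0$ and the zero operator of $\sH$ is itself an extension (of the zero operator of $\sH_+^{J\imath}$) satisfying all the requirements, the uniqueness part of Proposition~\ref{propestensione} forces $Q(A,B)=0$.

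The main obstacle — the only genuinely non--routine point — is the identification $Q(A,B)\rr_{\sH_+^{J\imath}}=g(T_+,T_+^*)$ together with the reduction of quaternionic vanishing to complex vanishing. One must verify carefully that the \emph{positive} operator $B$ restricts to the imaginary part $\frac{1}{2\imath}(T_+-T_+^*)$, i.e.\ that the complex structure carried by $\sH_+^{J\imath}$ is precisely $\imath$ and that $J$ absorbs the sign of $T-T^*$; it is exactly this identification that converts the real two--variable polynomial $Q(A,B)$ into an honest continuous function of the single normal operator $T_+$, to which the classical spectral calculus and the spectrum relation of Proposition~\ref{propinterssigma} can be applied.
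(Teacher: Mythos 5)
Your proposal is correct and follows essentially the same route as the paper: restrict $A$, $B$ and $Q(A,B)$ to the complex subspace $\sH_+^{J\imath}$, identify $A_+=\frac{1}{2}(T_++T_+^*)$ and $B_+=\frac{1}{2\imath}(T_+-T_+^*)$, and invoke the classical continuous functional calculus together with the spectral relation of Proposition~\ref{propinterssigma}$(\mr{c})$. The only (harmless) difference is the concluding step: the paper applies the spectral radius formula to the self--adjoint operator $Q(A,B)$ itself to get $\|Q(A,B)\|=\sup|\sigma(Q(A,B)|_{\sH_+^{J\imath}})|=0$, whereas you conclude via the uniqueness of the right $\bH$--linear extension in Proposition~\ref{propestensione}, which works equally well.
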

\begin{proof}
Firstly, we observe that the operator $F:=Q(A,B)$ is self--adjoint and commutes with $J$. By applying the spectral radius formula (see (\ref{raggiospett})) and point $(\mr{c})$ of Proposition \ref{propinterssigma} to $F$, we obtain that
\[
\|F\|=\sup\left\{|q| \in \bR^+ \left| \, q \in  \sigma(F|_{\sH^{J\imath}_+}) \right.\right\},
\]
where $F|_{\sH^{J\imath}_+}$ denotes the operator in $\gB(\sH^{J\imath}_+)$ obtained restricting $F$ to $\sH^{J\imath}_+$. Points $(\mr{a})$ and $(\mr{b})$ of Proposition \ref{propinterssigma} imply that the restrictions $A|_{\sH^{J\imath}_+}$ of $A$ to $\sH^{J\imath}_+$ and $B|_{\sH^{J\imath}_+}$ of $B$ to $\sH^{J\imath}_+$ define operators in $\gB(\sH^{J\imath}_+)$. Moreover, we have that $A|_{\sH^{J\imath}_+}=\big(T|_{\sH^{J\imath}_+}+(T|_{\sH^{J\imath}_+})^*\big)\frac{1}{2}$ and $B|_{\sH^{J\imath}_+}=\big(T|_{\sH^{J\imath}_+}-(T|_{\sH^{J\imath}_+})^*\big)\frac{1}{2\imath}$. The latter equality follows from the definition of $\sH^{J\imath}_+$ (see Definition \ref{defHJ}) and from the fact that $B$ is equal to $-J(T-T^*)\frac{1}{2}$. In this way, if $f:\sigma(F|_{\sH^{J\imath}_+}) \lra \bR$ denotes the polynomial function sending $\alpha+\imath\beta$ into $Q(\alpha,\beta)$, then the continuous functional calculus theorem for complex normal operators (see \cite{RudinARC,Analysisnow,Moretti}) ensures that $F|_{\sH^{J\imath}_+}=f(A|_{\sH^{J\imath}_+},B|_{\sH^{J\imath}_+})$ and $\sigma(F|_{\sH^{J\imath}_+})=f(\sigma(T|_{\sH^{J\imath}_+}))$. By hypothesis, we know that $f(\sigma(T|_{\sH^{J\imath}_+}))=\{0\}$. It follows that $\sigma(F|_{\sH^{J\imath}_+})=\{0\}$ and hence $\|F\|=0$; that is, $F=0$, as desired.
\end{proof}

In the following define $\bC_\imath^+ := \{ q \in \bH \:|\: q= \alpha + \imath \beta\:, \alpha, \beta \in \bR\:, \beta \geq 0\}$ and, similarly,
$\bC_\imath^- := \{ q \in \bH \:|\: q= \alpha + \imath \beta\:, \alpha, \beta \in \bR\:, \beta \leq 0\}$. We have:

\begin{corollary}\label{corollaruCplus}
Let $\sH$ be a quaternionic Hilbert space, let $T \in \gB(\sH)$ be a normal operator, let $J \in \gB(\sH)$ be an anti self--adjoint and unitary operator satisfying (\ref{OMEGA}) and (\ref{decA0-bis}), and let $\imath \in \bS$. Then it holds:
\beq \label{eq:T}
\sigma(T|_{H^{J\imath}_+})=\ssp(T) \cap \bC_\imath^+,
\; \;
\sigma(T|_{H^{J\imath}_-})=\ssp(T) \cap \bC_\imath^-
\eeq
and hence
\beq \label{eq:TT}
\sigma(T_{H^{J\imath}_+})=\overline{\sigma(T|_{H^{J\imath}_-})}.
\eeq
\end{corollary}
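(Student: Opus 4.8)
The plan is to push the whole question into the complex Hilbert space $\sH^{J\imath}_+$, where $T$ restricts to an ordinary complex normal operator whose spectrum I can confine to a half--plane, and then to read off the result from Proposition~\ref{propinterssigma}$(\mr{c})$, that is, from $\sigma(T\rr_{\sH^{J\imath}_+}) \cup \overline{\sigma(T\rr_{\sH^{J\imath}_+})} = \ssp(T) \cap \bC_\imath$. Put $A:=(T+T^*)\frac{1}{2}$ and $B:=|T-T^*|\frac{1}{2}$. By Theorem~\ref{teoext} the operators $A$, $B$, $J$ commute mutually, so $A$ and $B$ commute with $J$ and, being right $\bH$--linear, preserve $\sH^{J\imath}_+$; write $A_+$, $B_+$ for their restrictions. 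Then $A_+$, $B_+$ are commuting self--adjoint operators of the $\bC_\imath$--Hilbert space $\sH^{J\imath}_+$ with $B_+ \geq 0$. Since $Ju=u\imath$ and $B_+u \in \sH^{J\imath}_+$ for $u \in \sH^{J\imath}_+$, decomposition~(\ref{decA0-bis}) gives $(T\rr_{\sH^{J\imath}_+})u = A_+u + (B_+u)\imath$, which under $\bC_\imath \simeq \bC$ ($\imath \leftrightarrow i$) means $T\rr_{\sH^{J\imath}_+} = A_+ + \imath B_+$: a complex normal operator whose imaginary part $B_+$ is non--negative. Hence $\sigma(T\rr_{\sH^{J\imath}_+}) \subseteq \bC_\imath^+$; explicitly, for $\lambda=\alpha+\imath\beta$ with $\beta<0$ the operator $(A_+ - \1_+\alpha)+\imath(B_+-\1_+\beta)$ is a self--adjoint operator plus $\imath$ times the commuting strictly positive operator $B_+-\1_+\beta \geq -\1_+\beta > 0$; by self--adjointness and commutativity the cross terms cancel, so it and its adjoint are bounded below, hence it is invertible and $\lambda \in \rho(T\rr_{\sH^{J\imath}_+})$. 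The same argument on $\sH^{J\imath}_-$, where $Ju=-u\imath$, gives $\sigma(T\rr_{\sH^{J\imath}_-}) \subseteq \bC_\imath^-$.

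Next I would obtain the equalities (\ref{eq:T}) by intersecting Proposition~\ref{propinterssigma}$(\mr{c})$ with $\bC_\imath^+$, which yields $\ssp(T)\cap\bC_\imath^+ = \big(\sigma(T\rr_{\sH^{J\imath}_+})\cap\bC_\imath^+\big) \cup \big(\overline{\sigma(T\rr_{\sH^{J\imath}_+})}\cap\bC_\imath^+\big)$. The first term is all of $\sigma(T\rr_{\sH^{J\imath}_+})$ by the localization just proved; a point of the second term is simultaneously $\bar z$ with $z \in \bC_\imath^+$ and itself lies in $\bC_\imath^+$, which forces $z \in \bR$ and so $\bar z = z \in \sigma(T\rr_{\sH^{J\imath}_+})$. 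Therefore $\ssp(T)\cap\bC_\imath^+ = \sigma(T\rr_{\sH^{J\imath}_+})$, and symmetrically $\ssp(T)\cap\bC_\imath^- = \sigma(T\rr_{\sH^{J\imath}_-})$. Finally, since $\ssp(T)$ is circular (Remark~\ref{spectrum}$(3)$) it is invariant under quaternionic conjugation, whence $\overline{\ssp(T)\cap\bC_\imath^-} = \ssp(T)\cap\bC_\imath^+$; combined with the two equalities this gives $\overline{\sigma(T\rr_{\sH^{J\imath}_-})} = \sigma(T\rr_{\sH^{J\imath}_+})$, which is (\ref{eq:TT}).

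The principal obstacle is the localization step: one must confirm that $T\rr_{\sH^{J\imath}_+}$ is a genuine complex normal operator with real part $A_+$ and non--negative imaginary part $B_+$ — this rests on the mutual commutativity and on the self--adjointness and positivity supplied by Theorem~\ref{teoext}, together with the action $Ju=u\imath$ on $\sH^{J\imath}_+$ — after which positivity of $B_+$ pins the complex spectrum into $\bC_\imath^+$. Everything afterwards is routine set--theoretic bookkeeping built on Proposition~\ref{propinterssigma} and the circularity of $\ssp(T)$.
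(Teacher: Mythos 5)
Your proposal is correct, and its overall skeleton coincides with the paper's: restrict $T$ to $\sH^{J\imath}_\pm$, identify $T\rr_{\sH^{J\imath}_\pm}=A_\pm\pm\imath B_\pm$ with $B_\pm\geq 0$, show $\sigma(T\rr_{\sH^{J\imath}_\pm})\subset\bC_\imath^\pm$, and then combine with Proposition~\ref{propinterssigma}$(\mr{c})$. The one place where you genuinely diverge is the localization step. The paper proves $\sigma(T_\pm)\subset\bC_\imath^\pm$ by invoking the continuous functional calculus for complex normal operators: it applies the function $f_\pm(\alpha+\imath\beta)=\beta$ to $\pm T_\pm$, identifies $B_\pm=f_\pm(\pm T_\pm)$, and uses the spectral mapping theorem together with $\sigma(B_\pm)\subset\bR^+$. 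You instead give a direct resolvent estimate: for $\lambda=\alpha+\imath\beta$ with $\beta<0$ you write $T_+-\1_+\lambda=(A_+-\1_+\alpha)+\imath(B_+-\1_+\beta)$ and observe that, since the real and imaginary parts are commuting self--adjoint operators, the cross terms in $\|(T_+-\1_+\lambda)u\|^2$ vanish (because $\langle u|XYu\rangle$ is real when $X,Y$ are commuting self--adjoint), so both the operator and its adjoint are bounded below by $|\beta|\,\|u\|$, hence invertible. Both arguments are valid; yours is more elementary and self--contained (it needs no functional calculus at all, only the positivity of $B_+$ and basic inner--product manipulations), while the paper's is shorter once the complex spectral machinery is taken for granted. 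Your set--theoretic bookkeeping — intersecting Proposition~\ref{propinterssigma}$(\mr{c})$ with $\bC_\imath^+$, noting that a point of $\overline{\sigma(T_+)}\cap\bC_\imath^+$ must be real and hence already in $\sigma(T_+)$, and deducing (\ref{eq:TT}) from the circularity of $\ssp(T)$ — fills in details the paper leaves implicit and is correct.
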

\begin{proof}
Let $A:= (T+T^*)\frac{1}{2}$ and $B:=|T-T^*|\frac{1}{2}$,
so that $T=A+JB$. Denote by $T_\pm$, $A_\pm$ and $B_\pm$ the operators in $\gB(\sH_\pm^{J\imath})$ obtained restricting $T$, $A$ and $B$ to $\sH_\pm^{J\imath}$, respectively. Observe that $T_\pm=A_\pm \pm B_\pm \imath$ and $B_\pm\geq 0$, because $B$ is so. Therefore, the spectral theorem for self--adjoint operators in complex Hilbert spaces \cite{RudinFA,Moretti} implies that $\sigma(B_\pm) \subset \bR^+$. Let $f_\pm:\sigma(T_\pm) \lra \bC_\imath$ be the continuous $\bC_\imath$--complex  function sending $\alpha+\imath\beta$ into $\beta$. It follows that $B_\pm= f_\pm(\pm T_\pm)$, where the right--hand side is defined in the standard sense by means of continuous functional calculus for  normal operators in complex Hilbert spaces \cite{RudinFA,Moretti}. Moreover, using again the spectral theory in complex Hilbert spaces, we infer that $f_\pm(\sigma(\pm T_\pm))=\sigma(f_\pm(\pm T_\pm))$. In particular, we have:
\[
\pm f_\pm(\sigma(T_\pm))=f_\pm(\sigma(\pm T_\pm))=\sigma(f_\pm(\pm T_\pm))=\sigma(B_\pm)
\]
and hence $\pm f_\pm(\sigma(T_\pm)) \subset \bR^+$. The latter inclusion is equivalent to the following two:
$\sigma(T\rr_{H^{J\imath}_+}) \subset \bC_\imath^+$ and 
$\sigma(T\rr_{H^{J\imath}_-}) \subset \bC_\imath^-$. By combining these inclusions with Proposition \ref{propinterssigma}$(\mr{c})$, we immediately obtain equalities (\ref{eq:T}) and (\ref{eq:TT}).
\end{proof}


\subsection{Extension of $\boldsymbol J$ to a full left scalar multiplication of $\boldsymbol \sH$}
We conclude this section by proving how it is possible to extend the operator $J$ satisfying Theorem \ref{teoext}
to a full left scalar multiplication $\bH \ni q \mapsto L_q$ of $\sH$ in such a way that each $L_q$ commutes with $(T+T^*)\frac{1}{2}$ and $|T-T^*|\frac{1}{2}$.

\begin{theorem}\label{newtheorem}
Let $\sH$ be a quaternionic Hilbert space, let $T \in \gB(\sH)$ be a normal operator and let $\imath \in \bS$. Suppose that $T$ decomposes as follows:
\[
T=A+JB,
\]
where $A$ and $B$ are self--adjoint operators in $\gB(\sH)$ and $J \in \gB(\sH)$ is an anti self--adjoint and unitary operator commuting with $A$ and $B$ (for example, one can choose\insec$J$ as in the statement of Theorem \ref{teoext} and define $A:=(T+T^*)\frac{1}{2}$ and $B:=|T-T^*|\frac{1}{2}$). Then there exists a left scalar multiplication $\bH \ni q \mapsto L_q$ of $\sH$ such that $L_\imath=J$ and, for every $q \in \bH$, $L_qA=AL_q$ and $L_qB=BL_q$.
\end{theorem}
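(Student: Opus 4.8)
The plan is to reduce the construction of the left scalar multiplication to a suitable choice of Hilbert basis of the $\bC_\imath$--Hilbert space $\sH^{J\imath}_+$, and to produce that basis from a conjugation commuting with $A$ and $B$. First I would record a structural fact forced by normality: writing $T^*=A-JB$ (using $J^*=-J$ and $JB=BJ$) and computing with $J^2=-\1$, one finds $TT^*-T^*T=2J(BA-AB)$, so normality of $T$ together with invertibility of $J$ yields $AB=BA$. Next, since $J$ commutes with $A$ and $B$, these operators preserve $\sH^{J\imath}_+=\{u\in\sH\,|\,Ju=u\imath\}$: if $Ju=u\imath$ then $J(Au)=A(Ju)=(Au)\imath$, and likewise for $B$. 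Hence $A$ and $B$ restrict to commuting self--adjoint $\bC_\imath$--linear operators $A_+,B_+$ on $\sH^{J\imath}_+$, the self--adjointness and the fact that $\b\cdot|\cdot\k$ is $\bC_\imath$--valued coming from Lemma \ref{lemmalemme}.

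I would then translate the goal into a reality condition on a basis. By Proposition \ref{propJ}$(\mr{f})$, any Hilbert basis $N$ of the $\bC_\imath$--Hilbert space $\sH^{J\imath}_+$ is also a Hilbert basis of $\sH$, and the left multiplication $\bH\ni q\mapsto L_q$ it induces satisfies $L_\imath=J$. By property $(\mr{f})$ of Proposition \ref{propprod}, $L_qz=zq$ for $z\in N$, so for $z\in N$ one has $AL_qz=A(zq)=(Az)q=\sum_{w\in N}w\b w|Az\k q$, whereas $L_q(Az)=\sum_{w\in N}wq\b w|Az\k$. Comparing coefficients in the basis $N$, these two expressions coincide for every $q\in\bH$ exactly when $\b w|Az\k\in\bR$ for all $w\in N$. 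Since $A$ and each $L_q$ are bounded, right $\bH$--linearity and continuity then give that $L_q$ commutes with $A$ for every $q$ if and only if $\b w|Az\k\in\bR$ for all $w,z\in N$, and the same for $B$. Thus it suffices to find a Hilbert basis $N$ of $\sH^{J\imath}_+$ in which both $A_+$ and $B_+$ have real ``matrix entries''.

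The heart of the argument, and the step I expect to be the main obstacle, is the construction of a \emph{conjugation} $C$ on $\sH^{J\imath}_+$, i.e.\ an antilinear antiunitary involution, commuting with $A_+$ and $B_+$. Here I would invoke the complex spectral theorem: the commuting pair $(A_+,B_+)$, equivalently the normal operator $M:=A_++\imath B_+$, is simultaneously unitarily equivalent through some isometric isomorphism $U$ to multiplication by real--valued functions $a,b$ on a space $\bigoplus_\ell L^2(X_\ell,\bC;\mu_\ell)$; the non--separable case is handled by a cyclic--subspace decomposition via Zorn's lemma, exactly as in the proof of Theorem \ref{teoA3}. Ordinary complex conjugation $\kappa(g)=\overline{g}$ commutes with multiplication by the real functions $a,b$, so $C:=U\circ\kappa\circ U^{-1}$ is the required conjugation. (Alternatively one may cite that every bounded normal operator on a complex Hilbert space is complex symmetric, i.e.\ admits a conjugation $C$ with $CMC=M^*$, and verify directly that such a $C$ commutes with $A_+=\frac12(M+M^*)$ and $B_+=\frac1{2\imath}(M-M^*)$.)

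Finally I would take $N$ to be any orthonormal basis of the real Hilbert space $\sH^{C}:=\{u\in\sH^{J\imath}_+\,|\,Cu=u\}$; the standard real direct sum decomposition $\sH^{J\imath}_+=\sH^{C}\oplus\sH^{C}\imath$ shows that $N$ is a Hilbert basis of the $\bC_\imath$--Hilbert space $\sH^{J\imath}_+$. For $w,z\in N$, antiunitarity of $C$ together with $CA_+z=A_+Cz=A_+z$ gives $\overline{\b w|A_+z\k}=\b Cw|CA_+z\k=\b w|A_+z\k$, so $\b w|Az\k\in\bR$, and likewise $\b w|Bz\k\in\bR$. By the criterion of the second paragraph, the left multiplication $L$ induced by this $N$ satisfies $L_\imath=J$ and $L_qA=AL_q$, $L_qB=BL_q$ for every $q\in\bH$, which is the assertion. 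The only genuinely nontrivial ingredient is the simultaneous multiplication--operator model used to build $C$; everything else is the bookkeeping of left multiplications and the elementary computation identifying commutation with reality of the matrix entries.
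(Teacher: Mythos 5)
Your proposal is correct and follows essentially the same route as the paper: reduce the statement to finding a Hilbert basis of the $\bC_\imath$--Hilbert space $\sH^{J\imath}_+$ in which $A_+$ and $B_+$ have real matrix entries, and produce that basis from the simultaneous $L^2$ multiplication--operator model of the commuting pair $(A_+,B_+)$. Your packaging of the final step through a conjugation $C$ and its fixed--point real subspace is equivalent to the paper's direct Zorn's--lemma selection of a maximal orthonormal family of real--valued functions in each $L^2_\ell$, and your explicit derivation of $AB=BA$ from normality is a small but welcome addition that the paper leaves implicit.
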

\begin{proof}
Let $N$ be a Hilbert basis of $\sH$ and let $\bH \ni q \mapsto L_q=
\sum_{z \in N}zq\b z|\cdot \k$ be the left scalar multiplication of $\sH$ induced by $N$. Observe that, if $S \in \gB(\sH)$ is an operator such that $\b z|Sz' \k \in \bR$ for every $z,z'\in N$,
then $L_qS=SL_q$. Indeed, if $z' \in N$, one has:
\[
(L_qS)(z')=\sum_{z \in N}zq\b z|Sz'\k=\sum_{z \in N}z\b z|Sz'\k q=S(z')q=S(z'q)=(SL_q)(z').
\]
Thanks to this fact, it is sufficient to find a Hilbert basis $N$ of $\sH$ such that the induced left scalar multiplication $\bH \ni q \mapsto L_q$ has the following properties: $L_\imath=J$ and $\b z|Az'\k, \b z|Bz'\k \in \bR$ for every $z,z'\in N$. Actually, it suffices to construct a Hilbert basis $N$ of the $\bC_\imath$--Hilbert space $\sH_+^{J\imath}$  satisfying
\beq \label{real}
\b z|A_+z' \k, \b z|B_+z' \k \in \bR \quad \text{if $z,z'\in N$},
\eeq
where $A_+$ and $B_+$ denote the operators in $\gB(\sH_+^{J\imath})$ obtained restricting $A$ and $B$ to $\sH_+^{J\imath}$, respectively. Indeed, by point $(\mr{f})$ of Proposition \ref{propJ}, $N$ is also a Hilbert basis of $\sH$ and $J=\sum_{z \in N} z\imath \langle z|\cdot\rangle$. Let $T_+ \in \gB(\sH_+^{J\imath})$ be the normal operator obtained restricting $T$ to $\sH_+^{J\imath}$. Observe that $A_+$ and $B_+$ are self--adjoint commuting operators and $T_+=A_++\imath B_+$ in $\gB(\sH_+^{J\imath})$. From the Spectral Representation Theorem (see Chap.X, sec.5 of \cite{DS}), there exists an orthogonal decomposition
$\bigoplus_{\ell \in \Lambda} \sH_\ell$ of $\sH_+^{J\imath}$ into closed subspaces and, for every $\ell \in \Lambda$, a positive $\sigma$--additive Borel measure $\mu_\ell$ on $\sigma(T_+)$ and an isometric isomorphism $U_\ell$ from $L^2_\ell:=L^2(\sigma(T_+),\bC;\mu_\ell)$ to $\sH_\ell$ such that $\sH_\ell$ is invariant under $T_+$ and $T_+^*$ (and thus
under $A_+$ and $B_+$), and
\[
(U_\ell T_+^{(\ell)} U_\ell^{-1})(f_\ell)(x+\imath y)=(x+\imath y)f_\ell(x+\imath y)
\]
for every $f_\ell \in L^2_\ell$, where $T_+^{(\ell)}$ is the restriction of $T_+$ from $\sH_\ell$ into itself. It follows that
\[
(U_\ell^{-1} A_+^{(\ell)}U_\ell)(f_\ell)(x+\imath y)=x f_\ell(x+\imath y)
\]
and
\[
(U_\ell^{-1} B_+^{(\ell)}U_\ell)(f_\ell)(x+\imath y)=y f_\ell(x+\imath y)
\]
for every $\ell \in \Lambda$ and $f_\ell \in L^2_\ell$, where $A_+^{(\ell)}$ and $B_+^{(\ell)}$ denotes the restrictions of $A_+$ and $B_+$ from $\sH_\ell$ into itself.

Fix $\ell \in \Lambda$ and define the operators $A':=U_\ell^{-1} A_+^{(\ell)}U_\ell$ and $B':=U_\ell^{-1} B_+^{(\ell)}U_\ell$ in $\gB(L^2_\ell)$. Let $\mc{P}_\ell$ be the set of all orthonormal subsets $F$ of $L^2_\ell$ such that each function in $F$ is real--valued. Equip $\mc{P}_\ell$ with the partial ordering induced by the inclusion. Since $\mc{P}_\ell$ is non--empty and inductive, Zorn's lemma ensures the existence of a maximal element $M_\ell$ of $\mc{P}_\ell$. Let us prove that $M_\ell$ is an Hilbert basis of $L^2_\ell$. Let $g \in L^2_\ell$ such that $g \perp M_\ell$ in $L^2_\ell$; that is, it holds:
\[
0=\int_{\sigma(T_+)}\overline{g}f \, d\mu_\ell=\int_{\sigma(T_+)}\mr{Re}(g)f \, d\mu_\ell-\imath \int_{\sigma(T_+)}\mr{Im}(g)f \, d\mu_\ell
\]
for each $f \in M_{\ell}$. It follows that $\mr{Re}(g) \perp M_\ell$ and $\mr{Im}(g) \perp M_\ell$. By the maximality of $M_\ell$, we infer that $\mr{Re}(g)=0=\mr{Im}(g)$ and hence $g=0$, as desired. Observe that, for every $f,f' \in L^2_\ell$, we have:
\beq \label{real1}
\int_{\sigma(T_+)}\overline{f}(A'f') \, d\mu_\ell=\int_{\sigma(T_+)}xf(x+\imath y)f'(x+\imath y) \, d\mu_\ell \in \bR
\eeq
and
\beq \label{real2}
\int_{\sigma(T_+)}\overline{f}(B'f') \, d\mu_\ell=\int_{\sigma(T_+)}yf(x+\imath y)f'(x+\imath y) \, d\mu_\ell \in \bR.
\eeq

Define $N_\ell:=U(M_\ell)$ for every $\ell \in \Lambda$ and $N:=\bigcup_{\ell \in \Lambda}N_\ell$. The reader observes that, given any $\ell \in \Lambda$, (\ref{real1}) and (\ref{real2}) are equivalent to (\ref{real}) with $z,z' \in N_\ell$. On the other hand, by construction, if $\ell,\ell' \in \Lambda$ with $\ell \neq \ell'$, $z \in N_\ell$ and $z' \in N_{\ell'}$, then $\b z|A'z'\k=0=\b z|B'z'\k$. This implies (\ref{real}) and completes the proof.
\end{proof}

As a consequence, we obtain:

\begin{corollary}
Let $\sH$ be a quaternionic Hilbert space and let $T \in \gB(\sH)$ be a normal operator. Then there exists a unitary operator $U:\sH \lra \sH$ such that
\[
UTU^*=T^*.
\]
\end{corollary}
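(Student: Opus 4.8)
The plan is to realize the passage from $T$ to $T^*$ as conjugation by a single left multiplication operator, exploiting the decomposition produced in Theorem~\ref{teoext}. First I would apply Theorem~\ref{teoext} to the normal operator $T$, obtaining an anti self--adjoint unitary $J \in \gB(\sH)$ commuting with $T$ and $T^*$, together with the self--adjoint operators $A := (T+T^*)\frac{1}{2}$ and $B := |T-T^*|\frac{1}{2}$, so that $T = A + JB$. Taking adjoints and using $A^*=A$, $B^*=B$, $J^*=-J$ and the fact that $J$ commutes with $B$, one gets $(JB)^* = B^*J^* = -BJ = -JB$, whence
\[
T^* = A - JB .
\]
Thus the whole problem reduces to producing a unitary $U$ that fixes $A$ and $B$ while reversing the sign of $J$.

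Next I would invoke Theorem~\ref{newtheorem} with this very $J$, $A$ and $B$: fixing an imaginary unit $\imath \in \bS$, there is a left scalar multiplication $\bH \ni q \mapsto L_q$ of $\sH$ with $L_\imath = J$ and with $L_q A = A L_q$ and $L_q B = B L_q$ for every $q \in \bH$. Now choose $\jmath \in \bS$ orthogonal to $\imath$, i.e.\ $\jmath\imath = -\imath\jmath$ (such a $\jmath$ always exists), and set $U := L_\jmath$. By Proposition~\ref{propprod}, $\LL_N$ is a real algebra homomorphism with $(L_q)^* = L_{\overline q}$, so $U^* = L_{\overline\jmath} = L_{-\jmath}$ and $U^*U = L_{\overline\jmath}L_\jmath = L_{|\jmath|^2} = L_1 = \1$; hence $U$ is unitary.

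It then remains the short computation $UTU^* = T^*$. Since $U$ commutes with $A$ and with $B$ and $U^*U=\1$, conjugation leaves both fixed: $L_\jmath A L_{\overline\jmath} = A$ and $L_\jmath B L_{\overline\jmath} = B$. For the cross term, writing $J = L_\imath$ and moving $B$ past $L_{\overline\jmath}$,
\[
L_\jmath (JB) L_{\overline\jmath} = L_\jmath L_\imath L_{\overline\jmath}\, B = L_{\jmath\imath\overline\jmath}\, B = L_{-\imath}\, B = -JB,
\]
where $\jmath\imath\overline\jmath = -\jmath\imath\jmath = -\imath$ follows from $\jmath\imath=-\imath\jmath$ and $\jmath^2=-1$. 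Therefore $UTU^* = A - JB = T^*$, as required.

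I do not expect a genuine obstacle here: all the substance is carried by Theorems~\ref{teoext} and~\ref{newtheorem}, and the only point demanding care is the algebraic sign flip $L_\jmath J L_{\overline\jmath} = -J$, which is exactly what the anticommuting choice of $\jmath$ delivers. The conceptual content is simply that, once $T$ is written in the ``slice'' form $A+JB$ relative to a compatible left multiplication, replacing the imaginary unit $\imath$ by an orthogonal one implements the quaternionic conjugation $\alpha+\imath\beta \mapsto \alpha-\imath\beta$ at the operator level, sending $T$ to $T^*$.
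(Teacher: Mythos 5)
Your proposal is correct and follows essentially the same route as the paper: decompose $T=A+JB$ via Theorem~\ref{teoext}, extend $J$ to a left scalar multiplication commuting with $A$ and $B$ via Theorem~\ref{newtheorem}, and conjugate by $U=L_p$ for an imaginary unit $p$ with $p\imath\overline{p}=-\imath$ (your anticommuting $\jmath$ is exactly such a $p$). The sign-flip computation matches the paper's verbatim.
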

\begin{proof}
Decompose $T$  as in
Theorem~\ref{teoext}:  $T=(T+T^*)\frac{1}{2}+J|T-T^*|\frac{1}{2}$. Define $A:=(T+T^*)\frac{1}{2}$ and $B:=|T-T^*|\frac{1}{2}$. Choose $\imath \in \bS$. By Theorem \ref{newtheorem}, there exists a left scalar multiplication $\bH \ni q \mapsto L_q$ of $\sH$ such that $L_\imath=J$ and, for every $q \in \bH$, $L_qA=AL_q$ and $L_qB=BL_q$. Let $p \in \bS$ such that $p\imath\overline{p}=-\imath$. Define $U:=L_p$. It holds:
\begin{align*}
UTU^* &=U(A+JB)U^*=L_pAL_{\overline{p}}+L_pJL_{\overline{p}}L_pBL_{\overline{p}}=\\
&=A+L_{p\imath\overline{p}}B =
A+L_{-\imath}B=A-JB=T^*.
\end{align*}
The proof is complete.
\end{proof}

We remind the reader that the preceding result is false in the complex setting: if $H$ is a complex Hilbert space and $T$ is the normal operator on $H$ obtained multiplying the identity operator by $i$, then it is immediate to verify that there does not exist any unitary operator $U$ on $H$ such that $UTU^*=T^*$.


\section{Relevant $C^*$-algebras of slice functions}

\textit{Throughout this section, $\K$ will denote a non--empty subset of $\bC$, invariant under complex conjugation.}


\subsection{Slice functions}
We recall basic definitions and results concerning slice functions, taking \cite{GhPe_AIM} and \cite{GhPe_Trends} as references.

Consider the complexification $\bH_\bC:=\bH \otimes_\bR \bC$ of $\bH$. We represent the elements $w$ of $\bH_\bC$ by setting $w=q+ip$ with $q,p \in \bH$, where $i^2=-1$. The product of $\bH_\bC$ is given by the following equality:
\beq \label{eq:H_C}
(q+ip)(q'+ip')=qq'-pp'+i(qp'+pq').
\eeq

Let us introduce the notion of stem function on $\K$.

\begin{definition}
Let $F:\K \lra \bH_\bC$ be a function and let $F_1,F_2:\K \lra \bH$ be the components of $F=F_1+iF_2$. We say that $F$ is a \emph{stem function} on $\K$ if $(F_1,F_2)$ forms an even--odd pair with respect to the imaginary part of $z \in \bC$; that is, $F_1(\bar{z})=F_1(z)$ and $F_2(\overline{z})=-F_2(z)$ for every $z \in \K$.

If $F_1$ and $F_2$ are continuous, then $F$ is called \emph{continuous}.

\end{definition}

\begin{remark} \label{rem:tilde-bar}
Given $w=q+ip \in \bH_\bC$, denote by $\overline{w}$ the element $q-ip$ of $\bH_\bC$. It is immediate to verify that a function $F:\K \lra \bH_\bC$ is a stem function if and only if it is complex--intrinsic; that is, $F(\overline{z})=\overline{F(z)}$ for every $z \in \K$ such that $\overline{z} \in \K$.
\end{remark}

As discussed in Remark \ref{remin}$(3)$, the quaternions have the following two properties, which describe their ``slice'' nature:
\begin{itemize}
 \item $\bH=\bigcup_{\jmath \in \bS}\bC_\jmath$,
 \item $\bC_\jmath \cap \bC_\kappa=\bR$ for every $\jmath,\kappa \in \bS$ with $\jmath \neq \pm\kappa$.
\end{itemize}
In this way, every $q \in \bH$ can be written as follows:
\[
q=\alpha+\jmath\beta \quad \text{for some $\alpha,\beta \in \bR$ and $\jmath \in \bS$}.
\]
If $q \in \bR$, then $\alpha=q$, $\beta=0$ and $\jmath$ is an arbitrary element of $\bS$. If $q \in \bH \setminus \bR$, then $q$ belongs to a unique ``slice complex plane'' $\bC_\jmath$
 and hence $q$ has only two expressions:
\beq \label{eq:q}
q=\alpha+\jmath\beta=\alpha+(-\jmath)(-\beta),
\eeq
where $\alpha=\mr{Re}(q)$, $\beta=\pm|\mr{Im}(q)|$ and $\jmath=\pm\mr{Im}(q)/|\mr{Im}(q)|$.

Recall the definition of circularization $\OO_\K$ of $\K$ given in (\ref{circularization}):
\[
\OO_\K:=\{\alpha+\jmath\beta \in \bH \,|\, \alpha,\beta \in \bR, \, \alpha+i\beta \in \K, \, \jmath \in \bS\}.
\]

We are now in position to define slice functions.

\begin{definition}\label{defslice}
Each stem function $F=F_1+iF_2:\K \lra \bH_\bC$ on $\K$ induces a \emph{(left) slice function} $\I(F):\OO_\K \lra \bH$ on $\OO_\K$ as follows: if $q=\alpha+\jmath\beta \in \OO_\K$ for some $\alpha,\beta \in \bR$ and $\jmath \in \bS$, then 
\[
\I(F)(q):=F_1(z)+\jmath F_2(z) \quad \text{if $z=\alpha+i\beta \in \K$}.
\]

If $F$ is continuous, then $f$ is called \emph{continuous slice functions} on $\OO_\K$. We denote by $\Sl(\OO_\K,\bH)$ the set of all continuous slice functions on $\OO_\K$.
\end{definition}

The notion of slice function $\I(F)$ just given is well--posed. Indeed, if $q \in \bR$, then $F_2(z)=0$ and hence $\I(F)(q)=F_1(q)$, independently from the choice of $\jmath$ in $\bS$. Moreover, if $q$ belongs to $\bH \setminus \bR$ and has expressions (\ref{eq:q}), then it holds:
\[
\I(F)(\alpha+(-\jmath)(-\beta))=F_1(\overline{z})+(-\jmath)F_2(\overline{z})=F_1(z)+\jmath F_2(z)=\I(F)(\alpha+\jmath\beta),
\]
where $z=\alpha+i\beta$.

\begin{remark}\label{remslices}
$(1)$ Every slice function is induced by a unique stem function. Indeed, if $f$ is induced by some stem function $F_1+iF_2$, $\jmath$ is a fixed element of $\bS$, $z=\alpha+i\beta$ is an arbitrary point of $\K$ and $q:=\alpha+\jmath\beta \in \OO_\K$, then $F_1(z)=\frac{1}{2}(f(q)+f(\overline{q}))$ and $F_2(z)=-\jmath\frac{1}{2}(f(q)-f(\overline{q}))$. It follows that $f$ satisfies the following \emph{representation formula}:
\[
\textstyle
f(\alpha+\imath\beta)=\frac{1}{2}(f(q)+f(\overline{q}))-\imath\jmath\frac{1}{2}(f(q)-f(\overline{q}))
\]
for every $\imath \in \bS$ (see Subsection 3.3 of \cite{GhPe_AIM} for details). As an immediate consequence, we infer that, if two slice functions on $\OO_\K$ coincide on $\OO_\K \cap \bC_\jmath$ for some $\jmath \in \bS$, then they coincide on the whole $\OO_\K$.

$(2)$ All continuous slice functions $f:\OO_\K \lra \bH$ are continuous in the usual topo\-logical sense (see Proposition 7(1) of \cite{GhPe_AIM}); that is, $\Sl(\OO_\K,\bH) \subset \mscr{C}(\OO_\K,\bH)$.

$(3)$ Given $n \in \bN$ and $a \in \bH$, the function $\bC \ni z \mapsto z^na=\mr{Re}(z^n)a+i\mr{Im}(z^n)a \in \bH_\bC$ is a stem function inducing the slice function $\bH \ni q \mapsto q^na \in \bH$. It follows that all polynomial functions $\sum_{h=0}^dq^ha_h$ on $\bH$ and all convergent power series $\sum_{h \in \bN}q^ha_h$ on some ball of $\bH$ are slice functions.
\end{remark}

The pointwise product of slice functions is not necessarily a slice function. On the contrary, as it is immediate to see, if $F=F_1+iF_2$ and $G=G_1+iG_2$ are stem functions, then their product 
\beq \label{eq:FG}
FG=(F_1G_1-F_2G_2)+i(F_1G_2+F_2G_1)
\eeq
is again a stem function. This suggests a natural way to define the product of slice functions.

\begin{definition} \label{sliceprod}
Given two slice functions $f=\I(F)$ and $g=\I(G)$ on $\OO_\K$, we define the \emph{slice product} $f \cdot g$ of $f$ and $g$ as the slice function $\I(FG)$ on $\OO_\K$.
\end{definition} 

\begin{remark} \label{rem:prod}
The slice product defines an operation on $\Sl(\OO_\K,\bH)$. Indeed, formula (\ref{eq:FG}) ensures that the product of two continuous stem functions is again a continuous stem function. 
\end{remark}

Let us define the $\bH$--intrinsic and $\bC_\jmath$--slice functions.

\begin{definition} \label{def:real}
Let $F=F_1+iF_2$ be a stem function on $\K$ and let $f:\OO_\K \lra \bH$ be the slice function induced by $F$. We say that $f$ is a \emph{$\bH$--intrinsic slice function} if $f(\overline{q})=\overline{f(q)}$ for every $q \in \OO_\K$. This is equivalent to require that $F_1$ and $F_2$ are real--valued (see Lemma \ref{lemstemslice} below). For this reason, we denote by $\RS(\OO_\K,\bH)$ the subset of $\Sl(\OO_\K,\bH)$ consisting of all continuous $\bH$--intrinsic slice functions on $\OO_\K$. 

Given $\jmath \in \bS$, we say that $f$ is a \emph{$\bC_\jmath$--slice function} if $F_1$ and $F_2$ are $\bC_\jmath$--valued. We denote by $\jS(\OO_\K,\bH)$ the subset of $\Sl(\OO_\K,\bH)$ consisting of all continuous $\bC_\jmath$--slice functions on $\OO_\K$.
\end{definition}

We underline that, in Definition 10 of \cite{GhPe_AIM}, $\bH$--intrinsic slice functions are called real slice functions.

Evidently, $\RS(\OO_\K,\bH) \subset \jS(\OO_\K,\bH)$ for every $\jmath \in \bS$.

$\bH$--intrinsic slice functions have nice characterizations.

\begin{lemma} \label{lemstemslice} 
Given a slice function $f=\I(F_1+iF_2):\OO_\K \lra \bH$, the following three conditions are equivalent:
\begin{itemize}
 \item[$(\mr{i})$] $f$ is $\bH$--intrinsic.
 \item[$(\mr{ii})$] $F_1$ and $F_2$ are real--valued.
 \item[$(\mr{iii})$] $f(\OO_\K \cap \bC_\jmath) \subset \bC_\jmath$ for every $\jmath \in \bS$.
\end{itemize}
\end{lemma}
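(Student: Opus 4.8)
The plan is to prove the three equivalences in a cycle $(\mr{i}) \Rightarrow (\mr{ii}) \Rightarrow (\mr{iii}) \Rightarrow (\mr{i})$, exploiting the representation of $f$ through its stem function $F = F_1 + iF_2$ and the behaviour of quaternionic conjugation on each slice $\bC_\jmath$. Throughout, I would fix an arbitrary point $q = \alpha + \jmath\beta \in \OO_\K$ with $\alpha,\beta \in \bR$ and $\jmath \in \bS$, so that $z = \alpha + i\beta \in \K$ and $f(q) = F_1(z) + \jmath F_2(z)$, while $\overline q = \alpha - \jmath\beta = \alpha + (-\jmath)\beta$ gives $f(\overline q) = F_1(z) - \jmath F_2(z)$ after using that $F_1$ is even and $F_2$ is odd in the imaginary part.

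First I would establish $(\mr{i}) \Leftrightarrow (\mr{ii})$. Computing $\overline{f(q)} = \overline{F_1(z)} - \overline{\jmath}\,\overline{\cdots}$ requires care because conjugation reverses products, but the key simplification is that $\jmath$ is imaginary so $\overline{F_1(z) + \jmath F_2(z)} = \overline{F_1(z)} + \overline{F_2(z)}\,\overline{\jmath} = \overline{F_1(z)} - \overline{F_2(z)}\jmath$. Comparing this with $f(\overline q) = F_1(z) - \jmath F_2(z)$, the $\bH$--intrinsic condition $f(\overline q) = \overline{f(q)}$ becomes $F_1(z) - \jmath F_2(z) = \overline{F_1(z)} - \overline{F_2(z)}\jmath$ for every $\jmath \in \bS$. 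Evaluating this identity for two different choices of $\jmath$ (or equivalently exploiting that $\jmath$ ranges over all of $\bS$ while $z$ is fixed through $\alpha,\beta$) forces $F_1(z) = \overline{F_1(z)}$ and $F_2(z) = \overline{F_2(z)}$, i.e.\ both are real--valued; the converse direction is an immediate substitution. This is the step where the non--commutativity is most delicate, and I expect it to be the main obstacle: one must handle the case $q \in \bR$ (where $\jmath$ is free and $F_2(z) = 0$) separately from $q \notin \bR$, and argue that reality of $F_1, F_2$ at every $z \in \K$ follows from the intrinsic condition holding for \emph{all} $q$ in the circularized domain.

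Next I would prove $(\mr{ii}) \Rightarrow (\mr{iii})$: if $F_1(z), F_2(z) \in \bR$, then for $q = \alpha + \jmath\beta \in \OO_\K \cap \bC_\jmath$ we have $f(q) = F_1(z) + \jmath F_2(z)$, which is a real linear combination of $1$ and $\jmath$, hence lies in $\bC_\jmath = \{\alpha' + \jmath\beta' \mid \alpha',\beta' \in \bR\}$. This containment holds for every $\jmath \in \bS$ by the same computation. Finally, for $(\mr{iii}) \Rightarrow (\mr{i})$, I would fix any $\jmath \in \bS$ and note that on the slice $\bC_\jmath$ the quaternionic conjugation restricts to the ordinary complex conjugation of $\bC_\jmath \simeq \bC$. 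Since $f$ maps $\OO_\K \cap \bC_\jmath$ into $\bC_\jmath$, and since the representation formula (Remark~\ref{remslices}$(1)$) recovers $F_1, F_2$ from the values of $f$ on a single slice, the hypothesis that $f(\OO_\K \cap \bC_\jmath) \subset \bC_\jmath$ for \emph{every} $\jmath$ lets me verify $f(\overline q) = \overline{f(q)}$ by checking it slice by slice: on $\bC_\jmath$ the identity $f(\overline q) = \overline{f(q)}$ is just the statement that a $\bC_\jmath$--valued function commutes with the conjugation of $\bC_\jmath$, which follows once we know $F_1, F_2$ take values in $\bC_\jmath$ for all $\jmath$ simultaneously, forcing them into $\bigcap_{\jmath} \bC_\jmath = \bR$. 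Closing the cycle then yields all three equivalences.
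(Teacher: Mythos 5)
Your argument is correct and, for the substantive equivalence $(\mr{i}) \Leftrightarrow (\mr{ii})$, it rests on exactly the same computation as the paper: $\overline{F_1(z)+\jmath F_2(z)}=\overline{F_1(z)}-\overline{F_2(z)}\jmath$ compared against $f(\overline q)=F_1(z)-\jmath F_2(z)$, followed by separating real and imaginary parts. The one place where you should be careful is the remark that ``two different choices of $\jmath$'' suffice: writing $F_2(z)=a+v$ with $a\in\bR$ and $v\in\mr{Im}(\bH)$, the identity gives $F_1(z)-\overline{F_1(z)}=\jmath v+v\jmath=-2\langle\jmath,v\rangle\in\bR$, so a single $\jmath$ already yields $F_1(z)\in\bR$ and $v\perp\jmath$, but two independent $\jmath$'s still leave a line of possible $v$'s; you need either three independent imaginary units or, as your parenthetical correctly offers, all of $\bS$. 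The paper short--circuits this by choosing the \emph{one} imaginary unit $\imath$ aligned with $\mr{Im}(F_2(z))$, which kills $v$ in a single stroke. The other difference is organizational: the paper cites Proposition~10 of \cite{GhPe_AIM} for $(\mr{ii})\Leftrightarrow(\mr{iii})$, whereas you prove it directly — your route $(\mr{iii})\Rightarrow F_1(z),F_2(z)\in\bC_\jmath$ for every $\jmath$ (via the representation formula applied to $f(q)$ and $f(\overline q)$) $\Rightarrow F_1,F_2\in\bigcap_{\jmath\in\bS}\bC_\jmath=\bR$ is sound and makes the lemma self--contained; just note that your intermediate phrase about a ``$\bC_\jmath$--valued function commuting with conjugation'' is not literally true on its own and only becomes correct once reality of $F_1,F_2$ is in hand, so the implication you actually prove is $(\mr{iii})\Rightarrow(\mr{ii})\Rightarrow(\mr{i})$, which closes the cycle all the same.
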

\begin{proof}
The equivalence of conditions $(\mr{ii})$ and $(\mr{iii})$ is proved in Proposition~10 of \cite{GhPe_AIM}. In this way, in order to complete the proof, it suffices to show that $(\mr{i})$ is equivalent to $(\mr{ii})$. Let $F=F_1+iF_2$ be the stem function inducing $f$, let $\jmath \in \bS$, let $q=\alpha+\jmath\beta \in \OO_\K$ for some $\alpha,\beta \in \bR$ and let $z:=\alpha+i\beta \in \K$. If $(\mr{ii})$ holds, then we have:
\[
\overline{f(q)}=\overline{F_1(z)+\jmath{F_2(z)}}=F_1(z)-\jmath F_2(z)=f(\overline{q}).
\]
This proves implication $(\mr{ii}) \Longrightarrow (\mr{i})$. Assume now that $f$ is $\bH$--intrinsic. Let $F_2^0(z),F_2^1(z) \in \bR$, $\imath \in \bS$ and $p \in \OO_\K$ such that $F_2(z)=F_2^0(z)+\imath F_2^1(z)$ and $p=\alpha+\imath\beta$. Since
\[
F_1(z)-\imath F_2(z)=f(\overline{p})= \overline{f(p)}=\overline{F_1(z)}-\overline{F_2(z)}\imath,
\]
we infer that
\[
F_1(z)-\overline{F_1(z)}=\imath F_2(z)-\overline{F_2(z)}\imath=-2F^1_2(z) \in \mr{Im}(\bH) \cap \bR=\{0\}.
\]
It follows that $F_1(z) \in \bR$, $F^1_2(z)=0$ and hence $F_2(z) \in \bR$ as well.
\end{proof}

Now we introduce the notion of circular slice function.

\begin{definition} \label{def:circular-slice}
A slice function $\I(F_1+iF_2)$ on $\OO_\K$ is called \emph{circular slice function} if $F_2$ vanishes on the whole $\K$. We denote by $\Sc(\OO_\K,\bH)$ the subset of $\Sl(\OO_\K,\bH)$ consisting of all continuous circular slice functions on $\OO_\K$.
\end{definition}

\begin{lemma} \label{lemstemslice-c} 
 Given a slice function $f:\OO_\K \lra \bH$, the following two conditions are equivalent:
\begin{itemize}
 \item[$(\mr{i})$] $f$ is a circular slice function.
 \item[$(\mr{ii})$] $f(\overline{q})=f(q)$ for every $q \in \OO_\K$.
\end{itemize}
\end{lemma}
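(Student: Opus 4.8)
The plan is to prove the equivalence of the two conditions characterizing circular slice functions by relating the vanishing of the second stem component $F_2$ to the symmetry $f(\bar q) = f(q)$. The key observation is that the representation formula from Remark~\ref{remslices}$(1)$ expresses $F_2$ directly in terms of values of $f$, so the equivalence becomes essentially a direct translation.

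First I would prove the implication $(\mr{i}) \Longrightarrow (\mr{ii})$. Suppose $f = \I(F_1 + iF_2)$ with $F_2 \equiv 0$ on $\K$. Let $q = \alpha + \jmath\beta \in \OO_\K$ for some $\alpha,\beta \in \bR$ and $\jmath \in \bS$, and set $z = \alpha + i\beta \in \K$. By the definition of slice function, $f(q) = F_1(z) + \jmath F_2(z) = F_1(z)$. Since $\overline q = \alpha + (-\jmath)\beta = \alpha + \jmath(-\beta)$ corresponds to $\bar z = \alpha - i\beta \in \K$, and using the stem function parity $F_1(\bar z) = F_1(z)$, we get $f(\overline q) = F_1(\bar z) + \jmath F_2(\bar z) = F_1(\bar z) = F_1(z) = f(q)$. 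This is exactly condition $(\mr{ii})$.

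For the converse $(\mr{ii}) \Longrightarrow (\mr{i})$, I would invoke the representation formula for $F_2$ established in Remark~\ref{remslices}$(1)$: for a fixed $\jmath \in \bS$ and $z = \alpha + i\beta \in \K$, with $q = \alpha + \jmath\beta \in \OO_\K$, one has $F_2(z) = -\jmath\frac{1}{2}(f(q) - f(\overline q))$. Assuming $(\mr{ii})$, namely $f(\overline q) = f(q)$ for every $q \in \OO_\K$, this immediately yields $F_2(z) = -\jmath\frac{1}{2}(f(q) - f(q)) = 0$. Since $z \in \K$ was arbitrary, $F_2$ vanishes identically on $\K$, which is condition $(\mr{i})$.

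I do not expect any serious obstacle here: the entire content is bookkeeping with the stem/slice correspondence, and the only nontrivial ingredient is the representation formula recalled in Remark~\ref{remslices}, which expresses both stem components pointwise in terms of $f$. The mildest care needed is in tracking the roles of $\jmath$ versus $-\jmath$ when passing between $q$ and $\overline q$, but the parity conditions $F_1(\bar z) = F_1(z)$, $F_2(\bar z) = -F_2(z)$ built into the definition of a stem function handle this cleanly.

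\begin{proof}
Let $F = F_1 + iF_2$ be the (unique) stem function on $\K$ inducing $f$.

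$(\mr{i}) \Longrightarrow (\mr{ii})$. Assume $F_2$ vanishes on $\K$. Fix $q \in \OO_\K$ and write $q = \alpha + \jmath\beta$ for some $\alpha,\beta \in \bR$ and $\jmath \in \bS$, so that $z := \alpha + i\beta \in \K$. Then $f(q) = F_1(z) + \jmath F_2(z) = F_1(z)$. Since $\overline{q} = \alpha + \jmath(-\beta)$ is induced by $\overline{z} = \alpha - i\beta \in \K$, and $F_1(\overline{z}) = F_1(z)$ by the parity of the stem function, we obtain
\[
f(\overline{q}) = F_1(\overline{z}) + \jmath F_2(\overline{z}) = F_1(\overline{z}) = F_1(z) = f(q).
\]
As $q \in \OO_\K$ was arbitrary, condition $(\mr{ii})$ holds.

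$(\mr{ii}) \Longrightarrow (\mr{i})$. Assume $f(\overline{q}) = f(q)$ for every $q \in \OO_\K$. Fix $\jmath \in \bS$ and an arbitrary point $z = \alpha + i\beta \in \K$, and set $q := \alpha + \jmath\beta \in \OO_\K$. By the representation formula recalled in Remark~\ref{remslices}$(1)$, we have
\[
F_2(z) = -\jmath\tfrac{1}{2}\big(f(q) - f(\overline{q})\big).
\]
Using the hypothesis $f(\overline{q}) = f(q)$, the right--hand side vanishes, so $F_2(z) = 0$. Since $z \in \K$ was arbitrary, $F_2$ vanishes on the whole $\K$; that is, $f$ is a circular slice function.
\end{proof}
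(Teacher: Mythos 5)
Your proof is correct and follows essentially the same route as the paper: the forward implication is the direct computation the paper calls evident, and for the converse your use of the representation formula $F_2(z)=-\jmath\tfrac{1}{2}(f(q)-f(\overline q))$ is just a repackaging of the paper's step of equating $F_1+\imath F_2 = f(q) = f(\overline q) = F_1-\imath F_2$ to conclude $F_2=0$. No gaps.
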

\begin{proof}
The implication $(\mr{i}) \Longrightarrow (\mr{ii})$ is evident. Let $F_1+iF_2:\K \lra \bH$ be the stem function inducing $f$. Suppose that $(\mr{ii})$ holds. Then, for every $\alpha,\beta \in \bR$ and $\imath \in \cS$ with $q:=\alpha+\imath\beta \in \OO_\K$, we have that
\[
F_1(\alpha,\beta)-\imath F_2(\alpha,\beta)=f(\overline{q})=f(q)=F_1(\alpha,\beta)+\imath F_2(\alpha,\beta)
\]
or, equivalently, $\imath F_2(\alpha,\beta)=0$. It follows immediately that $F_2$ is null and hence $f \in \Sc(\OO_\K,\bH)$.
\end{proof}

Given a basis of $\bH$, the functions in $\Sl(\OO_\K,\bH)$, and hence in $\jS(\OO_\K,\bH)$ and in $\Sl_c(\OO_\K,\bH)$, can be expressed in terms of functions in $\RS(\OO_\K,\bH)$ as follows.

\begin{lemma} \label{lem:RS}
Let $\{1,\jmath,\kappa,\delta\}$ be a basis of $\bH$. Then the map
\[
\big(\RS(\OO_\K,\bH)\big)^4 \ni (f_0,f_1,f_2,f_3) \mapsto f_0+f_1\jmath+f_2\kappa+f_3\delta \in \Sl(\OO_\K,\bH)
\]
is bijective. In particular, it follows that, given any $f \in \Sl(\OO_\K,\bH)$, there exist, and are unique, $f_0,f_1,f_2,f_3 \in \RS(\OO_\K,\bH)$ such that
\beq \label{eq:f}
f=f_0+f_1\jmath+f_2\kappa+f_3\delta.
\eeq
Moreover, it holds:
\begin{itemize}
 \item[$(\mr{a})$] If $\jmath \in \cS$, then $f$ belongs to $\jS(\OO_\K,\bH)$ if and only if $f_2=f_3=0$.
 \item[$(\mr{b})$] $f$ belongs to $\Sc(\OO_\K,\bH)$ if and only if $f_0$, $f_1$, $f_2$ and $f_3$ are real--valued.
\end{itemize}
\end{lemma}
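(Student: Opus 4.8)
The plan is to transport the whole statement to the level of stem functions, where it becomes the elementary decomposition of an $\bH$--valued function along the real basis $\{1,\jmath,\kappa,\delta\}$. Write $e_0:=1$, $e_1:=\jmath$, $e_2:=\kappa$, $e_3:=\delta$. By Remark~\ref{remslices}$(1)$ together with Definition~\ref{defslice}, the map $\I$ is a bijection between continuous stem functions $F=F_1+iF_2$ on $\K$ and elements of $\Sl(\OO_\K,\bH)$, and by Lemma~\ref{lemstemslice} its restriction to stem functions with real--valued components is a bijection onto $\RS(\OO_\K,\bH)$. Given $\bH$--valued $F_1,F_2$, decomposing in the basis yields real functions $F^a_1,F^a_2:\K\lra\bR$ with $F_1=\sum_{a=0}^3 F^a_1 e_a$ and $F_2=\sum_{a=0}^3 F^a_2 e_a$; since coordinate projection is real--linear it commutes with $z\mapsto\bar z$, so it sends the even/odd pair $(F_1,F_2)$ to the even/odd pairs $(F^a_1,F^a_2)$ and preserves continuity. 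Thus $F\mapsto(F^0_1+iF^0_2,\dots,F^3_1+iF^3_2)$ is a bijection from continuous stem functions onto $4$--tuples of real--valued continuous stem functions, and $f_a:=\I(F^a_1+iF^a_2)$ lie in $\RS(\OO_\K,\bH)$.

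First I would establish the pointwise identity
\[
f_0(q)+f_1(q)\jmath+f_2(q)\kappa+f_3(q)\delta=\I\Big(\textstyle\sum_{a=0}^3(F^a_1+iF^a_2)e_a\Big)(q)\qquad(q\in\OO_\K).
\]
For $q=\alpha+\hat\jmath\beta$ with $z=\alpha+i\beta$, Definition~\ref{defslice} gives $f_a(q)=F^a_1(z)+\hat\jmath F^a_2(z)$, so $\sum_a f_a(q)e_a=\sum_a F^a_1(z)e_a+\sum_a\hat\jmath F^a_2(z)e_a$; left--distributivity turns the second sum into $\hat\jmath F_2(z)$ and the first is $F_1(z)$, giving $F_1(z)+\hat\jmath F_2(z)$, the right--hand side. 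This identity says precisely that the map of the statement is $\I$ composed with the stem--level bijection above composed with four copies of $\I^{-1}$; being a composition of bijections it is itself a bijection, which is exactly the asserted existence and uniqueness of the decomposition $f=f_0+f_1\jmath+f_2\kappa+f_3\delta$.

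For part $(\mr{a})$ I would observe that $\bC_\jmath=\{\alpha+\jmath\beta\mid\alpha,\beta\in\bR\}=\mr{span}_\bR\{e_0,e_1\}$, so by uniqueness of basis coordinates a quaternion lies in $\bC_\jmath$ exactly when its $e_2$-- and $e_3$--coordinates vanish. Hence $F_1$ and $F_2$ are $\bC_\jmath$--valued, which by Definition~\ref{def:real} is the defining property of $\jS(\OO_\K,\bH)$, if and only if $F^2_1=F^3_1=F^2_2=F^3_2\equiv0$; by uniqueness of stem functions this is equivalent to $f_2=\I(F^2_1+iF^2_2)=0$ and $f_3=\I(F^3_1+iF^3_2)=0$. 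For part $(\mr{b})$, Definition~\ref{def:circular-slice} gives $f\in\Sc(\OO_\K,\bH)$ iff $F_2\equiv0$, i.e.\ iff $F^a_2\equiv0$ for every $a$. On the other hand $f_a(\alpha+\hat\jmath\beta)=F^a_1(z)+\hat\jmath F^a_2(z)$ has imaginary part $\hat\jmath F^a_2(z)$, so $f_a$ is real--valued iff $F^a_2\equiv0$ (evaluation at a non--real point forces $F^a_2(z)=0$, while $F^a_2$ vanishes automatically on $\K\cap\bR$). Combining these characterizations gives that $f$ is circular iff all of $f_0,f_1,f_2,f_3$ are real--valued.

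The argument is essentially bookkeeping; the only points needing care are that the coordinate decomposition really preserves the stem (even/odd) structure and continuity, so that it lands in $(\RS(\OO_\K,\bH))^4$, and the pointwise identity, where left--distributivity lets $\hat\jmath$ be pulled out of the $F^a_2$--sum. I do not expect any genuine obstacle beyond these verifications.
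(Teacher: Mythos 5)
Your proof is correct and follows essentially the same route as the paper's: both decompose the stem function's components $F_1,F_2$ along the real basis $\{1,\jmath,\kappa,\delta\}$ and use the uniqueness of the inducing stem function to get bijectivity, then read off (a) and (b) from the coordinate description. The only cosmetic difference is that you verify part (b) entirely at the stem-function level, whereas the paper routes the converse implication through the characterizations $f(\bar q)=\overline{f(q)}$ and $f(\bar q)=f(q)$ of Lemmas~\ref{lemstemslice} and~\ref{lemstemslice-c}; both are valid.
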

\begin{proof}
Let $\{f_\ell=\I(F_1^\ell+iF_2^\ell)\}_{\ell=0}^3$ be $\bH$--intrinsic slice functions in $\RS(\OO_\K,\bH)$ and let $f:\OO_\K \lra \bH$ be the continuous slice function $f:=f_0+f_1\jmath+f_2\kappa+f_3\delta$. Define the function $F:\K \lra \bH_\bC$ by setting
\[
F:=(F_1^0+F_1^1\jmath+F_1^2\kappa+F_1^3\delta)+i(F_2^0+F_2^1\jmath+F_2^2\kappa+F_2^3\delta).
\]
It is immediate to verify that $F$ is the (unique) stem function inducing the slice function $f_0+f_1\jmath+f_2\kappa+f_3\delta=f$. Thanks to the uniqueness of $F$, it follows that the map mentioned in the statement is injective. Let us prove that it is also surjective. Let $g$ be a slice function in $\Sl(\OO_\K,\bH)$ induced by the stem function $G=G_1+iG_2$ and let $\{G_1^\ell,G_2^\ell\}_{\ell=0}^3$ be the real--valued continuous functions on $\K$ such that
\[
G_m=G_m^0+G_m^1\jmath+G_m^2\kappa+G_m^3\delta
\quad \mbox{ if }m \in \{1,2\}.
\]
Define the stem functions $\{G^\ell\}_{\ell=0}^3$ on $\K$ and the $\bH$--intrinsic slice functions $\{g_\ell\}_{\ell=0}^3$ in $\RS(\OO_\K,\bH)$ by setting $G^\ell:=G_1^\ell+iG_2^\ell$ and $g_\ell:=\I(G^\ell)$. Since $\I(G^1\jmath)=g_1\jmath$, $\I(G^2\kappa)=g_2\kappa$, $\I(G^3\delta)=g_3\delta$ and $G=G^0+G^1\jmath+G^2\kappa+G^3\delta$, we infer that $g=g_0+g_1\jmath+g_2\kappa+g_3\delta$, which proves the desired surjectivity.

Point $(\mr{a})$ and the fact that $f_0$, $f_1$, $f_2$ and $f_3$ are real--valued when $f \in \Sc(\OO_\K,\bH)$ are easy consequences of the above argument. Finally, suppose that $f_0$, $f_1$, $f_2$ and $f_3$ are real--valued. By Lemma \ref{lemstemslice}, we know that $f_\ell(\overline{q})=f_\ell(q)$ for every $q \in \OO_\K$ and $\ell \in \{0,1,2,3\}$. It follows that $f(\overline{q})=f(q)$ for every $q \in \OO_\K$. Now Lemma \ref{lemstemslice-c} ensures that $f \in\Sc(\OO_\K,\bH)$.
\end{proof}

\begin{remark} \label{rem:prod2}
The reader observes that, if $f,g \in \RS(\OO_\K,\bH)$, then $f \cdot g=g \cdot f \in \RS(\OO_\K,\bH)$. Indeed, if $f$ is induced by $F=F_1+iF_2$ and $g$ by $G=G_1+iG_2$ with $F_1,F_2,G_1,G_2$ real--valued, then formula (\ref{eq:FG}) ensures that also the components of $FG$ are real--valued and $FG=GF$. It is worth observing that, in this case, $f \cdot g$ coincides with the pointwise product $fg$ (see Remark 7 of \cite{GhPe_AIM} for details).

By similar considerations, we see that, if $f,g \in \Sc(\OO_\K,\bH)$, then $f \cdot g \in \Sc(\OO_\K,\bH)$ and $f \cdot g=fg$. However, in general, $f \cdot g$ is different from $g \cdot f$.

Finally, if $f,g \in \jS(\OO_\K,\bH)$, then $f \cdot g \in \jS(\OO_\K,\bH)$ and $f \cdot g=g \cdot f$. The first assertion follows again from formula (\ref{eq:FG}). The second is an immediate consequence of the representation formula for slice functions and of the fact that $f \cdot g$ and $g \cdot f$ coincide on $\OO_\K \cap \bC_\jmath$ (see Remark \ref{remslices}$(1)$). Moreover, it is immediate to verify that $f \cdot g$ coincides with $fg$ on $\OO_\K \cap \bC_\jmath$, but they can be different outside $\bC_\jmath$.
\end{remark}


\subsection{A quaternionic two-sided Banach unital $\boldsymbol{C^*}$-algebra structure on $\boldsymbol{\bH_\bC}$}
Let us introduce a structure of quaternionic two--sided Banach unital $C^*$--algebra on $\bH_\bC$ (see Section \ref{subsec:two-sided} for the definition). We need that structure later on. 

Identify $\bH$ with the subset $\{q+ip \in \bH_\bC \, | \, p=0\}$ of $\bH_\bC$ and define the quaternionic left and right multiplications $(q,w) \mapsto qw$ and $(w,q) \mapsto wq$ on $\bH_\bC$ via formula (\ref{eq:H_C}). It is immediate to verify that the set $\bH_\bC$, equipped with the usual sum, with these left and right scalar multiplications and with product (\ref{eq:H_C}), is a quaternionic two--sided algebra with unity $1$. Define an involution $w \mapsto w^*$ on $\bH_\bC$ by setting
\beq \label{eq:w^*}
w^*:=\overline{q}-i\overline{p}
\quad \mbox{if }w=q+ip.
\eeq
Observe that, given $w=q+ip$ and $y=q'+ip'$ in $\bH_\bC$, it holds:
\begin{align} \label{eq:wy^*}
(wy)^*&=(\overline{qq'-pp'})-i \, (\overline{qp'+pq'})= \nonumber\\
&=(\overline{q'} \, \overline{q}-\overline{p'} \, \overline{p})-i(\overline{p'} \, \overline{q}+
\overline{q'} \, \overline{p})=y^*w^*.
\end{align}
It follows that the involution $w \mapsto w^*$ is a $^*$--involution on $\bH_\bC$ and hence, equipping $\bH_\bC$ with such a $^*$--involution, we obtain a quaternionic two--sided unital $^*$--algebra. 

Consider the function $\|\cdot\|_{\bH_\bC}:\bH_\bC \lra \bR^+$ defined by setting
\beq \label{eq:norm-H_C} \|w\|_{\bH_\bC}:=\left(|q|^2+|p|^2+2|\mr{Im}(p \, \overline{q})|\right)^{1/2}
\quad \mbox{if }w=q+ip.
\eeq
It is worth observing that $ \|w^*\|_{\bH_\bC}= \|w\|_{\bH_\bC}$, since $|\mr{Im}(-\overline p q)|=|\mr{Im}(p\overline q )|$, and
\beq \label{eq:equiv-norms}
(|q|^2+|p|^2)^{1/2} \leq \|w\|_{\bH_\bC} \leq |q|+|p| \quad \mbox{if }w=q+ip.
\eeq

\begin{proposition} \label{prop:two-sided-H_C}
The following assertions hold.
\begin{itemize}
 \item[$(\mr{a})$] Given $w=q+ip \in \bH_\bC$, we have that $\|w\|_{\bH_\bC}=\sup_{\jmath \in \bS}|q+\jmath p|$. Moreover, if $\mr{Im}(p \, \overline{q})=\imath\beta$ for some $\imath \in \bS$ and $\beta \geq 0$, then $\|w\|_{\bH_\bC}=|q-\imath p|$.
 \item[$(\mr{b})$] The function $\|\cdot\|_{\bH_\bC}$ is a quaternionic Banach $C^*$--norm on $\bH_\bC$. In this way, the set $\bH_\bC$, equipped with the usual sum, with the product defined in (\ref{eq:H_C}), with the quaternionic left and right scalar multiplications induced by such a product, with the $^*$--involution defined in (\ref{eq:w^*}) and with the norm defined in (\ref{eq:norm-H_C}), is a quaternionic two--sided Banach $C^*$--algebra with unity\insec$1$.
\end{itemize}
\end{proposition}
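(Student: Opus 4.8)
The plan is to handle the two parts separately: part (a) by a direct optimization over the sphere $\bS$, and part (b) by exhibiting a faithful, norm-preserving $^*$-representation of $\bH_\bC$ inside $\gB(\bH\oplus\bH)$, so that every axiom $(\mr{n1})$--$(\mr{n*})$ is inherited from the (real) $C^*$-algebra structure of $\gB(\bH\oplus\bH)$ already established in Remark \ref{r0} and Remark \ref{remarkop}.

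For (a), I would expand, for $w=q+ip$ and $\jmath\in\bS$,
\[
|q+\jmath p|^2=(q+\jmath p)(\overline{q}-\overline{p}\jmath)=|q|^2+|p|^2+(\sigma\jmath+\jmath\sigma),\qquad \sigma:=\mr{Im}(p\overline{q}),
\]
using $\jmath^2=-1$ and $\overline{\jmath p}=-\overline{p}\jmath$. Since $\sigma$ and $\jmath$ are imaginary, $\sigma\jmath+\jmath\sigma=-2\langle\jmath,\sigma\rangle$ (Euclidean product of imaginary parts), hence $|q+\jmath p|^2=|q|^2+|p|^2-2\langle\jmath,\sigma\rangle$. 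Optimizing $-2\langle\jmath,\sigma\rangle$ over the unit sphere gives the value $2|\sigma|$, attained at $\jmath=-\sigma/|\sigma|$ when $\sigma\neq0$; therefore $\sup_{\jmath\in\bS}|q+\jmath p|^2=|q|^2+|p|^2+2|\sigma|=\|w\|_{\bH_\bC}^2$. For the ``moreover'' clause, if $\sigma=\imath\beta$ with $\imath\in\bS$ and $\beta\ge0$, the maximizer is $\jmath=-\imath$, which yields $\|w\|_{\bH_\bC}=|q-\imath p|$ (the case $\beta=0$ being trivial).

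For (b), the point to stress is that the naive ``evaluations'' $q+ip\mapsto q+\jmath p$ from (a) are \emph{not} algebra homomorphisms, since $q\jmath\neq\jmath q$ in general; they cannot transport submultiplicativity, and this is exactly the obstacle. Instead I would define $\rho\colon\bH_\bC\to\gB(\bH\oplus\bH)$ by
\[
\rho(q+ip):=\begin{pmatrix}q&-p\\ p&q\end{pmatrix},
\]
acting on the quaternionic Hilbert space $\bH\oplus\bH$ by left multiplication on the entries. A direct check against the product (\ref{eq:H_C}) gives $\rho(wy)=\rho(w)\rho(y)$, and since left/right $\bH$-multiplication on $\bH_\bC$ acts entrywise one also gets $\rho(qw)=\rho(q)\rho(w)$ and $\rho(wq)=\rho(w)\rho(q)$; the map is visibly injective, and, the adjoint in $\gB(\bH\oplus\bH)$ being the quaternionic conjugate-transpose, the involution (\ref{eq:w^*}) satisfies $\rho(w^*)=\rho(w)^*$. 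Thus $\rho$ is a faithful $^*$-homomorphism respecting the two-sided scalar action.

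The crux is then the isometry $\|\rho(w)\|=\|w\|_{\bH_\bC}$. Here I would compute $\rho(w)^*\rho(w)=\rho(w^*w)$ with $w^*w=(|q|^2+|p|^2)+i\,2\mr{Im}(\overline{q}p)=:Q+iP$, where $Q\ge0$ is real and $P$ is imaginary; the operator $\rho(w^*w)\ge0$ is self-adjoint by Proposition~\ref{Lemmadiag}$(\mr{b})$, so its norm equals $\sup_{\|v\|=1}\langle v|\rho(w^*w)v\rangle$. A short calculation gives, for a unit vector $v=(x_1,x_2)$,
\[
\langle v\,|\,\rho(w^*w)v\rangle=Q-2\mr{Re}(\overline{x_1}Px_2),
\]
whose supremum is $Q+|P|$ by Cauchy--Schwarz together with the arithmetic--geometric mean inequality (the extremum being attainable). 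Hence $\|\rho(w)\|^2=\|\rho(w^*w)\|=Q+|P|=|q|^2+|p|^2+2|\mr{Im}(p\overline{q})|=\|w\|_{\bH_\bC}^2$. With the isometry established, $(\mr{n1})$, $(\mr{n2})$, $(\mr{n4})$ and $(\mr{n*})$ follow at once from (\ref{eq:sum-prod}) and Remark~\ref{remarkop}$(\mr{ii})$ for $\gB(\bH\oplus\bH)$; $(\mr{n5})$ from $\rho(1)=\1$; and $(\mr{n3})$ because $\rho(q)=\left(\begin{smallmatrix}q&0\\0&q\end{smallmatrix}\right)$ is $|q|$ times a unitary, so multiplying by it scales the operator norm exactly by $|q|$. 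Completeness is automatic, as $\bH_\bC$ is finite-dimensional over $\bR$ and, by (\ref{eq:equiv-norms}), $\|\cdot\|_{\bH_\bC}$ is equivalent to the Euclidean norm. The only genuinely delicate step is the isometry computation above, which is precisely where the non-commutativity is brought under control.
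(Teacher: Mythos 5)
Your proof is correct. Part (a) is essentially the paper's own argument: both reduce to $|q+\jmath p|^2=|q|^2+|p|^2+2\,\mr{Re}(\jmath\, p\overline{q})$ and maximize over $\jmath\in\bS$; your formulation via the Euclidean pairing of imaginary parts even avoids the paper's separate treatment of the case $q=0$. Part (b), however, takes a genuinely different route. The paper proves submultiplicativity and the $C^*$--identity by direct quaternionic computation: it expands $\phi(w)\phi(y)-\phi(wy)$, disposes of a term $2(a-b)\geq 0$ by the triangle inequality, and checks positivity of the remainder in coordinates adapted to $\mr{Im}(p'\,\overline{q'})$. You instead realize $\bH_\bC$ inside $\gB(\bH\oplus\bH)$ via $\rho(q+ip)=\left(\begin{smallmatrix}q&-p\\ p&q\end{smallmatrix}\right)$, verify that $\rho$ is a faithful $^*$--homomorphism compatible with the two--sided scalar action, and prove $\|\rho(w)\|=\|w\|_{\bH_\bC}$, after which all of $(\mr{n1})$--$(\mr{n*})$ and completeness are inherited from the operator norm on $\gB(\bH\oplus\bH)$. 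This is more conceptual -- it explains \emph{why} the $C^*$--identity holds rather than verifying it -- and the only computation left, $\sup_{\|v\|=1}\langle v\,|\,\rho(w^*w)v\rangle=Q+|P|$, is elementary and correctly carried out (including attainability). The one step you invoke without proof is that $\|S\|=\sup_{\|v\|=1}\langle v|Sv\rangle$ for a positive $S\in\gB(\sH)$; this is not stated explicitly in the paper, but it follows from its toolbox (write $S=\sqrt{S}\sqrt{S}$ by Theorem \ref{radT}, note that $\sqrt{S}$ is self--adjoint by Proposition \ref{Lemmadiag}$(\mr{b})$, and use $\|\sqrt{S}\|^2=\|(\sqrt{S})^*\sqrt{S}\|=\|S\|$ from Remark \ref{remarkop}$(\mr{ii})$), so it is a one--line addition rather than a gap.
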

\begin{proof}
$(\mr{a})$ Let $\phi:\bH_\bC \lra \bR^+$ be the function defined by setting $\phi(w):=\|w\|_{\bH_\bC}^2$. Fix $w=q+ip \in \bH_\bC$. We must prove that $\phi(w)=\sup_{\jmath \in \bS}|q+\jmath p|^2$. If $q=0$, then this is evident. Suppose that $q \neq 0$. Let $\alpha,\beta \in \bR$ and $\imath \in \bS$ such that $\beta \geq 0$ and $p \, \overline{q}=\alpha+\imath\beta$. Observe that  $\phi(w)=\left|q-\imath p\right|^2$. Indeed, $\beta=|\mr{Im}(p \, \overline{q})|$ and it holds:
\begin{align*}
\left|q-\imath p\right|^2&=|q|^{-2}\left|(q-\imath p)\bar q\right|^2=|q|^{-2}\left((|q|^2+\beta)^2+\alpha^2\right)=\\
&=|q|^{-2}\left(|q|^4+|q|^2|p|^2+2\beta|q|^2\right)=|q|^2+|p|^2+2\beta=\phi(w).
\end{align*}
Moreover, for every $\jmath \in \cS$, we have:
\begin{align*}
\left|q+\jmath p\right|^2 &=|q|^2+|p|^2+2 \, \mr{Re}(\jmath p \, \overline{q})=|q|^2+|p|^2+2 \, \mr{Re}(\jmath(\alpha+\imath\beta))=\\
&=|q|^2+|p|^2+2 \, \mr{Re}(\jmath\imath)\beta \leq |q|^2+|p|^2+2\beta=\phi(w).
\end{align*}
It follows that $\phi(w)=\sup_{\jmath \in \bS}|q+\jmath p|^2=|q-\imath p|^2$, as desired.

$(\mr{b})$ Firstly, observe that formula (\ref{eq:norm-H_C}) implies at once that $\|w\|_{\bH_\bC}=0$ if and only if $w=0$. The triangular inequality for $\|\cdot\|_{\bH_\bC}$ is an immediate consequence of point\insec$(\mr{a})$. It is evident that $\|1\|_{\bH_\bC}=1$. It is also easy to see that $\|w^*w\|_{\bH_\bC}=\|w\|_{\bH_\bC}^2$. Indeed, we have that $ww^*=|q|^2+|p|^2-2i\mr{Im}(p\,\overline  q)$  and hence $(\mr{a})$ ensures that
\begin{align*}
\|ww^*\|_{\bH_\bC} &=\textstyle \sup_{\jmath \in \bS}\big| |q|^2+|p|^2+2\jmath\mr{Im}( p \, \overline q)\big|=\\
&=|q|^2+|p|^2+2|\mr{Im}( p \, \overline q)|=\|w\|_{\bH_\bC}^2,
\end{align*}
and therefore also $\|w^*w\|_{\bH_\bC} =\|w\|_{\bH_\bC}^2$.

Choose $q' \in \bH$. It hold:
\begin{align*}
\phi(wq') &=|qq'|^2+|pq'|^2+2|\mr{Im}(p q'\overline{q'} \, \overline{q})|=\\
&=|q|^2|q'|^2+|p|^2|q'|^2+2|\mr{Im}(p \, \overline{q})| \, |q'|^2=\phi(w)|q'|^2
\end{align*}
and
\begin{align*}
\phi(q'w) &=|q'q|^2+|q'p|^2+2|\mr{Im}(q'p \, \overline{q} \, \overline{q'})|=\\
&=|q'|^2|q|^2+|q'|^2|p|^2+2|q'\mr{Im}(p \, \overline{q})\overline{q'}|=\\
&=|q'|^2|q|^2+|q'|^2|p|^2+2|q'|^2|\mr{Im}(p \, \overline{q})|=|q'|^2\phi(w).
\end{align*}
This proves that $\|wq'\|_{\bH_\bC}=|q'|\|w\|_{\bH_\bC}=\|q'w\|_{\bH_\bC}$. In particular, it follows that $\|\cdot\|_{\bH_\bC}$ is a norm on $\bH_\bC \simeq \bR^4 \times \bR^4$ in the usual sense, and hence it is equivalent to the euclidean one. This ensures that the metric on $\bH_\bC$ induced by such a norm is complete.

Let $p' \in \bH$ and let $y:=q'+ip'$. In order to complete the proof, it remains to show that $\|wy\|_{\bH_\bC} \leq \|w\|_{\bH_\bC}\|y\|_{\bH_\bC}$ or, equivalently, $\phi(wy) \leq \phi(w)\phi(y)$. Bearing in mind that
$\phi(w)=|q|^2+|p|^2+2|\mr{Im}(p \, \overline{q})|$, $\phi(y)=|q'|^2+|p'|^2+2|\mr{Im}(p' \, \overline{q'})|$ and
\begin{align*}
\phi(wy)&=\phi((qq'-pp')+i(qp'+pq'))=\\
&=|qq'-pp'|^2+|qp'+pq'|^2+2\big|\mr{Im}((qp'+pq')(\overline{q'} \, \overline{q}-\overline{p'} \, \overline{p}))\big|,
\end{align*}
an explicit computation gives that
\[
\phi(w)\phi(y)-\phi(wy)=2(a-b)+4|\mr{Im}(p \, \overline{q}) \mr{Im}(p' \, \overline{q'})|-4 \, \mr{Re}(q \, \mr{Im}(p' \, \overline{q'}) \, \overline{p}),
\]
where
\begin{align*}
a&:=|q|^2|\mr{Im}(p' \, \overline{q'})|+|p|^2|\mr{Im}(p' \, \overline{q'})| +|q'|^2|\mr{Im}(p \, \overline{q})|+|p'|^2|\mr{Im}(p \, \overline{q})|,\\
b&:=\big| q \,\mr{Im}(p' \, \overline{q'}) \overline{q} \, +p \, \mr{Im}(p' \, \overline{q'}) \overline{p} +|q'|^2 \, \mr{Im}(p \, \overline{q})+|p'|^2 \, \mr{Im}(p \, \overline{q}) \big|
\end{align*}
The triangular inequality implies that $b \leq a$ and hence
\[
\phi(w)\phi(y)-\phi(wy) \geq 4|\mr{Im}(p \, \overline{q}) \mr{Im}(p' \, \overline{q'})|-4 \, \mr{Re}(q \, \mr{Im}(p' \, \overline{q'}) \, \overline{p}).
\]
Let $\imath' \in \bS$ and $r \in \bR$ such that $\mr{Im}(p' \, \overline{q'})=\imath' r$. Choose $\jmath' \in \bS$ in such a way that $\{1,\imath',\jmath',\imath'\jmath'\}$ is an orthonormal basis of $\bH$. Write $\overline p q=x_0+i'x_1+j'x_2+\imath' \jmath' x_3$, where $x_0,x_1,x_2,x_3\in\bR$. By a direct computation, we obtain:
\begin{align*}
&|\mr{Im}(p \, \overline{q}) \mr{Im}(p' \, \overline{q'})|^2-\big(\mr{Re}(q \, \mr{Im}(p' \, \overline{q'}) \, \overline{p})\big)^2=r^2(|\mr{Im}(\overline p\,q)|^2-\mr{Re}( \imath' \overline{p}\,q)^2)=\\
&=r^2\left((x_1^2+x_2^2+x_3^2) -x_1^2\right) \geq 0.
\end{align*}
It follows that $\phi(w)\phi(y)-\phi(wy) \geq 0$.
\end{proof}


\subsection{$\boldsymbol{C^*}$-algebra structures on slice functions} 

The aim of this section is to define a structure of quaternionic two--sided Banach unital $C^*$--algebra on $\Sl(\OO_\K,\bH)$. As a consequence, $\Sc(\OO_\K,\bH)$ will turn out to be a quaternionic two--sided Banach unital $C^*$--subalgebra of $\Sl(\OO_\K,\bH)$. Furthermore, restricting the scalars to $\bR$ and to a fixed $\bC_\jmath$, $\RS(\OO_\K,\bH)$ will be a commutative real Banach unital $C^*$--subalgebra of $\Sl(\OO_\K,\bH)$ and $\jS(\OO_\K,\bH)$ will be a commutative $\bC_\jmath$--Banach unital $C^*$--subalgebra of $\Sl(\OO_\K,\bH)$. We refer again the reader to Section\insec\ref{subsec:two-sided} for the definitions of this kind of algebraic structures. 

Let $f=\I(F)$ and $g=\I(G)$ be slice functions in $\Sl(\OO_\K,\bH)$. The usual pointwise sum $f+g$ of $f$ and $g$ defines an element in $\Sl(\OO_\K,\bH)$, because $F+G$ is continuous and $f+g=\I(F)+\I(G)=\I(F+G)$. Equip $\Sl(\OO_\K,\bH)$ with the slice product. As observed in Remark \ref{rem:prod2}, the slice product of functions in $\RS(\OO_\K,\bH)$ is again a function in $\RS(\OO_\K,\bH)$ and coincides with the pointwise product. The same is true for $\Sc(\OO_\K,\bH)$. The slice product defines an operation on $\jS(\OO_\K,\bH)$ as well. Furthermore, the slice product on $\RS(\OO_\K,\bH)$ and on $\jS(\OO_\K,\bH)$ is commutative.

Let us define quaternionic left and right scalar multiplications $(q,f) \mapsto q \cdot f$ and $(f,q) \mapsto f \cdot q$ on $\Sl(\OO_\K,\bH)$. Given $q \in \bH$, denote by $c_q:\OO_\K \lra \bH$ the function constantly equal to $q$. Such a function belongs to $\Sl(\OO_\K,\bH)$. Indeed, it is the slice function induced by the stem function  on $\K$ constantly equal to $q$. For convenience, denote $c_1$ also by the symbol $1_{\OO_\K}$. Define:

\beq \label{eq:scalar-slice}
q \cdot f:=c_q \cdot f
\quad \mbox{and} \quad
f \cdot q:=f \cdot c_q,
\eeq
where $c_q \cdot f$ and $f \cdot c_q$ are slice products. It is easy to see that $f \cdot q$ coincides with the pointwise scalar multiplication $fq$ for every $q \in \bH$. If $q \in \bR$ or $f \in \Sc(\OO_\K,\bH)$, then also $q \cdot f$ is equal to the pointwise scalar multiplication $qf$. Otherwise, $qf$ is not, in general,  a slice function and hence is different from $q \cdot f$. It is immediate to see that the set $\Sl(\OO_\K,\bH)$, equipped with the pointwise sum and with the left and right scalar multiplications (\ref{eq:scalar-slice}) is a quaternionic two--sided algebra with unity $1_{\OO_\K}$.

Let us introduce an involution $f \mapsto f^*$ on $\Sl(\OO_\K,\bH)$. Fix a stem function $F=F_1+iF_2$ on $\K$ and define $f:=\I(F)$. Denote by $F^*:\K \lra \bH_\bC$ the stem function sending $z$ into $(F(z))^*$, where $(F(z))^*$ is defined as in (\ref{eq:w^*}). More explicitly, we have that $F^*=\overline{F_1}-i\overline{F_2}$, where $\overline{F_h}:\K \lra \bH$ is defined by setting $\overline{F_h}(z):=\overline{F_h(z)}$ with $h \in \{1,2\}$. Define:
\beq \label{eq:f^*}
f^*:=\I(F^*).
\eeq
Let $g$ be another function in $\Sl(\OO_\K,\bH)$. By (\ref{eq:wy^*}), we infer that
\[
(f \cdot g)^*=g^* \cdot f^*.
\]
Given $q \in \bH$, it is easy to verify that $(c_q)^*=c_{\overline{q}}$. It follows that
\[
(q \cdot f)^*=f^* \cdot \overline{q}
\quad \mbox{and} \quad
(f \cdot q)^*=\overline{q} \cdot f^*.
\]
Since it is evident that $(f+g)^*=f^*+g^*$, the involution defined in (\ref{eq:f^*}) is a $^*$--involution of $\Sl(\OO_\K,\bH)$.

The reader observes that, if $f \in \RS(\OO_\K,\bH)$ or $f \in \Sc(\OO_\K,\bH)$, then $f^* \in \RS(\OO_\K,\bH)$ or $f^* \in \Sc(\OO_\K,\bH)$, respectively. Moreover, in these cases, we have that $f^*=\overline{f}$, where $\overline{f}:\OO_\K \lra \bH$ denotes the conjugated function sending $q$ into $\overline{f(q)}$. If $f \in \jS(\OO_\K,\bH)$, then $\overline{F_1}$ and $\overline{F_2}$ are $\bC_\jmath$--valued and hence $f^* \in \jS(\OO_\K,\bH)$.

Consider now the supremum norm $\|\cdot\|_\infty:\Sl(\OO_\K,\bH) \lra \R^+$ defined by setting
\beq \label{eq:slice-norm}
\|f\|_{\infty}:=\sup_{q \in \OO_\K}|f(q)|.
\eeq
Thanks to point $(\mr{a})$ of Proposition \ref{prop:two-sided-H_C}, we infer that
\beq \label{eq:|f|}
\|f\|_{\infty}=\sup_{z \in \K}\|F(z)\|_{\bH_\bC}.
\eeq
The norm $\|\cdot\|_{\infty}$ is a quaternionic Banach $C^*$--norm on $\Sl(\OO_\K,\bH)$. To see this, we must prove that the set $\Sl(\OO_\K,\bH)$, equipped with the distance induced by  the norm $\|\cdot\|_\infty$, is a complete metric space. Let $\{f_n=\I(F_n)\}_{n \in \bN}$ be a Cauchy sequence in $\Sl(\OO_\K,\bH)$. Thanks to (\ref{eq:|f|}), we have that $\{F_n\}_{n \in \bN}$ is a Cauchy sequence in $\mscr{C}(\K,\bH_\bC)$, where $\bH_\bC$ is equipped with the norm $\|\cdot\|_{\bH_\bC}$ and $\mscr{C}(\K,\bH_\bC)$ with the corresponding supremum norm. Since $\mscr{C}(\K,\bH_\bC)$ is a (real) Banach space, $\{F_n\}_{n \in \bN}$ converges to some continuous stem function $F$ on $\K$. Using (\ref{eq:|f|}) again, we infer that $\{f_n\}_{n \in \bN} \to \I(F)$ in $\Sl(\OO_\K,\bH)$. This shows the completeness of $\Sl(\OO_\K,\bH)$.

The above discussion proves the following basic result.

\begin{theorem} \label{thm:two-sided-C^*}
The following assertions hold:
\begin{itemize}
 \item[$(\mr{a})$] The set $\Sl(\OO_\K,\bH)$, equipped with the pointwise sum, with the left and right scalar multiplications defined in (\ref{eq:scalar-slice}), with the slice product, with the $^*$--involution defined in (\ref{eq:f^*}) and with the supremum norm defined in (\ref{eq:slice-norm}), is a quaternionic two--sided Banach $C^*$--algebra with unity $1_{\OO_\K}$.
 \item[$(\mr{b})$] The set $\RS(\OO_\K,\bH)$ is a commutative real Banach unital $C^*$--subalgebra of $\Sl(\OO_\K,\bH)$.
 \item[$(\mr{c})$] Given $\jmath \in \bS$, the set $\jS(\OO_\K,\bH)$ is a commutative $\bC_\jmath$--Banach unital $C^*$--subalgebra of $\Sl(\OO_\K,\bH)$.
 \item[$(\mr{d})$] The set $\Sc(\OO_\K,\bH)$ is a quaternionic two--sided Banach unital $C^*$--subalgebra of $\Sl(\OO_\K,\bH)$.
\end{itemize}
\end{theorem}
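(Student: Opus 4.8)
The plan is to recognise that the whole statement is an immediate consequence of the correspondence $F \mapsto \I(F)$ between continuous stem functions on $\K$ and elements of $\Sl(\OO_\K,\bH)$, which is a bijection by Remark~\ref{remslices}$(1)$ and an isometry by (\ref{eq:|f|}). Under this correspondence the pointwise sum, the slice product, the left/right scalar multiplications (\ref{eq:scalar-slice}) and the $^*$--involution (\ref{eq:f^*}) of $\Sl(\OO_\K,\bH)$ transport to the pointwise operations on $\bH_\bC$--valued functions: the slice product becomes the pointwise product in $\bH_\bC$ because (\ref{eq:FG}) is literally the pointwise version of (\ref{eq:H_C}), and $f^*$ corresponds to $z \mapsto (F(z))^*$. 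Thus I would identify $\Sl(\OO_\K,\bH)$ with the algebra of continuous complex--intrinsic functions $\K \to \bH_\bC$ (Remark~\ref{rem:tilde-bar}) equipped with the pointwise $\bH_\bC$--structure and the supremum norm, and then deduce everything from the fact that $\bH_\bC$ is itself a quaternionic two--sided Banach $C^*$--algebra (Proposition~\ref{prop:two-sided-H_C}).

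For part~$(\mr{a})$, most of the two--sided $^*$--algebra axioms were already verified in the discussion preceding the theorem; the few remaining ones hold fibrewise from the corresponding identities in $\bH_\bC$. The norm axioms $(n1)$--$(n5)$ then follow by taking the supremum over $z \in \K$ of the pointwise (in)equalities valid in $\bH_\bC$, using (\ref{eq:|f|}). The only axiom requiring a separate remark is the $C^*$--identity $(n*)$: since the product and involution are pointwise and $\bH_\bC$ satisfies $\|w^*w\|_{\bH_\bC}=\|w\|_{\bH_\bC}^2$, one computes
\[
\|f^* \cdot f\|_\infty=\sup_{z \in \K}\|F(z)^*F(z)\|_{\bH_\bC}=\sup_{z \in \K}\|F(z)\|_{\bH_\bC}^2=\Big(\sup_{z \in \K}\|F(z)\|_{\bH_\bC}\Big)^2=\|f\|_\infty^2.
\]
Completeness has already been established before the theorem, a Cauchy sequence of stem functions converging in $\mscr{C}(\K,\bH_\bC)$ to a limit that is again a stem function, by closedness of the complex--intrinsic condition.

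For parts~$(\mr{b})$--$(\mr{d})$ I would verify, for each of $\RS$, $\jS$ and $\Sc$, the three requirements of a unital $C^*$--subalgebra: containment of the unit $1_{\OO_\K}$ (immediate, since its components are $F_1 \equiv 1$, $F_2 \equiv 0$, which are real--valued, $\bC_\jmath$--valued and satisfy $F_2=0$ simultaneously); closedness under all operations; and closedness in the norm. Closedness under sum and slice product is exactly Remark~\ref{rem:prod2}, while closedness under $^*$ was noted just before the theorem (the involution preserves the conditions ``$F_1,F_2$ real--valued'', ``$F_1,F_2 \in \bC_\jmath$'' and ``$F_2=0$'' that characterise the three classes by Lemmas~\ref{lemstemslice} and~\ref{lemstemslice-c}). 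Norm--closedness holds because each of these conditions on $(F_1,F_2)$ is preserved under uniform limits, so each class corresponds to a closed subspace of stem functions and hence is complete. Finally, the commutativity of $\RS$ and of $\jS$, together with the centrality condition $(v6')$ of Remark~\ref{rem:complex-C^*} needed to make $\jS$ a $\bC_\jmath$--algebra (i.e.\ $c \cdot f = f \cdot c$ for $c \in \bC_\jmath$, which follows since the constant $c_c$ lies in $\jS$), are consequences of the commutativity statements in Remark~\ref{rem:prod2}; the class $\Sc$ is retained merely as a quaternionic two--sided, and in general non--commutative, subalgebra.

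The main obstacle is conceptual rather than computational, and it has already been surmounted in Proposition~\ref{prop:two-sided-H_C}: the delicate point is that the expression (\ref{eq:norm-H_C}) really is a $C^*$--norm on $\bH_\bC$, together with the identity $\|w\|_{\bH_\bC}=\sup_{\jmath \in \bS}|q+\jmath p|$ of Proposition~\ref{prop:two-sided-H_C}$(\mr{a})$, which is precisely what guarantees that the fibrewise supremum over the sphere $\bS$ entering $\|f\|_\infty=\sup_{q \in \OO_\K}|f(q)|$ is captured correctly by $\sup_{z\in\K}\|F(z)\|_{\bH_\bC}$ in (\ref{eq:|f|}). Granting this, the proof of the theorem is pure bookkeeping: transporting the pointwise $\bH_\bC$--structure through the isometric bijection $\I$ and passing to suprema.
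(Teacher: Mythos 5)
Your proposal is correct and follows essentially the same route as the paper: the paper's proof is precisely the discussion preceding the theorem, which transports the pointwise $\bH_\bC$--structure of Proposition~\ref{prop:two-sided-H_C} through the isometric bijection $F \mapsto \I(F)$ via the identity (\ref{eq:|f|}), and establishes completeness through Cauchy sequences in $\mscr{C}(\K,\bH_\bC)$. Your explicit verification of the $C^*$--identity $(n*)$ and of the norm--closedness of the three subclasses only makes fully explicit what the paper leaves as immediate consequences of the same ingredients.
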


\begin{remark} \label{rem:sectional}
Thanks to point $(\mr{a})$ of Proposition \ref{prop:two-sided-H_C}, it is immediate to verify that $\|f\|_\infty=\sup_{q \in \OO_\K \cap \bC_\jmath}|f(q)|$ for every $f \in \jS(\OO_\K,\bH)$.
\end{remark}


\subsection{$\boldsymbol{\bH}$-intrinsic polynomial density}
Recall that $\bR[X,Y]$ denotes the ring of real polynomials in the indeterminates $X$ and $Y$, \textit{with coefficients on the right}.

Let us introduce a particular class of $\bH$--intrinsic slice functions (see Definition \ref{def:real}).

\begin{definition}
Let $g:\OO_\K \lra \bH$ be a slice function induced by the stem function $G=G_1+iG_2:\K \lra \bH_\bC$. We say that $g$ is a \emph{polynomial $\bH$--intrinsic slice function} on $\OO_\K$ if there exist polynomials $Q_1$ and $Q_2$ in $\bR[X,Y]$ such that $G_1(z)=Q_1(\alpha,\beta)$ and $G_2(z)=Q_2(\alpha,\beta)$ for every $z=\alpha+i\beta \in \K$. We denote by $\PRS(\OO_\K,\bH)$ the subset of $\RS(\OO_\K,\bH)$ consisting of all polynomial $\bH$--intrinsic slice functions on $\OO_\K$.
\end{definition}

\begin{remark} \label{rem:PRS}
$(1)$ In the preceding definition, it is always possible to assume that $Q_1$ is even in $Y$ and $Q_2$ in odd in $Y$; that is, $Q_1(X,-Y)=Q_1(X,Y)$ and $Q_2(X,-Y)=-Q_2(X,Y)$ in $\bR[X,Y]$.

$(2)$  Bearing in mind points $(1)$ and $(3)$ of Remark \ref{remslices}, one can easily prove that, if there exists a polynomial $P \in \bR[X]$ such that $g(q)=P(q)$ on $\OO_\K$, then $g \in \PRS(\OO_\K,\bH)$. However, in general, there exist functions in $\PRS(\OO_\K,\bH)$, which cannot be expressed in this way. If $\K$ has an interior point in $\bC$, an example is given by the polynomial $\bH$--intrinsic slice functions sending $q \in \OO_\K$ into $\overline{q} \in \bH$.

$(3)$ It is easy to verify that $\PRS(\OO_\K,\bH)$ is a commutative real Banach unital $C^*$--subalge\-bra of $\RS(\OO_\K,\bH)$. 
\end{remark}

We now prove a version of the Weierstrass approximation theorem for functions in $\RS(\OO_\K,\bH)$.

\begin{proposition} \label{prop:Weierstrass}
If $\K$ is a non--empty compact subset of $\bC$ invariant under complex conjugation, then  $\PRS(\OO_\K,\bH)$ is dense in $\RS(\OO_\K,\bH)$.
\end{proposition}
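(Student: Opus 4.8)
The plan is to reduce the statement to the classical approximation theorem applied separately to the two real--valued components of the stem function, and then to repair the parity of the approximating polynomials by a symmetrization that exploits the conjugation--invariance of $\K$.

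First I would record how the norm simplifies on $\RS(\OO_\K,\bH)$. Let $f=\I(F_1+iF_2)$ be a continuous $\bH$--intrinsic slice function; by Lemma~\ref{lemstemslice} the components $F_1,F_2\colon\K\lra\bR$ are real--valued and continuous, with $F_1$ even and $F_2$ odd in the imaginary part of $z$. For such $f$, each value $F(z)=F_1(z)+iF_2(z)\in\bH_\bC$ has real $q=F_1(z)$ and $p=F_2(z)$, so $\mr{Im}(p\,\overline{q})=0$ and $\|F(z)\|_{\bH_\bC}=\big(F_1(z)^2+F_2(z)^2\big)^{1/2}$. Combining this with $(\ref{eq:|f|})$ gives
\[
\|f\|_\infty=\sup_{z\in\K}\big(F_1(z)^2+F_2(z)^2\big)^{1/2},
\]
so to approximate $f$ in $\Sl(\OO_\K,\bH)$ it suffices to approximate $F_1$ and $F_2$ uniformly on $\K$ by real polynomials subject to the correct parity.

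Next, fix $\varepsilon>0$. Viewing $\K$ as a compact subset of $\bR^2$ via $z=\alpha+i\beta\leftrightarrow(\alpha,\beta)$, the Stone--Weierstrass theorem furnishes $P_1,P_2\in\bR[X,Y]$ with $\sup_\K|F_1-P_1|<\varepsilon$ and $\sup_\K|F_2-P_2|<\varepsilon$. These need not have the required parity, so I would symmetrize: set
\[
Q_1(X,Y):=\tfrac{1}{2}\big(P_1(X,Y)+P_1(X,-Y)\big),\qquad Q_2(X,Y):=\tfrac{1}{2}\big(P_2(X,Y)-P_2(X,-Y)\big),
\]
so that $Q_1$ is even and $Q_2$ is odd in $Y$, as in Remark~\ref{rem:PRS}$(1)$. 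Because $\K$ is invariant under complex conjugation, $(\alpha,-\beta)\in\K$ whenever $(\alpha,\beta)\in\K$; using the evenness of $F_1$ and writing $F_1(\alpha,\beta)=\tfrac{1}{2}\big(F_1(\alpha,\beta)+F_1(\alpha,-\beta)\big)$, the triangle inequality yields $\sup_\K|F_1-Q_1|<\varepsilon$, and symmetrically $\sup_\K|F_2-Q_2|<\varepsilon$ by the oddness of $F_2$.

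Finally I would assemble the approximant. The stem function $Q_1+iQ_2$ has real--valued components with the correct parity, hence $g:=\I(Q_1+iQ_2)$ lies in $\PRS(\OO_\K,\bH)$. Applying the norm formula above to $f-g=\I\big((F_1-Q_1)+i(F_2-Q_2)\big)$ gives
\[
\|f-g\|_\infty=\sup_{z\in\K}\big((F_1-Q_1)^2+(F_2-Q_2)^2\big)^{1/2}\le\sqrt{2}\,\varepsilon,
\]
and since $\varepsilon$ is arbitrary this proves density. There is no serious obstacle here: the only point requiring care is that the raw polynomial approximants carry no parity, and the expected difficulty---preserving the even/odd structure that makes $Q_1+iQ_2$ a genuine stem function---is dissolved precisely by the symmetrization step, whose error control relies on the conjugation--invariance of $\K$.
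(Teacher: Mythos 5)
Your proposal is correct and follows essentially the same route as the paper: classical Weierstrass approximation of the two real--valued components, followed by the same even/odd symmetrization $Q_1(X,Y)=\tfrac12(P_1(X,Y)+P_1(X,-Y))$, $Q_2(X,Y)=\tfrac12(P_2(X,Y)-P_2(X,-Y))$, with the error controlled via the conjugation--invariance of $\K$. The only cosmetic difference is that you use the exact identity $\|F(z)\|_{\bH_\bC}=(F_1(z)^2+F_2(z)^2)^{1/2}$ (valid here since the components are real--valued) where the paper invokes the cruder bound $\|w\|_{\bH_\bC}\le|q|+|p|$ from $(\ref{eq:equiv-norms})$; both yield the density.
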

\begin{proof}
Let $f \in \RS(\OO_\K,\bH)$ induced by the stem function $F=F_1+iF_2$ and let $\vep$ be a positive real number. By the Weierstrass approximation theorem, there exist polynomials $P_1,P_2 \in \R[X,Y]$ such that 
\[
\sup_{\alpha+i\beta \in \K}|F_1(\alpha+i\beta)-P_1(\alpha,\beta)|<\frac{\vep}{2}
\]
and
\[
\sup_{\alpha+i\beta \in \K}|F_2(\alpha+i\beta)-P_2(\alpha,\beta)|<\frac{\vep}{2}.
\]
Define $Q_1,Q_2 \in \R[X,Y]$ by setting
\[
Q_1(X,Y):=\frac{P_1(X,Y)+P_1(X,-Y)}{2}
\]
and
\[
Q_2(X,Y):=\frac{P_2(X,Y)-P_2(X,-Y)}{2}.
\]
Evidently, $Q_1$ is even in $Y$ and $Q_2$ is odd in $Y$. Moreover, given any $z=\alpha+i\beta \in \K$, it holds:
\begin{align*}
|F_1(z)-Q_1(\alpha,\beta)|&=  \left|\frac{F_1(z)+F_1(\overline{z})}{2}-\frac{P_1(\alpha,\beta)+P_1(\alpha,-\beta)}{2}\right|=\\
&=\left|\frac{F_1(z)-P_1(\alpha,\beta)}{2}+\frac{F_1(\overline{z})-P_1(\alpha,-\beta)}{2}\right| \leq\\
&\leq  \left|\frac{F_1(z)-P_1(\alpha,\beta)}{2}\right|+\left|\frac{F_1(\overline{z})-P_1(\alpha,-\beta)}{2}\right|<\\
&<
\frac{\vep}{4}+\frac{\vep}{4}=\frac{\vep}{2}.
\end{align*}
Similarly, we have that $|F_2(z)-Q_2(\alpha,\beta)|<\vep/2$. Denote by $g$ the function in $\PRS(\OO_\K,\bH)$ induced by the stem function $G=G_1+iG_2$ on $\K$ defined by setting
\[
G_1(z):=Q_1(\alpha,\beta)
\quad \mbox{and} \quad
G_2(z):=Q_2(\alpha,\beta) \quad \mbox{if }z=\alpha+i\beta \in \K.
\]
Thanks to equality (\ref{eq:|f|}) and to the second inequality of (\ref{eq:equiv-norms}), we obtain:
\begin{align*}
\|f-g\|_\infty &=\sup_{z \in \K}\|F(z)-G(z)\|_{\bH_\bC} \leq \\
&\leq \sup_{z \in \K}(|F_1(z)-G_1(z)|+|F_2(z)-G_2(z)|) < \frac{\vep}{2}+\frac{\vep}{2}=\vep.
\end{align*}
The proof is complete.
\end{proof}


\section{Continuous slice functional calculus for normal operators}

\textit{Throughout this final part, $\sH$ will denote a fixed quaternionic Hilbert space and $T$ will be a fixed normal operator in $\gB(\sH)$.} We remind the reader that $\gB(\sH)$ indicates the set of all (bounded right $\bH$--linear) operators of $\sH$.


\subsection{Polynomial $\boldsymbol{\bH}$-intrinsic slice functions of a normal operator $\boldsymbol T$}

Define the self--adjoint operator $A \in \gB(\sH)$ and the positive operator $B \in \gB(\sH)$ by setting
\[
A:=(T+T^*)\frac{1}{2}
\quad \mbox{and} \quad
B:=|T-T^*|\frac{1}{2}.
\]
Theorem \ref{teoext} ensures the existence of an anti self--adjoint and unitary operator $J \in \gB(\sH)$ such that
\begin{align}
& T=A+JB, \nonumber\\
& \text{$A$, $B$ and $J$ commute mutually}, \nonumber\\
\label{eq:uniqueness-J}
&\text{$J$ is uniquely determined by $T$ on $\mi{Ker}(T-T^*)^\perp$}.
\end{align}

The reader observes that $T^*=A-JB$ and hence $B=-J(T-T^*)\frac{1}{2}$.

Denote by $\K$ the non--empty compact subset of $\bC$, invariant under complex conjugation, such that
\[
\OO_\K=\ssp(T).
\]

\begin{definition}\label{defpolT}
Let $g \in \PRS(\ssp(T),\bH)$ be a polynomial $\bH$--intrinsic slice function on $\ssp(T)$ and let $G:\K \lra \bH_\bC$ be the stem function inducing $g$. Choose polynomials $Q_1,Q_2 \in \bR[X,Y]$ such that $G(\alpha+i\beta)=Q_1(\alpha,\beta)+iQ_2(\alpha,\beta)$ for every $\alpha+i\beta \in \K$. We define the operator $g(T) \in \gB(\sH)$ by setting
\beq \label{gT}
g(T):=Q_1(A,B)+JQ_2(A,B).
\eeq
\end{definition}

The definition of $g(T)$ just given is consistent as we see in the next result.

\begin{lemma} \label{lem:unique}
The definition of $g(T)$ given in (\ref{gT}) depends only on $g$ and $T$, not on the operator $J$ and on the polynomials $Q_1$ and $Q_2$ we chose.
\end{lemma}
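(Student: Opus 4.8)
The plan is to isolate the two sources of ambiguity in the definition (\ref{gT}) --- the choice of the representing polynomials $Q_1,Q_2$ and the choice of the operator $J$ --- and to dispose of them one at a time, keeping in mind that $A=(T+T^*)\frac{1}{2}$ and $B=|T-T^*|\frac{1}{2}$ are determined by $T$ alone.

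First I would fix $J$ and show that the operators $Q_1(A,B)$ and $Q_2(A,B)$ depend only on $g$. Fix any $\imath\in\bS$; since $\K$ is invariant under complex conjugation and $\ssp(T)=\OO_\K$, one checks directly from (\ref{circularization}) that $\alpha+\imath\beta\in\ssp(T)$ if and only if $\alpha+i\beta\in\K$. Hence, if $Q_1$ and $Q_1'$ both induce $G_1$ on $\K$, then the difference $Q_1-Q_1'$ vanishes at every $(\alpha,\beta)$ with $\alpha+\imath\beta\in\ssp(T)$, so Corollary~\ref{cor:well-def-pol} gives $(Q_1-Q_1')(A,B)=0$, i.e.\ $Q_1(A,B)=Q_1'(A,B)$; the same argument applied to any two representatives $Q_2,Q_2'$ of $G_2$ yields $Q_2(A,B)=Q_2'(A,B)$. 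Thus $Q_1(A,B)$ and $Q_2(A,B)$ are independent of the chosen polynomials.

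It remains to prove that $J\,Q_2(A,B)$ does not depend on $J$, and this is the delicate point, because $J$ is genuinely non-unique: by the last assertion of Theorem~\ref{teoext} (see (\ref{eq:uniqueness-J})) two admissible operators $J,J'$ agree only on $\mi{Ker}(T-T^*)^\perp$. The resolution uses the parity of $Q_2$. By the previous paragraph I may compute $Q_2(A,B)$ using the representative that is odd in $Y$ (Remark~\ref{rem:PRS}$(1)$); writing $Q_2(X,Y)=Y\,S(X,Y)$ with $S\in\bR[X,Y]$ and using that $A$ and $B$ commute, I obtain $Q_2(A,B)=B\,S(A,B)$, so that $\mi{Ran}(Q_2(A,B))\subset\mi{Ran}(B)$. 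Since $B$ is self--adjoint with $\mi{Ker}(B)=\mi{Ker}(|T-T^*|)=\mi{Ker}(T-T^*)$ (as in (\ref{norm})), Proposition~\ref{lemma0} and Theorem~\ref{teoremaperp} give $\overline{\mi{Ran}(B)}=\mi{Ker}(B)^\perp=\mi{Ker}(T-T^*)^\perp$. Therefore $Q_2(A,B)$ maps $\sH$ into $\mi{Ker}(T-T^*)^\perp$, where $J$ and $J'$ coincide, whence $(J-J')Q_2(A,B)=0$, i.e.\ $J\,Q_2(A,B)=J'\,Q_2(A,B)$. Combining the two steps shows that $g(T)=Q_1(A,B)+J\,Q_2(A,B)$ depends only on $g$ and $T$, as claimed. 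The main obstacle, as indicated, is exactly the non-uniqueness of $J$ on $\mi{Ker}(T-T^*)$; the key idea that overcomes it is that the odd-in-$Y$ normalization forces $Q_2(A,B)$ to factor through $B$, hence to have range in the subspace on which $J$ is pinned down by $T$.
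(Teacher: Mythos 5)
Your proof is correct and follows essentially the same route as the paper: Corollary~\ref{cor:well-def-pol} disposes of the choice of $Q_1,Q_2$, and the odd-in-$Y$ normalization of $Q_2$ combined with the uniqueness of $J$ on $\mi{Ker}(T-T^*)^\perp$ disposes of the choice of $J$. The only (harmless) variation is in the last step: you factor $B$ on the left so that $\mi{Ran}(Q_2(A,B))\subset\overline{\mi{Ran}(B)}=\mi{Ker}(T-T^*)^\perp$ and conclude in one stroke, whereas the paper factors $B$ on the right to get $\mi{Ker}(T-T^*)\subset\mi{Ker}(Q_2(A,B))$ and then treats the two summands of $\sH=\mi{Ker}(T-T^*)\oplus\mi{Ker}(T-T^*)^\perp$ separately, using the commutation $JQ_2(A,B)=Q_2(A,B)J$ on the second one.
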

\begin{proof}
By Corollary \ref{cor:well-def-pol}, the operators $Q_1(A,B)$ and $Q_2(A,B)$ depend only on $g$ and $T$. It follows that the operator $Q_1(A,B)+JQ_2(A,B)$ depends only on $g$, $T$ and $J$. Let us prove that it is independent from $J$. By Remark \ref{rem:PRS}$(1)$, we may suppose that $Q_2$ is odd in $Y$. In this way, we have that $Q_2(A,B)=CB$ for some $C \in \gB(\sH)$ and hence
\[
\mi{Ker}(T-T^*)=\mi{Ker}(B) \subset \mi{Ker}(Q_2(A.B)).
\]
It follows that the operator $JQ_2(A,B)$ vanishes on $\mi{Ker}(T-T^*)$ and hence it is uniquely determined by $T$ on $\mi{Ker}(T-T^*)$. On the other hand, by (\ref{eq:uniqueness-J}), the operator $JQ_2(A,B)=Q_2(A,B)J$ is uniquely determined by $T$ on $\mi{Ker}(T-T^*)^\perp$ as well. This completes the proof.
\end{proof}

\begin{proposition} \label{propcentrale}
 For every $g \in \PRS(\ssp(T),\bH)$, $g(T)$ is a normal operator in $\gB(\sH)$, which commutes with $J$ and satisfies the following equalities: 
\beq \label{spectralmapPR}
\|g(T)\|=\|g\|_\infty
\eeq
and
\beq \label{spectralmapPR2}
\ssp(g(T))=g(\ssp(T)).
\eeq
\end{proposition}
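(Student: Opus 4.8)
The plan is to reduce all four claims to the classical continuous functional calculus for complex normal operators, using the dictionary between $T$ and its restriction to the complex subspace $\sH^{J\imath}_+$ provided by Proposition~\ref{propinterssigma} and Corollary~\ref{corollaruCplus}. First I would fix $\imath\in\bS$ and recall that $A$, $B$, $J$ commute with $J$, hence all preserve $\sH^{J\imath}_+$, on which $J$ acts as the right multiplication $u\mapsto u\imath$, i.e.\ as the complex scalar $\imath$ under the identification $\bC_\imath\simeq\bC$. Writing $A_+,B_+,T_+$ for the restrictions, one has $T_+=A_++\imath B_+$ with $A_+,B_+$ commuting self--adjoint, $B_+\geq 0$, and $\sigma(T_+)=\ssp(T)\cap\bC_\imath^+$ by Corollary~\ref{corollaruCplus}. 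Since $g(T)=Q_1(A,B)+JQ_2(A,B)$ is assembled from operators preserving $\sH^{J\imath}_+$, it preserves it as well, and its restriction is $Q_1(A_+,B_+)+\imath Q_2(A_+,B_+)=\phi(T_+)$, where $\phi(\alpha+\imath\beta):=Q_1(\alpha,\beta)+\imath Q_2(\alpha,\beta)$ and $\phi(T_+)$ is the complex functional calculus. The bridge to slice functions is that $\phi$ agrees with $g$ on $\sigma(T_+)\subset\bC_\imath^+$: for $z=\alpha+\imath\beta$ with $\beta\geq 0$ one has $g(z)=G_1+\imath G_2=Q_1(\alpha,\beta)+\imath Q_2(\alpha,\beta)=\phi(z)$.

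Normality and commutation with $J$ I would establish directly, independently of the above. Put $P:=Q_1(A,B)$ and $Q:=Q_2(A,B)$; these are self--adjoint and commute with $A$, $B$, $J$. Using $J^*=-J$ and $J^2=-\1$ one gets $g(T)^*=P-JQ$ and a short computation yields $g(T)g(T)^*=g(T)^*g(T)=P^2+Q^2$, so $g(T)$ is normal; similarly $Jg(T)=PJ-Q=g(T)J$.

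For the norm identity (\ref{spectralmapPR}), since $g(T)$ is normal the spectral radius formula (\ref{raggiospett}) gives $\|g(T)\|=r_S(g(T))$. Because $g(T)$ commutes with $J$, Proposition~\ref{propinterssigma}$(\mr{c})$ identifies $r_S(g(T))$ with the spectral radius of $\phi(T_+)$ on $\sH^{J\imath}_+$ (the spherical spectrum is circular, so its radius is computed on $\bC_\imath$ and $|\overline{q}|=|q|$), which equals $\|\phi(T_+)\|=\sup_{\sigma(T_+)}|\phi|$. Using $\phi=g$ on $\sigma(T_+)=\ssp(T)\cap\bC_\imath^+$, together with $\|g\|_\infty=\sup_{q\in\ssp(T)\cap\bC_\imath}|g(q)|$ (Remark~\ref{rem:sectional}, as $g\in\jS$) and $|g(\overline{q})|=|\overline{g(q)}|=|g(q)|$ for $\bH$--intrinsic $g$, this supremum is exactly $\|g\|_\infty$.

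For the spectral mapping (\ref{spectralmapPR2}) I would argue that both $\ssp(g(T))$ and $g(\ssp(T))$ are circular (the former by Remark~\ref{spectrum}$(3)$; the latter because $g$ is $\bH$--intrinsic and $\ssp(T)$ is circular, so $g$ sends each eigensphere to a sphere), whence it suffices to compare their traces on $\bC_\imath$. On one side, Proposition~\ref{propinterssigma}$(\mr{c})$ for $g(T)$ gives $\ssp(g(T))\cap\bC_\imath=\sigma(\phi(T_+))\cup\overline{\sigma(\phi(T_+))}$, and the complex spectral mapping theorem yields $\sigma(\phi(T_+))=\phi(\sigma(T_+))=g(\ssp(T)\cap\bC_\imath^+)$. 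On the other side $g(\ssp(T))\cap\bC_\imath=g(\ssp(T)\cap\bC_\imath^+)\cup\overline{g(\ssp(T)\cap\bC_\imath^+)}$: any $q\in\ssp(T)$ with $g(q)\in\bC_\imath$ but $q\notin\bC_\imath$ forces $G_2=0$, so $g(q)\in\bR$ is already a value attained on $\bC_\imath$. The two traces coincide, proving the equality. The main obstacle is precisely this bookkeeping between $g$ and $\phi$ and the upper/lower half--plane conventions; once the restriction $g(T)|_{\sH^{J\imath}_+}=\phi(T_+)$ is correctly identified, the remaining arguments are routine applications of the complex theory.
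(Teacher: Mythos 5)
Your proposal is correct and follows essentially the same route as the paper's proof: direct verification of normality and commutation with $J$ from the mutual commutation of $A$, $B$, $J$, then reduction of both the norm identity and the spectral map property to the complex continuous functional calculus on $\sH^{J\imath}_+$ via the identification $g(T)|_{\sH^{J\imath}_+}=\phi(T|_{\sH^{J\imath}_+})$, the spectral radius formula, and Proposition~\ref{propinterssigma}$(\mr{c})$, concluding by circularity of both spectra. The only cosmetic difference is that you pin the restriction to $\bC_\imath^+$ through Corollary~\ref{corollaruCplus}, whereas the paper works with $\ssp(T)\cap\bC_\imath$ and the conjugation-invariance of $|G|$; the substance is identical.
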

\begin{proof}
We begin as in the proof of Corollary \ref{cor:well-def-pol}. Since $A^*=A$, $B^*=B$, $J^*=-J$ and $A$, $B$ and $J$ commute mutually, it is immediate to verify that $g(T)$ is a normal operator in $\gB(\sH)$ and $J$ commutes with $Q_1(A,B)$, $Q_2(A,B)$ and hence with $g(T)$.

Fix $\imath \in \bS$ and identify $\bC$ with $\bC_\imath$ via the real isomorphism $\alpha+i\beta \mapsto \alpha+\imath\beta$. Apply the spectral radius formula (see (\ref{raggiospett})) and Proposition \ref{propinterssigma}$(\mr{c})$ to $g(T)$. We obtain that 
\beq \label{eq:ggTT}
\|g(T)\|=\sup\left\{|q| \in \bR^+ \left| \, q \in  \sigma\big(g(T)|_{\sH^{J\imath}_+}\big) \right.\right\},
\eeq
where $g(T)|_{\sH^{J\imath}_+}$ denotes the operator in $\gB(\sH^{J\imath}_+)$ obtained restricting $g(T)$ to $\sH^{J\imath}_+$. Thanks to Proposition \ref{propinterssigma}, we know that $A(\sH^{J\imath}_+) \subset \sH^{J\imath}_+$ and $B(\sH^{J\imath}_+) \subset \sH^{J\imath}_+$. Denote by $A|_{\sH^{J\imath}_+}$ and $B|_{\sH^{J\imath}_+}$ the operators in $\gB(\sH^{J\imath}_+)$ obtained restricting $A$ and $B$ to $\sH^{J\imath}_+$, respectively. Bearing in mind that $A=(T+T^*)\frac{1}{2}$, $B=-J(T-T^*)\frac{1}{2}$ and $Ju=u\imath$ for every $u \in \sH^{J\imath}_+$, we infer that $A|_{\sH^{J\imath}_+}=\big(T|_{\sH^{J\imath}_+}+(T|_{\sH^{J\imath}_+})^*\big)\frac{1}{2}$,  $B|_{\sH^{J\imath}_+}=\big(T|_{\sH^{J\imath}_+}-(T|_{\sH^{J\imath}_+})^*\big)\frac{1}{2\imath}$ and
\beq \label{eq:gT|}
g(T)|_{\sH^{J\imath}_+}=Q_1\big(A|_{\sH^{J\imath}_+},B|_{\sH^{J\imath}_+}\big)+Q_2\big(A|_{\sH^{J\imath}_+},B|_{\sH^{J\imath}_+}\big) \, \imath.
\eeq

Let $G$ be the stem function on $\K$ inducing $g$ and let $Q_1,Q_2 \in \bR[X,Y]$ be polynomials such that  $G(\alpha+i\beta)=Q_1(\alpha,\beta)+iQ_2(\alpha,\beta)$ for every $\alpha+i\beta \in \K$. Consider $G$ as a continuous function from $\K \subset \bC_\imath$ to $\bC_\imath$ by setting $G(\alpha+\imath\beta)=g(\alpha+\imath\beta)=Q_1(\alpha)+Q_2(\alpha,\beta) \, \imath$. Combining (\ref{eq:gT|}) with the continuous functional calculus theorem for complex normal operators (see \cite\cite{RudinARC,Analysisnow,Moretti}}), we see at once that $g(T)|_{\sH^{J\imath}_+}=G(T|_{\sH^{J\imath}_+})$ and
\beq \label{eq:sigma}
\sigma(g(T)|_{\sH^{J\imath}_+})=G(\sigma(T|_{\sH^{J\imath}_+})).
\eeq
Thanks (\ref{eq:ggTT}), (\ref{eq:sigma}) and Proposition \ref{propinterssigma}$(\mr{c})$, we infer that
\begin{align*}\label{est1}
\|g(T)\|&=
\sup \left\{|G(\alpha+\imath\beta)| \in \R^+ \left| \,  \alpha+\imath\beta \in \sigma\big(T|_{\sH^{J\imath}_+}\big)\right.\right\}=\\
&=\sup \left\{\left. \left(|Q_1(\alpha,\beta)|^2+|Q_2(\alpha,\beta)|^2\right)^{1/2} \in \bR^+ \right| \, \alpha+\imath\beta \in \ssp(T) \cap \bC_\imath \right\}=\\
&=\sup \{|g(q)| \in \bR^+ \, | \, q \in \ssp(T)\}=\|g\|_\infty.
\end{align*}
Using (\ref{eq:sigma}) and Proposition \ref{propinterssigma}$(\mr{c})$ again, together with Lemma~\ref{lemstemslice}$(\mr{ii})$ and with the fact that $G$ is the stem function inducing $g$, we obtain:
\begin{align*}
\ssp(g(T)) \cap \bC_\imath &=\sigma(g(T)|_{\sH^{J\imath}_+}) \cup  \overline{\sigma(g(T)|_{\sH^{J\imath}_+})}=G(\sigma(T|_{\sH^{J\imath}_+}))\cup  \overline{G(\sigma(T|_{\sH^{J\imath}_+}))}=\\
&=G(\sigma(T|_{\sH^{J\imath}_+})) \cup G(\overline{\sigma(T|_{\sH^{J\imath}_+})})=G\left( \sigma(T|_{\sH^{J\imath}_+}) \cup \overline{\sigma(T|_{\sH^{J\imath}_+})}\right)=\\
&=g\left(\ssp(T) \cap \bC_\imath \right)=g(\ssp(T)) \cap \bC_\imath.
\end{align*}
Since $\ssp(g(T))$ and $g(\ssp(T))$ are both circular subsets of $\bH$, it follows that they are equal.
\end{proof}


\subsection{Continuous $\boldsymbol{\bH}$-intrinsic slice functions of a normal operator $\boldsymbol T$}
\textit{In this section, we assume that the set $\RS(\ssp(T),\bH)$ is equipped with the commutative real Banach unital $C^*$--algebra structure given in Theorem \ref{thm:two-sided-C^*}(b).}

\textit{Furthermore, we consider $\gB(\sH)$ as a real Banach  $C^*$--algebra with unity $\1$}; that is, $\gB(\sH)$ is equipped with the pointwise sum, with the real scalar multiplication defined in $(\ref{eq:rT})$, with the composition as product, with the adjunction $T \mapsto T^*$ as $^*$--involution and with the norm defined in $(\ref{QN})$.

\begin{theorem}\label{teofinale1}
There exists, and is unique, a continuous $^*$--homomorphism 
\[
\Psi_{\bR,T}:\RS(\ssp(T),\bH) \ni f \mapsto f(T) \in  \gB(\sH)
\]
of real Banach unital $C^*$--algebras such that:
\begin{itemize}
 \item[$(\mr{i})$] $\Psi_{\bR,T}$ is unity--preserving; that is, $\Psi_{\bR,T}(1_{\ssp(T)})=\1$.
 \item[$(\mr{ii})$] $\Psi_{\bR,T}(\mi{id})=T$, where $\mi{id}:\ssp(T) \hookrightarrow \bH$ denotes the inclusion map.
\end{itemize}
The following further facts hold true.
\begin{itemize}
\item[$(\mr{a})$] If $f \in \RS(\ssp(T),\bH)$ and $J \in \gB(\sH)$ is an anti self--adjoint and unitary operator satisfying (\ref{OMEGA}) and (\ref{decA0-bis}), then $\Psi_{\bR,T}(f)$ is normal and commutes with $J$.
\item[$(\mr{b})$] $\Psi_{\bR,T}$ is isometric; that is, $\|f(T)\|=\|f\|_\infty$ for every $f \in \RS(\ssp(T),\bH)$.
\item[$(\mr{c})$] For every $f \in \RS(\ssp(T),\bH)$, the following \emph{continuous $\bH$--intrinsic slice spectral map property} holds:  
\[
\ssp(f(T))=f(\ssp(T)).
\]
\end{itemize}
\end{theorem}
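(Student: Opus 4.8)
The plan is to construct $\Psi_{\bR,T}$ by extending the assignment $g \mapsto g(T)$ of Definition \ref{defpolT} from $\PRS(\ssp(T),\bH)$ to all of $\RS(\ssp(T),\bH)$ by density, and then to read off properties (a)--(c) from their polynomial counterparts in Proposition \ref{propcentrale} together with a suitable limiting argument. First I would verify that the map $\PRS(\ssp(T),\bH) \ni g \mapsto g(T) \in \gB(\sH)$ is a unit--preserving $^*$--homomorphism of real algebras. Additivity and $\bR$--linearity are immediate from (\ref{gT}), and the unit goes to $\1$ because $1_{\ssp(T)}$ is induced by $Q_1=1$, $Q_2=0$. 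Multiplicativity $(g \cdot h)(T) = g(T)\,h(T)$ and the identity $g^*(T) = g(T)^*$ follow by expanding (\ref{gT}) and using that $A$, $B$, $J$ commute mutually, that $A$ and $B$ are self--adjoint (so $Q_1(A,B)$ and $Q_2(A,B)$ are self--adjoint), and that $J^*=-J$ with $J^2=-\1$; the cross--term cancellation reproduces exactly the stem product rule (\ref{eq:FG}). These are routine computations.

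By Proposition \ref{propcentrale} the map is isometric, $\|g(T)\|=\|g\|_\infty$. Since $\PRS(\ssp(T),\bH)$ is dense in $\RS(\ssp(T),\bH)$ by Proposition \ref{prop:Weierstrass} and $\gB(\sH)$ is complete (Proposition \ref{propcompleteness}$(\mr{a})$), the map extends uniquely to an isometric map $\Psi_{\bR,T}$ on $\RS(\ssp(T),\bH)$; since sum, slice product, involution and real scalar multiplication are continuous, the extension remains a unit--preserving $^*$--homomorphism. This yields existence together with (i), (ii) and (b). For uniqueness I would note that any continuous $^*$--homomorphism $\Phi$ obeying (i) and (ii) is forced to coincide with $\Psi_{\bR,T}$ on $\PRS(\ssp(T),\bH)$: from $\Phi(\mi{id})=T$ and $\Phi(\mi{id}^*)=T^*$ one gets $\Phi\big(\tfrac12(\mi{id}+\mi{id}^*)\big)=A$ and $\Phi\big(\tfrac12(\mi{id}-\mi{id}^*)\big)=JB$, and since $\PRS(\ssp(T),\bH)$ is the unital real $^*$--subalgebra of $\RS(\ssp(T),\bH)$ generated by $\mi{id}$ under the slice product (every monomial $\I(i^bX^aY^b)$ being a slice product of copies of $\tfrac12(\mi{id}+\mi{id}^*)$ and $\tfrac12(\mi{id}-\mi{id}^*)$), $\Phi$ is determined on $\PRS$, hence on all of $\RS$ by continuity and density.

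Property (a) then follows by choosing $g_n \to f$ with $g_n \in \PRS(\ssp(T),\bH)$: each $g_n(T)$ is normal and commutes with $J$ by Proposition \ref{propcentrale}, and by Lemma \ref{lem:unique} the value $g_n(T)$ is independent of the chosen $J$, so the commutation holds for every $J$ as in the statement; passing to the norm limit preserves normality and the commutation with $J$ (multiplication and involution being continuous).

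The main obstacle is property (c), the spectral map theorem for a general continuous $f$, since one cannot directly pass (\ref{spectralmapPR2}) to the limit: the spherical spectrum is not obviously continuous in $T$. Here I would reduce to the complex case exactly as in the proof of Proposition \ref{propcentrale}. As $\Psi_{\bR,T}(f)$ commutes with $J$ by (a), it leaves $\sH^{J\imath}_+$ invariant, and I claim that $f(T)|_{\sH^{J\imath}_+} = \widehat{f}\big(T|_{\sH^{J\imath}_+}\big)$, where $\widehat{f}$ is the $\bC_\imath$--valued continuous function on $\sigma(T|_{\sH^{J\imath}_+})$ determined by the stem function of $f$. This identity holds for polynomials by (\ref{eq:gT|})--(\ref{eq:sigma}), and for general $f$ it follows by approximating $f$ by $g_n \in \PRS(\ssp(T),\bH)$, so that the associated complex functions converge uniformly on $\sigma(T|_{\sH^{J\imath}_+})$, and invoking continuity of the complex continuous functional calculus. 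The classical complex spectral mapping theorem then gives $\sigma\big(f(T)|_{\sH^{J\imath}_+}\big) = \widehat{f}\big(\sigma(T|_{\sH^{J\imath}_+})\big)$; combining this with Proposition \ref{propinterssigma}$(\mr{c})$ as in Proposition \ref{propcentrale} yields $\ssp(f(T)) \cap \bC_\imath = f(\ssp(T)) \cap \bC_\imath$, and since both sets are circular, property (c) follows.
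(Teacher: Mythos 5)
Your construction of $\Psi_{\bR,T}$ (define it on $\PRS(\ssp(T),\bH)$ via Definition~\ref{defpolT}, check the $^*$--homomorphism identities by hand using that $A,B,J$ commute, $J^2=-\1$ and $J^*=-J$, then extend isometrically by Proposition~\ref{prop:Weierstrass} and completeness of $\gB(\sH)$), your uniqueness argument, and your proofs of (a) and (b) coincide with the paper's; your uniqueness step is in fact more explicit than the paper's, which merely asserts that any competing $^*$--homomorphism agrees on $\PRS(\ssp(T),\bH)$, whereas you verify that $\PRS(\ssp(T),\bH)$ is generated as a unital real $^*$--algebra by $\mi{id}$ under the slice product. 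Where you genuinely diverge is part (c). The paper splits the spectral map property into two inclusions: $f(\ssp(T)) \subset \ssp(f(T))$ is obtained by approximating $f$ by polynomial $\bH$--intrinsic slice functions $g_n$, noting $\Delta_{g_n(q)}(g_n(T)) \to \Delta_{f(q)}(f(T))$ and that the set of non--invertible operators is closed (Proposition~\ref{propcompleteness}(c)); the reverse inclusion is proved intrinsically, by observing that for $p \notin f(\ssp(T))$ the $\bH$--intrinsic slice function $\Delta_p f$ is nowhere zero (this uses $f(\cS_y)=\cS_{f(y)}$), so its pointwise inverse is again a continuous $\bH$--intrinsic slice function whose image under $\Psi_{\bR,T}$ is a two--sided inverse of $\Delta_p(f(T))$. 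You instead push the whole of (c) through the complex reduction: you first establish $f(T)|_{\sH^{J\imath}_+}=\widehat f\big(T|_{\sH^{J\imath}_+}\big)$ by a density argument (this identity is precisely the content of the paper's later Corollary~\ref{cor:restr}, proved there via uniqueness of the complex functional calculus rather than by approximation), then invoke the classical complex spectral mapping theorem and Proposition~\ref{propinterssigma}(c), exactly as the paper does for polynomials in Proposition~\ref{propcentrale}. Both arguments are correct and non--circular. Your route buys uniformity — one mechanism handles polynomials and general continuous functions alike, and you get Corollary~\ref{cor:restr} for free along the way — at the cost of importing the full continuous complex spectral mapping theorem; the paper's route is more self--contained at this stage, needing the complex theory only for polynomials, and its inverse--construction argument is the template reused later for Theorem~\ref{teofinale3}(c), where the circular slice functions are not $\bH$--intrinsic and the complex reduction would be less direct.
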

\begin{proof}
Uniqueness of $\Psi_{\bR,T}$ is simply proved. Suppose that $\Psi_{\bR,T}$ exists. If $\Psi:\RS(\ssp(T),\bH) \lra \gB(\sH)$ is another continuous $^*$--homomorphism satisfying $(\mr{i})$ and $(\mr{ii})$, then it coincides with 
$\Psi_{\bR,T}$ on $\PRS(\ssp(T),\bH)$. On the other hand, thanks to Proposition~\ref{prop:Weierstrass}, the set $\PRS(\ssp(T),\bH)$ is dense in $\RS(\ssp(T),\bH)$ and hence, by continuity, $\Psi$ and $\Psi_{\bR,T}$ coincide everywhere on $\RS(\ssp(T),\bH)$.

Let us pass to prove the existence of $\Psi_{\bR,T}$. First, define the map
\[
\psi_{\bR,T}:\PRS(\ssp(T),\bH) \lra \gB(\sH)
\]
by setting $\Psi_{\bR,T}(g):=g(T)$ as in Definition~\ref{defpolT}. It is immediate to verify that $\psi_{\bR,T}$ is a unity--preserving $^*$--homomorphism of real Banach unital $^*$--algebras sending $\mi{id}$ into $T$. Moreover, thanks to Proposition \ref{propcentrale}, $\psi_{\bR,T}$ satisfies conditions $(\mr{a})$, $(\mr{b})$ and $(\mr{c})$ with ``$\Psi_{\bR,T}$'' and ``$f \in \RS(\ssp(T),\bR)$'' replaced by ``$\psi_{\bR,T}$'' and ``$g \in \PRS(\ssp(T),\bR)$'', respectively. In particular, $\psi_{\bR,T}$ is isometric and hence, thanks to the density of $\PRS(\ssp(T),\bH)$ in $\RS(\ssp(T),\bH)$, it admits a unique extension $\Psi_{\bR,T}$ defined on the whole $\RS(\ssp(T,\bH))$. Evidently, by continuity, $\Psi_{\bR,T}$ is a $^*$--homomorphism of real Banach unital $C^*$--algebras, satisfying $(\mr{a})$ and $(\mr{b})$. 

Let us show that $\Psi_{\bR,T}$ verifies $(\mr{c})$. Fix $f \in \RS(\ssp(T),\bH)$. First, we prove that $f(\ssp(T)) \subset \ssp(f(T))$. Choose $q \in \ssp(T)$ and a sequence $\{g_n\}_{n \in \bN}$ in $\PRS(\ssp(T),\bH)$ converging to $f$. Observe that $\{g_n(q)\}_{n \in \bN} \to f(q)$ in $\bH$, $\{g_n(T)\}_{n \in \bN} \to f(T)$ in $\gB(\sH)$ and hence $\{\Delta_{g_n(q)}(g_n(T))\}_{n \in \bN} \to \Delta_{f(q)}(f(T))$ in $\gB(\sH)$. Recall that the set $\mscr{S}$ of operators in $\gB(\sH)$, which do not admit a two--sided inverse in $\gB(\sH)$, is closed in $\gB(\sH)$ (see Proposition \ref{propcompleteness}$(\mr{c})$). By (\ref{spectralmapPR2}), each operator $\Delta_{g_n(q)}(g_n(T))$ belongs to $\mscr{S}$. It follows that $\Delta_{f(q)}(f(T)) \in \mscr{S}$ or, equivalently, $f(q) \in \ssp(f(T))$.

It remains to prove that $\ssp(f(T)) \subset f(\ssp(T))$. Let $p \not\in f(\ssp(T))$. We must show that $p \not\in \ssp(f(T))$; that is, $\Delta_p(f(T))$ has a two--sided inverse in $\gB(\sH)$. Let $\Delta_p:\bH \lra \bH$ be the polynomial real function sending $q$ into $q^2-q(p+\overline{p})+|p|^2$. The continuous $\bH$--intrinsic slice function $f$ on $\ssp(T)$ and $\Delta_p$ can be composed, and the composition is still a continuous $\bH$--intrinsic slice function $\Delta_pf:\ssp(T) \lra \bH$. Indeed, if $F$ is the stem function inducing $f$, then $\Delta_pf$ is induced by the stem function $\Delta_pF:=F^2-F(p+\overline{p})+|p|^2$. The function $\Delta_pf$ is nowhere zero. Let us prove this assertion. On the contrary, suppose that there exists $y \in \ssp(T)$ such that $\Delta_pf(y)=0$. This is equivalent to say that $f(y) \in \cS_p$. Since $f$ is a $\bH$--intrinsic slice function, it would follow that $f(\cS_y)=\cS_p$ and hence, it being $\cS_y \subset \ssp(T)$, we would infer that $p \in f(\ssp(T))$, which contradicts our hypothesis. Thanks to the fact that $\Delta_pf$ is nowhere zero, we can define the function $f':\ssp(T) \lra \bH$ by setting $f'(q):=(\Delta_pf(q))^{-1}$. It is immediate to verify that $f'$ is a continuous $\bH$--intrinsic slice function, the one induced by the stem function sending $z$ into the inverse of $\Delta_pF(z)$ in $\bH_\bC$. Since $\Psi_{\bR,T}$ is a unity--preserving homomorphism, we infer that
\beq \label{eq:inverse}
\Delta_p(f(T))f'(T)=\Delta_pf(T)f'(T)=\1=f'(T)\Delta_pf(T)=f'(T)\Delta_p(f(T)).
\eeq
This means that $\Delta_p(f(T))$ has a two--sided inverse in $\gB(\sH)$, as desired.
\end{proof}

We conclude this section with two corollaries we will use later.

\begin{corollary} \label{cor:restr}
Let $\sH$ be a quaternionic Hilbert space, let $T \in \gB(\sH)$ be a normal operator, let $J \in \gB(\sH)$ be an anti self--adjoint and unitary operator satisfying (\ref{OMEGA}) and (\ref{decA0-bis}), let $\imath \in \cS$ and let $f \in \RS(\ssp(T),\bH)$. Denote by  $f|_{\bC_\imath}:\sigma(T|_{\sH^{J\imath}_+}) \lra \bC_\imath$ the continuous function obtained restricting $f$. Then the restriction $\Psi_{\bR,T}(f)|_{\sH^{J,\imath}_+}$ of $\Psi_{\bR,T}(f)$ to $\sH^{J,\imath}_+$ defines an operator in $\gB(\sH^{J,\imath}_+)$ and it holds:
\[
\Psi_{\bR,T}(f)|_{\sH^{J,\imath}_+}=f|_{\bC_\imath}(T|_{\sH^{J\imath}_+}),
\]
where $f|_{\bC_\jmath} (T|_{\sH_+^{J\jmath}})$ indicates the operator in $\gB(\sH^{J\jmath}_+)$ defined in the framework of standard continuous functional calculus in $\bC_\imath$--Hilbert spaces.
\end{corollary}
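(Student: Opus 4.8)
The plan is to recognise both sides of the claimed identity as the values of two continuous maps from $\RS(\ssp(T),\bH)$ to $\gB(\sH^{J\imath}_+)$ and to check that these maps agree on the dense subalgebra $\PRS(\ssp(T),\bH)$, where the required equality has in effect already been computed inside the proof of Proposition~\ref{propcentrale}. First I would verify that both sides make sense. By Theorem~\ref{teofinale1}$(\mr{a})$ the operator $\Psi_{\bR,T}(f)$ commutes with $J$, so, exactly as in Proposition~\ref{propinterssigma}$(\mr{a})$, for $u \in \sH^{J\imath}_+$ we get $J(\Psi_{\bR,T}(f)u)=\Psi_{\bR,T}(f)(Ju)=(\Psi_{\bR,T}(f)u)\imath$; hence $\Psi_{\bR,T}(f)$ leaves $\sH^{J\imath}_+$ invariant and its restriction is a $\bC_\imath$--linear operator in $\gB(\sH^{J\imath}_+)$ of norm at most $\|\Psi_{\bR,T}(f)\|=\|f\|_\infty$. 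On the other side, Proposition~\ref{propinterssigma}$(\mr{b})$,$(\mr{c})$ shows that $T|_{\sH^{J\imath}_+}$ is a normal operator of the $\bC_\imath$--Hilbert space $\sH^{J\imath}_+$ with $\sigma(T|_{\sH^{J\imath}_+}) \subset \ssp(T)\cap\bC_\imath$, and Lemma~\ref{lemstemslice} guarantees that $f$ is $\bC_\imath$--valued on $\bC_\imath$, so $f|_{\bC_\imath}$ is a well-defined continuous $\bC_\imath$--valued function on $\sigma(T|_{\sH^{J\imath}_+})$ to which the standard complex continuous functional calculus applies.

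Next I would establish continuity of both maps and reduce to polynomials. The map $f \mapsto \Psi_{\bR,T}(f)|_{\sH^{J\imath}_+}$ is continuous, being the composition of the continuous $^*$--homomorphism $\Psi_{\bR,T}$ (Theorem~\ref{teofinale1}) with the norm--nonincreasing restriction operator. The map $f \mapsto f|_{\bC_\imath}(T|_{\sH^{J\imath}_+})$ is continuous as well, since $f \mapsto f|_{\bC_\imath}$ is norm--nonincreasing into $\mscr{C}(\sigma(T|_{\sH^{J\imath}_+}),\bC_\imath)$ and the complex functional calculus $h \mapsto h(T|_{\sH^{J\imath}_+})$ is isometric. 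Because $\PRS(\ssp(T),\bH)$ is dense in $\RS(\ssp(T),\bH)$ by Proposition~\ref{prop:Weierstrass}, it then suffices to prove the identity for $g \in \PRS(\ssp(T),\bH)$.

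For such a $g=\I(G)$ with $G=Q_1+iQ_2$, the equality is essentially formula (\ref{eq:gT|}) in the proof of Proposition~\ref{propcentrale}: restricting $g(T)=Q_1(A,B)+JQ_2(A,B)$ to $\sH^{J\imath}_+$ and using that $A$ and $B$ preserve $\sH^{J\imath}_+$ while $J$ acts there as right multiplication by $\imath$ gives $g(T)|_{\sH^{J\imath}_+}=Q_1(A|_{\sH^{J\imath}_+},B|_{\sH^{J\imath}_+})+Q_2(A|_{\sH^{J\imath}_+},B|_{\sH^{J\imath}_+})\,\imath$, which by the complex functional calculus equals $G(T|_{\sH^{J\imath}_+})$. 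I expect the only delicate point to be the bookkeeping that identifies this $G(T|_{\sH^{J\imath}_+})$ with $g|_{\bC_\imath}(T|_{\sH^{J\imath}_+})$: one must observe that $g(\alpha+\imath\beta)=Q_1(\alpha,\beta)+\imath Q_2(\alpha,\beta)$ for $\alpha+\imath\beta\in\ssp(T)\cap\bC_\imath$, so that $G$ regarded as a $\bC_\imath\to\bC_\imath$ function coincides with $g|_{\bC_\imath}$ on $\sigma(T|_{\sH^{J\imath}_+})$. Letting $g$ run through a sequence in $\PRS(\ssp(T),\bH)$ converging to $f$ and invoking the continuity established above then delivers the identity for every $f\in\RS(\ssp(T),\bH)$.
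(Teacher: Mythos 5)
Your proposal is correct. It differs from the paper's own argument in how the final identification is organized. You reduce to the dense subalgebra $\PRS(\ssp(T),\bH)$ (Proposition~\ref{prop:Weierstrass}), where the identity is exactly the computation $g(T)|_{\sH^{J\imath}_+}=Q_1(A|_{\sH^{J\imath}_+},B|_{\sH^{J\imath}_+})+Q_2(A|_{\sH^{J\imath}_+},B|_{\sH^{J\imath}_+})\,\imath=G(T|_{\sH^{J\imath}_+})$ already carried out as formula (\ref{eq:gT|}) in the proof of Proposition~\ref{propcentrale}, and then you pass to the limit using the continuity of both sides. The paper instead argues abstractly: by Corollary~\ref{corollaruCplus} it takes $\sigma(T|_{\sH^{J\imath}_+})=\ssp(T)\cap\bC_\imath^+$, uses the representation formula to set up the bijection $w \leftrightarrow W$ between $\mscr{C}(\ssp(T)\cap\bC_\imath^+,\bC_\imath)$ and $\RS(\ssp(T),\bH)$, checks that $w \mapsto \Psi_{\bR,T}(W)|_{\sH^{J\imath}_+}$ is a unital $\bC_\imath$--$^*$--homomorphism sending the inclusion map to $T|_{\sH^{J\imath}_+}$, and invokes the uniqueness of the continuous functional calculus in complex Hilbert spaces. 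The two routes are of course cousins --- the uniqueness theorem is itself proved by a polynomial--density argument --- but yours is more hands--on and makes the dependence on (\ref{eq:gT|}) explicit, while the paper's packages the density step into a black box and, as a by--product, exhibits the restriction map as the complex functional calculus on the \emph{whole} of $\mscr{C}(\ssp(T)\cap\bC_\imath^+,\bC_\imath)$ at once. Your bookkeeping remarks are the right ones: the only points needing care are that $g(\alpha+\imath\beta)=Q_1(\alpha,\beta)+\imath Q_2(\alpha,\beta)$ on $\ssp(T)\cap\bC_\imath$, so that $G$ viewed as a $\bC_\imath$--valued function agrees with $f|_{\bC_\imath}$ on $\sigma(T|_{\sH^{J\imath}_+})\subset\ssp(T)\cap\bC_\imath$, and that the complex functional calculus only sees this restriction.
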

\begin{proof}
The fact that $\Psi_{\bR,T}(f)|_{\sH^{J,\imath}_+}$ is a well--defined operator in $\gB(\sH^{J,\imath}_+)$ follows immediately from Theorem \ref{teofinale1}$(\mr{a})$ and Proposition \ref{propinterssigma}. By Corollary \ref{corollaruCplus}, we know that $\sigma(T|_{H^{J\imath}_+})=\ssp(T) \cap \bC_\imath^+$. Let $w \in \mscr{C}(\ssp(T) \cap \bC_\imath^+,\bC_\imath)$. Lemma \ref{lemstemslice} and the representation formula for slice functions (see Remark \ref{remslices}$(1)$) imply the existence and the unicity of a function $W \in \RS(\ssp(T),\bH)$ such that $W|_{\ssp(T) \cap \bC_\imath^+}=w$. Using again Theorem \ref{teofinale1}$(\mr{a})$, we infer that the ope\-rator $\Psi_{\bR,T}(W) \in \gB(\sH)$ is normal and $J$ commutes both with $\Psi_{\bR,T}(W)$ and $\Psi_{\bR,T}(W)^*=\Psi_{\bR,T}(W^*)$. In this way, Proposition \ref{propinterssigma} and Theorem \ref{teofinale1} ensures that the map
\[
\mscr{C}(\ssp(T) \cap \bC_\imath^+,\bC_\imath) \ni w \mapsto \Psi_{\bR,T}(W)|_{\sH^{J\imath}_+} \in \gB(\sH^{J\imath}_+)
\]
is a $\bC_\imath$--complex $^*$--homomorphism, sending $1_{\ssp(T) \cap \bC_\imath^+}$ into the identity operator on $\sH^{J\imath}_+$ and the inclusion map $\ssp(T) \cap \bC_\imath^+ \hookrightarrow \bC_\imath$ into $T|_{\sH^{J\imath}_+}$. By the uniqueness of the continuous functional calculus $^*$--homomorphism in $\bC_\imath$--Hilbert spaces, it holds: $\Psi_{\bR,T}(W)|_{\sH^{J\imath}_+}=w(T|_{\sH^{J\imath}_+})$ for every $w \in \mscr{C}(\ssp(T) \cap \bC_\imath^+,\bC_\imath)$, as desired.
\end{proof}

\begin{corollary} \label{cor:kk}
Let $J \in \gB(\sH)$ be an anti self--adjoint and unitary operator sati\-sfying (\ref{OMEGA}) and (\ref{decA0-bis}), and let $K \in \gB(\sH)$ be another anti self--adjoint and unitary operator such that $JK=-K J$ and $K$ commutes both with $A=(T+T^*)\frac{1}{2}$ and with $B=|T-T^*|\frac{1}{2}$. Choose $f \in \RS(\ssp(T),\bH)$ and define $f(T):=\Psi_{\bR,T}(f)$ and $f^*(T):=\Psi_{\bR,T}(f^*)$. Then it holds:
\begin{align}
\label{eq:fjjf}
& f(T)J=J \, f(T),\\
\label{eq:fkkf-}
& f(T)K=K \, f^*(T).
\end{align}
\end{corollary}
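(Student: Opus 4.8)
The identity (\ref{eq:fjjf}) costs nothing: it is exactly the commutation statement already contained in Theorem \ref{teofinale1}$(\mr{a})$, whose hypotheses on $J$ are precisely (\ref{OMEGA}) and (\ref{decA0-bis}). So I would dispose of $f(T)J=Jf(T)$ in a single line and devote the work to (\ref{eq:fkkf-}).

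For (\ref{eq:fkkf-}) the plan is the usual ``prove it on polynomials, then pass to the limit''. Fix first a polynomial $\bH$--intrinsic slice function $g \in \PRS(\ssp(T),\bH)$, induced by a stem function $G=Q_1+iQ_2$ with $Q_1,Q_2 \in \bR[X,Y]$. Because $g$ is $\bH$--intrinsic, Lemma \ref{lemstemslice} forces $Q_1$ and $Q_2$ to be real--valued, so the $^*$--involution on $\Sl(\OO_\K,\bH)$ sends $G$ to $Q_1-iQ_2$; hence, by Definition \ref{defpolT}, $g^*(T)=Q_1(A,B)-JQ_2(A,B)$, the same as $g(T)=Q_1(A,B)+JQ_2(A,B)$ but with the sign of the $J$--term reversed. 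Now I would simply compute $g(T)K$, using that $K$ commutes with $A$ and $B$ (hence with $Q_1(A,B)$ and $Q_2(A,B)$) together with the anticommutation $JK=-KJ$:
\[
g(T)K = Q_1(A,B)K + JQ_2(A,B)K = KQ_1(A,B) + JKQ_2(A,B) = KQ_1(A,B) - KJQ_2(A,B) = Kg^*(T).
\]
This is the crux of the statement: it is precisely the anticommutation of $K$ with $J$ (and not commutation) that turns $g(T)$ into $g^*(T)$.

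Finally I would extend the identity to arbitrary $f \in \RS(\ssp(T),\bH)$ by density. By Proposition \ref{prop:Weierstrass} pick $g_n \in \PRS(\ssp(T),\bH)$ with $g_n \to f$; then $g_n^* \to f^*$ as well, since the $^*$--involution is isometric ($\|g_n^*-f^*\|_\infty=\|g_n-f\|_\infty$). Continuity of $\Psi_{\bR,T}$ (Theorem \ref{teofinale1}$(\mr{b})$) gives $g_n(T)\to f(T)$ and $g_n^*(T)\to f^*(T)$ in $\gB(\sH)$, and continuity of composition in $\gB(\sH)$ (Proposition \ref{propcompleteness}$(\mr{b})$) lets me pass to the limit in $g_n(T)K=Kg_n^*(T)$, yielding (\ref{eq:fkkf-}).

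I do not expect a genuine obstacle here; the only points requiring care are the correct identification of $g^*(T)$ (recognizing that $\bH$--intrinsity makes $Q_1,Q_2$ real, so conjugation merely flips the $Q_2$--term) and keeping track of which operators commute with $K$. The whole argument is the same density scheme already used to construct $\Psi_{\bR,T}$, so the essential content is the one-line polynomial computation above.
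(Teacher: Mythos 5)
Your proof is correct and follows essentially the same route as the paper's: reduce to $g\in\PRS(\ssp(T),\bH)$, write $g(T)=Q_1(A,B)+JQ_2(A,B)$ and $g^*(T)=Q_1(A,B)-JQ_2(A,B)$, use the (anti)commutation relations of $J$ and $K$ with $A$, $B$ and each other, and conclude by Proposition~\ref{prop:Weierstrass} together with the continuity of $\Psi_{\bR,T}$. The only cosmetic difference is that you dispatch (\ref{eq:fjjf}) by citing Theorem~\ref{teofinale1}$(\mr{a})$ directly, which is legitimate and if anything slightly cleaner.
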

\begin{proof}
First, suppose that $f=\I(F_1+iF_2) \in \PRS(\ssp(T),\bH)$. Let $Q_1,Q_2 \in \bR[X,Y]$ such that $F_m(\alpha,\beta)=Q_m(\alpha,\beta)$ for every $m \in \{1,2\}$ and $(\alpha,\beta) \in \K$, where $\K$ is the subset of $\bC$, invariant under complex conjugation, such that $\OO_\K=\ssp(T)$. By Definition \ref{defpolT} and Lemma \ref{lem:unique}, we have that $f(T)=Q_1(A,B)+JQ_2(A,B)$. Since $J$ and $K$ commute both with $A$ and with $B$, and $JK=-K J$, equalities (\ref{eq:fjjf}) and (\ref{eq:fkkf-}) are evident. Now the corollary follows immediately from Proposition \ref{prop:Weierstrass} and from the continuity of $\Psi_{\bR,T}$ (see Theorem \ref{teofinale1}).
\end{proof}

\begin{remark} 
When $T\in\gB(\sH)$ is self--adjoint, an easy inspection shows that the functions of the operator $T$ defined in Theorem~\ref{teoA1} coincide with those defined in Theorems~\ref{teofinale1}.
\end{remark}


\subsection{Continuous $\boldsymbol{\bC_\jmath} \,$-slice functions of a normal operator $\boldsymbol T$}
We can pass to discuss the analogue for continuous $\bC_\jmath$--slice functions of what done for continuous $\bH$--intrinsic slice functions.

\textit{Fix $\jmath \in \cS$ and equip $\jS(\ssp(T),\bH)$ with the structure of commutative (two--sided) $\bC_\jmath$--Banach unital $C^*$--algebra given in Theorem \ref{thm:two-sided-C^*}(c)}.

Recall that, given $q \in \bC_\jmath$ and $f \in \jS(\ssp(T),\bH)$, the left and right scalar multiplications $q \cdot f$ and $f \cdot q$ in $\jS(\ssp(T),\bH)$ are defined as the slice products $q \cdot f:=c_q \cdot f$ and $f \cdot q:=f \cdot c_q$, where $c_q$ denotes the slice function on $\ssp(T)$ constantly equal to $q$. We know that $q \cdot f$ and $f \cdot q$ are equal and coincide with the pointwise scalar product $fq$. In this way, taking into account only the right scalar multiplication $(f,q) \mapsto f \cdot q$, we can consider $\jS(\ssp(T),\bH)$ as a standard commutative $\bC_\jmath$--Banach unital $C^*$--algebra.

\textit{Fix an anti self--adjoint and unitary operator $J \in \gB(\sH)$ sati\-sfying (\ref{OMEGA}) and (\ref{decA0-bis})}. We remind the reader that condition (\ref{OMEGA}) requires that $J$ commute with $T$ and $T^*$, and condition (\ref{decA0-bis}) imposes that $T$ decomposes as follows: $T=A+JB$, where $A:=(T+T^*)\frac{1}{2}$ and $B:=|T-T^*|\frac{1}{2}$.

Fix a left scalar multiplication $\bH \ni q \mapsto L_q$ of $\sH$ such that $L_\jmath=J$. Such a left scalar multiplication of $\sH$ exists by Proposition \ref{propJ}$(\mr{a})$ and makes $\gB(\sH)$ a quaternionic two--sided Banach unital $C^*$--algebra, via Theorem \ref{thm:two-sided}. Restricting the scalars from $\bH$ to $\bC_\jmath$, one defines on $\gB(\sH)$ a structure of  two--sided $\bC_\jmath$--Banach unital $C^*$--algebra. It is important to observe that such a structure on $\gB(\sH)$ depends only on real scalar multiplication (\ref{eq:rT}) and on $J$, and not on the fixed left scalar multiplication $\bH \ni q \mapsto L_q$ of $\sH$. Indeed, if $q=\alpha+\jmath\beta \in \bC_\jmath$ with $\alpha,\beta \in \bR$, then it holds: 
\[
qT=T\alpha+J(T\beta)
\quad \mbox{ and } \quad
Tq=T\alpha+(T\beta)J.
\]

Since $J$ commutes with $T$, then $qT=Tq$ for every $q \in \bC_\jmath$ and hence $\gB(\sH)$ can be considered as a standard $\bC_\jmath$--Banach unital $C^*$--algebra by taking into account only the right scalar multiplication $(T,\alpha+\jmath\beta) \mapsto T\alpha+(T\beta)J$. 

\textit{We assume that $\gB(\sH)$ is equipped with that structure of $\bC_\jmath$--Banach unital $C^*$--algebra}.

We are now in a position to present our next result.

\begin{theorem} \label{teofinale2}
There exists, and is unique, a continuous $^*$--homomorphism 
\[
\Psi_{\bC_\jmath,T}:\jS(\ssp(T),\bH) \ni f \mapsto f(T) \in \gB(\sH)
\]
of $\bC_\jmath$--Banach unital $C^*$--algebras such that:
\begin{itemize}
 \item[$(\mr{i})$] $\Psi_{\bC_\jmath,T}$ is unity--preserving; that is, $\Psi_{\bR,T}(1_{\ssp(T)})=\1$.
 \item[$(\mr{ii})$] $\Psi_{\bC_\jmath,T}(\mi{id})=T$, where $\mi{id}:\ssp(T) \hookrightarrow \bH$ denotes the inclusion map.
\end{itemize}
The following further facts hold true.
\begin{itemize}
 \item[$(\mr{a})$] $\Psi_{\bC_\jmath,T}$ extends $\Psi_{\bR,T}$ in the following sense. Let $f \in \jS(\ssp(T),\bH)$ and let $f_0$ and $f_1$ be the unique functions in $\RS(\ssp(T),\bH)$ such that $f=f_0+f_1\jmath$ (see Lemma \ref{lem:RS}). Then it holds:
\beq \label{PhibcPhiT}
\Psi_{\bC_\jmath,T}(f)=\Psi_{\bR,T}(f_0)+\Psi_{\bR,T}(f_1)J.
\eeq
\item[$(\mr{b})$] For every $f \in \jS(\ssp(T),\bH)$, $\Psi_{\bC_\jmath,T}(f)$ is normal and commutes with $J$.
 \item[$(\mr{c})$] For every $f \in \jS(\ssp(T),\bH)$, the following \emph{continuous $\bC_\jmath$--slice spectral map property} holds:  
\[
\ssp(f(T))=\OO_{f(\ssp(T) \cap \bC_\jmath^+)}.
\]
 \item[$(\mr{d})$] $\Psi_{\bC_\jmath,T}$ is norm decreasing; that is, $\|f(T)\| \leq \|f\|_\infty$ if  $f \in \jS(\ssp(T),\bH)$. More precisely, it holds:
\beq \label{eq:fj}
\|f(T)\|=\|f|_{\ssp(T) \cap \bC_\jmath^+}\|_\infty
\eeq
for every $f \in \jS(\ssp(T),\bH)$.
 \item[$(\mr{e})$] The kernel of $\Psi_{\bC_\jmath,T}$ consists of all functions in $\jS(\ssp(T),\bH)$ vanishing on $\ssp(T) \cap \bC_\jmath^+$. More precisely, a function $f \in \mi{Ker}(\Psi_{\bC_\jmath,T})$ if and only if there exists $g \in \mscr{C}(\ssp(T) \cap \bC_\jmath^-,\bC_\jmath)$ with $g|_{\ssp(T) \cap \bR}=0$ such that
 \[ f(\alpha+\imath\beta)=\frac{1}{2}(1+\imath\jmath) \, g(\alpha-\jmath\beta)
\]
for every $\alpha \in \bR$, $\beta\in\bR^+$ and $\imath \in \cS$ with $\alpha+\imath\beta \in \ssp(T)$. 
\end{itemize}
\end{theorem}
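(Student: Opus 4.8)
The plan is to take equation (\ref{PhibcPhiT}) as the \emph{definition} of $\Psi_{\bC_\jmath,T}$, so that property $(\mr{a})$ holds by construction, and then to verify the remaining assertions. Given $f\in\jS(\ssp(T),\bH)$, Lemma \ref{lem:RS} produces unique $f_0,f_1\in\RS(\ssp(T),\bH)$ with $f=f_0+f_1\jmath$, and I set $\Psi_{\bC_\jmath,T}(f):=\Psi_{\bR,T}(f_0)+\Psi_{\bR,T}(f_1)J$. The only facts needed to check that this is a $\bC_\jmath$--linear $^*$--homomorphism are the commutation of $J$ with every $\Psi_{\bR,T}(g)$ (Theorem \ref{teofinale1}$(\mr{a})$) and the identity $J^2=-\1$. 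Indeed, decomposing the slice product as $f\cdot g=(f_0g_0-f_1g_1)+(f_0g_1+f_1g_0)\jmath$ (which follows from the commutativity of the slice product on $\jS$, the fact that on $\RS$ it is the pointwise product, and $c_\jmath\cdot c_\jmath=-1_{\ssp(T)}$), a short expansion using $J\Psi_{\bR,T}(g)=\Psi_{\bR,T}(g)J$ and $J^2=-\1$ gives $\Psi_{\bC_\jmath,T}(f\cdot g)=\Psi_{\bC_\jmath,T}(f)\Psi_{\bC_\jmath,T}(g)$. Similarly, writing $f^*=f_0^*-f_1^*\jmath$ and taking adjoints term by term yields $\Psi_{\bC_\jmath,T}(f^*)=\Psi_{\bC_\jmath,T}(f)^*$, and $\bC_\jmath$--linearity follows by applying multiplicativity to the constant function $c_q$, noting $\Psi_{\bC_\jmath,T}(c_{\alpha+\jmath\beta})=\alpha\1+\beta J$. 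Conditions $(\mr{i})$, $(\mr{ii})$ and the commutation half of $(\mr{b})$ are immediate; normality of $\Psi_{\bC_\jmath,T}(f)$ follows because $\Psi_{\bR,T}(f_0),\Psi_{\bR,T}(f_1),J$ mutually commute, with $\Psi_{\bR,T}(f_i)$ normal.

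The heart of the argument is the passage to the complex slice $\sH_+^{J\jmath}$. Since $\Psi_{\bR,T}(f_i)$ commutes with $J$, it preserves $\sH_+^{J\jmath}$, and because $J$ acts there as right multiplication by $\jmath$, I expect the restriction formula $\Psi_{\bC_\jmath,T}(f)|_{\sH_+^{J\jmath}}=f|_{\bC_\jmath}(T|_{\sH_+^{J\jmath}})$ to drop out of Corollary \ref{cor:restr} applied to $f_0$ and $f_1$, the right-hand side being the ordinary continuous functional calculus of the complex normal operator $T|_{\sH_+^{J\jmath}}$. Granting this, $(\mr{d})$ follows: $\Psi_{\bC_\jmath,T}(f)$ is normal and commutes with $J$, so by the spherical spectral radius formula (\ref{raggiospett}) together with Proposition \ref{propinterssigma}$(\mr{c})$ its norm equals the norm of its restriction to $\sH_+^{J\jmath}$, which in turn equals $\sup_{\lambda\in\sigma(T|_{\sH_+^{J\jmath}})}|f(\lambda)|$; by Corollary \ref{corollaruCplus} this supremum is exactly $\|f|_{\ssp(T)\cap\bC_\jmath^+}\|_\infty$. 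The spectral map property $(\mr{c})$ then comes from the complex spectral mapping $\sigma(\Psi_{\bC_\jmath,T}(f)|_{\sH_+^{J\jmath}})=f(\sigma(T|_{\sH_+^{J\jmath}}))=f(\ssp(T)\cap\bC_\jmath^+)$ and the circularity of $\ssp$ of a normal operator, after circularizing via (\ref{circularization}).

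For the kernel description $(\mr{e})$, the norm formula $(\mr{d})$ already gives $\Psi_{\bC_\jmath,T}(f)=0$ if and only if $f$ vanishes on $\ssp(T)\cap\bC_\jmath^+$; the explicit form is then a direct consequence of the representation formula (Remark \ref{remslices}$(1)$). Writing $q=\alpha+\jmath\beta$ with $\beta\ge0$, so that $q\in\bC_\jmath^+$ and $f(q)=0$, the representation formula reduces to $f(\alpha+\imath\beta)=\frac{1}{2}(1+\imath\jmath)f(\alpha-\jmath\beta)$, so one takes $g:=f|_{\ssp(T)\cap\bC_\jmath^-}$, which is $\bC_\jmath$--valued and vanishes on $\ssp(T)\cap\bR$; the converse is checked by substituting $\imath=\jmath$. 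Finally, uniqueness follows as in Theorem \ref{teofinale1}: any competing continuous $\bC_\jmath$--$^*$--homomorphism restricts on $\RS(\ssp(T),\bH)$ to a continuous real $^*$--homomorphism fixing $\1$ and $T$, hence equals $\Psi_{\bR,T}$ there by Theorem \ref{teofinale1}, and then agrees with $\Psi_{\bC_\jmath,T}$ on all of $\jS(\ssp(T),\bH)$ via $f=f_0+f_1\cdot c_\jmath$ and multiplicativity.

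The main obstacle I anticipate is the clean derivation of the restriction identity $\Psi_{\bC_\jmath,T}(f)|_{\sH_+^{J\jmath}}=f|_{\bC_\jmath}(T|_{\sH_+^{J\jmath}})$: Corollary \ref{cor:restr} is stated for $\bH$--intrinsic functions, so I must combine the two restrictions $\Psi_{\bR,T}(f_0)|_{\sH_+^{J\jmath}}$ and $\Psi_{\bR,T}(f_1)|_{\sH_+^{J\jmath}}$ with the right action of $\jmath$ and check that the result is literally the complex functional calculus of $f=f_0+f_1\jmath$. Once this bookkeeping is in place, the asymmetry that the norm and the spectrum only see the half-plane $\bC_\jmath^+$ (the genuinely new feature compared with the $\bH$--intrinsic case) is exactly what Corollary \ref{corollaruCplus} encodes, and the rest is formal.
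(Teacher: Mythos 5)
Your proposal is correct and follows essentially the same route as the paper: define $\Psi_{\bC_\jmath,T}$ by the formula in $(\mr{a})$, verify the $^*$--homomorphism property via the expansion $f\cdot g=(f_0g_0-f_1g_1)+(f_0g_1+f_1g_0)\jmath$ together with $J^2=-\1$ and the commutation from Theorem \ref{teofinale1}$(\mr{a})$, establish the restriction identity $\Psi_{\bC_\jmath,T}(f)|_{\sH_+^{J\jmath}}=f|_{\bC_\jmath}(T|_{\sH_+^{J\jmath}})$ from Corollary \ref{cor:restr}, and then deduce $(\mr{c})$--$(\mr{e})$ from Proposition \ref{propinterssigma}$(\mr{c})$, Corollary \ref{corollaruCplus} and the representation formula. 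The only cosmetic differences are that you obtain $\bC_\jmath$--linearity from multiplicativity applied to constant functions rather than by direct computation, and you derive the norm identity $(\mr{d})$ before the spectral map property $(\mr{c})$ instead of after.
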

\begin{proof}
We begin proving the uniqueness of $\Psi_{\bC_\jmath,T}$. Assume that $\Psi_{\bC_\jmath,T}$ exists. Since it is a $\bC_\jmath$ $^*$--homomorphism satisfying $(\mr{i})$ and $(\mr{ii})$, if a function $f=f_0+f_1\jmath$ in $\jS(\ssp(T),\bH)$ is decomposed as in $(\mr{a})$, then one has $\Psi_{\bC_\jmath,T}(f)=\Psi_{\bC_\jmath,T}(f_0)+\Psi_{\bC_\jmath,T}(f_1)J$ and then, by Theorem~\ref{teofinale1}, $\Psi_{\bC_\jmath,T}(f_0)=\Psi_{\bR,T}(f_0)$ and $\Psi_{\bC_\jmath,T}(f_1)=\Psi_{\bR,T}(f_1)$. It follows that $\Psi_{\bC_\jmath,T}$ is unique.

Concerning the existence, we have now a natural way to define $\Psi_{\bC_\jmath,T}$:
\[
\Psi_{\bC_\jmath,T}(f):=\Psi_{\bR,T}(f_0)+\Psi_{\bR,T}(f_1)J
\]
if $f=f_0+f_1\jmath \in \jS(\ssp(T),\bH)$ with $f_0,f_1 \in \RS(\ssp(T),\bH)$. Evidently, $\Psi_{\bC_\jmath,T}$ sati\-sfies $(\mr{i})$, $(\mr{ii})$ and $(\mr{a})$. Denote $\Psi_{\bC_\jmath,T}(f)$ simply by $f(T)$ for every $f \in\jS(\ssp(T),\bH)$. 

Let us verify that $\Psi_{\bC_\jmath,T}$ is a $\bC_\jmath$ $^*$--homomorphism. Fix two functions $f=f_0+f_1\jmath$ and $g=g_0+g_1\jmath$ in $\jS(\ssp(T),\bH)$ decomposed as in $(\mr{a})$. For simplicity, denote $\Psi_{\bR,T}(f_0)$ by $f_0(T)$ and $\Psi_{\bR,T}(f_1)$ by $f_1(T)$. It holds:
\begin{align}
f \cdot g &=f_0g_0+f_0g_1\jmath+f_1 \cdot \jmath \cdot g_0+f_1 \cdot \jmath \cdot g_1 \cdot \jmath=\nonumber\\
&=f_0g_0+f_0g_1\jmath+f_1g_0\jmath+f_1 \cdot \jmath \cdot \jmath \cdot g_1=\nonumber\\
\label{eq:jmath}
&=(f_0g_0-f_1g_1)+(f_0g_1+f_1g_0)\jmath
\end{align}
and hence, bearing in mind that $\Psi_{\bR,T}$ is a real $*$--homomorphism satisfying point $(\mr{a})$ of Theorem \ref{teofinale1}, we have:
\begin{align*}
(f \cdot g)(T)
&=f_0(T)g_0(T)-f_1(T)g_1(T)+f_0(T)g_1(T)J+f_1(T)g_0(T)J=\\
&=f_0(T)g_0(T)+f_0(T)g_1(T)J+f_1(T)Jg_0(T)+f_1(T)Jg_1(T)J=\\
&=(f_0(T)+f_1(T)J)(g_0(T)+g_1(T)J)=f(T)g(T).
\end{align*}
Let $q=\alpha+\jmath\beta \in \bC_\jmath$ with $\alpha,\beta \in \bR$. Replacing $g$ with $c_q$ in (\ref{eq:jmath}), we obtain that $f \cdot q=(f_0\alpha-f_1\beta)+(f_1\alpha+f_0\beta)\jmath$. Since $(f_1(T)\alpha)J=(f_1(T)J)\alpha$ and $-f_1(T)\beta=(f_1(T)J\beta)J$, we infer that
\begin{align*}
(f \cdot q)(T) &=(f_0(T)\alpha-f_1(T)\beta)+(f_1(T)\alpha+f_0(T)\beta)J=\\
&=(f_0(T)+f_1(T)J)\alpha+\big((f_0(T)+f_1(T)J)\beta\big)J=\\
&=f(T) \, q.
\end{align*} 

It remains to show that $\Psi_{\bC_\jmath,T}$ preserves the $^*$--involutions. Observe that $f^*=f_0^*-\jmath \cdot f_1^*=f_0^*-f_1^*\jmath$. Since $f_0^*(T)=f_0(T)^*$, $f_1^*(T)=f_1(T)^*$ and $J$ commutes with $f_1^*(T)$, we have that $-f_1^*(T)J=(f_1(T)J)^*$ and hence
\[
f^*(T)=f_0^*(T)-f_1^*(T)J=(f_0(T)+f_1(T)J)^*=f(T)^*.
\]

The operators $f_0(T)$, $f_0^*(T)$, $f_1(T)$, $f_1^*(T)$ and $J$ commute mutually. This fact implies at once that $f(T)$ is normal and commutes with $J$; that is, $(\mr{b})$ is proved.

Let us show $(\mr{c})$. First, we need to show that
\beq \label{finalid}
f(T)|_{\sH^{J\jmath}_+}=f|_{\bC_\jmath}(T|_{\sH^{J\jmath}_+}).
\eeq
By Corollary \ref{cor:restr}, we have that $f_0(T)|_{\sH^{J\jmath}_+}=f_0|_{\bC_\jmath}(T_{\sH^{J\jmath}_+})$ and $f_1(T)|_{\sH^{J\jmath}_+}=f_1|_{\bC_\jmath}(T_{\sH^{J\jmath}_+})$, where $f_0|_{\bC_\jmath}$ and $f_1|_{\bC_\jmath}$ denote the operators in $\gB(\sH^{J\jmath}_+)$ defined in  the (standard) functional calculus in $\bC_\jmath$--Hilbert spaces. It holds:
\begin{align*}
f(T)|_{\sH^{J\jmath}_+} &=\big(f_0(T)+f_1(T)J\big)|_{\sH^{J\jmath}_+}=f_0(T)|_{\sH^{J\jmath}_+}+J|_{\sH^{J\jmath}_+}f_1(T)|_{\sH^{J\jmath}_+}=\\
&=f_0|_{\bC_\jmath}(T|_{\sH^{J\jmath}_+})+\left(f_1|_{\bC_\jmath}(T|_{\sH^{J\jmath}_+})\right)\jmath=\\
&=\left(f_0|_{\bC_\jmath}+(f_1\jmath)|_{\bC_\jmath}\right)(T|_{\sH^{J\jmath}_+})=f|_{\bC_\jmath}(T|_{\sH^{J\jmath}_+}),
\end{align*}
which proves (\ref{finalid}). We are now in a position to prove $(\mr{c})$. Apply Proposition~\ref{propinterssigma} $(\mr{c})$ to $f(T)$. We obtain that
\beq \label{quasifine}
\sigma_S(f(T)) \cap \bC_\jmath = \sigma(f(T)|_{\sH_+^{J\jmath}}) \cup \overline{ \sigma(f(T)|_{\sH_+^{J\jmath}})}.
\eeq
By combining (\ref{finalid}), (\ref{quasifine}) and Corollary~\ref{corollaruCplus} with the continuous functional calculus theo\-rem for $\bC_\jmath$ normal operators, we infer that
\begin{align*}
\ssp(f(T)) \cap \bC_\jmath &=\sigma(f|_{\bC_\jmath} (T|_{\sH_+^{J\jmath}})) \cup \overline{\sigma( f|_{\bC_\jmath} (T|_{\sH_+^{J\jmath}}))}=\\
&=f|_{\bC_\jmath}(\sigma(T|_{\sH_+^{J\jmath}})) \cup \overline{f|_{\bC_\jmath}(\sigma(T|_{\sH_+^{J\jmath}}))}=\\
&=f(\ssp(T) \cap \bC^+_\jmath) \cup \overline{f(\ssp(T) \cap \bC^+_\jmath)}.
\end{align*}

This proves $(\mr{c})$. 

Let us prove $(\mr{d})$. Since $f(T)$ is normal, we can apply the spectral radius formula (see (\ref{raggiospett})) obtaining the equality
$\|f(T)\|=\sup\{|q| \in \bR^+ \, | \, q \in \ssp(f(T))\}$. Piecing together the latter equality with $(\mr{c})$ and with Remark \ref{rem:sectional}, we infer at once $(\mr{d})$.

Point $(\mr{e})$ is an immediate consequence of $(\mr{d})$ and of the representation formula for slice functions (see Remark \ref{remslices}$(1)$).
\end{proof}

\begin{remark} \label{rem:nonunico-j}
Unless the case in which $\mi{Ker}(T-T^*)=\{0\}$, the map $\Psi_{\bC_\jmath,T}$ depends on the choice of $J$. Indeed, $J$ is uniquely determined by $T$ on $\mi{Ker}(T-T^*)^\perp$ (see Theorem \ref{teoext}), but it can be chosen in many ways on $\mi{Ker}(T-T^*)$ (see Remark \ref{remarkJunded}). In this way, if $c_\jmath$ is the function on $\ssp(T)$ constantly equal to\insec$\jmath$, then $\Psi_{\bC_\jmath,T}(c_\jmath)$ is equal to $J$ and hence it is not uniquely determined by $T$ on $\mi{Ker}(T-T^*) \neq \{0\}$.
\end{remark}


\subsection{Continuous slice functions of a normal operator $\boldsymbol T$: the circular and general cases} \textit{Fix an anti self--adjoint and unitary operator $J \in \gB(\sH)$ sati\-sfying (\ref{OMEGA}) and (\ref{decA0-bis})}.

\textit{Fix $\jmath, \kappa \in \cS$ with $\jmath\kappa=-\kappa\jmath$ and fix a left scalar multiplication $\bH \ni q \mapsto L_q$ of $\sH$ such that $L_\jmath=J$ and, for every $q \in \bH$, $L_qA=AL_q$ and $L_qB=BL_q$}. The existence of such a left scalar multiplication of $\sH$ is ensured by Theorem \ref{newtheorem}.

\textit{Define $K:=L_\kappa$}. By Proposition \ref{propprod}, we infer that $K$ is an anti--self adjoint and unitary operator in $\gB(\sH)$ such that $JK=-K J$.

\textit{Finally, we equip $\gB(\sH)$ with the structure of quaternionic two--sided Banach unital $C^*$--algebra induced by the fixed left scalar multiplication $\bH \ni q \mapsto L_q$ of $\sH$, as described in Theorem \ref{thm:two-sided}}.

\subsubsection{The circular case}  We now consider continuous circular slice functions, which, differently from the cases of continuous $\bH$--intrinsic slice and $\bC_\jmath$--slice functions we treated above, concerns \textit{non--commutative} structures.

\textit{We assume that the set $\Sc(\ssp(T),\bH)$ is equipped with the qua\-ternionic two--sided Banach unital $C^*$--algebra structure given in Theorem \ref{thm:two-sided-C^*}(d).}

Our next result is as follows.

\begin{theorem}\label{teofinale3}
There exists, and is unique, a continuous $^*$--homomorphism 
\[
\Psi_{c,T}:\Sc(\ssp(T),\bH) \ni f \mapsto f(T) \in \gB(\sH)
\]
of quaternionic two--sided Banach unital $C^*$--algebras such that:
\begin{itemize}
 \item[$(\mr{i})$] $\Psi_{c,T}$ is unity--preserving; that is, $\Psi_{\bR,T}(1_{\ssp(T)})=\1$.
 \item[$(\mr{ii})$] $\Psi_{c,T}(\mi{id})=T$, where $\mi{id}:\ssp(T) \hookrightarrow \bH$ denotes the inclusion map.
\end{itemize}
The following further facts hold true.
\begin{itemize}
 \item[$(\mr{a})$] $\Psi_{c,T}$ extends $\Psi_{\bR,T}$ and $\Psi_{\bC_\jmath,T}$ in the following sense. Let $f \in \Sc(\ssp(T),\bH)$ and let $f_0,f_1,f_2,f_3$ be the unique functions in $\RS(\ssp(T),\bH)$ such that $f=f_0+f_1\jmath+f_2\kappa+f_3\jmath\kappa$ (see Lemma \ref{lem:RS}). Then it holds:
\begin{align*} 
\Psi_{c,T}(f) &=\Psi_{\bR,T}(f_0)+\Psi_{\bR,T}(f_1)J+\Psi_{\bR,T}(f_2)K+\Psi_{\bR,T}(f_3)JK=\\
&=\Psi_{\bC_\jmath,T}(f_0+f_1\jmath)+\Psi_{\bC_\jmath,T}(f_2+f_3\jmath)K.
\end{align*}
\item[$(\mr{b})$] For every $f \in \Sc(\ssp(T),\bH)$, $\Psi_{c,T}(f)$ is normal.
 \item[$(\mr{c})$] For every $f \in \Sc(\ssp(T),\bH)$, the following \emph{continuous circular slice spectral map property} holds:  
\[
\ssp(f(T)) \subset \OO_{f(\ssp(T))}.
\]
 \item[$(\mr{d})$] $\Psi_{c,T}$ is norm decreasing; that is, $\|f(T)\| \leq \|f\|_\infty$ if $f \in \Sc(\ssp(T),\bH)$.
\end{itemize}
\end{theorem}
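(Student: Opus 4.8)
The plan is to \emph{define} $\Psi_{c,T}$ by the formula dictated by property $(\mr a)$ and then verify all the listed features, deferring uniqueness to the end. Fix the decomposition of Lemma~\ref{lem:RS}: every $f \in \Sc(\ssp(T),\bH)$ writes uniquely as $f=f_0+f_1\jmath+f_2\kappa+f_3\jmath\kappa$, and by Lemma~\ref{lem:RS}$(\mr b)$ the components $f_0,f_1,f_2,f_3 \in \RS(\ssp(T),\bH)$ are \emph{real--valued}. Writing $B_0:=\1$, $B_1:=J$, $B_2:=K$, $B_3:=JK$, I would set
\[
\Psi_{c,T}(f):=\sum_{\ell=0}^3\Psi_{\bR,T}(f_\ell)\,B_\ell .
\]
This is well posed by uniqueness of the decomposition, and it is $\bR$--linear and continuous because $\Psi_{\bR,T}$ is (Theorem~\ref{teofinale1}$(\mr b)$). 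Property $(\mr i)$ and the extension property $(\mr a)$ are immediate, and $\Psi_{c,T}(c_q)=L_q$ for every $q\in\bH$ follows by decomposing the constant $c_q$.

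The technical core is to show this is a $^*$--homomorphism of quaternionic two--sided $C^*$--algebras, and this is where I expect the \textbf{main obstacle} to sit, since $\Sc(\ssp(T),\bH)$ is \emph{non--commutative}. The engine is that each $\Psi_{\bR,T}(f_\ell)$ is self--adjoint (as $f_\ell^*=f_\ell$) and \emph{central relative to} $\{\1,J,K,JK\}$: commutation with $J$ is Theorem~\ref{teofinale1}$(\mr a)$, commutation with $K$ is Corollary~\ref{cor:kk} applied to the real function $f_\ell$ (so that $f_\ell(T)K=Kf_\ell^*(T)=Kf_\ell(T)$), and commutation with $JK$ is then automatic. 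Since $J^2=K^2=-\1$ and $JK=-KJ$, the system $\{\1,J,K,JK\}$ carries exactly the multiplication table of $\{1,\jmath,\kappa,\jmath\kappa\}$. On $\Sc$ the slice product equals the pointwise product (Remark~\ref{rem:prod2}), so expanding $(f\cdot g)(q)=\sum_{\ell,m}f_\ell(q)g_m(q)\,b_\ell b_m$ and the operator product $\Psi_{c,T}(f)\Psi_{c,T}(g)=\sum_{\ell,m}\Psi_{\bR,T}(f_\ell)\Psi_{\bR,T}(g_m)\,B_\ell B_m$ term by term, multiplicativity drops out: the central reals slide past the $B_\ell$, the tables match, and $\Psi_{\bR,T}$ is multiplicative on the commutative algebra $\RS$. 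The $\bH$--bilinearity reduces via $f\cdot c_q$ and $c_q\cdot f$ to $\Psi_{c,T}(c_q)=L_q$, and $^*$--preservation follows from $B_\ell^*=-B_\ell$ $(\ell\ge1)$ and $f_\ell^*=f_\ell$, checked against $f^*=\overline f=f_0-f_1\jmath-f_2\kappa-f_3\jmath\kappa$ (using $\overline{\jmath\kappa}=-\jmath\kappa$ and $(JK)^*=-JK$); getting these signs consistent is the one place demanding careful bookkeeping. Property $(\mr b)$ then falls out of the same computation, since $\Psi_{c,T}(f)^*\Psi_{c,T}(f)=\Psi_{c,T}(f)\Psi_{c,T}(f)^*=\sum_\ell\Psi_{\bR,T}(f_\ell)^2$.

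For the spectral inclusion $(\mr c)$ I would mimic the end of the proof of Theorem~\ref{teofinale1}. Let $p\notin\OO_{f(\ssp(T))}$; by circularity this means no value $f(q)$, $q\in\ssp(T)$, lies on the sphere $\cS_p$, i.e.\ $\Delta_p(f(q))\neq0$ for all $q$, where $\Delta_p(X)=X^2-X(p+\overline p)+|p|^2\in\bR[X]$ has \emph{real} coefficients. Hence the circular slice function $h:=f\cdot f-(p+\overline p)\,f+|p|^2\,1_{\ssp(T)}\in\Sc(\ssp(T),\bH)$ satisfies $h(q)=\Delta_p(f(q))$ and is nowhere zero, so it is invertible in the $C^*$--algebra $\Sc(\ssp(T),\bH)$ (its pointwise inverse is again a continuous circular slice function, the real--valued denominator $|h|^2$ being nowhere zero). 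Applying the unital homomorphism $\Psi_{c,T}$ gives $\Delta_p(f(T))=\Psi_{c,T}(h)$ invertible in $\gB(\sH)$, so $p\in\srho(f(T))$; thus $\ssp(f(T))\subset\OO_{f(\ssp(T))}$. Only the inclusion holds because a circular $f$ is constant on each sphere $\cS_q$ and so need not be spectrally surjective. Property $(\mr d)$ is then a one--line consequence: $f(T)$ is normal, so $\|f(T)\|=r_S(f(T))$ by~(\ref{raggiospett}), and $(\mr c)$ bounds this by $\sup_{q\in\ssp(T)}|f(q)|=\|f\|_\infty$.

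Finally, uniqueness. Any competitor $\Psi$ is an $\bR$--linear, continuous, multiplicative, $\bH$--bilinear, $^*$--preserving map; bilinearity and $(\mr i)$ force $\Psi(c_q)=L_q$, hence $\Psi(c_\jmath)=J$, $\Psi(c_\kappa)=K$, $\Psi(c_{\jmath\kappa})=JK$. Using $f_\ell\jmath=f_\ell\cdot c_\jmath$ (and similarly for $\kappa,\jmath\kappa$) together with multiplicativity, the value $\Psi(f)$ is completely determined by the values of $\Psi$ on the real--valued subalgebra $\Sc(\ssp(T),\bR)=\RS(\ssp(T),\bH)\cap\Sc(\ssp(T),\bH)$. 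There $\Psi$ and $\Psi_{c,T}$ must agree: the real polynomial $\bH$--intrinsic functions are dense by Proposition~\ref{prop:Weierstrass}, and on them property $(\mr{ii})$, combined with $(\mr i)$ and multiplicativity exactly as in the uniqueness arguments of Theorems~\ref{teofinale1} and~\ref{teofinale2}, pins down $\Psi=\Psi_{\bR,T}$. Consequently $\Psi=\Psi_{c,T}$ everywhere, completing the proof.
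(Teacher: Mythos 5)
Your proposal follows the paper's own proof essentially verbatim: the same definition $\Psi_{c,T}(f)=\sum_{\ell}\Psi_{\bR,T}(f_\ell)B_\ell$ with $B_0=\1$, $B_1=J$, $B_2=K$, $B_3=JK$, the same use of Theorem~\ref{teofinale1}$(\mr{a})$ and Corollary~\ref{cor:kk} to make the real--valued coefficients $\Psi_{\bR,T}(f_\ell)$ commute with $J$ and $K$ so that $\{\1,J,K,JK\}$ reproduces the quaternionic relations (the paper's ``behave like quaternions''), and the same nowhere--vanishing $\Delta_pf$ argument for the spectral inclusion $(\mr{c})$ followed by the spectral--radius bound for $(\mr{d})$. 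You merely spell out the multiplicativity, $^*$--preservation and uniqueness bookkeeping in more detail than the paper does, which is harmless.
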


Before presenting the proof of this result, we underline that point $(\mr{d})$ of the preceding theorem can be improved as follows:

\vspace{.7em}
 
\begin{itemize}
 \item[$(\mr{d}')$] \textit{$\Psi_{c,T}$ is isometric; that is, $\|f(T)\|=\|f\|_\infty$ if $f \in \Sc(\ssp(T),\bH)$.}
\end{itemize}

\vspace{.7em}

We will prove this stronger property of $\Psi_{c,T}$ in the forthcoming paper \cite{GhMoPe2} making use of a spectral representation theorem for normal operators on quaternionic Hilbert spaces.

\begin{proof}[Proof of Theorem \ref{teofinale3}]
Let us prove that $\Psi_{c,T}$ is unique. Suppose that $\Psi_{c,T}$ exists. Let $f=f_0+f_1\jmath+f_2\kappa+f_3\jmath\kappa$ be a function in $\Sc(\ssp(T),\bH)$ decomposed as in $(\mr{a})$. Thanks to Theorem~\ref{teoext} and to the fact that $\Psi_{c,T}$ is a quaternionic $^*$--homomorphism satisfying $(\mr{i})$ and $(\mr{ii})$, we infer immediately the uniqueness of $\Psi_{c,T}$.

Let us pass to define $\Psi_{c,T}$ assuming the truthfulness of $(\mr{a})$:
\[
\Psi_{c,T}(f):=f_0(T)+f_1(T)J+f_2(T)K+f_3(T)JK
\]
if $f=f_0+f_1\jmath+f_2\kappa+f_3\jmath\kappa$ is a function in $\Sc(\ssp(T),\bH)$ decomposed as in $(\mr{a})$, where $f_\ell(T):=\Psi_{\bR,T}(f_\ell)$ for every $\ell \in \{0,1,2,3\}$. It is evident that $\Psi_{c,T}$ satisfies $(\mr{i})$ and $(\mr{ii})$. Let $\ell \in \{0,1,2,3\}$. By Lemma \ref{lem:RS}, we have that $f_\ell$ belongs to $\RS(\ssp(T),\bH)$ and is real--valued. Thanks to the latter fact, we have that $f_\ell^*=\overline{f_\ell}=f_\ell$. Moreover, Corollary \ref{cor:kk} ensures that $f_{\ell}(T)J=Jf_\ell(T)$ and  $f_{\ell}(T)K=K f_\ell(T)$. In this way, the functions $f$ and their images $\Psi_{c,T}(f)$ under $\Psi_{c,T}$ behave like quaternions. This implies at once that $\Psi_{c,T}$ is a $^*$--homomorphism satisfying $(\mr{b})$.

It remains to prove $(\mr{c})$. Indeed, $(\mr{d})$ can be easily deduced from $(\mr{c})$ as we did at the end of the proof of Theorem \ref{teofinale2}. In order to prove point $(\mr{c})$, we follow the strategy used at the end of the proof of Theorem \ref{teofinale1}. Let $f=\I(F_1) \in \Sc(\ssp(T),\bH)$, let $f(T):=\Psi_{c,T}(f)$ and let $p \not\in \OO_{f(\ssp(T))}$. We must show that $\Delta_p(f(T))$ has a two--sided inverse in $\gB(\sH)$. Let $\Delta_pf:\ssp(T) \lra \bH$ be the function sending $q$ into $f^2(q)-f(q)(p+\overline{p})+|p|^2$. Such a function belongs to $\Sc(\ssp(T),\bH)$, because it is the slice function induced by the stem function $\Delta_pF_1:=F_1^2-F_1(p+\overline{p})+|p|^2$. The function $\Delta_pf$ is nowhere zero. Indeed, it there would exist $y \in \ssp(T)$ such that $\Delta_pf(y)=0$, then $f(y) \in \cS_p$ and hence $p \in \OO_{f(\ssp(T))}$, which is impossible. In this way, we can define $f':\ssp(T) \lra \bH$ by setting $f'(q):=(\Delta_pf(q))^-1$. This function is an element of $\Sc(\ssp(T),\bH)$ induced by the stem function $(\Delta_pF_1)^{-1}$. Evidently, equalities (\ref{eq:inverse}) (with $f'(T):=\Psi_{c,T}(f')$) hold also in this situation. The proof is complete.
\end{proof}

\begin{remark} \label{rem:nonunico-c}
The map $\Psi_{c,T}$ depends on the choice of $J$ if $\mi{Ker}(T-T^*) \neq \{0\}$ (see Remark \ref{rem:nonunico-j}) and always on $K$. Indeed, $K$ is not uniquely determined by $T$ and $\Psi_{c,T}(c_\kappa)=K$ if $c_\kappa$ is the function on $\ssp(T)$ constantly equal to $\kappa$.
\end{remark}

\subsubsection{Continuous slice functions of $T$}
We now come to the general case:  we extend the previous definitions of the operator $f(T)$ to every continuous slice function $f\in\Sl(\ssp(T),\bH)$.

Given $f\in \Sl(\ssp(T),\bH)$, let $f_0,f_1,f_2,f_3$ be the unique functions in $\RS(\ssp(T),\bH)$ such that $f=f_0+f_1\jmath+f_2\kappa+f_3\jmath\kappa$ (see Lemma \ref{lem:RS}). Define
\[\label{f(T)}
f(T):=f_0(T)+f_1(T)J+f_2(T)K+f_3(T)JK,
\]
where $f_\ell(T)$ denotes $\Psi_{\bR,T}(f_\ell)$
 for every $\ell \in \{0,1,2,3\}$. From the definition, it follows immediately that the map $f\mapsto f(T)$ is $\R$--linear. Since $\Psi_{\bR,T}$ is continuous, the map $f\mapsto f(T)$ is also continuous; that is, there exists a positive constant $C$ such that
\[
\|f(T)\|\le C \|f\|_\infty
\]
for every $f \in \Sl(\ssp(T),\bH)$. Furthermore, we have:

\begin{proposition}\label{teofinale4}
Given $f,g\in \Sl(\ssp(T),\bH)$, the following facts hold.
\begin{itemize}
 \item[$(\mr{a})$] If $f\in \jS(\ssp(T),\bH)$ or $g\in \Sc(\ssp(T),\bH)$, then
\[(f\cdot g)(T)=f(T)g(T).\]
 \item[$(\mr{b})$] 
If $p\in \bC_\jmath$ and $q \in \bH$, then
\[
(p\cdot g)(T)=p \, g(T)\text{\quad and\quad}(f\cdot q)(T)=f(T) q.
\]
 \item[$(\mr{c})$] Let $\tilde f=f_0+f_1\jmath+f_2^*\kappa+f_3^*\jmath\kappa$. Then $\tilde f\in \Sl(\ssp(T),\bH)$ and it holds:
\[f(T)^*=\tilde{f}^*(T)\]
\end{itemize}
\end{proposition}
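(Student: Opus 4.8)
The plan is to reduce every assertion to the decomposition of a slice function along $\bC_\jmath$ and the remaining ``imaginary direction'' $\kappa$. By Lemma~\ref{lem:RS}, every $f\in\Sl(\ssp(T),\bH)$ is uniquely $f=\phi+\psi\cdot c_\kappa$ with $\phi:=f_0+f_1\jmath$ and $\psi:=f_2+f_3\jmath$ in $\jS(\ssp(T),\bH)$, and comparing with the defining formula for $f(T)$ together with (\ref{PhibcPhiT}) gives $f(T)=\Psi_{\bC_\jmath,T}(\phi)+\Psi_{\bC_\jmath,T}(\psi)\,K$. Writing $\hat h:=\Psi_{\bC_\jmath,T}(h)$ for $h\in\jS(\ssp(T),\bH)$, I would first record the two commutation rules that drive the proof: (i) $K\hat h=\widehat{h^*}\,K$, and (ii) $c_\kappa\cdot h=\bar h\cdot c_\kappa$, where $\bar h$ denotes the slice function whose stem components are the pointwise conjugates of those of $h$. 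Rule (i) follows by writing $h=h_0+h_1\jmath$ with $h_0,h_1\in\RS(\ssp(T),\bH)$, so that $\hat h=h_0(T)+h_1(T)J$, and applying (\ref{eq:fjjf}) and (\ref{eq:fkkf-}) of Corollary~\ref{cor:kk} to $h_0$ and $h_1$; rule (ii) is a direct stem-function computation using $\kappa z=\bar z\kappa$ for $z\in\bC_\jmath$ (a consequence of $\jmath\kappa=-\kappa\jmath$). The decisive observation is that $\bar h$ and $h^*$ coincide \emph{exactly} when $h\in\Sc(\ssp(T),\bH)$, i.e.\ when the odd stem component vanishes.

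For part~(a) I would expand both sides in the $\jS\oplus\jS\cdot c_\kappa$ picture. Decomposing $g=\chi+\omega\cdot c_\kappa$ and using (ii), $c_\kappa\cdot c_\kappa=-1_{\OO_\K}$, and the commutativity of the slice product on $\jS(\ssp(T),\bH)$ (Remark~\ref{rem:prod2}), one gets $f\cdot g=(\phi\cdot\chi-\psi\cdot\bar\omega)+(\phi\cdot\omega+\psi\cdot\bar\chi)\cdot c_\kappa$, whence, applying the defining formula again, $(f\cdot g)(T)=\widehat{\phi\cdot\chi-\psi\cdot\bar\omega}+\widehat{\phi\cdot\omega+\psi\cdot\bar\chi}\,K$. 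On the operator side, $f(T)g(T)=(\hat\phi+\hat\psi K)(\hat\chi+\hat\omega K)$ expands, via rule~(i), $K^2=-\1$, and the fact that $h\mapsto\hat h$ is multiplicative (Theorem~\ref{teofinale2}), to $\widehat{\phi\cdot\chi-\psi\cdot\omega^*}+\widehat{\phi\cdot\omega+\psi\cdot\chi^*}\,K$. The two expressions agree precisely when $\psi=0$ (the hypothesis $f\in\jS(\ssp(T),\bH)$) or when $\chi^*=\bar\chi$ and $\omega^*=\bar\omega$; the latter holds exactly when $g\in\Sc(\ssp(T),\bH)$, since then $\chi,\omega\in\Sc(\ssp(T),\bH)$ (their stem functions inherit a vanishing odd part from that of $g$). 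This is where the two alternative hypotheses of~(a) enter, and it is the main obstacle: one must see that the mismatch between the componentwise conjugate $\bar h$ produced by the slice product and the involution $h^*$ produced by the adjoint is annihilated exactly under the stated hypotheses.

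Part~(b) then follows from~(a) by specialization. For the left identity I take $f=c_p$ with $p=p_0+p_1\jmath\in\bC_\jmath$, so $c_p\in\jS(\ssp(T),\bH)$ and $(c_p\cdot g)(T)=c_p(T)g(T)$ by the case $f\in\jS$; since $c_p(T)=p_0\1+p_1J=L_p$, this is $p\,g(T)$. For the right identity I take $g=c_q$ with $q\in\bH$; a constant slice function is circular, $c_q\in\Sc(\ssp(T),\bH)$, so $(f\cdot c_q)(T)=f(T)c_q(T)$ by the case $g\in\Sc$, and $c_q(T)=L_q$ (using $L_\jmath=J$, $L_\kappa=K$, $L_{\jmath\kappa}=JK$) gives $f(T)q$. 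For part~(c) I first note $\tilde f\in\Sl(\ssp(T),\bH)$, its components $f_0,f_1,f_2^*,f_3^*$ lying in $\RS(\ssp(T),\bH)$, which is closed under $*$. I then compute $f(T)^*$ termwise from $f(T)=f_0(T)+f_1(T)J+f_2(T)K+f_3(T)JK$, using $f_\ell(T)^*=f_\ell^*(T)$ (Theorem~\ref{teofinale1}), $J^*=-J$, $K^*=-K$, $(JK)^*=KJ=-JK$, and (\ref{eq:fjjf}), (\ref{eq:fkkf-}); this yields $f(T)^*=f_0^*(T)-f_1^*(T)J-f_2(T)K-f_3(T)JK$. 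On the other hand, a short stem-function and commutativity check (again via (ii), using that $\RS$-functions have self-conjugate stem components) gives the involution formula $h^*=h_0^*-h_1^*\jmath-h_2^*\kappa-h_3^*\jmath\kappa$ for components $h_\ell\in\RS(\ssp(T),\bH)$, so that $\tilde f^*=f_0^*-f_1^*\jmath-f_2\kappa-f_3\jmath\kappa$ and hence $\tilde f^*(T)=f_0^*(T)-f_1^*(T)J-f_2(T)K-f_3(T)JK$, matching $f(T)^*$.
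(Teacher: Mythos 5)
Your proof is correct and follows essentially the same route as the paper: both decompose $f=f^1+f^2\kappa$ with $f^1,f^2\in\jS(\ssp(T),\bH)$, expand $f\cdot g$ using the commutation rule $c_\kappa\cdot h=\bar h\cdot c_\kappa$ and expand $f(T)g(T)$ using Corollary~\ref{cor:kk}, and then observe that the mismatch between the componentwise conjugate and the $^*$--involution disappears exactly under either hypothesis of $(\mr{a})$; parts $(\mr{b})$ and $(\mr{c})$ are handled identically (you merely spell out $(\mr{b})$, which the paper leaves as an immediate consequence). No gaps.
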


\begin{proof}
Let $f=(f_0+f_1\jmath)+(f_2+f_3\jmath)\kappa=f^1+f^2\kappa$, where $f^1:=f_0+f_1\jmath, f^2:=f_2+f_3\jmath\in  \jS(\ssp(T),\bH)$, and similarly for $g=g^1+g^2\kappa$. From $\jmath\kappa=-\kappa\jmath$, it follows that $k\cdot f=f'\cdot k$, where  $f':=(f_0-f_1\jmath)+(f_2-f_3\jmath)\kappa$. Therefore
\[
f\cdot g=(f^1+f^2\kappa)\cdot (g^1+g^2\kappa)=f^1\cdot g^1 -f^2\cdot (g^2)'+\left(f^1\cdot g^2+f^2\cdot (g^1)'\right)\kappa
\]
and
\[
(f\cdot g)(T)=f^1(T) g^1(T) -f^2(T) (g^2)'(T)+\left(f^1(T) g^2(T)+f^2(T) (g^1)'(T)\right)K.
\]
On the other hand, from \ref{cor:kk} we get that
\begin{align*}
f(T)g(T)&=(f^1(T)+f^2(T)K)(g^1(T)+g^2(T)K)=\\&=f^1(T) g^1(T) -f^2(T)(g^2)^*(T)+\left(f^1(T)g^2(T)+f^2(T) (g^1)^*(T)\right)K.
\end{align*}
From Lemma \ref{lem:RS}, we get that if $f\in \jS(\ssp(T),\bH)$, then $f^2=0$, while if $g\in \Sc(\ssp(T),\bH)$, then $(g^1)'=(g^1)^*$ and $(g^2)'=(g^2)^*$. In both cases, $(f\cdot g)(T)$ coincides with $f(T)g(T)$ and $(\mr{a})$ is proved. Part $(\mr{b})$ is an immediate consequence of $(\mr{a})$.

It remains to prove $(\mr{c})$. Since $\tilde{f}^*=(f_0+f_1\jmath +f_2^*\kappa+f_3^*\jmath\kappa)^*=f_0^*-f_1^*\jmath-f_2\kappa-f_3\jmath\kappa$, $(\mr{c})$ is a consequence of the following equality:
\begin{align*}
f(T)^*&=f_0(T)^*-Jf_1(T)^*-K f_2(T)^*+K J f_3(T)^*=\\&=f_0^*(T)-f_1^*(T)J- f_2(T)K- f_3(T)JK.
\end{align*}
This proves the proposition.
\end{proof}


\subsection{Slice regular functions of a normal operator $\boldsymbol T$}
\label{sec:sliceregular}

As we recalled in the Introduction, an important subclass of slice functions is the one of \emph{slice regular functions}, those slice functions that are induced by holomorphic stem functions. They were introduced in \cite{GeSt2006CR,GeSt2007Adv} as quaternionic power series and later generalized in \cite{CoSaSt2009Israel,GhPe_AIM,GhPe_Trends}. A functional calculus for slice regular functions of a (bounded right $\bH$--linear) operator on quaternionic two--sided Banach module has been developed in \cite{libroverde} as an effective generalization of the (classical) holomorphic functional calculus. We recall the definition given in \cite{libroverde}.

\begin{definition}\cite[Def.~4.10.4]{libroverde}
Let $V$ be a quaternionic two--sided Banach module, let $T \in \gB(V)$ be an operator and let $f:U \lra \bH$ be a slice regular function defined on a circular open neighborhood of $\ssp(T)$ in $\bH$. Fix any $\jmath \in \cS$ and define the element $f(T)_{reg}$ of $\gB(V)$ by setting
\beq \label{fTreg}
f(T)_{reg}:=\frac1{2\pi}\int_{\partial(U\cap\bC_\jmath)}S_L^{-1}(s,T)\,ds\, \jmath^{-1} \, f(s)\,.
\eeq
Here $S_L^{-1}(s,x)$ denotes the \emph{Cauchy kernel} for (left) slice regular functions.
\end{definition}

\begin{proposition}
Let $T \in \gB(\sH)$ be a normal operator and let $f:U \lra \bH$ be a slice regular function defined on a circular open neighborhood of $\ssp(T)$ in $\bH$. Then $f|_{\ssp(T)} \in \Sl(\ssp(T),\bH)$ and it holds:
\beq \label{calculi}
f(T)_{reg}=f|_{\ssp(T)}(T);
\eeq
that is, the two functional calculi coincide if $T$ is normal and $f$ is slice regular.
\end{proposition}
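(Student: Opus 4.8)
The plan is to prove both assertions by reducing to the classical complex theory on the invariant complex subspace $\sH^{J\imath}_+$. For the first assertion, recall that a slice regular $f$ on the circular open set $U=\OO_{\K'}\supset\ssp(T)$ is, by definition, induced by a \emph{holomorphic} stem function $F=F_1+iF_2$ on $\K'$; restricting the domain of $F$ to the compact, conjugation-invariant set $\K$ with $\OO_\K=\ssp(T)$ (which is circular and compact by Theorem~\ref{teopropspectrum}$(\mr{b})$ and Remark~\ref{spectrum}$(3)$) yields a continuous stem function whose induced slice function is exactly $f|_{\ssp(T)}$. Hence $f|_{\ssp(T)}\in\Sl(\ssp(T),\bH)$.

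For the equality of the two calculi I would first fix $\imath\in\bS$, take $J$ as in Theorem~\ref{teoext}, and fix $\kappa\in\bS$ with $\imath\kappa=-\kappa\imath$ together with a left scalar multiplication $L$ such that $L_\imath=J$ and $L_\kappa=:K$ commute with $A$ and $B$ (Theorem~\ref{newtheorem}); then use $\jmath=\imath$ as the slice in the defining integral~(\ref{fTreg}), which is legitimate since $f(T)_{reg}$ does not depend on that choice. Decompose $f=f^1+f^2\kappa$ with $f^1:=f_0+f_1\imath$ and $f^2:=f_2+f_3\imath$, where $f_0,\dots,f_3\in\RS(\ssp(T),\bH)$ are the components of Lemma~\ref{lem:RS}. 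Because the Cauchy--Riemann system defining a holomorphic stem function is inherited by the real components with respect to the basis $\{1,\imath,\kappa,\imath\kappa\}$, each $f_\ell$ is slice regular, hence so are $f^1,f^2\in\Sl_{\bC_\imath}(\ssp(T),\bH)$ and $f^2\kappa$. By $\bR$-linearity of~(\ref{fTreg}) it then suffices to treat the two summands. For the second one I would compute directly from~(\ref{fTreg}), pulling the constant $K=L_\kappa$ out on the right (recall that right multiplication of an operator by a quaternion $q$ is precomposition with $L_q$, which preserves right $\bH$-linearity), obtaining $(f^2\cdot c_\kappa)(T)_{reg}=(f^2)(T)_{reg}\,K$; this exactly mirrors $(f^2\cdot c_\kappa)(T)=f^2(T)\,K$ coming from the definition of the general continuous calculus given after Theorem~\ref{teofinale3}.

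The heart of the argument is therefore to show that $(g)(T)_{reg}=g(T)$ for a $\bC_\imath$-slice slice regular $g$ (applied to $g=f^1$ and $g=f^2$). Here $g|_{\bC_\imath}$ is $\bC_\imath$-valued, so the scalar $ds\,\imath^{-1}g(s)$ lies in $\bC_\imath$ and commutes with $J=L_\imath$; thus $(g)(T)_{reg}$ commutes with $J$, as does $g(T)=\Psi_{\bC_\imath,T}(g)$ by Theorem~\ref{teofinale2}$(\mr{b})$. By Proposition~\ref{propestensione}$(\mr{c})$ each is the unique right $\bH$-linear extension of its restriction to $\sH^{J\imath}_+$, so it is enough to compare these restrictions. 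On $\sH^{J\imath}_+$ the operator $T_+:=T|_{\sH^{J\imath}_+}$ is $\bC_\imath$-normal (Proposition~\ref{propinterssigma}$(\mr{b})$), left multiplication by $\bC_\imath$ agrees with right multiplication, and the left $S$-resolvent collapses to the ordinary resolvent,
\[
S_L^{-1}(s,T)|_{\sH^{J\imath}_+}=-\Delta_s(T_+)^{-1}(T_+-\1_+\overline{s})=(s\1_+-T_+)^{-1}\qquad(s\in\bC_\imath),
\]
since $\Delta_s(T_+)=(T_+-\1_+ s)(T_+-\1_+\overline{s})$ factorises in the commutative $\bC_\imath$-setting. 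Consequently the restriction of~(\ref{fTreg}) becomes precisely the Riesz--Dunford integral $\frac{1}{2\pi\imath}\int_{\partial(U\cap\bC_\imath)}g(s)(s\1_+-T_+)^{-1}\,ds$, i.e. the holomorphic functional calculus of $g|_{\bC_\imath}$ at $T_+$. The classical theorem that, for a bounded normal operator on a complex Hilbert space, the holomorphic and the continuous functional calculi coincide then identifies this with $g|_{\bC_\imath}(T_+)$ in the continuous sense, which equals $g(T)|_{\sH^{J\imath}_+}$ by formula~(\ref{finalid}) in the proof of Theorem~\ref{teofinale2}. Hence $(g)(T)_{reg}=g(T)$, and assembling the two summands gives
\[
f(T)_{reg}=f^1(T)+f^2(T)K=f_0(T)+f_1(T)J+f_2(T)K+f_3(T)JK=f|_{\ssp(T)}(T).
\]
I expect the main obstacle to be exactly this middle step: correctly matching the quaternionic right-module conventions in~(\ref{fTreg}) with the complex calculus, so that the restricted integrand is genuinely $\bC_\imath$-linear and equal to the resolvent, and then importing the classical coincidence of the holomorphic and continuous functional calculi for the normal operator $T_+$.
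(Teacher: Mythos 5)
Your proof is essentially correct, but it follows a genuinely different route from the paper's. The paper works intrinsically in the quaternionic setting: it first checks the identity for real polynomials in $q$, then for rational $\bH$--intrinsic slice regular functions $p^{-1}p'$ (using multiplicativity and invertibility of $\Psi_{\bR,T}(p)$), then passes to all of $\RS$ by a symmetrized Runge approximation $R'(z)=\tfrac{1}{2}(R(z)+\overline{R(\bar z)})$ of the stem function together with the continuity of both calculi, and finally handles general $f$ by the decomposition $f=f_0+f_1\jmath+f_2\kappa+f_3\jmath\kappa$ and the additivity/right--$\bH$--linearity of the regular calculus. You instead restrict everything to the invariant complex subspace $\sH^{J\imath}_+$, observe that for $s\in\bC_\imath$ the left $S$--resolvent collapses to the classical resolvent $(s\1_+-T_+)^{-1}$, identify the restricted integral with the Riesz--Dunford integral of the holomorphic function $g|_{\bC_\imath}$, and then invoke the classical coincidence of the holomorphic and continuous functional calculi for the normal operator $T_+$, lifting back via the uniqueness of the right $\bH$--linear extension (Proposition~\ref{propestensione}$(\mr{c})$) and formula~(\ref{finalid}). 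What your approach buys is a clean conceptual reduction to standard one--variable complex operator theory, with the Runge--type approximation hidden inside the classical theorem you quote; what the paper's approach buys is self--containedness within the quaternionic framework and no need to verify the collapse of the $S$--resolvent or the $\bC_\imath$--linearity of the restricted integrand. The steps you flag as delicate (that the components $f_\ell$ of a slice regular $f$ are again slice regular because the Cauchy--Riemann system for the stem function decomposes over a real basis of $\bH$; that right multiplication by $\kappa$ pulls out of the integral as composition with $L_\kappa$; that the integrand commutes with $J$ for $s\in\bC_\imath$ so that the restriction is legitimate) all check out, though a complete write--up should also note that the contour $\partial(U\cap\bC_\imath)$ encircles $\sigma(T_+)\cup\overline{\sigma(T_+)}$ rather than just $\sigma(T_+)$, which is harmless since $g|_{\bC_\imath}$ is holomorphic on all of $U\cap\bC_\imath$ and the contour can be deformed onto a neighborhood of $\sigma(T_+)$ alone.
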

\begin{proof}
Firstly consider the case of a  $\bH$--intrinsic slice regular $f\in\RS(\OO,\bH)$, with $\OO$ open neighbourhood of $\ssp(T)$. If $f$ is a polynomial $\sum_{n=0}^m q^n a_n$ with real coefficients $a_n$, then $f(T)_{reg}$ and $f\rr_{\ssp(T)}(T)=\Psi_{\bR,T}(f)$ are both equal to $\sum_{n=0}^m T^n a_n$ (cf.~\cite[Theorem~4.8.10]{libroverde}). If $f$ is a \emph{rational} slice regular function, of the form $f=p^{-1}p'$ for two $\bH$--intrinsic slice regular polynomials $p$ and $p'$, with $p=\I(P)=\I(P_1+iP_2)\ne0$ on $\ssp(T)$ and $p^{-1}=\I((P_1+iP_2)/P^2)$, the algebraic properties stated in Theorem~\ref{teofinale1} and \cite[Proposition~4.11.6]{libroverde} give:
\[\Psi_{\bR,T}(p)\Psi_{\bR,T}(f)=\Psi_{\bR,T}(p')=p'(T)_{reg}=p(T)_{reg}f(T)_{reg}=\Psi_{\bR,T}(p)f(T)_{reg}\:,\]
from which it follows that $\Psi_{\bR,T}(f)=f(T)_{reg}$, since $\Psi_{\bR,T}(p)$ is invertible.
We now show that every $f=\I(F)\in\RS(\OO,\bH)$ can be approximated in the supremum norm by rational $\bH$--intrinsic slice regular functions (see also \cite{CoSaSt2011PAMS} for a Runge Theorem for slice regular functions).
Let $\K\subset\bC$, invariant with respect to conjugation, such that $\OO_\K=\ssp(T)$.
Given $\epsilon>0$, the classical Runge Theorem assures the existence of a rational function $R$, with poles outside $\K$, such that $|F(z)-R(z)|<\epsilon/2$ for every $z\in \K$. Let $R'(z):=(R(z)+\overline{R(\bar z)})/2$. Then $R'$ is a  rational stem function, whose induced slice function $r$ is regular and $\bH$--intrinsic on a neighbourhood of $\ssp(T)$. Therefore
\[
\|f-r\|_{\ssp(T)}=\sup_{z\in \K}|F(z)-R'(z)|\le\left|\frac{F(z)-R(z)}2\right|+\left|\frac{F(\bar z)-R(\bar z)}2\right|<\epsilon\:.
\]
Using \cite[Theorem~4.10.6]{libroverde} and the continuity of $f\mapsto f(T)$, we obtain the equality (\ref{calculi}) for every $f\in\RS(\OO,\bH)$.

Now we come to the case of a generic slice regular $f\in\Sl(\OO,\bH)$, decomposed as $f=f_0+f_1\jmath+f_2\kappa+f_3\jmath\kappa$, with $f_k \in  \RS(\OO,\bH)$. From the above and \cite[Proposition~4.11.1]{libroverde} we get (\ref{calculi}):
\begin{align*}
f(T)_{reg}&=f_0(T)_{reg}+f_1(T)_{reg}J+f_2(T)_{reg}K+f_3(T)_{reg}JK=\\&= f_0\rr_{\ssp(T)}(T)+f_1\rr_{\ssp(T)}(T)J+f_2\rr_{\ssp(T)}(T)K+f_3\rr_{\ssp(T)}(T)JK=\\&= f\rr_{\ssp(T)}(T)\:.
\end{align*}
This completes the proof.
\end{proof}

		

\providecommand{\bysame}{\leavevmode\hbox to3em{\hrulefill}\thinspace}
\providecommand{\MR}{\relax\ifhmode\unskip\space\fi MR }
\providecommand{\MRhref}[2]{%
  \href{http://www.ams.org/mathscinet-getitem?mr=#1}{#2}
}
\providecommand{\href}[2]{#2}

\end{document}